\let\C\undefined
\newtheorem{proposition}{Proposition}[section]
\newtheorem{theorem}[proposition]{Theorem}
\newtheorem{lemma}[proposition]{Lemma}
\theoremstyle{definition}
\newtheorem{openproblem}{Open Problem}
\newtheorem{remark}[proposition]{Remark}
\newcommand{\defeq}{\coloneqq}
\newcommand{\Nset}{\mathbb{N}}
\newcommand{\Zset}{\mathbb{Z}}
\newcommand{\Rset}{\mathbb{R}}
\newcommand{\Cset}{\mathbb{C}}
\newcommand{\Sset}{\mathbb{S}}
\newcommand{\Qset}{\mathbb{Q}}
\newcommand{\Bset}{\mathbb{B}}
\newcommand{\dif}{\,\mathrm{d}}
\newcommand{\compose}{\,\circ\,}
\newcommand{\manifold}[1]{\mathcal{#1}}
\newcommand{\lifting}[1]{\smash{\widetilde{#1}}}
\newcommand{\homog}[1]{\smash{\dot{#1}}}
\DeclarePairedDelimiter{\brk}{(}{)}
\DeclarePairedDelimiter{\abs}{\lvert}{\rvert}
\DeclarePairedDelimiter{\norm}{\lVert}{\rVert}
\DeclarePairedDelimiter{\floor}{\lfloor}{\rfloor}
\DeclarePairedDelimiter{\ceil}{\lceil}{\rceil}
\DeclarePairedDelimiterX{\intvc}[2]{[}{]}{#1,#2}
\DeclarePairedDelimiterX{\intvl}[2]{(}{]}{#1,#2}
\DeclarePairedDelimiterX{\intvr}[2]{[}{)}{#1,#2}
\DeclarePairedDelimiterX{\intvo}[2]{(}{)}{#1,#2}
\DeclarePairedDelimiterX{\setcond}[2]{\{}{\}}{#1 \,\delimsize\vert\, #2}
\newcommand{\restr}[1]{\!\upharpoonright_{#1}}
\providecommand{\st}{\,\vert\,}
\newcommand\stSymbol[1][]{%
\nonscript\;#1\vert
\allowbreak
\nonscript\;
\mathopen{}}
\DeclarePairedDelimiterX\set[1]\{\}{%
\renewcommand\st{\stSymbol[\delimsize]}
#1
}
\DeclareMathOperator{\dist}{dist}
\DeclareMathOperator{\supp}{supp}
\DeclareMathOperator{\Aut}{Aut}
\DeclareMathOperator{\inj}{inj}
\DeclareMathOperator{\diam}{diam}
\renewcommand{\PrintDOI}[1]{%
  \href{http://dx.doi.org/#1}{doi:#1}%
}
\author{Jean Van Schaftingen}
\title{%
Lifting of fractional Sobolev mappings to noncompact covering spaces%
}
\date{July 7, 2023}
\address{
Universit\'e catholique de Louvain, Institut de Recherche en Math\'ematique et Physique, Chemin du Cyclotron 2 bte L7.01.01, 1348 Louvain-la-Neuve, Belgium}
\email{Jean.VanSchaftingen@UCLouvain.be}
\keywords{
Sobolev--Slobodeckiĭ space; 
sum of Sobolev spaces;
Sobolev embedding theorem}
\subjclass[2020]{58D15 (46E35)}
\begin{document}

\begin{abstract}
Given compact Riemannian manifolds $\mathcal{M}$ and $\mathcal{N}$, a Riemannian covering $\pi : \smash{\widetilde{\mathcal{N}}} \to \mathcal{N}$  by a noncompact covering space $\smash{\widetilde{\mathcal{N}}}$, $1 < p < \infty$ and $0 < s < 1$, the space of liftings of fractional Sobolev maps in $\smash{\dot{W}^{s, p}} (\mathcal{M}, \mathcal{N})$ is characterized when $sp > 1$ and an optimal nonlinear fractional Sobolev estimate is obtained when moreover $sp \ge \dim \mathcal{M}$.
A nonlinear characterization of the sum of spaces $\smash{\dot{W}^{s, p}} (\mathcal{M}, \mathbb{R}) + \smash{\dot{W}^{1, sp}} (\mathcal{M}, \mathbb{R})$ is also provided.
\end{abstract}

\thanks{The author was supported by the Mandat d'Impulsion Scientifique F.4523.17, ``Topological singularities of Sobolev maps'' of the Fonds de la Recherche Scientifique--FNRS and by the Projet de Recherche T.0229.21 ``Singular Harmonic Maps and Asymptotics of Ginzburg--Landau Relaxations'' of the
Fonds de la Recherche Scientifique--FNRS}

\maketitle

\tableofcontents

\section{Introduction}

Given a covering map \(\pi : \lifting{\manifold{N}} \to \manifold{N}\), that is, a map \(\pi\) such that for every \(y \in \manifold{N}\) there exists some open set \(U \subseteq \manifold{N}\) such that \(y \in U\) and \(\pi^{-1} (U)\) is a disjoint union of open subsets of \(\lifting{\manifold{N}}\) on which \(\pi\) is a homeomorphism, the classical topological lifting theory states that if \(\manifold{M}\) is a simply-connected topological manifold and if \(\pi\) is surjective, then every mapping \(u \in C (\manifold{M}, \manifold{N})\) can be written as \(u = \pi \compose \lifting{u}\) for some map \(\lifting{u} \in  C (\manifold{M}, \lifting{\manifold{N}})\) (see for example \cite{Hatcher_2002}*{prop.\ 1.33}).
For instance the universal covering of the circle \(\pi : \Rset \to \Sset^1\) defined for each \(\lifting{y} \in \Rset\) by \(\pi (\lifting{y}) \defeq e^{i \lifting{y}} \in \Sset^1 \subseteq \Rset^2 \simeq \Cset\) allows one to classify the homotopy classes of maps from the circle \(\Sset^1\) to itself (see for example \cite{Hatcher_2002}*{th.\ 1.7}).

When the manifolds \(\manifold{N}\) and \(\lifting{\manifold{N}}\) are both endowed with a Riemannian metric, we say that \(\pi : \lifting{\manifold{N}} \to \manifold{N}\) is a \emph{Riemannian covering} whenever it is a covering and it is a local isometry, that is, it preserves the metric tensor. In fact if \(\manifold{N}\) is a Riemannian manifold and \(\pi\) is a topological covering map, there exists a unique Riemannian metric on \(\lifting{\manifold{N}}\) such that \(\pi : \lifting{\manifold{N}} \to \manifold{N}\) is a Riemannian covering  (see \citelist{\cite{Lee_2018}*{prop.\ 2.31}\cite{Gallot_Hullin_Lafontaine_2004}*{2.A.4}}).

Given a Riemannian covering \(\pi : \lifting{\manifold{N}} \to \manifold{N}\), a Riemannian manifold \(\manifold{M}\), \(s \in \intvl{0}{1}\) and \(p \in \intvr{1}{\infty}\), the \emph{lifting problem in Sobolev spaces} amounts to determine whether each mapping \(u \in \homog{W}^{s, p}(\manifold{M}, \manifold{N})\) can be written as \(u = \pi \compose \lifting{u}\) on \(\manifold{M}\), for some map \(\lifting{u} \in \homog{W}^{s, p}(\manifold{M}, \lifting{\manifold{N}})\) \citelist{\cite{Bourgain_Brezis_Mironescu_2000}\cite{Bethuel_Chiron_2007}}.

\medskip

When \(s = 1\), the space \(\homog{W}^{1, p}(\manifold{M}, \manifold{N})\) is the \emph{homogeneous first-order Sobolev space} defined --- if the Riemannian manifold \(\manifold{N}\) is assumed without loss of generality in view of Nash's embedding theorem   \cite{Nash_1956} to be isometrically embedded into some Euclidean space \(\Rset^\nu\) --- as 
\begin{equation*}
    \homog{W}^{1, p} (\manifold{M}, \manifold{N})
    \defeq 
    \set*{u : \manifold{M} \to \manifold{N} \st u \text{ is weakly differentiable and } \int_{\manifold{M}} \abs{D u}^p < \infty}.
\end{equation*}
If the domain manifold \(\manifold{M}\) is simply-connected, then the first-order Sobolev spaces in which each map admits a lifting have been characterized for the universal covering of the circle \(\pi : \Rset \to \Sset^1\) by Bourgain, Brezis \& Mironescu \cite{Bourgain_Brezis_Mironescu_2000}*{th.\ 3} and for a general Riemannian covering map \(\pi : \lifting{\manifold{N}} \to \manifold{N}\) by Bethuel \& Chiron \cite{Bethuel_Chiron_2007}*{th.\ 1} (see also \cite{Brezis_Mironescu_2021}*{th.\ 1.1}): 
if the covering \(\pi\) is surjective and not injective,
every map \(u \in \homog{W}^{1, p} (\manifold{M}, \manifold{N})\) can be written as \(u = \pi \compose \lifting{u}\) for some mapping \(\lifting{u} \in \homog{W}^{1, p}(\manifold{M}, \lifting{\manifold{N}})\) if and only if \(p \ge \min \set{2, \dim \manifold{M}}\); moreover one has then 
\begin{equation}
\label{eq_dingooWohai3veorooz4maeb}\int_{\manifold{M}} \abs{D \lifting{u}}^p
=
\int_{\manifold{M}} \abs{D u}^p;
\end{equation}
once the existence of the lifting \(\lifting{u}\) is known, the identity \eqref{eq_dingooWohai3veorooz4maeb} follows directly from the chain rule for the Sobolev functions since the covering map \(\pi\) is a local Riemannian isometry so that \(\abs{D \lifting{u}} = \abs{D u}\) almost everywhere on \(\manifold{M}\).

\medskip

In the fractional case \(0 < s < 1\), the corresponding \emph{homogeneous fractional Sobolev--Slobodeckiĭ space} \(\homog{W}^{s, p}(\manifold{M}, \manifold{N})\) can be defined through the finiteness of the Gagliardo fractional energy as 
\begin{equation}
\homog{W}^{s, p} (\manifold{M}, \manifold{N})
\defeq  \set[\Bigg]{u : \manifold{M} \to \manifold{N} \st \norm{u}_{\homog{W}^{s, p}(\manifold{M})}^p \defeq \smashoperator{\iint_{\manifold{M} \times \manifold{M}}} \frac{d_{\manifold{N}} \brk{u (y), u (x)}^p}{d_{\manifold{M}} (y, x)^{m + sp}} \dif y \dif x < \infty},
\end{equation}
where \(d_{\manifold{M}}\) and \(d_{\manifold{N}}\) respectively denote the geodesic distances on the connected Riemannian manifolds \(\manifold{M}\) and \(\manifold{N}\) and where \(m \defeq \dim \manifold{M}\).

When \(sp < 1\), by the works of Bourgain, Brezis \& Mironescu for the universal covering of the circle \(\pi : \Rset \to \Sset^1\) \citelist{\cite{Bourgain_Brezis_Mironescu_2000}*{th.\ 2}\cite{Brezis_Mironescu_2021}*{th.\ 5.1 \& 5.2}} and of Bethuel \& Chiron \cite{Bethuel_Chiron_2007}*{th.\ 3}, 
every map \(u \in \homog{W}^{s, p} (\manifold{M}, \manifold{N})\) can be written as \(u = \pi \compose \lifting{u}\) with \(\lifting{u} \in \homog{W}^{s, p}(\manifold{M}, \lifting{\manifold{N}})\)  and one has then the lifting estimate
\begin{equation}
\label{eq_baeJ8oocoorohshaijooj9Lo}
\smashoperator{\iint_{\manifold{M} \times \manifold{M}}} \frac{d_{\lifting{\manifold{N}}} \brk{\lifting{u} (y), \lifting{u} (x)}^p}{d_{\manifold{M}} (y, x)^{m + sp}} \dif y \dif x
\le C \smashoperator{
\iint_{\manifold{M} \times \manifold{M}}} \frac{d_{\manifold{N}} \brk{u (y), u (x)}^p}{d_{\manifold{M}} (y, x)^{m + sp}} \dif y \dif x.
\end{equation}
In this régime \(sp < 1\), fractional Sobolev maps are quite rough mappings, and the possibility of jumps leaves much room for the construction of the lifting, which is quite challenging because of the highly nonunique character of the lifting.

When the covering space \(\lifting{\manifold{N}}\) is \emph{compact}, fractional Sobolev spaces \(\homog{W}^{s, p} (\manifold{M}, \manifold{N})\) for which any map admits a lifting have been characterized in the works of Bethuel \& Chiron \cite{Bethuel_Chiron_2007}*{th.\ 3}, and of Mironescu \& Van Schaftingen \cite{Mironescu_VanSchaftingen_2021}: if the covering \(\pi\) is surjective and not injective, every map \(u \in \homog{W}^{s, p} (\manifold{M}, \manifold{N})\) can be written as \(u = \pi \compose \lifting{u}\) for some \(\lifting{u} \in \homog{W}^{s, p}(\manifold{M}, \lifting{\manifold{N}})\) if and only if \(sp \ge \min \set{2, \dim \manifold{M}}\). 
Moreover, if \(sp > 1\), the estimate \eqref{eq_baeJ8oocoorohshaijooj9Lo} holds for any \(u \in \homog{W}^{s, p} (\manifold{M}, \manifold{N})\) that can be written as \(u = \pi \compose \lifting{u}\) with \(\lifting{u} \in \homog{W}^{s, p}(\manifold{M}, \lifting{\manifold{N}})\); this crucial estimate is a consequence of a reverse oscillation estimate, combined with the observation that the diameter of the covering space \(\lifting{\manifold{N}}\) can be bounded by a multiple of \(\inj(\manifold{N})\), the injectivity radius of the manifold \(\manifold{N}\).

\medskip

When the covering space \(\lifting{\manifold{N}}\) is \emph{not compact}, one encounters an \emph{analytical obstruction} for \(1 \le sp < m\): there exist maps in \(\smash{\homog{W}^{s, p}} (\manifold{M}, \manifold{N})\) that are smooth except at a single point and that cannot be written \(u = \pi \compose \lifting{u}\) for some map \(\lifting{u} \in \homog{W}^{s, p} (\manifold{M}, \lifting{\manifold{N}})\) \citelist{\cite{Bourgain_Brezis_Mironescu_2000}*{th.\ 2}\cite{Bethuel_Chiron_2007}*{th.\ 3}}.
This does not end the story, as one can still try to describe the functional space of liftings.

In the case of the universal covering of the circle \(\pi : \Rset \to \Sset^1\), the liftings have been characterized in a sequence of works by Bourgain, Brezis, Mironescu and Nguyen \citelist{\cite{Mironescu_2008}\cite{Mironescu_2010_Decomposition}\cite{Mironescu_preprint}\cite{Brezis_Mironescu_2021}\cite{Bourgain_Brezis_2003}\cite{Nguyen_2008}\cite{Mironescu_2010}}: 

\begin{theorem}
\label{theorem_lifting_circle_sum}
Let \(\manifold{M}\) be compact Riemannian manifold, let \(m \defeq  \dim \manifold{M}\), let \(s \in \intvo{0}{1}\) and let \(p \in \intvo{1}{\infty}\). 
If \(\manifold{M}\) is simply-connected and if \(sp \ge 2\), then there exists a constant \(C \in\intvo{0}{\infty}\) such that every map \(u \in \homog{W}^{s, p} (\manifold{M}, \Sset^1)\) can be written as \(u = \pi \compose \lifting{u}\) on \(\manifold{M}\) with \(\lifting{u} = \lifting{v} + \lifting{w}\) where the functions \(\lifting{v} \in \homog{W}^{s, p} (\manifold{M}, \Rset)\) and \(\lifting{w} \in \homog{W}^{1, sp} (\manifold{M}, \Rset)\) satisfy the estimate
\begin{equation}
\label{eq_aJeefah6Hee4Chaowoh1aade}
 \smashoperator{\iint_{\manifold{M} \times \manifold{M}}} \frac{\abs*{\lifting{v} (y) - \lifting{v} (x)}^p}{d_{\manifold{M}} (y, x)^{m + sp}} \dif y \dif x
 + \int_{\manifold{M}} \abs{D \lifting{w}}^{sp} 
 \le  C \smashoperator{\iint_{\manifold{M} \times \manifold{M}}} \frac{\abs{u (y) - u (x)}^p}{d_{\manifold{M}} (y, x)^{m + sp}} \dif y \dif x.
\end{equation}
\end{theorem}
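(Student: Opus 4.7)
The plan is to approximate \(u \in \homog{W}^{s,p}(\manifold{M}, \Sset^1)\) by smooth maps \(u_n \in C^\infty(\manifold{M}, \Sset^1)\) converging in the Gagliardo seminorm; this density is available because \(\manifold{M}\) is simply-connected and \(sp \ge 2\) excludes the topological obstructions to smooth approximation in \(\Sset^1\)-valued Sobolev spaces. For each smooth \(u_n\), the classical topological lifting theorem on the simply-connected manifold \(\manifold{M}\) provides a smooth phase \(\lifting{u}_n : \manifold{M} \to \Rset\) with \(u_n = \pi \compose \lifting{u}_n\), where \(\pi(t) \defeq e^{it}\). The theorem then reduces to constructing, for each smooth phase \(\lifting{u}\), a decomposition \(\lifting{u} = \lifting{v} + \lifting{w}\) with the estimate \eqref{eq_aJeefah6Hee4Chaowoh1aade} uniform in \(n\), and to extracting a limit \((\lifting{v}, \lifting{w})\) by weak compactness in the summand spaces (up to adjusting additive constants).

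For a fixed smooth phase \(\lifting{u}\), I would use a multiscale dyadic construction. Using a finite bi-Lipschitz atlas I transport the problem to cubes in \(\Rset^m\); let \(\mathcal{Q}_k\) be the dyadic grid at scale \(2^{-k}\). For each cube \(Q \in \mathcal{Q}_k\) define a representative \(a_Q\) of the phase average on \(Q\), and set \(\lifting{u}_k(x) \defeq a_{Q(x,k)}\) where \(Q(x,k) \in \mathcal{Q}_k\) contains \(x\). The telescopic identity \(\lifting{u} = a_{Q_0} + \sum_{k \ge 0} (\lifting{u}_{k+1} - \lifting{u}_k)\) expresses the lifting as a sum of piecewise-constant corrections. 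The essential step is to split each correction into a \emph{small part} (where neighboring cube averages differ in phase by less than \(\pi\), after choice of representative mod \(2\pi\)) and a \emph{jump part} (where they differ by close to a nonzero integer multiple of \(2\pi\)); the small parts are assembled, after a smoothing, into \(\lifting{v}\) and the jump parts into \(\lifting{w}\).

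The estimate on \(\lifting{v}\) in \(\homog{W}^{s,p}\) follows from the elementary comparison \(\abs{\alpha - \beta} \le (\pi/2) \abs{e^{i\alpha} - e^{i\beta}}\) when \(\abs{\alpha - \beta} \le \pi\), so that each small-part contribution to \(\lifting{v}(y) - \lifting{v}(x)\) is dominated pointwise by \(\abs{u(y) - u(x)}\) on the relevant pair, and summation over scales produces the Gagliardo bound directly. The estimate on \(\lifting{w}\) in \(\homog{W}^{1,sp}\) is the harder one: the jump part is concentrated on the union of dyadic cube faces where the averaged phase jumps by \(\pm 2\pi\), and by a reverse oscillation estimate the number of such bad faces at scale \(2^{-k}\) is bounded by a constant times \(2^{k(m-sp)}\) times the Gagliardo \((s,p)\)-energy of \(u\). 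Summing \((2\pi \cdot 2^k)^{sp}\) against this count, then replacing each piecewise-constant jump function by a mollification at the corresponding scale \(2^{-k}\), produces an \(\homog{W}^{1,sp}\) control of the assembled \(\lifting{w}\) by the Gagliardo energy of \(u\).

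The main obstacle is the reverse oscillation estimate converting the fractional energy of \(u\) into the counting bound on bad faces; this is where the hypothesis \(sp \ge 2\) enters critically, since it guarantees enough integrability for a single \(\pm 2\pi\) jump between adjacent cubes at scale \(2^{-k}\) to cost at least \(c \cdot 2^{-k(m-sp)}\) of Gagliardo \((s,p)\)-energy. Once this reverse estimate is in place, combined with a Calderón--Zygmund type interpolation to pass from dyadic piecewise-constant functions with controlled interface measure to genuine \(\homog{W}^{1,sp}\) functions, the decomposition and its estimate \eqref{eq_aJeefah6Hee4Chaowoh1aade} follow by summation over scales and passage to the limit \(n \to \infty\).
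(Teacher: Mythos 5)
The paper does not actually prove this statement: Theorem~\ref{theorem_lifting_circle_sum} is quoted as known from the cited works of Bourgain--Brezis, Mironescu, Nguyen and the Brezis--Mironescu monograph, and is later used as a black box (e.g.\ in the proof of Theorem~\ref{theorem_nonlinear_sum}). So there is no internal proof to compare against, and your sketch has to stand on its own merits.

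Your general framework --- smooth approximation (valid for \(sp\ge 2\)), telescopic multiscale phase averages, separation of the increments into a small part sent to \(\lifting{v}\in\homog W^{s,p}\) and a jump part sent to \(\lifting{w}\in\homog W^{1,sp}\) --- is in the right family of ideas, and the control of \(\lifting{v}\) by \(\abs{\alpha-\beta}\lesssim\abs{e^{i\alpha}-e^{i\beta}}\) when \(\abs{\alpha-\beta}\le\pi\) is sound. The gaps are all in the \(\lifting{w}\) estimate. First, the claimed per-scale counting bound does not close: you assert \(N_k\lesssim 2^{k(m-sp)}E\) for the number of bad faces at scale \(2^{-k}\). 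Each mollified \(\pm 2\pi\) jump at that scale contributes \(\sim 2^{k(sp-m)}\) to \(\int\abs{D\lifting w}^{sp}\), so the total per-scale contribution is \(\sim N_k\,2^{k(sp-m)}\lesssim E\), a bound constant in \(k\): the sum over scales diverges. What is needed is a Carleson-type bound \(\sum_k N_k\,2^{k(sp-m)}\lesssim E\), which requires showing that distinct bad neighbor pairs at distinct scales consume essentially disjoint pieces of the Gagliardo double integral; your sketch does not establish this. Second, even granting the Carleson bound, the mollified per-scale jump functions have overlapping supports, so \(\int\abs{D\lifting w}^{sp}\) is not the sum of per-scale energies. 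Minkowski in \(L^{sp}\) only gives \(\norm{D\lifting w}_{L^{sp}}\le\sum_k\brk{N_k 2^{k(sp-m)}}^{1/sp}\), which is not controlled by \(\sum_k N_k 2^{k(sp-m)}\) once \(sp\ge 2\) (take \(N_k 2^{k(sp-m)}=k^{-2}\)); the invocation of ``Calder\'on--Zygmund type interpolation'' is a placeholder, not an argument. A genuine construction has to resolve each jump at a single well-chosen scale, or organize the cancellation across scales, before any summation. Third, the role you ascribe to \(sp\ge 2\) is misplaced: the a~priori estimate on a lifting (the reverse oscillation estimate, cf.\ \cref{proposition_spgt1_estimate}) already holds for all \(sp>1\). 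The threshold \(sp\ge 2\) is topological --- it is exactly what makes smooth maps (or maps smooth outside a codimension-\(2\) skeleton) dense in \(\homog W^{s,p}(\manifold M,\Sset^1)\), since \(\pi_1(\Sset^1)=\Zset\) obstructs approximation precisely when \(\lfloor sp\rfloor=1\) --- not an integrability condition for the cost of a jump.
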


In other words, \cref{theorem_lifting_circle_sum} states that any map \(u \in \homog{W}^{s, p} (\manifold{M}, \Sset^1)\) has a lifting \(\lifting{u} \in  \homog{W}^{s, p} (\manifold{M}, \Rset) + \homog{W}^{1, sp} (\manifold{M}, \Rset)\).

Conversely to \cref{theorem_lifting_circle_sum}, in view of the fractional Gagliardo--Nirenberg interpolation inequality (see for example \citelist{\cite{Berzis_Mironescu_2001}*{cor.\ 3.2}\cite{Runst_1986}*{lem.\ 2.1}\cite{Brezis_Mironescu_2018}}), if \(\lifting{u} = \lifting{v} + \lifting{w}\) with \(\lifting{v} \in \homog{W}^{s, p} (\manifold{M}, \Rset)\) and \(\lifting{w} \in \homog{W}^{1, sp} (\manifold{M}, \Rset)\), then \(u \defeq \pi \compose \lifting{u} \in \homog{W}^{s, p} (\manifold{M}, \Sset^1)\),
with inequality \eqref{eq_aJeefah6Hee4Chaowoh1aade} reversed.
\Cref{theorem_lifting_circle_sum} characterizes thus completely the lifting space of \(\homog{W}^{s, p} (\manifold{M}, \Sset^1)\) for \(sp \ge 2\) as the sum of linear spaces \(\homog{W}^{s, p} (\manifold{M}, \Rset) + \homog{W}^{1, sp} (\manifold{M}, \Rset)\).

The first goal of the present work is to obtain a counterpart of \cref{theorem_lifting_circle_sum} for a general covering map \(\pi : \lifting{\manifold{N}} \to \manifold{N}\) when the covering space \(\lifting{\manifold{N}}\) is not compact. 
This endeavour is delicate from its very beginning, since \(\homog{W}^{s, p} (\manifold{M}, \lifting{\manifold{N}}) + \homog{W}^{1, s p} (\manifold{M}, \lifting{\manifold{N}})\) has no straightforward definition or generalization when the covering space \(\lifting{\manifold{N}}\) is not a linear space.
We characterize the lifting space as follows.

\begin{theorem}
\label{theorem_lifting_truncated_noncompact}
Let \(\manifold{M}\) and let \(\manifold{N}\) be a compact Riemannian manifold, let \(m \defeq  \dim \manifold{M}\), let \(\pi : \lifting{\manifold{N}} \to \manifold{N}\) be a surjective Riemannian covering map, let \(s \in \intvo{0}{1}\) and let \(p \in \intvo{1}{\infty}\). 
If \(\manifold{M}\) is simply-connected and if \(sp \ge 2\), then there exists a constant \(C \in \intvo{0}{\infty}\) such that for every map \(u \in \homog{W}^{s, p} (\manifold{M}, \manifold{N})\) there exists a measurable map \(\lifting{u} : \manifold{M} \to \lifting{\manifold{N}}\) satisfying \(\pi \compose \lifting{u} = u\) almost everywhere on \(\manifold{M}\) and 
\begin{equation}
\label{eq_oGeuzoo2Aeyaifaeto0uelei}
 \smashoperator{\iint_{\manifold{M} \times \manifold{M}}} \frac{d_{\lifting{\manifold{N}}} (\lifting{u} (y), \lifting{u} (x))^p \wedge 1 }{d_{\manifold{M}} (y, x)^{m + sp}} \dif y \dif x
 \le  C \smashoperator{\iint_{\manifold{M} \times \manifold{M}}} \frac{d_{\manifold{N}} (u (y), u (x))^p}{d_{\manifold{M}} (y, x)^{m + sp}} \dif y \dif x.
\end{equation}
\end{theorem}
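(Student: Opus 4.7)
The proof has two parts: the existence of a measurable lifting \(\lifting{u}\), and the truncated estimate \eqref{eq_oGeuzoo2Aeyaifaeto0uelei}. For the existence I would approximate \(u\) by continuous maps \(u_n \in C(\manifold{M}, \manifold{N})\) converging in \(\homog{W}^{s, p}(\manifold{M}, \manifold{N})\) and almost everywhere. The classical topological lifting theorem and the simple-connectedness of \(\manifold{M}\) provide, after the choice of a base value, continuous liftings \(\lifting{u}_n \in C(\manifold{M}, \lifting{\manifold{N}})\) with \(\pi \compose \lifting{u}_n = u_n\). The key new difficulty compared with the compact covering case is to keep the sequence \((\lifting{u}_n)\) within a bounded region of \(\lifting{\manifold{N}}\); this will follow a posteriori from the truncated estimate applied to the continuous approximants, and an almost-everywhere limit will then yield \(\lifting{u}\).

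For the estimate itself, I would fix \(\rho \defeq \inj(\manifold{N})/2\) and split \(\manifold{M} \times \manifold{M} = A \cup B\) with
\[
    A \defeq \set*{(x, y) \in \manifold{M} \times \manifold{M} \st d_{\lifting{\manifold{N}}}(\lifting{u}(y), \lifting{u}(x)) \le \rho}.
\]
On \(A\), since \(\pi\) is a local Riemannian isometry and \(\rho\) is below the injectivity radius of \(\lifting{\manifold{N}}\), the short geodesic in \(\lifting{\manifold{N}}\) projects to a short geodesic in \(\manifold{N}\), forcing \(d_{\lifting{\manifold{N}}}(\lifting{u}(y), \lifting{u}(x)) = d_{\manifold{N}}(u(y), u(x))\) and bounding the integral over \(A\) by the right-hand side of \eqref{eq_oGeuzoo2Aeyaifaeto0uelei}. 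On \(B\) the truncated numerator is at most \(1\), and pairs with \(d_{\manifold{N}}(u(y), u(x)) > \rho/2\) are absorbed by Chebyshev's inequality. What remains is to control the weighted measure of the \emph{jump set}
\[
    B' \defeq \set*{(x, y) \in B \st d_{\manifold{N}}(u(y), u(x)) \le \rho/2}.
\]

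To bound \(B'\) I would implement a chaining argument: for \((x, y) \in B'\) the short geodesic in \(\manifold{N}\) from \(u(x)\) to \(u(y)\) cannot lift to a short geodesic joining \(\lifting{u}(x)\) to \(\lifting{u}(y)\), so a nontrivial deck transformation is accumulated along any discretization \(x = x_0, x_1, \dots, x_N = y\) of a geodesic from \(x\) to \(y\) in \(\manifold{M}\). The triangle inequality in \(\lifting{\manifold{N}}\) combined with the local isometry forces at least one consecutive pair \((x_j, x_{j+1})\) to satisfy \(d_{\manifold{N}}(u(x_{j+1}), u(x_j)) \ge c \rho/N\) while \(d_{\manifold{M}}(x_{j+1}, x_j) \le C d_{\manifold{M}}(y, x)/N\). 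Averaging over a suitable probability measure on chains, a Fubini-style device paralleling the compact covering case of Mironescu--Van Schaftingen, turns the existence of such a bad pair into an averaged Gagliardo density whose integral against \(d_{\manifold{M}}(y, x)^{-(m + sp)}\) is controlled by \(\|u\|_{\homog{W}^{s, p}(\manifold{M})}^p\).

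The main obstacle is to make this chaining sharp at all scales simultaneously: the number \(N\) of chain points, and hence the resulting loss, must be tuned so as to be compatible with the weight \(d_{\manifold{M}}(y, x)^{-(m + sp)}\), which is exactly where the threshold \(sp \ge 2\) becomes essential, in analogy with the compact covering case.
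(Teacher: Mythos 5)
Your sketch of the \emph{estimate} is in the right spirit --- the split into the small-oscillation set $A$ (where the local isometry of $\pi$ gives $d_{\lifting{\manifold{N}}}(\lifting{u}(y),\lifting{u}(x)) = d_{\manifold{N}}(u(y),u(x))$ below $\inj(\manifold{N})$, i.e.\ \cref{lemma_small_isometry}), the Chebyshev absorption on $B\setminus B'$, and a chaining/reverse-oscillation idea on $B'$ all have analogues in the paper. The paper realises the chaining step precisely as \cref{proposition_spgt1_estimate}: on each segment $[x,y]$ where the lifting is essentially continuous, the intermediate value theorem produces $z\in[x,y]$ with $\inj(\manifold{N})\wedge d_{\lifting{\manifold{N}}}(\lifting{u}(x),\lifting{u}(y)) = d_{\manifold{N}}(u(z),u(y))$, and the one-dimensional fractional Morrey--Sobolev embedding then bounds this by a one-dimensional Gagliardo energy; \cref{lemma_fractional_integration} integrates this over segments. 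This only needs $sp>1$. Your ``averaging over a suitable probability measure on chains'' gestures at the same thing but would need to be made precise (the 1D Morrey--Sobolev embedding \emph{is} that averaging), and --- this matters --- the threshold $sp\ge 2$ does \emph{not} enter here, contrary to what you write at the end. Indeed \cref{theorem_lifting_truncated_noncompact_apriori} shows the estimate holds whenever a lifting in $X(\manifold{M},\lifting{\manifold{N}})$ exists, as soon as $sp>1$.

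The genuine gap is in your \emph{existence} step. You propose to approximate $u$ by $u_n\in C(\manifold{M},\manifold{N})$ converging in $\homog{W}^{s,p}$ and a.e., then invoke the topological lifting theorem. But when $sp<m$ and $\pi_{\floor{sp}}(\manifold{N})\ne\{0\}$, continuous (or smooth) maps are \emph{not} dense in $\homog{W}^{s,p}(\manifold{M},\manifold{N})$ --- that is exactly the obstruction that makes this whole subject delicate. The class that is dense (by Brezis--Mironescu) consists of maps continuous outside a finite union of $(m-2)$-dimensional submanifolds; this is the class $\mathcal{R}^0_{m-2}$ used in the paper. It is precisely here that $sp\ge 2$ plays its role: it guarantees both the density of $\mathcal{R}^0_{m-2}$ in $\homog{W}^{s,p}$ and that the complement of the $(m-2)$-dimensional singular set of a simply-connected $\manifold{M}$ is itself simply-connected, so that the topological lifting theorem applies to each $u_j$ away from its singular set (giving $\lifting{u}_j\in\mathcal{R}^0_{m-2}(\manifold{M},\lifting{\manifold{N}})\subset Y(\Omega,\lifting{\manifold{N}})$ on charts, which is enough for \cref{proposition_spgt1_estimate}). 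A second, smaller gap: the uniform truncated bound you obtain does \emph{not} by itself keep the sequence $(\lifting{u}_n)$ from wandering to infinity in a noncompact $\lifting{\manifold{N}}$ --- if $\lifting{u}_n$ is a lifting, so is $\tau\compose\lifting{u}_n$ for any deck transformation $\tau$, and these can escape any bounded set while having the same truncated energy. The paper normalises by composing with deck transformations so that a fixed value $\lifting{u}_j(x_j)$ stays in a bounded fundamental set $\lifting{W}$ (this uses normality of the covering, reduced to via the universal covering of $\lifting{\manifold{N}}$ in the general case), and then invokes a bespoke compactness result (\cref{prop_compactness}) to pass to the limit. You would need both of these ingredients; ``this will follow a posteriori from the truncated estimate'' does not supply them.
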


The integrand in the left-hand side \eqref{eq_oGeuzoo2Aeyaifaeto0uelei} only differs from the classical Gagliardo energy in the right-hand side by truncating through the minimum \(\wedge\)  operation the value of the distance at \(1\); in terms of metric space, this can be interpreted as taking on the covering space \(\lifting{\manifold{N}}\) a bounded distance for which the covering map \(\pi\) is an isometry at small scales.

The characterization of liftings of \cref{theorem_lifting_truncated_noncompact} is sharp, in the sense that if for some mapping \(\lifting{u}: \manifold{M} \to \lifting{\manifold{N}}\) the left-hand side of \eqref{eq_oGeuzoo2Aeyaifaeto0uelei} is finite, then by the local isometry property of liftings one has \(u = \pi \compose \lifting{u} \in \homog{W}^{s, p} (\manifold{M}, \manifold{N})\) together with the estimate \eqref{eq_oGeuzoo2Aeyaifaeto0uelei} reversed.

The core of the proof of \cref{theorem_lifting_truncated_noncompact} is the reverse oscillation estimate of Mironescu \& Van Schaftingen \cite{Mironescu_VanSchaftingen_2021}*{lem.\ 3.1} (see \cref{proposition_spgt1_estimate} below), combined with the approximation of maps that are smooth outside a finite union of manifolds of dimension \(\ceil{m - sp - 1}\) by Brezis \& Mironescu \cite{Brezis_Mironescu_2015}, a suitable variant of the fractional Rellich compactness theorem under a boundedness assumption on the left-hand side of \eqref{eq_oGeuzoo2Aeyaifaeto0uelei} (see \cref{prop_compactness} below) and, at a more technical level, the equivalent characterization of the lifting space (see \cref{proposition_equivalent_norms} below).

The lifting in the space of functions such that the left-hand side of \eqref{eq_oGeuzoo2Aeyaifaeto0uelei} is finite enjoys a uniqueness property.
In order to state this, we define the space
\begin{equation}
 X (\manifold{M}, \lifting{\manifold{N}})
 \defeq 
\set[\Bigg]{\lifting{u} : \manifold{M} \to \lifting{\manifold{N}} \st 
 \lifting{u} \text{ is measurable and } \smashoperator{\iint_{\substack{x, y \in \manifold{M}\\[.2em] d_{\lifting{\manifold{N}}} (\lifting{u} (x), \lifting{u} (y))\ge \inj(\manifold{N})/2}}} \frac{1}{d_{\manifold{M}} (x, y)^{m + 1}} \dif y \dif x < \infty};
\end{equation}
the latter space contains mappings for which the left-hand side of \eqref{eq_oGeuzoo2Aeyaifaeto0uelei}  is finite (see \cref{proposition_nonlinear_sum_in_X} below) and the 
uniqueness of the lifting then follows from the next proposition.

\begin{proposition}
\label{proposition_lifting_unique}
Let \(\manifold{M}\) be a compact Riemannian manifold and let \(\pi : \lifting{\manifold{N}} \to \manifold{N} \) be a Riemannian covering.
If \(\manifold{M}\) is connected, if \(\lifting{u}_0, \lifting{u}_1 \in X (\manifold{M},\lifting{\manifold{N}})\) and if \(\pi \compose \lifting{u}_0 = \pi \compose \lifting{u}_1\) almost everywhere on \(\manifold{M}\),
then either \(\lifting{u}_0 = \lifting{u}_1\) almost everywhere on \(\manifold{M}\) or \(\lifting{u}_0 \ne \lifting{u}_1\) almost everywhere on \(\manifold{M}\).
\end{proposition}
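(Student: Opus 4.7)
The plan is to reduce the dichotomy to a triviality statement for characteristic functions in a Gagliardo seminorm at the critical scaling \(sp = 1\).

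I would first set \(E \defeq \set{x \in \manifold{M} \st \lifting{u}_0(x) = \lifting{u}_1(x)}\) and \(F \defeq \manifold{M} \setminus E\), both measurable, and aim to prove \(\abs{E} \in \set{0, \abs{\manifold{M}}}\). The first ingredient is that in a Riemannian covering, the restriction of \(\pi\) to any open ball of radius \(\inj(\manifold{N})\) in \(\lifting{\manifold{N}}\) is an isometric diffeomorphism onto its image, so that two distinct points in the same fiber of \(\pi\) are at distance at least \(\inj(\manifold{N})\) apart in \(\lifting{\manifold{N}}\). Combined with \(\pi \compose \lifting{u}_0 = \pi \compose \lifting{u}_1\) almost everywhere, this gives \(d_{\lifting{\manifold{N}}}(\lifting{u}_0(x), \lifting{u}_1(x)) \ge \inj(\manifold{N})\) for almost every \(x \in F\). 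For any \((x, y) \in E \times F\), the triangle inequality together with \(\lifting{u}_0(x) = \lifting{u}_1(x)\) yields
\begin{equation*}
 \inj(\manifold{N}) \le d_{\lifting{\manifold{N}}}(\lifting{u}_0(y), \lifting{u}_1(y)) \le d_{\lifting{\manifold{N}}}(\lifting{u}_0(x), \lifting{u}_0(y)) + d_{\lifting{\manifold{N}}}(\lifting{u}_1(x), \lifting{u}_1(y)),
\end{equation*}
so that at least one summand is at least \(\inj(\manifold{N})/2\) and \((x, y)\) then belongs to one of the two ``large-jump sets'' from the definition of \(X(\manifold{M}, \lifting{\manifold{N}})\) for \(\lifting{u}_0\) and \(\lifting{u}_1\). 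Since \(\lifting{u}_0, \lifting{u}_1 \in X(\manifold{M}, \lifting{\manifold{N}})\), it follows that
\begin{equation*}
 \iint_{E \times F} \frac{\dif y \dif x}{d_{\manifold{M}}(x, y)^{m + 1}} < \infty.
\end{equation*}

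The proof then reduces to the following lemma: \emph{on a connected compact Riemannian manifold \(\manifold{M}\), any measurable set \(E \subseteq \manifold{M}\) with \(\iint_{E \times (\manifold{M} \setminus E)} d_{\manifold{M}}(x, y)^{-m-1} \dif y \dif x < \infty\) has \(\abs{E} \in \set{0, \abs{\manifold{M}}}\)}. To prove it, I would assume \(\abs{E}, \abs{F} > 0\). Fubini gives, for almost every \(x \in E\), a finite value \(f(x) \defeq \int_F d_{\manifold{M}}(x, y)^{-m-1} \dif y\), and the trivial bound \(d_{\manifold{M}}(x, y) \le r\) on \(B(x, r)\) then forces the quantitative density estimate \(\abs{F \cap B(x, r)} \le f(x) \, r^{m+1}\) for every \(r > 0\); a symmetric bound holds at almost every \(y \in F\). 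Combining with the Lebesgue density theorem applied to \(F\) at density points and truncating at a large level \(C\) yields measurable subsets \(E^{(C)} \subseteq E\) and \(F^{(C)} \subseteq F\) of measure arbitrarily close to \(\abs{E}\) and \(\abs{F}\), at uniform positive distance \(\delta(C) > 0\) from each other. Their \(\delta(C)/4\)-thickenings provide two disjoint nonempty open subsets of \(\manifold{M}\) whose union exhausts \(\manifold{M}\) up to a set of measure tending to zero as \(C \to \infty\); passing carefully to the limit and using the connectedness of \(\manifold{M}\) together with the fact that any open subset of \(\manifold{M}\) of zero volume is empty produces the required contradiction.

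The main obstacle is this characteristic-function lemma, which captures the critical nature of the exponent \(sp = 1\) of the Gagliardo seminorm defining \(X(\manifold{M}, \lifting{\manifold{N}})\); once it is in hand, the surrounding reduction via fiber separation and the triangle inequality is purely formal.
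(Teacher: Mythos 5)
Your reduction of the dichotomy to the characteristic-function lemma --- introducing the coincidence set \(E\), observing via the local-isometry property of the covering (\cref{lemma_small_isometry}) that two distinct preimages of a point of \(\mathcal{N}\) lie at distance at least \(\inj(\mathcal{N})\), splitting by the triangle inequality, and concluding from \(\widetilde{u}_0,\widetilde{u}_1\in X(\mathcal{M},\widetilde{\mathcal{N}})\) that \(\iint_{E\times(\mathcal{M}\setminus E)} d_{\mathcal{M}}(x,y)^{-m-1}\dif y\dif x<\infty\) --- is exactly the paper's argument. The paper then simply invokes \cref{lemma_potential_char} (the Brezis / Bourgain--Brezis--Mironescu / De Marco--Mariconda--Solimini result) to conclude \(\abs{E}\in\{0,\abs{\mathcal{M}}\}\); you instead attempt to prove that lemma from scratch, and that is where the gap lies.

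The first half of your lemma proof is fine and is in fact a nice observation: the Markov truncations \(E^{(C)}=\{x\in E : f(x)\le C\}\) and \(F^{(C)}\) inherit the bounds \(\abs{F\cap B(x,r)}\le C r^{m+1}\) and \(\abs{E\cap B(y,r)}\le C r^{m+1}\), and feeding these, together with the uniform lower volume bound \(\abs{B(y,\rho)}\ge c\rho^m\), into a small ball around a hypothetical close pair \((x,y)\in E^{(C)}\times F^{(C)}\) does yield a genuine uniform separation \(\dist(E^{(C)},F^{(C)})\ge\delta(C)>0\). The problem is the last sentence. For each fixed \(C\) the disjoint open thickenings \(U_C,V_C\) do not cover \(\mathcal{M}\): on the contrary, precisely because \(\mathcal{M}\) is connected, the closed buffer \(W_C=\mathcal{M}\setminus(U_C\cup V_C)\) is nonempty, and its smallness in measure (\(\abs{W_C}\lesssim 1/C\)) is perfectly compatible with that. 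Moreover \(\delta(C)\to 0\) as \(C\to\infty\), so the families \(U_C,V_C\) do not stabilize and the \(W_C\) are not nested; no extraction or limit you have indicated turns ``\(\abs{W_C}\to 0\)'' into an actual topological disconnection. Note also that the purely qualitative consequence of your density estimates --- that almost every point of \(E\) is a density-one point of \(E\), and likewise for \(F\) --- holds for \emph{any} measurable partition of \(\mathcal{M}\), whereas the lemma is certainly false without the integral hypothesis, so a genuinely quantitative use of that hypothesis beyond what you have written is unavoidable. The safe move here is to cite the known lemma, which is exactly what the paper does.
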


When \(1 < sp \le 2\) or when the manifold \(\manifold{M}\) is not simply-connected, topological obstructions can exclude the existence of a lifting; it turns out however that when a lifting exists in \(X (\manifold{M}, \lifting{\manifold{N}})\), then such a lifting has to satisfy the estimate of \cref{theorem_lifting_truncated_noncompact}.

\begin{theorem}
\label{theorem_lifting_truncated_noncompact_apriori}
Let \(\manifold{M}\) and let \(\manifold{N}\) be a compact Riemannian manifold, let \(m \defeq  \dim \manifold{M}\), let \(\pi : \lifting{\manifold{N}} \to \manifold{N}\) be a Riemannian covering map, let \(s \in \intvo{0}{1}\) and let \(p \in \intvo{1}{\infty}\). 
If \(sp > 1\), then there exists a constant \(C \in \intvo{0}{\infty}\) such that if
\(\lifting{u} \in X (\manifold{M}, \lifting{\manifold{N}})\) and if \(u \defeq \pi \compose \lifting{u} \in \homog{W}^{s, p} (\manifold{M}, \manifold{N})\), then 
\begin{equation*}
 \smashoperator{\iint_{\manifold{M} \times \manifold{M}}} \frac{d_{\lifting{\manifold{N}}} (\lifting{u} (y), \lifting{u} (x))^p \wedge 1 }{d_{\manifold{M}} (y, x)^{m + sp}} \dif y \dif x
 \le  C \smashoperator{\iint_{\manifold{M} \times \manifold{M}}} \frac{d_{\manifold{N}} (u (y), u (x))^p}{d_{\manifold{M}} (y, x)^{m + sp}} \dif y \dif x.
\end{equation*}
\end{theorem}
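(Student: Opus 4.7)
The plan is to split the integrand by truncation: the \emph{near} region, where \(d_{\lifting{\manifold{N}}}(\lifting{u}(y),\lifting{u}(x))\) is smaller than the injectivity radius of \(\manifold{N}\) (so the truncation is inactive), and the \emph{far} region, where it exceeds this threshold. The near region is handled by the local isometry property of \(\pi\); the far region is controlled through the reverse oscillation estimate \cref{proposition_spgt1_estimate} applied on a dyadic family of geodesic balls, together with the structural hypothesis \(\lifting{u} \in X(\manifold{M}, \lifting{\manifold{N}})\).

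Concretely, fix a threshold \(\rho_0 \in \intvo{0}{\inj(\manifold{N})}\) and write
\[
\smashoperator{\iint_{\manifold{M}\times\manifold{M}}} \frac{d_{\lifting{\manifold{N}}}(\lifting{u}(y),\lifting{u}(x))^p \wedge 1}{d_{\manifold{M}}(y,x)^{m+sp}} \dif y \dif x = I_{\text{near}} + I_{\text{far}},
\]
where \(I_{\text{near}}\) is the contribution from pairs with \(d_{\lifting{\manifold{N}}}(\lifting{u}(y),\lifting{u}(x)) \le \rho_0\). Since \(\pi\) is a local Riemannian isometry and minimizing geodesics in \(\lifting{\manifold{N}}\) of length strictly less than \(\inj(\manifold{N})\) project onto minimizing geodesics in \(\manifold{N}\) of the same length, on the near region one has \(d_{\lifting{\manifold{N}}}(\lifting{u}(y),\lifting{u}(x)) = d_{\manifold{N}}(u(y),u(x))\), and \(I_{\text{near}}\) is thus bounded by the right-hand side.

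For \(I_{\text{far}}\), I would perform a Whitney-type dyadic cover: at each scale \(r_k = 2^{-k} \diam(\manifold{M})\), fix a bounded-overlap covering of \(\manifold{M}\) by geodesic balls \(B^k_i\) of radius \(r_k\), observing that every pair with \(d_{\manifold{M}}(x,y) \in \intvc{r_k}{2 r_k}\) lies in some \(B^k_i \times B^k_i\) after harmless enlargement. On each such ball the reverse oscillation estimate \cref{proposition_spgt1_estimate} — whose applicability is ensured by the VMO-type small-scale behavior of \(u\) in the regime \(sp > 1\) — produces a localized comparison between the Gagliardo integrals of \(\lifting{u}\) and \(u\) on \(B^k_i \times B^k_i\). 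The hypothesis \(\lifting{u} \in X(\manifold{M}, \lifting{\manifold{N}})\) confines the set of sheet-jumps of \(\lifting{u}\) to a singular stratum of finite \((m+1)\)-type energy, legitimizing this ball-wise application and preventing large-scale inconsistencies of the lifting from spoiling the estimate. Summing the localized bounds over \(i\) using the bounded overlap and then telescoping over \(k\) yields \(I_{\text{far}}\) dominated by a constant multiple of the right-hand side.

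The main technical obstacle is the compatibility of the reverse oscillation — naturally a statement about pairs where \(d_{\manifold{N}}(u)\) is small — with the truncation, which is active precisely on pairs where \(d_{\lifting{\manifold{N}}}(\lifting{u})\) is large. This will require a dichotomy on each ball between a low-energy regime in which the reverse oscillation applies directly and a high-energy regime in which the truncation \(\wedge 1\) absorbs the loss while the local fractional energy of \(u\) is used to pay for the resulting constant; scale-invariant bookkeeping across the dyadic levels \(r_k\), so that constants sum to a finite universal bound without logarithmic loss, is the remaining technical point.
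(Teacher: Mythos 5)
Your plan is more elaborate than the paper's own proof, and it misplaces where the real work is.

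First, the near/far split by the size of \(d_{\lifting{\manifold{N}}}(\lifting{u}(y),\lifting{u}(x))\) is unnecessary: \cref{proposition_spgt1_estimate} already bounds the \emph{entire} truncated integral \(\iint d_{\lifting{\manifold{N}}}(\lifting{u}(y),\lifting{u}(x))^p \wedge 1 / \abs{y-x}^{m+sp}\) by the Gagliardo energy of \(u\) on any convex open set, provided \(\lifting{u} \in Y(\Omega,\lifting{\manifold{N}})\). The \(\wedge 1\) is already built into the conclusion of that proposition, so there is no ``compatibility obstacle'' between the reverse oscillation estimate and the truncation — that tension has already been resolved inside \cref{proposition_spgt1_estimate} via \cref{lemma_small_isometry} and the intermediate-value argument on segments. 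Likewise, the Whitney dyadic ball family can be replaced by the much simpler observation (\cref{lemma_manifold_diagonal_covering}) that \(\manifold{M}\times\manifold{M}\) is covered by finitely many products \(V_i \times V_i\) with each \(\Bar{V}_i\) diffeomorphic to a closed ball; summing the chart-wise estimates is then immediate, with no telescoping or scale-invariant bookkeeping to manage.

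Second — and this is the genuine gap — you never actually verify the hypothesis needed to invoke \cref{proposition_spgt1_estimate}, namely that \(\lifting{u}\) belongs to \(Y(\Omega, \lifting{\manifold{N}})\) on each convex chart. Your sentence about sheet-jumps being ``confined to a singular stratum of finite \((m+1)\)-type energy, legitimizing this ball-wise application'' gestures at this but proves nothing: the \(X\)-condition is an integrability statement about the set where \(d_{\lifting{\manifold{N}}}(\lifting{u}(x),\lifting{u}(y)) \ge \inj(\manifold{N})/2\), while \(Y\) requires that \(\lifting{u}\) coincide a.e.\ on almost every segment with a continuous function taking the correct endpoint values. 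Bridging these is precisely \cref{proposition_X_lifting_of_Wsp_in_Y}, which is proved by (i) a Fubini/rotation argument (\cref{lemma_Fubini_X}) showing the \(X\)-condition descends to almost every line; (ii) the one-dimensional fractional Morrey--Sobolev embedding (valid since \(sp>1\)) giving continuity of \(u\) on almost every line; and (iii) \cref{proposition_lifting_X_continuous}, which says a lifting in \(X\) of a continuous map is essentially continuous, itself relying on the uniqueness-of-lifting statement \cref{proposition_lifting_unique} and hence on the potential characterization of constants \cref{lemma_potential_char}. Without this chain, \cref{proposition_spgt1_estimate} is not applicable and the rest of your argument has nothing to stand on. Once \cref{proposition_X_lifting_of_Wsp_in_Y} is in place, the theorem follows in two lines, exactly as the paper does it.
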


The restriction to \(sp > 1\) is essential in \cref{theorem_lifting_truncated_noncompact_apriori} in both the compact and noncompact cases:  if \(sp = 1\) and if \(\pi\) is surjective and not injective, then there is no estimate on the lifting \cite{Mironescu_VanSchaftingen_2021}*{lem.\ 5.1}.

\Cref{theorem_lifting_truncated_noncompact,theorem_lifting_truncated_noncompact_apriori} motivate studying the quantity on the left-hand side of \eqref{eq_oGeuzoo2Aeyaifaeto0uelei}, which turns out be equivalent to a wide family of similar quantities.

\begin{proposition}
\label{proposition_equivalent_norms}
Let \(\manifold{M}\) be a compact Riemannian manifold with \(m \defeq  \dim \manifold{M}\), let \(\lifting{\manifold{N}}\) be a Riemannian manifold, let \(s \in \intvo{0}{1}\), let \(p \in \intvo{1}{\infty}\) and let \(q_0, q_1 \in \intvr{0}{\infty}\).
If \(q_0 \vee q_1 \vee 1 < sp\), then there exists a constant \(C \in \intvo{0}{\infty}\) such that every measurable mapping \(\lifting{u} : \manifold{M} \to \lifting{\manifold{N}}\) satisfies
\begin{multline*}
\smashoperator[r]{\iint_{\manifold{M} \times \manifold{M}}}
\frac{d_{\lifting{\manifold{N}}} (\lifting{u} (y), \lifting{u} (x))^p \wedge d_{\lifting{\manifold{N}}} (\lifting{u} (y), \lifting{u} (x))^{q_0}
}{d_\manifold{M} (y, x)^{m + sp}} \dif y \dif x \\[-1.5em]
\le 
C \smashoperator{\iint_{\manifold{M} \times \manifold{M}}}
\frac{d_{\lifting{\manifold{N}}} (\lifting{u} (y), \lifting{u} (x))^p \wedge d_{\lifting{\manifold{N}}} (\lifting{u} (y), \lifting{u} (x))^{q_1}}{d_\manifold{M} (y, x)^{m + sp}}
\dif y \dif x .
\end{multline*}
\end{proposition}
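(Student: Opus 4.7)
My plan is to reduce the equivalence to a tail-decay estimate on a distribution function and to prove that decay via a geodesic chain argument. Throughout, I write $f(x, y) \defeq d_{\lifting{\manifold{N}}}(\lifting{u}(x), \lifting{u}(y))$.

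First I would reduce to a single direction. Since $s < 1$ gives $q_0, q_1 < sp < p$, the map $t \mapsto t^p \wedge t^q$ equals $t^p$ on $\intvc{0}{1}$ and $t^q$ on $\intvr{1}{\infty}$, and is pointwise nondecreasing in $q \geq 0$. The two integrands in the proposition thus agree on $\set*{(x, y) \st f(x, y) \leq 1}$ and the one with the larger exponent dominates on the complement. By the symmetry of the claim in $q_0, q_1$ and by transitivity with pivot $q = 0$, it suffices to show that for every $q \in \intvr{0}{\infty}$ with $q \vee 1 < sp$,
\begin{equation*}
I_q \defeq \iint_{\manifold{M} \times \manifold{M}} \frac{f^p \wedge f^q}{d_\manifold{M}^{m + sp}} \dif x \dif y \leq C \iint_{\manifold{M} \times \manifold{M}} \frac{f^p \wedge 1}{d_\manifold{M}^{m + sp}} \dif x \dif y = C I_0.
\end{equation*}

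A layer cake representation then rewrites
\begin{equation*}
I_q - I_0 = \iint_{\set{f > 1}} \frac{f^q - 1}{d_\manifold{M}^{m + sp}} \dif x \dif y = q \int_1^\infty t^{q - 1} J(t) \dif t,
\qquad J(t) \defeq \iint_{\set{f(x, y) > t}} \frac{\dif x \dif y}{d_\manifold{M}(x, y)^{m + sp}},
\end{equation*}
together with the trivial bound $J(1) \leq I_0$ coming from $f^p \wedge 1 \equiv 1$ on $\set{f > 1}$. The problem is thereby reduced to an integrable decay estimate $J(t) \leq C I_0 \, t^{-\alpha}$ with $\alpha > q$.

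To produce the decay, I would run a geodesic chain argument. For $(x, y)$ with $r \defeq d_\manifold{M}(x, y)$ below the injectivity radius of $\manifold{M}$, set $N \defeq \floor{t}$, let $\gamma$ be the unit-speed minimizing geodesic from $x$ to $y$, and let $B_i$ be the ball of radius $\rho r / N$ centred at $\gamma(i r / N)$ for $i = 1, \ldots, N - 1$ (with a small fixed $\rho > 0$). Setting $z_0 \defeq x, z_N \defeq y$, the triangle inequality in $\lifting{\manifold{N}}$ gives $f(x, y) \leq \sum_{i = 0}^{N - 1} f(z_i, z_{i + 1})$ for every $(z_1, \ldots, z_{N - 1}) \in B_1 \times \cdots \times B_{N - 1}$, so $f(x, y) > t \geq N$ forces some $f(z_i, z_{i + 1}) > 1$. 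Averaging this indicator bound over the $z_i$'s, multiplying by $d_\manifold{M}(x, y)^{-(m + sp)}$ and integrating in $(x, y)$, I would swap the order of integration using the affine change of variables $(z_i, z_{i + 1}) \mapsto (x, y)$ given in a normal coordinate chart by $x = (i + 1) z_i - i z_{i + 1}$ and $y = (N - i) z_{i + 1} - (N - i - 1) z_i$, with Jacobian determinant $N^m$. Each of the $N$ chain terms then reduces to a weighted integral over $\set{(z, w) \st f(z, w) > 1}$, each contributing at most $C I_0 / N^{sp}$; summing yields $J(t) \leq C I_0 / t^{sp - 1}$.

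The main obstacle will be bridging the gap between the decay rate $\alpha = sp - 1$ delivered by this single-chain argument and the sharper $\alpha > q$ needed for $q$ in the window $\intvr{sp - 1}{sp}$. To close the gap I would bootstrap: iterate the chain estimate at dyadic scales to get a recursion of the form $J(2t) \leq K J(t)$, and combine it with the reverse oscillation estimate of \cref{proposition_spgt1_estimate} below --- which is tailored precisely to the regime $sp > 1$ --- to upgrade the geometric decay rate to one exceeding any given $q < sp$. Secondary technicalities --- pairs $(x, y)$ at scale comparable to or larger than $\inj (\manifold{M})$ where the minimizing geodesic is not unique, and the uniformity of the Jacobian and volume comparisons on the compact manifold $\manifold{M}$ --- can be handled by a partition of unity subordinate to normal coordinate charts, reducing the core computation to the infinitesimal Euclidean setting above.
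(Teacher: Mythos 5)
Your reduction and layer-cake decomposition are correct, and the geodesic-chain argument does deliver $J(t) \lesssim J(1)\, t^{-(sp-1)} \le I_0\, t^{-(sp-1)}$ as you claim, but the gap you acknowledge for $q \in [sp-1, sp)$ is real and neither of your proposed fixes closes it. The midpoint recursion is $J(2t) \le 2^{1-sp} J(t)$: this is exactly what yields the rate $sp - 1$, so iterating it at dyadic scales does not improve the exponent. And Proposition~\ref{proposition_spgt1_estimate} is simply inapplicable here: it is a reverse oscillation estimate for a lifting $\lifting{u}$ of a map $u = \pi \compose \lifting{u} \in \homog{W}^{s,p}(\Omega, \manifold{N})$ under a Riemannian covering $\pi$, whereas in Proposition~\ref{proposition_equivalent_norms} there is no covering and no base Sobolev map — $\lifting{u}$ is an arbitrary measurable map into an arbitrary Riemannian manifold, so the hypotheses of that proposition cannot be met. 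Conceptually, the layer-cake reduction to a pointwise decay rate for $J$ discards the compensation that actually makes the proposition true: for a smoothed ramp of height $T$ one finds $J(t) \approx J(1)\, t^{1-sp}$ for $1 \le t \le T$ with $I_0 \approx T\, J(1)$, so the crude bound $J(1) \le I_0$ wastes a factor $T$ and no pointwise improvement of the decay exponent alone can recover the result.

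The paper's route is genuinely different and sidesteps this loss. Instead of level-set measures $J(\lambda)$ it works with the shifted moments $\iint_{\{d \ge \lambda\}} (d - \lambda)^q_+ \, |y-x|^{-m-\gamma}$. Lemma~\ref{lemma_fract_gap_integral_growth} is the same midpoint-halving scaling idea you use, but applied to these quantities for general $q$, and Lemma~\ref{lemma_ineq_p_q_scaling} supplies an integral representation of $(t-1)^{q_1}$ as a weighted integral of $(t - r)^{q_0}$ over a one-parameter family of levels $r$, which lets the exponent of the shifted moment be changed. Proposition~\ref{proposition_p_q_scaling_Omega} combines the two, and a finite iteration (bootstrapping through $q_1 < q_0 + (\gamma - 1)$ until the exponent crosses $1$, after which any $q_1 < \gamma$ is reachable) gives the whole range $q_1 < sp = \gamma$; Proposition~\ref{proposition_exponent_improvement} then follows by splitting the integral at $\{d \ge 1\}$. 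The essential device you are missing is the analogue of Lemma~\ref{lemma_ineq_p_q_scaling}: tracking the shifted power $(d - \lambda)^q$ throughout, rather than the indicator $\mathbf{1}_{\{d > \lambda\}}$, is what carries the information that the layer cake discards.
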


In \cref{proposition_equivalent_norms}, the case \(q_0 \le q_1\) is trivial.
The estimate of \cref{proposition_equivalent_norms} generalizes similar estimates obtained in the context of estimates of homotopy classes with \(sp = \dim \manifold{M}\) \cite{VanSchaftingen_2020}*{\S 5}.

As a consequence of \cref{theorem_lifting_circle_sum}, \cref{theorem_lifting_truncated_noncompact} and \cref{proposition_equivalent_norms}, we obtain the following nonlinear characterization of the linear sum of Sobolev spaces.

\begin{theorem}
\label{theorem_nonlinear_sum}
Let \(\manifold{M}\) be compact Riemannian manifold, let \(m \defeq  \dim \manifold{M}\), let \(s \in \intvo{0}{1}\) and let \(p \in \intvo{1}{\infty}\). If \(sp > 1\) and if \(0 < q < sp\),
then 
\begin{multline}
\label{eq_eahep6eet2yoo2ohBeePee9w}
\set[\bigg]{f : \manifold{M} \to \Rset \st \smashoperator[r]{\iint_{\manifold{M} \times \manifold{M}}}
\frac{\abs{f (y) - f(x)}^p \wedge \abs{f (y) - f (x)}^{q}
}{d_\manifold{M} (y, x)^{m + sp}} \dif y \dif x  < \infty}\\[-.8em]
= \homog{W}^{s, p} (\manifold{M}, \Rset) + \homog{W}^{1, sp} (\manifold{M}, \Rset) . 
\end{multline}
Moreover, the quantities 
\begin{equation*}
 \smashoperator{\iint_{\manifold{M} \times \manifold{M}}}
\frac{\abs{f (y) - f(x)}^p \wedge \abs{f (y) - f (x)}^{q}
}{d_\manifold{M} (y, x)^{m + sp}} \dif y \dif x  
\end{equation*}
and 
\begin{equation}
 \inf_{\substack{g \in \homog{W}^{s, p} (\manifold{M}, \Rset)\\ h \in \homog{W}^{1, sp} (\manifold{M}, \Rset)\\
 f = g + h}} \; \smashoperator[r]{\iint_{\manifold{M} \times \manifold{M}}}
\frac{\abs{g (y) - g (x)}^p
}{d_\manifold{M} (y, x)^{m + sp}} \dif y \dif x
+ \int_{\manifold{M}} \abs{Dh}^{sp}
\end{equation}
are equivalent in the sense that each of them is bounded by constant multiple of the other.
\end{theorem}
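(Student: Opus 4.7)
The plan is to prove the set equality in \eqref{eq_eahep6eet2yoo2ohBeePee9w} by establishing both inclusions and to deduce the equivalence of the two quantities along the way; I write \(\Phi(t) \defeq t^p \wedge t^q\) for brevity.

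For the inclusion \(\homog{W}^{s, p}(\manifold{M}, \Rset) + \homog{W}^{1, sp}(\manifold{M}, \Rset) \subseteq\) left-hand side, the plan is to use a Gagliardo--Nirenberg type interpolation. The elementary subadditivity \(\Phi(a+b) \le C(\Phi(a) + \Phi(b))\) combined with the triangle inequality \(\abs{f(y) - f(x)} \le \abs{g(y) - g(x)} + \abs{h(y) - h(x)}\) splits the \(\Phi\)-integrand into separate \(g\)- and \(h\)-contributions. The \(g\)-contribution is controlled by \(\Phi(t) \le t^p\) and bounded by \([g]_{\homog{W}^{s, p}(\manifold{M})}^p\); the \(h\)-contribution calls for the nontrivial estimate
\begin{equation*}
\smashoperator{\iint_{\manifold{M} \times \manifold{M}}} \frac{\abs{h (y) - h (x)}^p \wedge \abs{h (y) - h (x)}^q}{d_\manifold{M}(y, x)^{m+sp}} \dif y \dif x \le C \int_\manifold{M} \abs{Dh}^{sp},
\end{equation*}
which I would prove by splitting the domain of integration according to whether \(\abs{h(y) - h(x)} \le 1\) or \(\ge 1\) (so as to exploit \(p > sp\) and \(q < sp\), respectively) and using the pointwise bound \(\abs{h(y) - h(x)} \le C d_\manifold{M}(y, x)\brk{M\abs{Dh}(x) + M\abs{Dh}(y)}\) with \(M\) the Hardy--Littlewood maximal operator.

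For the reverse inclusion, I first assume that \(\manifold{M}\) is simply-connected and \(sp \ge 2\). Given \(f\) with finite \(\Phi\)-energy, the composition \(u \defeq e^{if}\) satisfies \(\abs{u(y) - u(x)} \le \abs{f(y) - f(x)} \wedge 2\), and the elementary inequality \(\min(t^p, 2^p) \le C \Phi(t)\) yields \(u \in \homog{W}^{s, p}(\manifold{M}, \Sset^1)\) with seminorm controlled by the \(\Phi\)-energy of \(f\). \Cref{theorem_lifting_circle_sum} applied to \(u\) then supplies a lifting \(\lifting{u} = \lifting{v} + \lifting{w}\) with \(\lifting{v} \in \homog{W}^{s,p}(\manifold{M}, \Rset)\), \(\lifting{w} \in \homog{W}^{1, sp}(\manifold{M}, \Rset)\), and \(e^{i \lifting{u}} = e^{if}\) almost everywhere. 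The hypothesis \(sp > 1\) combined with the compactness of \(\manifold{M}\) guarantees \(f \in X(\manifold{M}, \Rset)\): the \(\Phi\)-integrand is bounded below by a positive constant on \(\{\abs{f(y) - f(x)} \ge \pi/2\}\), and the bounded factor \(d_\manifold{M}(y, x)^{sp-1}\) allows to trade the exponent \(m+sp\) against \(m+1\); the analogous property of \(\lifting{u}\) follows from \cref{theorem_lifting_truncated_noncompact}. Applying the dichotomy of \cref{proposition_lifting_unique} to the two liftings \(f\) and \(\lifting{u}\) on the connected \(\manifold{M}\) then forces \(f - \lifting{u} = 2\pi n\) almost everywhere for some integer \(n\), yielding the decomposition \(f = (\lifting{v} + 2\pi n) + \lifting{w}\).

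For general \(\manifold{M}\), a finite cover by geodesically convex (hence simply-connected) open sets with a subordinate smooth partition of unity reduces to the previous case, since multiplication by smooth cutoffs preserves each of \(\homog{W}^{s, p}\) and \(\homog{W}^{1, sp}\) with controlled seminorms. The regime \(1 < sp < 2\), where \cref{theorem_lifting_circle_sum} is unavailable, will be the principal technical obstacle; I plan to treat it by a direct interpolation or Littlewood--Paley decomposition of \(f\) adapted from the sum-space decomposition arguments in the circle-valued theory. The claimed equivalence of the two quantities in the last part of the statement then follows by chaining the estimates from the two inclusions, with \cref{proposition_equivalent_norms} ensuring that the specific value of \(q \in (0, sp)\) affects only multiplicative constants.
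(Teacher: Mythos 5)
Your argument for the inclusion $\homog{W}^{s,p}(\manifold{M},\Rset) + \homog{W}^{1,sp}(\manifold{M},\Rset) \subseteq$ LHS is a genuine departure from the paper. The paper takes a detour through the circle: it sets $u_j := e^{if_j}$ with $f_j = g_j + h_j$ smooth approximants, splits $u_j(y) - u_j(x)$ using the unitary factorization trick, controls the $\homog{W}^{s,p}$-energy of $e^{ih_j}$ by $\int|Dh_j|^{sp}$ via fractional Gagliardo--Nirenberg, and then recovers the truncated $\Phi$-energy of $f_j$ from the a priori lifting estimate \cref{theorem_lifting_truncated_noncompact_apriori}, finishing with \cref{proposition_equivalent_norms}. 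Your direct route — the pointwise maximal bound $\abs{h(y)-h(x)} \le C\,d_{\manifold{M}}(y,x)\bigl(M\abs{Dh}(x) + M\abs{Dh}(y)\bigr)$ followed by integration in radial coordinates, using $\int_0^\infty \Phi(u)\,u^{-sp-1}\,\dif u = \tfrac{1}{p-sp} + \tfrac{1}{sp-q} < \infty$ (which needs exactly $q < sp < p$) and then the maximal theorem (valid since $sp > 1$) — does give $\iint \Phi(\abs{h(y)-h(x)})/d_{\manifold{M}}(y,x)^{m+sp} \lesssim \int\abs{Dh}^{sp}$ after chart localization and the quasi-subadditivity $\Phi(a+b) \lesssim \Phi(a) + \Phi(b)$. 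This is cleaner and more self-contained than the paper's treatment of this half, and it avoids the circle entirely.

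The reverse inclusion is where there is a genuine gap. You invoke \cref{theorem_lifting_circle_sum}, which requires $sp \ge 2$, and you explicitly punt on $1 < sp < 2$: "direct interpolation or Littlewood--Paley decomposition" is a placeholder, not an argument, and it is precisely the regime where the theorem's hypothesis $sp > 1$ gives it content beyond what \cref{theorem_lifting_circle_sum} could deliver. What you have missed is that this direction can be proved purely locally on a ball (or cube), where the relevant sum-space decomposition for circle-valued maps — cited in the paper as \cite{Brezis_Mironescu_2021}*{th.\ 8.8} and \cite{Mironescu_preprint}*{th.\ 2} — holds for \emph{all} $sp > 1$, not just $sp \ge 2$. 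The paper first regularizes $f$ to smooth $f_j$ via \cref{proposition_dense_truncated}, forms $u_j := e^{if_j}$, and applies that decomposition result directly to the \emph{trivial} lifting $f_j$ of $u_j$ on each chart; there is never any need for a global lifting theorem, nor for a uniqueness argument to identify the constructed lifting with $f$. Two further points: (i) your uniqueness step cites the wrong statement — \cref{proposition_lifting_unique} gives only the dichotomy "equal a.e.\ or unequal a.e.", while identifying the two liftings up to a deck transformation (i.e.\ $f - \lifting{u} \in 2\pi\Zset$ constant) is \cref{proposition_uniqueness_automorphism}; (ii) your partition-of-unity sketch glosses over the fact that multiplying by a cutoff controls $[\varphi_k g_k]_{\homog{W}^{s,p}}$ only after recentring $g_k$ by its mean (seminorms, unlike norms, do not play well with multiplication by cutoffs) — the paper handles this with the explicit mean-zero corrections $g_*$, $h_*$ and a separate low-frequency component $f_0$, none of which appear in your sketch.
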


\Cref{theorem_nonlinear_sum} complements the characterization of sums of fractional Sobolev spaces by Rodiac \& Van Schaftingen
\cite{Rodiac_VanSchaftingen_2021}, which states that if \(q > sp\)
\begin{multline}
\label{eq_tie7thooD8ochui0ush9do8r}
\set[\bigg]{f : \manifold{M} \to \Rset \st\smashoperator[r]{\iint_{\manifold{M} \times \manifold{M}}}
\frac{\abs{f (y) - f(x)}^p \wedge \abs{f (y) - f (x)}^{q}}{d_\manifold{M} (y, x)^{m + sp}} \dif y \dif x  < \infty}\\[-.8em]
= \homog{W}^{s, p} (\manifold{M}, \Rset) + \homog{W}^{\frac{sp}{q}, q} (\manifold{M}, \Rset) . 
\end{multline}

We give proof of \cref{theorem_nonlinear_sum} relying on the characterization of liftings of mappings into the circle \cref{theorem_lifting_circle_sum}; it would be enlightening to have a direct proof.

\begin{openproblem}
Give a direct proof of \cref{theorem_nonlinear_sum}.
\end{openproblem}

In the case \(q = sp\), it turns that the identifications \eqref{eq_eahep6eet2yoo2ohBeePee9w} and \eqref{eq_tie7thooD8ochui0ush9do8r} fail (see \cref{proposition_counterexample_critical}), leading to the following question:

\begin{openproblem}
Given a compact Riemannian manifold \(\manifold{M}\) with \(m = \dim \manifold{M}\), \(p \in \intvr{1}{\infty}\) and \(s \in \intvo{0}{1}\) such that \(sp>1\), characterize the set 
\begin{equation}
\label{eq_ahngeeyohVeikeiboh5ocee7}
X^{s, p} (\manifold{M}, \Rset) \defeq 
\set[\bigg]{f : \manifold{M} \to \Rset \st \smashoperator[r]{\iint_{\manifold{M} \times \manifold{M}}}
\frac{\abs{f (y) - f (x)}^p \wedge \abs{f (y) - f (x)}^{sp}}{d_\manifold{M} (y, x)^{m + sp}} \dif y \dif x  < \infty}.
\end{equation}
\end{openproblem}

We have some information about what the space \(X^{s, p} (\manifold{M}, \Rset)\) could be:
by \cref{theorem_nonlinear_sum} and by \eqref{eq_tie7thooD8ochui0ush9do8r}, we have 
\begin{equation}
\label{eq_aiteip7yiepeivie5ju8joiK}
\bigcup_{s < \theta < 1}
(\homog{W}^{s, p} (\manifold{M}, \Rset) + \homog{W}^{s/\theta, \theta p} (\manifold{M}, \Rset))
\subseteq 
  X^{s, p}(\manifold{M}, \Rset) 
  \subseteq 
  \homog{W}^{s, p} (\manifold{M}, \Rset) + \homog{W}^{1, sp} (\manifold{M}, \Rset),
\end{equation}
whereas by \cref{proposition_counterexample_critical} below, we have
\begin{equation}
\label{eq_ia4oDaeZaXahPoojahthie6a}
 \homog{W}^{1, sp} (\manifold{M}, \Rset)\not \subseteq X^{s, p} (\manifold{M}, \Rset),
\end{equation}
so that the second inclusion in \eqref{eq_aiteip7yiepeivie5ju8joiK} cannot be an equality.

\medskip

The second goal of the present work is to investigate \emph{estimates} for the lifting when \(sp \ge m\). In this case every map \(u \in \homog{W}^{s, p} (\manifold{M}, \manifold{N})\)  can be written as \(\pi \compose \lifting{u}\) with \(\lifting{u} \in \homog{W}^{s, p} (\manifold{M}, \lifting{\manifold{N}})\)  \citelist{\cite{Bourgain_Brezis_Mironescu_2000}*{th.\ 2}\cite{Bethuel_Chiron_2007}*{th.\ 3}} (see also \cite{Brezis_Mironescu_2021}*{th.\ 5.1 \& 5.2}).
When \(sp = 1 = m\), it is known that there is no estimate on the lifting when the covering map \(\pi\) is surjective and not injective \citelist{\cite{Bourgain_Brezis_Mironescu_2000}*{rem.\ 3}\cite{Mironescu_VanSchaftingen_2021}*{lem.\ 5.1}} (see also \cite{Brezis_Mironescu_2021}*{prop.\ 9.2}).
If the covering space \(\lifting{\manifold{N}}\) is not compact, then it is also known that there cannot be any estimate of the form \eqref{eq_baeJ8oocoorohshaijooj9Lo} (see \cite{Mironescu_Molnar_2015}*{prop.\ 5.7} for the universal covering of the circle \(\pi : \Rset \to \Sset^1\)).

For the universal covering of the circle \(\pi : \Rset \to \Sset^1\), Merlet and Mironescu \& Molnar have obtained the following nonlinear estimate \citelist{\cite{Merlet_2006}*{th.\ 1.1}\cite{Mironescu_Molnar_2015}*{th.\ 5.4}} (see also \cite{Brezis_Mironescu_2021}*{th.\ 9.6}).

\begin{theorem}
\label{theorem_estimate_lifting_circle}
Let \(\manifold{M}\) be a compact Riemannian manifold, let \(m \defeq  \dim \manifold{M}\), let \(s \in \intvo{0}{1}\) and let \(p \in \intvo{1}{\infty}\).
If \(sp \ge m\) and \(sp > 1\), then  there exists a constant \(C \in \intvo{0}{\infty}\) such that if \(\lifting{u} \in \homog{W}^{s, p} (\manifold{M}, \Rset)\) and if \(u \defeq e^{i \lifting{u}}\), 
we have
\begin{multline*}
\smashoperator{
  \iint_{\manifold{M} \times \manifold{M}}
  } 
  \frac
    {\abs{\lifting{u} (y) - \lifting{u} (x)}^p}
    {d_{\manifold{M}} (y, x)^{m + sp}} 
    \dif y 
    \dif x
\\[-1.5em]
\le 
C 
\brk[\Bigg]{
  \;
  \smashoperator[r]{
\iint_{\manifold{M} \times \manifold{M}}
} 
  \frac
    {\abs{u (y)- u (x)}^p}
    {d_{\manifold{M}} (y, x)^{m + sp}} 
    \dif y 
    \dif x
  + 
    \brk[\Bigg]{
    \;
    \smashoperator[r]{
      \iint_{\manifold{M} \times \manifold{M}}
      } 
    \frac
      {\abs*{u (y)- u (x)}^p}
      {d_{\manifold{M}} (y, x)^{m + sp}} 
      \dif y 
      \dif x
      }^\frac{1}{s}
      \;}.
\end{multline*} 
\end{theorem}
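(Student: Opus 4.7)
The plan is to split the Gagliardo energy of $\lifting{u}$ according to whether $|\lifting{u}(y) - \lifting{u}(x)| \le 1$ or $> 1$. On the small oscillation set, the identity $|e^{i\lifting{u}(y)} - e^{i\lifting{u}(x)}| = 2\abs{\sin((\lifting{u}(y) - \lifting{u}(x))/2)}$ yields $|\lifting{u}(y) - \lifting{u}(x)| \le (\pi/2)|u(y) - u(x)|$, so that this portion is controlled directly by the Gagliardo energy of $u$, which I will denote by $E_u$.

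For the large oscillation set, I first apply \cref{theorem_lifting_truncated_noncompact_apriori} to the Riemannian covering $\pi \colon \Rset \to \Sset^1$ to obtain the truncated a priori estimate
\begin{equation*}
\iint_{\manifold{M} \times \manifold{M}} \frac{|\lifting{u}(y) - \lifting{u}(x)|^p \wedge 1}{d_{\manifold{M}}(y,x)^{m+sp}} \dif y \dif x \le C E_u,
\end{equation*}
and then combine it with \cref{proposition_equivalent_norms} (applied with $q_1 = 0$ and $q_0 = q \in \intvo{0}{sp}$) to derive
\begin{equation*}
\iint_{|\lifting{u}(y) - \lifting{u}(x)| \ge 1} \frac{|\lifting{u}(y) - \lifting{u}(x)|^q}{d_{\manifold{M}}(y,x)^{m+sp}} \dif y \dif x \le C_q E_u.
\end{equation*}
Writing $|\lifting{u}(y) - \lifting{u}(x)|^p = |\lifting{u}(y) - \lifting{u}(x)|^{p-q} \cdot |\lifting{u}(y) - \lifting{u}(x)|^{q}$ and using the crude pointwise bound $|\lifting{u}(y) - \lifting{u}(x)| \le 2\norm{\lifting{u} - c}_{L^\infty}$ (where $c$ is the average of $\lifting{u}$), the contribution of the large oscillation set to the Gagliardo energy of $\lifting{u}$ is controlled by a constant multiple of $\norm{\lifting{u} - c}_{L^\infty}^{p-q} \cdot E_u$.

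Since by assumption $\lifting{u} \in \homog{W}^{s,p}(\manifold{M}, \Rset)$ and $sp \ge m$, the Sobolev embedding (Morrey when $sp > m$, and the embedding into $\mathrm{BMO}$ followed by John--Nirenberg when $sp = m$) bounds $\norm{\lifting{u} - c}_{L^\infty}$ by a constant multiple of the $\homog{W}^{s,p}(\manifold{M})$-seminorm of $\lifting{u}$. Substituting into the previous bound and applying Young's inequality to absorb the resulting power of the $\homog{W}^{s,p}$-seminorm into the left-hand side leads to an inequality of the form $\norm{\lifting{u}}_{\homog{W}^{s,p}(\manifold{M})}^p \le C (E_u + E_u^{p/q})$; formally letting $q \to sp^{-}$ produces the target exponent $p/q = 1/s$.

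The main obstacle is recovering the \emph{sharp} exponent $1/s$ rather than $1/s + \varepsilon$: the constant $C_q$ in \cref{proposition_equivalent_norms} blows up as $q \nearrow sp$, so a direct application of the above scheme loses a small power of $E_u$. I expect to close this gap by dyadically decomposing the set $\{|\lifting{u}(y) - \lifting{u}(x)| > 1\}$ according to the dyadic size of $|\lifting{u}(y) - \lifting{u}(x)|$ and tuning the exponent $q$ at each scale so that the divergence of the constants is offset by the geometric gain in the summation. The critical case $sp = m$ is likely to be the most delicate, since Morrey's embedding is unavailable and the distribution function of $\lifting{u}$ must be handled sharply via the John--Nirenberg inequality rather than a pointwise H\"older bound.
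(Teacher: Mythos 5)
Your proposal follows a genuinely different route from the paper. The paper attributes \cref{theorem_estimate_lifting_circle} to Merlet and Mironescu--Molnar (via the linear decomposition of \cref{theorem_lifting_circle_sum} combined with the Sobolev embedding $\homog{W}^{1,sp}(\manifold{M}) \hookrightarrow \homog{W}^{s,p}(\manifold{M})$, valid precisely when $sp \ge m$), and obtains it as a special case of \cref{theorem_estimate_lifting_noncompact}, whose proof is a Marcinkiewicz-type interpolation (\cref{lemma_single_scale_large_oscillation_estimate}) with a level-dependent truncation $\mu_\lambda^r$. Your plan instead tries to combine \cref{theorem_lifting_truncated_noncompact_apriori}, \cref{proposition_equivalent_norms}, a pointwise $L^\infty$ bound, and Young's inequality. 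There are two genuine gaps.

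First, the dyadic fix does not recover the sharp exponent. The constant $C_q$ in \cref{proposition_equivalent_norms} (via \cref{proposition_p_q_scaling_Omega}) blows up like $(sp-q)^{-1}$ as $q \nearrow sp$. On the dyadic shell $S_j = \set{(x,y)\st 2^j \le \abs{\lifting{u}(y) - \lifting{u}(x)} < 2^{j+1}}$, the bound you can extract with a shell-dependent exponent $q_j < sp$ is
\begin{equation*}
  \iint_{S_j} \frac{\abs{\lifting{u}(y)-\lifting{u}(x)}^p}{d_{\manifold{M}}(y,x)^{m+sp}}\dif y\dif x
  \le 2^{(j+1)(p-q_j)}\,\frac{C}{sp-q_j}\, E_u,
\end{equation*}
because \cref{proposition_equivalent_norms} controls the \emph{global} truncated integral and does not localise to $S_j$; there is no ``geometric gain in the summation'' to offset the constant. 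Optimising $q_j$ (take $q_j = sp - (j+1)^{-1}$) gives $\approx 2^{(j+1)p(1-s)}(j+1)E_u$, and summing up to $j_{\max}$ with $2^{j_{\max}} \approx \norm{\lifting{u}}_{L^\infty}$ yields $\approx \norm{\lifting{u}}_{L^\infty}^{p(1-s)}\log\norm{\lifting{u}}_{L^\infty}\cdot E_u$. After Morrey and the Young-type absorption this gives $\norm{\lifting{u}}_{\homog{W}^{s,p}}^p \lesssim E_u + (E_u\log E_u)^{1/s}$, not the sharp exponent $1/s$. The paper's argument avoids this precisely by never passing through a truncated-exponent estimate with a blowing-up constant; instead it proves a sharp weak-type bound on level sets (\cref{proposition_large_oscillation_estimate}) with the correct homogeneity built in from the start.

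Second, the critical case $sp=m$ is not addressed. There $\homog{W}^{s,p}(\manifold{M})$ embeds only into $\mathrm{BMO}$, and John--Nirenberg gives exponential integrability of oscillations but no pointwise bound; the step $\abs{\lifting{u}(y)-\lifting{u}(x)}^{p-q}\le(2\norm{\lifting{u}-c}_{L^\infty})^{p-q}$ is unavailable. You flag the difficulty but offer no substitute. Both gaps are closed at once by the classical route (decomposition $\lifting{u}=\lifting{v}+\lifting{w}$ plus $\norm{\lifting{w}}_{\homog{W}^{s,p}}^p \lesssim (\norm{D\lifting{w}}_{L^{sp}}^{sp})^{1/s}$) or by the paper's interpolation scheme, neither of which ever needs an $L^\infty$ control on $\lifting{u}$.
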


We generalize \cref{theorem_estimate_lifting_circle} to a general covering \(\pi : \lifting{\manifold{N}} \to \manifold{N}\).
\begin{theorem}
\label{theorem_estimate_lifting_noncompact}
Let \(\manifold{M}\) and \(\manifold{N}\) be compact Riemannian manifolds, let \(m \defeq  \dim \manifold{M}\), let \(\pi : \lifting{\manifold{N}} \to \manifold{N}\) be a Riemannian covering map, let \(s \in \intvo{0}{1}\) and let \(p \in \intvo{1}{\infty}\). 
If \(sp \ge m\) and \(sp > 1\), then there exists a constant \(C \in \intvo{0}{\infty}\) such that if 
\(\lifting{u} \in X (\manifold{M}, \lifting{\manifold{N}})\) and if \(u \defeq \pi \compose \lifting{u} \in \homog{W}^{s, p} (\manifold{M}, \manifold{N})\), we have \(\lifting{u} \in \homog{W}^{s, p} (\manifold{M}, \lifting{\manifold{N}})\) and 
\begin{multline}
\label{eq_Pei1Chu7Amie6ohz0phi6cai}
\smashoperator{\iint_{\manifold{M} \times \manifold{M}}} \frac{d_{\lifting{\manifold{N}}} \brk{\lifting{u} (y), \lifting{u} (x)}^p}{d_{\manifold{M}} (y, x)^{m + sp}} \dif y \dif x\\[-1em]
\le C 
\brk[\Bigg]{\;\smashoperator[r]{
\iint_{\manifold{M} \times \manifold{M}}} \frac{d_{\manifold{N}} (u (y), u (x))^p}{d_{\manifold{M}} (y, x)^{m + sp}} \dif y \dif x
+ \brk[\Bigg]{\;\smashoperator[r]{\iint_{\manifold{M} \times \manifold{M}}} \frac{d_{\manifold{N}} (u (y), u (x))^p}{d_{\manifold{M}} (y, x)^{m + sp}} \dif y \dif x}^\frac{1}{s}\;}.
\end{multline}
\end{theorem}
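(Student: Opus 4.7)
The plan is to bootstrap the truncated estimate of \cref{theorem_lifting_truncated_noncompact_apriori} into the full fractional Gagliardo energy bound, using the equivalent characterization in \cref{proposition_equivalent_norms} and a Morrey--Sobolev diameter bound that becomes available once \(sp \ge m\).

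First, since \(sp > 1\), \cref{theorem_lifting_truncated_noncompact_apriori} gives the truncated estimate
\begin{equation*}
\smashoperator{\iint_{\manifold{M} \times \manifold{M}}} \frac{d_{\lifting{\manifold{N}}}(\lifting{u} (y), \lifting{u} (x))^p \wedge 1}{d_{\manifold{M}}(y,x)^{m+sp}} \dif y \dif x \le C \smashoperator[r]{\iint_{\manifold{M} \times \manifold{M}}} \frac{d_{\manifold{N}}(u(y), u(x))^p}{d_{\manifold{M}}(y,x)^{m+sp}} \dif y \dif x,
\end{equation*}
which, by \cref{proposition_equivalent_norms} applied with \(q_1 = 0\) and a fixed \(q_0 = q \in (1, sp)\), upgrades to the same right-hand side controlling \(\iint (d_{\lifting{\manifold{N}}}^p \wedge d_{\lifting{\manifold{N}}}^q)/d_{\manifold{M}}^{m+sp}\), up to a constant \(C_q\). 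Splitting the full Gagliardo integral of \(\lifting{u}\) along the locus where \(d_{\lifting{\manifold{N}}}(\lifting{u}(y), \lifting{u}(x)) = 1\), and using \(d_{\lifting{\manifold{N}}}^p \le D^{p - q} d_{\lifting{\manifold{N}}}^q\) on the region \(\{d_{\lifting{\manifold{N}}}(\lifting{u}(y), \lifting{u}(x)) > 1\}\) with \(D \defeq \diam \lifting{u}(\manifold{M})\), produces the intermediate bound
\begin{equation*}
\|\lifting{u}\|^p_{\homog{W}^{s,p}(\manifold{M})} \le C_q (1 + D^{p - q}) \|u\|^p_{\homog{W}^{s,p}(\manifold{M})}.
\end{equation*}

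Next, I would bound \(D\) in terms of \(\|\lifting{u}\|_{\homog{W}^{s, p}(\manifold{M})}\) itself. For \(sp > m\), the map \(u\) is H\"older continuous, so the classical topological lifting theory provides a continuous lift of \(u\) that agrees almost everywhere with \(\lifting{u}\) up to a deck transformation, thanks to \cref{proposition_lifting_unique} together with \(\lifting{u} \in X (\manifold{M}, \lifting{\manifold{N}})\); identifying \(\lifting{u}\) with this continuous representative, the Morrey--Sobolev embedding then yields \(D \le C \|\lifting{u}\|_{\homog{W}^{s, p}(\manifold{M})}\). Writing \(E \defeq \|\lifting{u}\|^p_{\homog{W}^{s, p}(\manifold{M})}\), substitution produces the self-improving inequality \(E \le C_q (1 + E^{(p - q)/p}) \|u\|^p_{\homog{W}^{s, p}(\manifold{M})}\), which I would solve by a dichotomy on the size of \(E\): if \(E \le 1\) the linear term dominates and \(E \le 2 C_q \|u\|^p\), while if \(E > 1\) the superlinear term dominates and \(E^{q/p} \le 2 C_q \|u\|^p\), yielding \(E \le C'_q \|u\|^{p^2/q}\); letting \(q \to sp\) recovers the sharp exponent \(p/s\) of \eqref{eq_Pei1Chu7Amie6ohz0phi6cai}. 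The a priori finiteness of \(E\) that justifies the Sobolev embedding input can be ensured by running the bootstrap on a regularization (truncation or mollification) of \(\lifting{u}\) and passing to the limit.

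The main obstacle is the critical case \(sp = m\), where the Morrey--Sobolev embedding fails and \(\lifting{u}(\manifold{M})\) need not be essentially bounded. I would address it either by a perturbation argument applying the subcritical result to parameters \((s', p')\) with \(s' p' > m\) close to \((s, p)\) and passing to a limit, or by replacing the sup-norm diameter bound with a sharper Moser--Trudinger-type exponential embedding combined with the \(X(\manifold{M}, \lifting{\manifold{N}})\)-constraint, which controls the size of the ``large-jump set''. A secondary technical point is the careful tracking of the constant \(C_q\) of \cref{proposition_equivalent_norms} as \(q \to sp\): recovering the exponent \(p/s\) rather than the slightly worse \(p^2/q > p/s\) requires a Young-type balancing between this constant and the exponent \((p - q)/p\) in the bootstrap.
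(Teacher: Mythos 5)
Your route is genuinely different from the paper's, which derives the exponent \(1/s\) directly from the large-oscillation estimate of \cref{proposition_large_oscillation_estimate}, built on a truncated Morrey--Sobolev inequality along segments (\cref{lemma_truncated_Morrey}, \cref{lemma_large_scale_1d_estimate}) and a Marcinkiewicz-type weak interpolation argument. Your bootstrap, however, has two gaps. The critical case \(sp = m\) (which occurs for every \(m \ge 2\), and is precisely the setting of the Merlet and Mironescu--Molnar results being generalized) is where your Morrey--Sobolev diameter bound \(D \le C\norm{\lifting{u}}_{\homog{W}^{s,p}}\) fails, since the lifting need not be essentially bounded. The suggested remedies are not carried out: perturbing to \((s',p')\) with \(s'p' > m\) bounds \(\norm{\lifting{u}}_{\homog{W}^{s',p'}}\) in terms of \(\norm{u}_{\homog{W}^{s',p'}}\), but does not yield the \(\homog{W}^{s,p}\)-bound with uniform constants as \((s',p') \to (s,p)\); and the Moser--Trudinger alternative is only gestured at.

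The second gap is structural and affects the whole range \(sp \ge m\). The constant \(C_q\) of \cref{proposition_equivalent_norms} necessarily blows up like \((sp - q)^{-1}\) as \(q \uparrow sp\): in the final step of the iteration in the proof of \cref{proposition_p_q_scaling_Omega}, the constant contains \(\int_\eta^\infty r^{q - sp - 1}\dif r = \eta^{q-sp}/(sp - q)\). Your bootstrap therefore yields \(E \le K_q \brk{\norm{u}^p_{\homog{W}^{s,p}}}^{p/q}\) with \(K_q \sim (sp - q)^{-p/q}\) and \(p/q\) strictly larger than the target exponent \(1/s\). Minimizing over \(q\) (your proposed Young-type balancing) trades a worse exponent against a worse constant, and an elementary computation shows the optimum is achieved at \(sp - q \sim 1/\log(e + \norm{u}^p_{\homog{W}^{s,p}})\), giving at best a logarithmically lossy bound of the form \(E \lesssim \brk{\norm{u}^p_{\homog{W}^{s,p}}\log(e + \norm{u}^p_{\homog{W}^{s,p}})}^{1/s}\) rather than \eqref{eq_Pei1Chu7Amie6ohz0phi6cai}. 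The paper avoids this entirely because the exponent \(p_*/(sp) = 1/s\) emerges intrinsically from the scaling relation \eqref{eq_Foreigie4zaiw2ze0dah8nah}, with no limit \(q \to sp\) involved.
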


\Cref{theorem_estimate_lifting_circle} can be proved by combining the estimate \eqref{eq_aJeefah6Hee4Chaowoh1aade}  on the linear decomposition of the lifting with a fractional Sobolev embedding \cite{Mironescu_Molnar_2015}; the latter embedding turns out to be a consequence of \cref{theorem_estimate_lifting_noncompact} (see \cref{remark_Sobolev_embedding} below).
Since the decomposition of the lifting into a sum  \eqref{eq_aJeefah6Hee4Chaowoh1aade} does not subsist for a general covering space \(\lifting{\manifold{N}}\), we give a direct proof of \cref{theorem_estimate_lifting_noncompact}; the structure of the proof with weak-type estimates on some level sets of differences is akin to the proof of Marcinkiewicz’s real interpolation theorem and Sobolev's embedding theorem by interpolation (see for example \cite{Stein_1970}*{ch.\ I th.\ 5}).

\medskip

As a consequence of \cref{theorem_estimate_lifting_circle} and of the classical extension of traces in the fractional space \(\homog{W}^{1 - 1/p, p} (\manifold{M}, \Rset)\) into \(\homog{W}^{1, p} (\manifold{M} \times \intvo{0}{1}, \Rset)\), one gets the following extension estimate: if \(p \ge \dim \manifold{M} + 1\), then there exists a constant \(C \in \intvo{0}{\infty}\) such that every map \(u \in \homog{W}^{1-1/p, p} (\manifold{M}, \Sset^1)\) is the trace on \(\manifold{M} \times \set{0}\) of a mapping \(U \in \homog{W}^{1, p} (\manifold{M} \times \intvo{0}{1}, \Sset^1)\) satisfying the estimate 
\begin{multline}
\label{eq_eedebewaiwuh4aiFoh9faidi}
\int_{\manifold{M} \times \intvo{0}{1}}
\abs{D U}^p \\[-1em]
\le C\, \brk[\Bigg]{\;\smashoperator[r]{
\iint_{\manifold{M} \times \manifold{M}}} \frac{\abs{u (y)- u (x)}^p}{d_{\manifold{M}} (y, x)^{m + sp}} \dif y \dif x
+ \brk[\bigg]{\;\smashoperator[r]{\iint_{\manifold{M} \times \manifold{M}}} \frac{\abs*{u (y)- u (x)}^p}{d_{\manifold{M}} (y, x)^{m + sp}} \dif y \dif x}^\frac{p}{p - 1}\;}.
\end{multline}

For a general target manifold \(\manifold{N}\), it is known that if \(p \ge \dim \manifold{M} + 1\), every every map \(u \in \homog{W}^{1-1/p, p} (\manifold{M}, \manifold{N})\) is the trace of a mapping \(U \in \homog{W}^{1, p} (\manifold{M} \times \intvo{0}{1}, \manifold{N})\) \cite{Bethuel_Demengel_1995}*{th.\ 1}.
When \(p > \dim \manifold{M} + 1\), a compactness argument shows that the extension \(U\) can be taken to remain in a bounded set of \(\homog{W}^{1, p} (\manifold{M} \times \intvo{0}{1}, \manifold{N})\) when the trace \(u\)  remains bounded in \(\homog{W}^{1-1/p, p} (\manifold{M}, \manifold{N})\) (see for example \cite{Petrache_VanSchaftingen_2017}*{th.\ 4}).
When \(p = \dim \manifold{M} + 1\) and  \(\pi_{p - 1} (\manifold{N})\not \simeq \set{0}\), such a boundedness cannot hold \citelist{\cite{Petrache_Riviere_2014}*{prop.\ 2.8}\cite{Mironescu_VanSchaftingen_2021_Trace}*{th.\ 1.10}}; one still gets then estimates when the mapping \(u\) has a small fractional Sobolev energy and weak-type estimates in general \citelist{\cite{Petrache_Riviere_2014}\cite{Petrache_VanSchaftingen_2017}}.

In the particular case where \(\pi_{1} (\manifold{N}) \simeq \dotsb \simeq \pi_{\floor{p - 1}} (\manifold{N}) \simeq \set{0}\), where \(\floor{r} \in \Zset\) denotes the integer part of \(r \in \Rset\), Hardt \& Lin \cite{Hardt_Lin_1987}*{th.\ 6.2} have proved  that there exists a constant \(C \in \intvo{0}{\infty}\) such that 
 every map \(u \in \homog{W}^{1-1/p, p} (\manifold{M}, \manifold{N})\) is the trace of a mapping \(U \in \homog{W}^{1, p} (\manifold{M} \times \intvo{0}{1}, \manifold{N})\) satisfying the estimate 
\begin{equation}
\label{eq_lesot8Ahza8eeRee9wu0evei}
\smashoperator[r]{\int_{\manifold{M} \times \intvo{0}{1}}}
\abs{D U}^p 
\le C \smashoperator{
\iint_{\manifold{M} \times \manifold{M}}} \frac{d_{\manifold{N}}( u (y), u (x))^p}{d_{\manifold{M}} (y, x)^{m + sp}} \dif y \dif x.
\end{equation}
The surjectivity of the trace with the linear estimate \eqref{eq_lesot8Ahza8eeRee9wu0evei} fails when the homotopy group \(\pi_{\floor{p - 1}}(\manifold{N})\) is nontrivial or when one of the homotopy groups \(\pi_{1}(\manifold{N}), \dotsc, \pi_{\floor{p - 2}}(\manifold{N})\) is infinite \citelist{\cite{Hardt_Lin_1987}*{\S 6.3}\cite{Bethuel_Demengel_1995}*{th.\ 4}\cite{Bethuel_2014}*{prop.\ 1.13}\cite{Mironescu_VanSchaftingen_2021_Trace}*{th.\ 1.10}}.

The estimates \eqref{eq_eedebewaiwuh4aiFoh9faidi} and \eqref{eq_lesot8Ahza8eeRee9wu0evei} raise naturally the following question.

\begin{openproblem}
 Given compact Riemannian manifolds \(\manifold{M}\) and \(\manifold{N}\) and \(p \ge \dim \manifold{M} + 1\), is there a  constant \(C \in \intvo{0}{\infty}\) such that every map \(u \in \homog{W}^{1-1/p, p} (\manifold{M}, \manifold{N})\) is the trace on \(\manifold{M} \times \set{0}\) of a mapping \(U \in \homog{W}^{1, p} (\manifold{M} \times \intvo{0}{1}, \manifold{N})\) satisfying the estimate 
\begin{multline}
\smashoperator[r]{\int_{\manifold{M} \times \intvo{0}{1}}}
\abs{D U}^p \le C \brk[\Bigg]{\;\smashoperator[r]{
\iint_{\manifold{M} \times \manifold{M}}} \frac{d_{\manifold{N}} (u (y), u (x))^p}{d_{\manifold{M}} (y, x)^{m + sp}} \dif y \dif x
+ \brk[\bigg]{\;\smashoperator[r]{\iint_{\manifold{M} \times \manifold{M}}} \frac{d_{\manifold{N}} (u (y), u (x))^p}{d_{\manifold{M}} (y, x)^{m + sp}} \dif y \dif x}^\frac{p}{p - 1}\;}?
\end{multline}
\end{openproblem}

In the case where the fundamental group \(\pi_1(\manifold{N})\) is infinite and where \(\pi_2 (\manifold{N}) \simeq \dotsb \simeq \smash{\pi_{\floor{p - 1}}} (\manifold{N}) \simeq \set{0}\), although  \cref{theorem_estimate_lifting_noncompact} provides a lifting in \(\homog{W}^{1 - 1/p, p}(\manifold{M}, \lifting{\manifold{N}})\), a universal covering space \(\lifting{\manifold{N}}\) fails to be compact so that Hardt \& Lin’s theorem on the extension of traces \cite{Hardt_Lin_1987}*{th.\ 6.2} is not applicable.

\section{Characterizations of the lifting space and related estimates}

\subsection{A priori estimate for regular liftings}
We begin by proving an priori estimate on the lifting that will be the main analytical tool for the construction and estimate of liftings in \cref{theorem_lifting_truncated_noncompact,theorem_lifting_truncated_noncompact_apriori}.
Given a convex open set \(\Omega \subseteq \Rset^m\), we define the space of mappings that are essentially continuous on almost every segment of \(\Omega\)
\begin{equation}
\label{eq_definition_Y}
\begin{split}
 Y(\Omega, \lifting{\manifold{N}}) 
 \defeq
 \bigl\{ \lifting{u} : \Omega \to \lifting{\manifold{N}}  \,\st {}&{} \text{for almost every } x, y \in \Omega, \text{ there exists } \lifting{u}_{x, y} \in C (\intvc{x}{y}, \lifting{\manifold{N}})\\ 
 & \text{such that } \lifting{u}_{x, y}(x) = \lifting{u} (x),\, \lifting{u}_{x, y}(y) = \lifting{u} (y),\\
 &\text{and } \lifting{u}_{x, y} = \lifting{u}\vert_{\intvc{x}{y}} \text{ almost everywhere on }\intvc{x}{y}
 \bigr\},
 \end{split}
\end{equation}
for which we prove the following a priori estimate.

\begin{proposition}
\label{proposition_spgt1_estimate}
Let \(m \in \Nset\setminus \set{0}\), let \(s \in \intvo{0}{1}\) and let \(p \in \intvo{1}{\infty}\). 
If \(sp > 1\), then there exists a constant \(C \in \intvo{0}{\infty}\) such that if \(\Omega \subseteq \Rset^m\) is open and convex, if \(\lifting{u} \in Y (\Omega, \lifting{\manifold{N}})\) and if \(u \defeq \pi \compose \lifting{u} \in \homog{W}^{s, p}(\Omega, \manifold{N})\), then 
\begin{equation}
\label{eq_hao4yeiKieju4Riel5AxaeTh}
\smashoperator[l]{\iint_{\Omega \times \Omega}}
\frac{d_{\lifting{\manifold{N}}} (\lifting{u} (y), \lifting{u} (x))^p \wedge 1}{\abs{y - x}^{m +sp}} \dif y \dif x 
\le 
C \smashoperator{\iint_{\Omega \times \Omega}}
\frac{d_{\manifold{N}} (u (y), u (x))^p}{\abs{y - x}^{m + sp}} \dif y \dif x .
\end{equation}
\end{proposition}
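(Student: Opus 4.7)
The plan is in three steps: reduce the multidimensional estimate to a one-dimensional reverse oscillation estimate by slicing $\Omega$ along lines, dispatch the regime where the lifted distance is small using the local isometry property of $\pi$, and prove the reverse estimate in the regime where the lifted distance is large by exploiting the continuous-lift structure of $Y(\Omega, \lifting{\manifold{N}})$ together with the hypothesis $sp > 1$.

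For the reduction to 1D, I would pass to polar coordinates $y = x + t\xi$ and then use Fubini together with translation invariance along the direction $\xi \in \smash{\Sset^{m-1}}$ to rewrite both sides of \eqref{eq_hao4yeiKieju4Riel5AxaeTh} as integrals over $(\xi, a) \in \smash{\Sset^{m-1}} \times \xi^\perp$ of the analogous 1D double Gagliardo integrals along the line $L_{a, \xi} = a + \Rset \xi$. By the definition of $Y(\Omega, \lifting{\manifold{N}})$, on almost every such line the restriction of $\lifting{u}$ admits a continuous representative; it is then enough to prove the 1D analogue of \eqref{eq_hao4yeiKieju4Riel5AxaeTh} for a continuous $\lifting{v} : \intvo{0}{T} \to \lifting{\manifold{N}}$ with $v \defeq \pi \compose \lifting{v}$.

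Next, I would fix $r_0 \in \intvo{0}{\inj(\manifold{N})/2}$ small enough that the restriction of $\pi$ to every geodesic ball of radius $2 r_0$ in $\lifting{\manifold{N}}$ is an isometry onto the corresponding ball in $\manifold{N}$, and split the 1D LHS according to whether $d_{\lifting{\manifold{N}}}(\lifting{v}(y), \lifting{v}(x)) \le r_0$ or $> r_0$. In the small-distance case, the local isometry yields the pointwise identity $d_{\lifting{\manifold{N}}}(\lifting{v}(y), \lifting{v}(x)) = d_{\manifold{N}}(v(y), v(x))$, so this part of the LHS is directly dominated by the 1D RHS. In the large-distance case, the truncated integrand reduces to $|y - x|^{-1 - sp}$.

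The heart of the proof is the 1D large-distance estimate. Applying the intermediate value theorem to the continuous function $t \mapsto d_{\lifting{\manifold{N}}}(\lifting{v}(x), \lifting{v}(t))$, for each $x \in \intvo{0}{T}$ having any partner $y$ in the large-distance set to its right, one gets a first radius $T_x > x$ at which the lifted distance equals $r_0/2$. Integrating in $y$ yields the bound $\int_{\intvo{0}{T}} (T_x - x)^{-sp}\dif x$ for the large contribution, and the crux is to dominate this by the 1D Gagliardo energy of $v$. I would do this through a dyadic decomposition: recording the last crossings by the continuous lift distance of the successive dyadic levels $r_0/2^k$ on $(x, T_x)$, on each such layer the local isometry provides a uniform lower bound $d_{\manifold{N}}(v(x), v(\cdot)) \ge r_0 / 2^{k+1}$ which contributes a quantitative amount to the RHS, and a telescoping argument over $k$ closes the estimate. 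The main obstacle is making this telescoping quantitatively tight: the geometric series converges with the correct exponent matching $sp$ precisely because $sp > 1$, consistently with the sharpness of the hypothesis recorded in the paper (where the $sp = 1$ case is known to fail). Integrating back over the line slices then recovers \eqref{eq_hao4yeiKieju4Riel5AxaeTh}.
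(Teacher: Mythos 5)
Your slicing reduction along lines (via the rotation method / Fubini identity) is valid and correctly recasts \eqref{eq_hao4yeiKieju4Riel5AxaeTh} as its \(m=1\) case on almost every line; this is in the same spirit as the paper's \cref{lemma_fractional_integration}, which integrates a one-dimensional Gagliardo seminorm over segments \(\intvc{x}{y}\) parameterized by pairs \((x,y)\). Your small-distance treatment via the local isometry (the paper's \cref{lemma_small_isometry}), and the use of the intermediate value theorem to produce \(T_x\), are likewise correct.

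The gap is in the step you yourself flag as the crux: bounding \(\int_I (T_x - x)^{-sp}\,\dif x\) by the one-dimensional Gagliardo energy of \(v\) via a dyadic telescope over the last crossings of the levels \(r_0/2^k\) on \((x, T_x)\). As described, this telescope is aiming at the \emph{pointwise} inequality \((T_x - x)^{-sp} \lesssim \int_x^{T_x} d_{\manifold{N}}(v(y), v(x))^p/\abs{y-x}^{1+sp}\,\dif y\), and that inequality is false. Take \(\lifting{v}\) constant on \(\intvc{x}{T_x - \varepsilon}\) with \(d_{\lifting{\manifold{N}}}(\lifting{v}(x), \lifting{v}(\cdot))\) rising from \(0\) to \(r_0/2\) only on the final \(\varepsilon\)-interval: then the local integral is \(\lesssim (r_0/2)^p\,\varepsilon\,(T_x - x)^{-1-sp} \to 0\) as \(\varepsilon \to 0\), while \((T_x - x)^{-sp}\) stays fixed, so no telescoping based at the single point \(x\) can close. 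Concretely, in the Abel-summed form the dyadic sum \(\sum_k (r_0/2^{k+1})^p\bigl[(t_{k+1}-x)^{-sp}-(t_k - x)^{-sp}\bigr]\) is not bounded below by a positive multiple of \((T_x-x)^{-sp}\) when the crossing times \(t_k\) cluster near \(T_x\). The missing idea is the one the paper exploits: apply the one-dimensional fractional Morrey--Sobolev embedding at a strictly \emph{smaller} exponent \(\sigma\) with \(1/p < \sigma < s\), not on \(\intvc{x}{T_x}\) but on the whole segment \(\intvc{x}{y}\), to get a pointwise bound on the truncated lifted distance by a segment Gagliardo seminorm of exponent \(\sigma\); then the integration in \((x,y)\) (the change of variables of \cref{lemma_fractional_integration}) converges precisely because of the gain \(s - \sigma > 0\). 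This ``spread in the base point'' is what distributes the cost \((T_x - x)^{-sp}\) over a range of \(x\), and it cannot be replaced by a pointwise dyadic argument at a single \(x\).
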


\Cref{proposition_spgt1_estimate} was initially stated and proved in the case where the covering space \(\lifting{\manifold{N}}\) is compact \cite{Mironescu_VanSchaftingen_2021}, where it was an essential tool in the construction of liftings; 
the same argument also yields reverse superposition estimates in fractional Sobolev spaces \cite{VanSchaftingen_2022}.
We perform here a straighforward adaptation of the proof to the case where the covering space \(\lifting{\manifold{N}}\) is not compact.

As in the proof in the compact case \cite{Mironescu_VanSchaftingen_2021}, the main analytic ingredient of the proof of \cref{proposition_spgt1_estimate} is the following estimate on Gagliardo seminorms on segments:

\begin{lemma}
\label{lemma_fractional_integration}
Let \(m \in \Nset\setminus \set{0}\), let \(s, \sigma \in \intvo{0}{1}\) and let \(p \in \intvo{1}{\infty}\). 
If the set \(\Omega \subseteq \Rset^m\) is open and convex,
if \(0  < \sigma < s\) and if the mapping \(u :\Omega \to \manifold{N}\) is measurable, then 
\begin{multline}
\label{eq_ooXurang8cheic6thiebi6vu}
\iint\limits_{\Omega \times \Omega}
\brk[\bigg]{\iint\limits_{\intvc{0}{1}\times \intvc{0}{1}}\frac{d_{\manifold{N}} (u ((1-t) x + t y), u((1 - r)x + r y))^p}{\abs{t - r}^{1 + \sigma p}\abs{y - x}^{m + sp}} \dif r \dif t} \dif y \dif x\\
\le \frac{8}{(2(s - \sigma)p + 1)^2 - 1} \iint\limits_{\Omega \times \Omega}  \frac{d_{\manifold{N}} (u (y), u(x))^p}{\abs{x - y}^{m + sp}}  \dif y \dif x.
\end{multline}
\end{lemma}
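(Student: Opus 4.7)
The plan is to convert the left-hand side into an integral over pairs of points in $\Omega$ through a linear change of variables, at the cost of an integrable weight in the parameters $(t, r)$.

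I would begin by swapping the order of integration so that the integration in $(t, r) \in \intvc{0}{1}^2$ is outermost. For fixed $(t, r)$ with $t \ne r$, I introduce the affine map $\Phi_{t, r} \colon (x, y) \mapsto (a, b)$ defined by $a \defeq (1-t)\, x + t\, y$ and $b \defeq (1-r)\, x + r\, y$. Since $\Omega$ is convex, one has $\Phi_{t, r}(\Omega \times \Omega) \subseteq \Omega \times \Omega$. A direct computation on the $2m \times 2m$ matrix yields $\abs{\det D\Phi_{t, r}} = \abs{r - t}^m$, while the identity $b - a = (r - t)(y - x)$ gives $\abs{y - x} = \abs{b - a} / \abs{r - t}$.

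Next, I would perform the change of variables $(x, y) \mapsto (a, b)$ in the inner integral and enlarge the resulting image of integration to $\Omega \times \Omega$, which is legitimate since the integrand is nonnegative. Bookkeeping the exponents --- the factor $\abs{y - x}^{-(m + sp)}$ contributes $\abs{r - t}^{m + sp}$, the Jacobian contributes $\abs{r - t}^{-m}$, and the prefactor $\abs{t - r}^{-(1 + \sigma p)}$ is unchanged --- one obtains that the inner integral is bounded by
\[
\abs{r - t}^{(s - \sigma) p - 1} \iint_{\Omega \times \Omega} \frac{d_{\manifold{N}}(u(a), u(b))^p}{\abs{b - a}^{m + sp}} \dif b \dif a .
\]

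Finally, I would integrate the weight in $(t, r) \in \intvc{0}{1}^2$: by symmetry and a single beta-type computation,
\[
\iint_{\intvc{0}{1} \times \intvc{0}{1}} \abs{r - t}^{(s - \sigma) p - 1} \dif r \dif t = \frac{2}{(s - \sigma) p \brk{(s - \sigma) p + 1}} = \frac{8}{(2 (s - \sigma) p + 1)^2 - 1},
\]
using the identity $(2\alpha + 1)^2 - 1 = 4\alpha(\alpha + 1)$ with $\alpha \defeq (s - \sigma) p > 0$. Multiplying this constant by the double Gagliardo integral produces the stated bound. There is no genuine obstacle in the argument; the two points that deserve care are invoking the convexity of $\Omega$ precisely where the passage from $\Phi_{t, r}(\Omega \times \Omega)$ to $\Omega \times \Omega$ is made, and noting that the strict inequality $\sigma < s$ is exactly what makes the singular factor $\abs{r - t}^{(s - \sigma) p - 1}$ integrable at the diagonal $r = t$.
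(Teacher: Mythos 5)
Your proof is correct and uses essentially the same argument as the paper: the affine change of variables $(x,y)\mapsto((1-t)x+ty,(1-r)x+ry)$ with Jacobian $\abs{t-r}^m$, the use of convexity to enlarge the image domain to $\Omega\times\Omega$, and the exact computation of $\iint_{\intvc{0}{1}^2}\abs{t-r}^{(s-\sigma)p-1}\dif t\dif r$. The only cosmetic difference is that you fix $(t,r)$ first and change variables in $(x,y)$, whereas the paper performs the full $4m$-dimensional change of variables and then bounds the inner $(t,r)$-integral over the resulting set $\Sigma_{z,w}$ by the integral over the whole square; these are the same estimate carried out in a different order.
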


It will appear in the proof of \cref{lemma_fractional_integration} that the constant in the inequality \eqref{eq_ooXurang8cheic6thiebi6vu} is sharp: equality holds in  \eqref{eq_ooXurang8cheic6thiebi6vu} if \(\Omega = \Rset^m\). The left-hand side of   \eqref{eq_ooXurang8cheic6thiebi6vu} cannot be bounded for \(\sigma = s\).

\begin{proof}[Proof of \cref{lemma_fractional_integration}]
We apply the change of variable \((z, w) = ((1-t) x + t y, (1 - r) x + r y)\) in the integral on the left-hand side of \eqref{eq_ooXurang8cheic6thiebi6vu}, 
and we obtain,
since \(z - w = (t - r) (y - x)\) and \(\det (\begin{smallmatrix} 1 - t & t \\ 1 - r & r \end{smallmatrix}) = -(t - r)\),
\begin{multline}
\label{eq_phee6OoZaichohxae3Cheduk}
\iint\limits_{\Omega \times \Omega} 
\brk[\bigg]{
\iint\limits_{\intvc{0}{1}\times \intvc{0}{1}}
\frac{d_{\manifold{N}} (u ((1-t) x + t y), u((1 - r) x + r y))^p}{\abs{t - r}^{1 + \sigma p}\abs{y - x}^{m + sp}} \dif t \dif r} \dif y \dif x\\
= 
\iint\limits_{\Omega \times \Omega} 
\iint\limits_{\Sigma_{z, w}} 
\frac
  {d_{\manifold{N}} (u (z), u(w))^p}
  {\abs{t - r}^{1  - (s - \sigma) p} \abs{z - w}^{m + sp}} 
\dif t 
\dif r 
\dif z 
\dif w
,
\end{multline}
where we have defined for each \(z, w \in \Omega\) the set
\begin{equation*}
    \Sigma_{z, w} 
  \defeq 
    \bigl\{ 
      (r, t) \in \intvc{0}{1} \times \intvc{0}{1} 
      \st \tfrac{rz - tw}{r - t} \in \Omega 
      \text{ and } 
      \tfrac{(1 - r)z - (1 -t)w}{t - r} \in \Omega 
    \bigr\}. 
\end{equation*}
We observe that, since \(s > \sigma\), we have by domain-monotonicity of the integral and by direct computation for each \(z, w \in \Omega\) 
\begin{equation}
\label{eq_JoopahViop3tiene3Le6beic}
\begin{split}
 \iint\limits_{\Sigma_{z, w}} \frac{1}{\abs{t - r}^{1 - (s - \sigma)p}} \dif t \dif r
 &\le \int_0^1 \int_0^1 \frac{1}{\abs{t - r}^{1 - (s - \sigma)p}} \dif t \dif r\\
 & = \frac{1}{(s - \sigma)p} \int_{0}^1 \abs{1 - r}^{(s - \sigma)p} + \abs{r}^{(s - \sigma)p}\dif r\\
 &
 = \frac{8}{(2(s - \sigma)p + 1)^2 - 1}
< \infty,
\end{split}
\end{equation}
and the conclusion \eqref{eq_ooXurang8cheic6thiebi6vu} follows from the identity \eqref{eq_phee6OoZaichohxae3Cheduk} and the estimate \eqref{eq_JoopahViop3tiene3Le6beic}.
\end{proof}

Our second tool is the following elementary geometric result on covering space.

\begin{lemma}
\label{lemma_small_isometry}
Let \(\pi : {\lifting{\manifold{N}}} \to \manifold{N} \) be a Riemannian covering map. If the manifold \(\manifold{N}\) has a positive injectivity radius \(\inj (\manifold{N}) > 0\), then for every \(\lifting{x}, \lifting{y} \in {\lifting{\manifold{N}}}\) such that \(d_{{\lifting{\manifold{N}}}} (\lifting{x}, \lifting{y}) \le \inj (\manifold{N})\), one has \( d_{{\lifting{\manifold{N}}}}(\lifting{x}, \lifting{y}) = d_{\manifold{N}}(\pi (\lifting{x}), \pi (\lifting{y}))\).
\end{lemma}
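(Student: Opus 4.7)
Plan: The inequality $d_{\manifold{N}}(\pi(\lifting{x}), \pi(\lifting{y})) \le d_{\lifting{\manifold{N}}}(\lifting{x}, \lifting{y})$ is immediate from the local isometry property of $\pi$: since $\pi$ is a local isometry it is $1$-Lipschitz, so any rectifiable path in $\lifting{\manifold{N}}$ from $\lifting{x}$ to $\lifting{y}$ projects under $\pi$ to a path in $\manifold{N}$ of the same length. The nontrivial direction is the reverse inequality, which I would prove by exhibiting a path from $\lifting{x}$ to $\lifting{y}$ in $\lifting{\manifold{N}}$ whose length equals $d_{\manifold{N}}(\pi(\lifting{x}), \pi(\lifting{y}))$, obtained by lifting a minimizing geodesic.

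My preferred route is via Hopf--Rinow. In the paper's running setting $\manifold{N}$ is compact, hence geodesically complete; since every geodesic of $\manifold{N}$ lifts uniquely (for a prescribed starting point) to a geodesic of $\lifting{\manifold{N}}$ of the same length, the covering space $\lifting{\manifold{N}}$ is geodesically complete as well. Hopf--Rinow then yields a minimizing geodesic $\lifting{\gamma} \colon [0, L] \to \lifting{\manifold{N}}$ from $\lifting{x}$ to $\lifting{y}$ of length $L \defeq d_{\lifting{\manifold{N}}}(\lifting{x}, \lifting{y}) \le \inj(\manifold{N})$. Its projection $\gamma \defeq \pi \compose \lifting{\gamma}$ is a geodesic in $\manifold{N}$ of the same length $L$ joining $\pi(\lifting{x})$ to $\pi(\lifting{y})$; since $L \le \inj(\manifold{N})$, this geodesic is minimizing, so $L = d_{\manifold{N}}(\pi(\lifting{x}), \pi(\lifting{y}))$, which gives the desired equality.

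A second, more hands-on route that bypasses completeness—and is probably closer to the elementary flavour of the statement—goes through normal neighborhoods. Pick $r$ with $d_{\lifting{\manifold{N}}}(\lifting{x}, \lifting{y}) < r < \inj(\manifold{N})$; then the preimage under $\pi$ of the normal ball $B$ of radius $r$ around $\pi(\lifting{x})$ splits as a disjoint union of sheets on each of which $\pi$ is an isometry. Any path from $\lifting{x}$ to $\lifting{y}$ in $\lifting{\manifold{N}}$ of length less than $r$ projects into $B$, and by the unique path lifting property of covers its lift stays in the single sheet $V$ containing $\lifting{x}$; hence $\lifting{y} \in V$, and since $\pi|_V \colon V \to B$ is an isometry of a normal ball,
\[
  d_{\lifting{\manifold{N}}}(\lifting{x}, \lifting{y}) \le d_V(\lifting{x}, \lifting{y}) = d_B(\pi(\lifting{x}), \pi(\lifting{y})) = d_{\manifold{N}}(\pi(\lifting{x}), \pi(\lifting{y})).
\]
Combined with the $1$-Lipschitz bound, this forces equality, with the boundary case $d_{\lifting{\manifold{N}}}(\lifting{x}, \lifting{y}) = \inj(\manifold{N})$ handled by approximating via nearby points or equivalently by a passage to the limit in $r$. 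No serious obstacle is expected; the argument rests only on the standard facts that geodesics of length at most $\inj(\manifold{N})$ are minimizing and that paths lift uniquely to covers.
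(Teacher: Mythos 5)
Both of your routes are correct, and the second one is essentially what the paper gestures at (the paper does not give a proof, only cites \cite{Mironescu_VanSchaftingen_2021}*{lem.\ 2.1} and says the result ``follows from the definition of injectivity radius and from the lifting of geodesics''). A few small remarks. Your first route needs $\manifold{N}$ to be complete to invoke Hopf--Rinow; this is harmless here since $\manifold{N}$ is compact throughout the paper, but the lemma as stated only assumes $\inj(\manifold{N}) > 0$, so the second route matches the generality better. In the second route, the key facts you use implicitly deserve one line each: the inequality $\inj_{\lifting{x}}(\lifting{\manifold{N}}) \ge \inj_{\pi(\lifting{x})}(\manifold{N})$, which makes $\exp_{\lifting{x}}$ injective on the ball of radius $r$ and hence produces the sheet $V$ with $\pi\restr{V}$ a Riemannian isometry onto $B$; and the identity $\pi \compose \exp_{\lifting{x}} = \exp_{\pi(\lifting{x})} \compose d\pi_{\lifting{x}}$, which is what makes $V$ map isometrically onto $B$. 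Also, for the last equality $d_B(\pi(\lifting{x}), \pi(\lifting{y})) = d_{\manifold{N}}(\pi(\lifting{x}), \pi(\lifting{y}))$ you are implicitly using that $\pi(\lifting{x})$ is the center of $B$, so the minimizing geodesic is radial and stays in $B$; the intrinsic distance of a normal ball need not agree with the ambient distance for arbitrary pairs of its points, but it does for pairs with one endpoint at the center. Finally, your limiting argument for the boundary case $d_{\lifting{\manifold{N}}}(\lifting{x}, \lifting{y}) = \inj(\manifold{N})$ works: pick near-minimizing paths from $\lifting{x}$ to $\lifting{y}$, truncate them shortly before $\lifting{y}$ to get points $\lifting{y}_n \to \lifting{y}$ with $d_{\lifting{\manifold{N}}}(\lifting{x}, \lifting{y}_n) < \inj(\manifold{N})$, apply the open case and pass to the limit by continuity of the two distance functions.
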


The proof of \cref{lemma_small_isometry} follows from the definition of injectivity radius \(\inj (\manifold{N})\) and from the lifting of geodesics (see for example \cite{Mironescu_VanSchaftingen_2021}*{lem.\ 2.1}).

\begin{proof}[Proof of \cref{proposition_spgt1_estimate}]
We first assume that the set \(\Omega \subseteq \Rset^m\) is open and convex.
By convexity of \(\Omega\) and by definition of \(Y (\Omega, \lifting{\manifold{N}})\) in \eqref{eq_definition_Y}, for almost every \(x, y \in \Omega\), we have \(\intvc{x}{y} \subset \Omega\), the restriction \(\lifting{u}\restr{\intvc{x}{y}}\) of \(\lifting{u}\) to the segment \(\intvc{x}{y}\) satisfies \(\lifting{u}\restr{\intvc{x}{y}} = \lifting{u}_{x, y}\) almost everywhere on \(\intvc{x}{y}\), \(\lifting{u}_{x, y}(x) = \lifting{u} (x)\) and \(\lifting{u}_{x, y}(y) = \lifting{u} (y)\),
with \(\lifting{u}_{x, y} \in C (\intvc{x}{y}, \lifting{\manifold{N}})\).
By the intermediate value theorem, there exists \(z \in \intvc{x}{y}\) such that 
\begin{equation*}
 \inj (\manifold{N}) \wedge d_{\lifting{\manifold{N}}} (\lifting{u} (y), \lifting{u}(x))=
 \inj (\manifold{N}) \wedge d_{\lifting{\manifold{N}}} (\lifting{u}_{x, y} (y), \lifting{u}_{x, y} (x))=
 d_{\lifting{\manifold{N}}} (\lifting{u}_{x, y} (z), \lifting{u}_{x, y}(y));
\end{equation*}
by \cref{lemma_small_isometry}, we have thus  
\begin{equation*}
 \inj (\manifold{N}) \wedge d_{\lifting{\manifold{N}}} (\lifting{u} (y), \lifting{u}(x))
 = 
 d_{\manifold{N}} (u_{x, y} (z), u_{x, y}(y)),
\end{equation*}
with \(u_{x, y} \defeq \pi \compose \lifting{u}_{x, y} \in C(\intvc{x}{y}, \manifold{N})\).
We have thus proved that
\begin{equation}
\label{eq_oo5Eakai6eiyei0eengoa1ie}
 \inj (\manifold{N}) \wedge d_{\lifting{\manifold{N}}} (\lifting{u} (y), \lifting{u}(x))
 \le \sup_{z \in \intvc{x}{y}} d_{\manifold{N}} (u_{x, y} (z), u_{x, y} (y)).
\end{equation}
Fixing \(\sigma \in \intvo{0}{1}\) such that \(1/p < \sigma < s\), we deduce from the one-dimensional fractional Morrey--Sobolev embedding (see for example \cite{Leoni_2023}*{th.\ 2.8}) and from \eqref{eq_oo5Eakai6eiyei0eengoa1ie} that
\begin{equation}
\label{eq_xukaejaewufaej2Eu3Sohzuw}
\begin{split}
  \inj (\manifold{N})^p \wedge {}& d_{\lifting{\manifold{N}}} (\lifting{u} (y), \lifting{u}(x))^p\\
  &\le \Cl{cst_pohbuij1ahgahWudohd8laim} \smashoperator{\iint_{\intvc{0}{1}\times \intvc{0}{1}}} \frac{d_{\manifold{N}} (u_{x, y} ((1-t) x + t y), u_{x, y} ((1 - r) x + r y))^p}{\abs{t - r}^{1 + \sigma p}} \dif t \dif r\\
  &= \Cr{cst_pohbuij1ahgahWudohd8laim} \smashoperator{\iint_{\intvc{0}{1}\times \intvc{0}{1}}} \frac{d_{\manifold{N}} (u ((1-t) x + t y), u ((1 - r) x + r y))^p}{\abs{t - r}^{1 + \sigma p}} \dif t \dif r,
\end{split}
\end{equation}
since \(\sigma p > 1\) and \(u_{x, y} = u\restr{\intvc{x}{y}}\) almost everywhere on \(\intvc{x}{y}\).
The conclusion follows then by integration of \eqref{eq_xukaejaewufaej2Eu3Sohzuw} thanks to \cref{lemma_fractional_integration}.
\end{proof}

\subsection{Variations on the lower exponent}
We exhibit a whole family of characterizations of the space appearing in the description of liftings of \cref{theorem_lifting_truncated_noncompact}; our analysis follows and extends the results obtained for \(m = sp\) in the context of homotopy estimates \cite{VanSchaftingen_2020}*{\S 5}.
The results of the present section are valid under the quite general assumption that the target \(\manifold{E}\) is any metric space.

\begin{proposition}[Exponent improvement]
\label{proposition_exponent_improvement}
Let \(\manifold{M}\) be a Riemannian manifold, let \(\manifold{E}\) be a metric space, let \(s \in \intvo{0}{1}\), let \(p \in \intvo{1}{\infty}\) and let \(q_0, q_1 \in \intvo{0}{\infty}\).
If \(sp >  1 \vee q_0 \vee q_1\), then there exists a constant \(C \in \intvo{0}{\infty}\) such for every measurable map \(f : \manifold{M} \to \manifold{E}\) one has
\begin{multline}
\label{eq_theeshee1Ung5ovu2buoph3I}
\smashoperator[r]{\iint_{\manifold{M} \times \manifold{M}}}
\frac{d_{\manifold{E}} (f (y), f (x))^p \wedge d_{\manifold{E}} (f (y), f (x))^{q_1}
}{d_\manifold{M} (y, x)^{m + sp}} \dif y \dif x \\
\le 
C \smashoperator{\iint_{\manifold{M} \times \manifold{M}}}
\frac{d_{\manifold{E}} (f (y), f (x))^p \wedge  d_{\manifold{E}} (f (y), f (x))^{q_0}}{d_\manifold{M} (y, x)^{m + sp}}
\dif y \dif x ,
\end{multline}
with \(m \defeq  \dim \manifold{M}\).
\end{proposition}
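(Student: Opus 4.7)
I plan to reduce \eqref{eq_theeshee1Ung5ovu2buoph3I} to a Euclidean setting and then combine the segment averaging of \cref{lemma_fractional_integration} with a weak-type decay estimate on the level sets of $(x, y) \mapsto d_\manifold{E}(f(x), f(y))$, in the Marcinkiewicz-type spirit later invoked for \cref{theorem_estimate_lifting_noncompact}. First, using a finite atlas of bi-Lipschitz coordinate charts from $\manifold{M}$ onto convex open subsets of $\Rset^m$ and the local equivalence of the geodesic and Euclidean distances, it is enough to prove the estimate when $\manifold{M}$ is replaced by a convex open set $\Omega \subseteq \Rset^m$. By the symmetry of the statement I may assume $q_0 \le q_1$; the case $p \le q_0 \le q_1$ is immediate from the pointwise inequality $\phi_{q_1}(t) \le \phi_{q_0}(t)$, valid since $q \mapsto \phi_q(t) \defeq t^p \wedge t^q$ is nonincreasing in $q$ on $\intvr{p}{\infty}$, so I focus on the two remaining cases $q_0 \le q_1 \le p$ and $q_0 \le p \le q_1$.

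Second, I split the left-hand side of \eqref{eq_theeshee1Ung5ovu2buoph3I} according to whether $d_\manifold{E}(f(x), f(y)) \le 1$ or not. On the small-jump set, the pointwise bound $\phi_{q_1}(t) \le \phi_{q_0}(t)$ for $t \in \intvc{0}{1}$ (by equality when $q_1 \le p$ and by $t^{q_1} \le t^p$ when $q_1 \ge p$) directly controls that part by the right-hand side of \eqref{eq_theeshee1Ung5ovu2buoph3I}. The essential task is then to estimate the large-jump part
\[
 \smashoperator{\iint_{\substack{(x, y) \in \Omega \times \Omega \\ d_\manifold{E}(f(x), f(y)) \ge 1}}} \frac{d_\manifold{E}(f(x), f(y))^{p \wedge q_1}}{\abs{y - x}^{m + sp}} \dif y \dif x
\]
by a constant multiple of the right-hand side of \eqref{eq_theeshee1Ung5ovu2buoph3I}. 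I would achieve this by combining a layer-cake decomposition of the preceding quantity in the variable $\lambda \in \intvr{1}{\infty}$ with a decay estimate on the level sets
\[
 J_\lambda \defeq \smashoperator{\iint_{\substack{(x, y) \in \Omega \times \Omega \\ d_\manifold{E}(f(x), f(y)) \ge \lambda}}} \frac{1}{\abs{y - x}^{m + sp}} \dif y \dif x,
\]
of the form $J_\lambda \le C \lambda^{-sp}$ times the right-hand side of \eqref{eq_theeshee1Ung5ovu2buoph3I} for $\lambda \ge 1$; since $sp > p \wedge q_1$, integrating this in $\lambda$ against $(p \wedge q_1) \lambda^{p \wedge q_1 - 1}$ on $\intvr{1}{\infty}$ yields the desired bound.

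The main obstacle and the technical heart of the argument is the weak-type decay $J_\lambda \le C \lambda^{-sp}$. A direct application of \cref{lemma_fractional_integration} together with the one-dimensional fractional Morrey--Sobolev embedding for some $\sigma \in \intvo{1/p}{s}$ only gives $J_\lambda \le C \lambda^{-p}$ times the $p$-Gagliardo energy, which is insufficient since the corresponding $\lambda$-integral diverges. To recover the missing factor $\lambda^{p - sp}$ I would iterate the segment-averaging argument by chaining through $N$ intermediate points on the segment $[x, y]$ with $N$ of order $\lambda$: whenever $d_\manifold{E}(f(x), f(y)) \ge \lambda$, the triangle inequality along $[x, y]$ forces at least $N$ successive subjumps of size comparable to one, each of which contributes an integrable amount of $\phi_{q_0}$-energy to the right-hand side, and the resulting accumulation provides the extra power $\lambda^{sp - p}$. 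The condition $sp > \max(1, q_0, q_1)$ ensures both that the 1D Morrey embedding applies and that the exponent bookkeeping closes.
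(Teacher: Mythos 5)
Your plan correctly isolates the two routine reductions (passing to a convex chart via \cref{lemma_manifold_diagonal_covering}, splitting off the region $d_{\manifold{E}}(f(x),f(y))\le 1$ where the inequality is pointwise), and these match the opening of the paper's argument. The gap is in the treatment of the large-jump region, where you propose a layer-cake decomposition driven by the weak-type decay
\begin{equation*}
  J_\lambda \defeq \smashoperator{\iint_{\substack{(x,y)\in\Omega\times\Omega\\ d_{\manifold{E}}(f(x),f(y))\ge\lambda}}}\frac{\dif y\,\dif x}{\abs{y-x}^{m+sp}}
  \ \lesssim\ \lambda^{-sp}\smashoperator{\iint_{\Omega\times\Omega}}\frac{d_{\manifold{E}}(f(y),f(x))^p\wedge d_{\manifold{E}}(f(y),f(x))^{q_0}}{\abs{y-x}^{m+sp}}\,\dif y\,\dif x,
\end{equation*}
with the decay to be obtained by ``chaining through $N\sim\lambda$ intermediate points'' along $[x,y]$.

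That chaining step is not available for a general measurable $f:\Omega\to\manifold{E}$. For a lifting $\lifting{u}$ one can invoke continuity on almost every segment and the intermediate value theorem to \emph{produce} points $z_0,\dotsc,z_\ell$ with each consecutive jump exactly $\delta$ (this is precisely what makes \cref{lemma_large_scale_1d_estimate} work); but for a bare measurable $f$ the triangle inequality only gives $\sum_i d_{\manifold{E}}(f(z_{i-1}),f(z_i))\ge\lambda$, which is perfectly consistent with a single subjump of size $\lambda$ and all others vanishing. So ``at least $N$ subjumps of size comparable to one'' does not follow. Moreover the target rate $\lambda^{-sp}$ is itself optimistic: the midpoint doubling argument of \cref{lemma_fract_gap_integral_growth} (with $q=0$) applied to $J_\lambda$ on a convex $\Omega$ only yields $J_\lambda\lesssim \lambda^{1-sp}J_{1}$, and inserting this into the layer-cake integral $\int_1^\infty \lambda^{q_1-1}J_\lambda\,\dif\lambda$ requires $q_1<sp-1$ rather than the hypothesis $q_1<sp$. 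The paper gains the missing power precisely by \emph{not} working with the indicator level sets $J_\lambda$: it uses truncated differences $(d_{\manifold{E}}(f(y),f(x))-\lambda)_+^{q}$, whose scaling under the midpoint trick (\cref{lemma_fract_gap_integral_growth}) is better than that of the indicator, combined with the integral representation of truncated powers in \cref{lemma_ineq_p_q_scaling}, assembled in \cref{proposition_p_q_scaling_Omega}. Even there, when $q_0<1$ the one-shot estimate only reaches $q_1<q_0+(\gamma-1)$ and a finite iteration is needed to cover the full range $q_1<sp$; this subtlety is invisible in your plan but is one more reason the crude $J_\lambda\lesssim\lambda^{-sp}$ bound cannot be the whole story. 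A minor additional remark: since $q_0\vee q_1<sp<p$, the cases $q_1\ge p$ you set aside never arise under the stated hypotheses.
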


The main tool to prove \cref{proposition_exponent_improvement} is the following estimate which was already known in the special case \(\gamma = m\) \cite{VanSchaftingen_2020}*{prop.\ 5.5}.

\begin{proposition}
  \label{proposition_p_q_scaling_Omega}
  Let \(q_0, q_1 \in [0, + \infty)\), let \(\eta \in (0, 1)\),
  and let \(\gamma \in \intvo{0}{\infty}\).
  If \(q_1 < \gamma\) and if  
  either \(q_0 \ge 1\) or \(\gamma > 1\), then there exists a constant \(C \in \intvo{0}{\infty}\) 
  such that 
  for every \(m \in \Nset \setminus \set{0}\), for every
  convex open set \(\Omega \subseteq \Rset^m\) and for every measurable map \(f : \Omega \to \manifold{E}\),
  one has 
  \begin{equation}
  \label{eq_Adui5veiraethahki0dah4mu}
   \smashoperator[r]{\iint_{
      \substack{
        (x, y) \in \Omega \times \Omega\\
        d_{\manifold{E}} (f (y), f (x)) \ge \lambda}
    }}
    \frac
    {\brk[\big]{
      d_{\manifold{E}} (f (y), f (x))
      -
      \lambda
      }^{q_1}}
    {\abs{y - x}^{m + \gamma}}
    \dif y
    \dif x
    \le
    C 
    \lambda^{q_1 - q_0}
    \smashoperator{\iint_{
      \substack{
        (x, y) \in \Omega \times \Omega\\
        d_{\manifold{E}} (f (y), f (x)) \ge \eta \lambda}
    }}
    \frac{
      \bigl(
      d_{\manifold{E}} (f (y), f (x))
      -
      \eta \lambda
      \bigr)^{q_0}}
    {\abs{y - x}^{m + \gamma}}
    \dif y
    \dif x
    .
  \end{equation}
\end{proposition}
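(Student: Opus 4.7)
The plan is to split the integration region in the left-hand side of \eqref{eq_Adui5veiraethahki0dah4mu} based on the size of $d := d_{\manifold{E}}(f(y), f(x))$ relative to $\lambda$, and to handle two regimes separately. Writing $C$ for a generic constant depending only on $q_0, q_1, \eta, \gamma$, I would first dispose of the ``moderate oscillation'' regime $\{\lambda \le d < \lambda/\eta\}$ by a pointwise comparison of integrands: on this set $d - \lambda \le (1-\eta)\lambda/\eta$ while $d - \eta \lambda \ge (1-\eta)\lambda$, so that $(d - \lambda)^{q_1} \le C \lambda^{q_1 - q_0}(d - \eta\lambda)^{q_0}$; since $\{\lambda \le d < \lambda/\eta\} \subseteq \{d \ge \eta \lambda\}$, integrating this pointwise inequality against the common kernel $\abs{y - x}^{-m - \gamma}$ bounds this contribution by the right-hand side of \eqref{eq_Adui5veiraethahki0dah4mu}.

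For the ``large oscillation'' regime $\{d \ge \lambda/\eta\}$ I would run a chaining argument. For each such pair $(x, y)$ and each auxiliary point $z$ in a ball $B = B\brk{(x+y)/2,\, r \abs{y - x}}$ with $r$ small enough that $B \subseteq \Omega$ (which holds for $r$ less than the distance of $(x+y)/2$ to $\partial \Omega$ normalized by $\abs{y-x}$, a property enjoyed by all of $[x, y]$ by convexity of $\Omega$), the triangle inequality forces $\max\brk{d_{\manifold{E}}(f(x), f(z)), d_{\manifold{E}}(f(z), f(y))} \ge d(x, y)/2 \ge \lambda/(2\eta)$. Multiplying the integrand by a normalized indicator of $B$ in $z$ and applying Fubini, the integral over $(x, y, z)$ rewrites as an integral over a pair $(x, z)$ or $(z, y)$ with threshold $\lambda/(2\eta)$ and comparable kernel, at the cost of a multiplicative factor extracted from the $z$-integration of $\abs{x - y}^{-m - \gamma}$ against the ball indicator.

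Iterating this dichotomy $k$ times lowers the threshold by a factor $2^{k}$ while absorbing at each step a quantitative factor of the form $d/\lambda \ge 1/\eta$, which is precisely what converts the exponent $q_1$ toward $q_0$; after $k \asymp \log(1/\eta)$ iterations the threshold drops below $\eta \lambda$ and the remaining integral falls into the moderate regime handled above. The hypothesis $q_1 < \gamma$ is what makes the total constant of this iteration finite: the Fubini bookkeeping yields a geometric factor of order $2^{q_1 - \gamma}$ per step, summable exactly under $q_1 < \gamma$. The endpoint disjunction ``$q_0 \ge 1$ or $\gamma > 1$'' is expected to appear at the matching step where the exponent on the $z$-difference is finally identified with $q_0$: when $q_0 < 1$ the convenient subadditivity $(a + b)^{q_0} \le a^{q_0} + b^{q_0}$ is available but its converse fails, and this asymmetry must be compensated by the strict margin $\gamma > 1$ to avoid a logarithmic divergence in the $z$-integration.

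The main obstacle is the algebraic bookkeeping across the chain: tracking how the exponent on $(d - \cdot)$ migrates from $q_1$ to $q_0$ while the threshold halves, and verifying that the two effects compound with a constant that depends only on $q_0, q_1, \eta, \gamma$ (and not on $m$ or $\Omega$). A secondary technical point is the dichotomy step, since at each iteration the auxiliary pair may be either $(x, z)$ or $(z, y)$; the two cases must be handled symmetrically and the resulting bounds reassembled, which is routine but requires care in separating the two halves of the ball-integration without double-counting.
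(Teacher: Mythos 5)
Your "moderate oscillation" split is correct and corresponds to the trivial pointwise case noted right after the statement in the paper, and your midpoint‐halving "chaining" step is indeed the mechanism behind the paper's Lemma \ref{lemma_fract_gap_integral_growth}. However, there is a genuine gap in how the exponent is supposed to change from \(q_1\) to \(q_0\). The halving iteration you describe lowers the \emph{threshold} (from \(\lambda\) to \(\lambda/2\), \(\lambda/4\), \ldots), but it carries the exponent \(q_1\) unchanged: after \(k \asymp \log(1/\eta)\) steps you arrive at an estimate with integrand \((d_{\manifold{E}}(f(y),f(x)) - \eta\lambda)^{q_1}\), not \((d_{\manifold{E}}(f(y),f(x)) - \eta\lambda)^{q_0}\). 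Nothing in the iteration caps \(d_{\manifold{E}}(f(y),f(x))\) from above, so after the last iteration the pairs still range over \emph{all} \(d_{\manifold{E}}(f(y),f(x)) \ge \eta\lambda\), and the pointwise comparison you used in the moderate regime (valid only when \(d_{\manifold{E}}\le\lambda/\eta\)) cannot be applied to finish — when \(q_1 > q_0\), the inequality \((d-\eta\lambda)^{q_1} \lesssim \lambda^{q_1-q_0}(d-\eta\lambda)^{q_0}\) fails for large \(d\). The phrase ``absorbing at each step a quantitative factor of the form \(d/\lambda\)\ldots which is precisely what converts the exponent'' names the missing step without providing a mechanism.

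The missing idea in the paper is Lemma \ref{lemma_ineq_p_q_scaling}, an \emph{integral representation of the truncated power}: \((t-1)^{q_1}\le C\int_\eta^t (t-r)^{q_0}r^{-(1+q_0-q_1)}\dif r\). Applying this pointwise with \(t=d_{\manifold{E}}(f(y),f(x))/\lambda\) and then Fubini replaces the single threshold \(\lambda\) by a \emph{continuum} of thresholds \(r\lambda\) (\(r\in(\eta,\infty)\)), each carrying the target exponent \(q_0\); the halving lemma is then applied at each fixed \(r\) to bring the threshold down to \(\eta\lambda\), producing a factor \(r^{-(\gamma-1)+(q_0-1)_+}\), and the resulting \(r\)-integral converges precisely when \(q_1 < \gamma - (1-q_0)_+\). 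This is where the disjunction ``\(q_0\ge 1\) or \(\gamma > 1\)'' actually enters (for \(q_0\ge 1\) one gets \(q_1<\gamma\) immediately; for \(q_0<1\) and \(\gamma>1\), one first proves the estimate for \(q_1<q_0+(\gamma-1)\) and then bootstraps by finitely many applications). Your heuristic for this disjunction — subadditivity of \((a+b)^{q_0}\) compensated by \(\gamma>1\) — does not match the real mechanism, and your stated per-step factor \(2^{q_1-\gamma}\) is only the correct one when \(q_1\ge 1\) (in general it is \(2^{(q-1)_+-(\gamma-1)}\)). A minor further issue: for a general convex \(\Omega\), a ball around \((x+y)/2\) of radius proportional to \(\abs{y-x}\) need not lie inside \(\Omega\) (e.g.\ when both endpoints are near the boundary), which is why the paper uses the exact midpoint rather than an averaged neighborhood.
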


In the particular case \(q_1 \le q_0\), one has the pointwise estimate
\(
\brk{
      t
      -
      \lambda
      }^{q_1}
      \le \brk{
      t
      -
      \eta \lambda
      }^{q_0}/((1 - \eta)\lambda)^{q_0 - q_1}
\),
and \eqref{eq_Adui5veiraethahki0dah4mu} follows immediately by integration.

\Cref{proposition_p_q_scaling_Omega} is reminiscent of an estimate of Nguyen that appears in characterizations of first-order Sobolev spaces \cite{Nguyen_2011}*{th.\ 1 (a)}.

Our first tool to prove \cref{proposition_p_q_scaling_Omega} in general, is the following scaling inequality (when \(\gamma = m\) see \cite{VanSchaftingen_2020}*{prop.\ 5.1}).

\begin{lemma}
  \label{lemma_fract_gap_integral_growth}
  For every \(m \in \Nset \setminus \set{0}\),
  for every convex open set \(\Omega \subset \Rset^m\), for every measurable map \(f : \Omega \to \manifold{E}\),
  for every \(q \in \intvr{0}{\infty}\) and for every \(\gamma \in \Rset\),
  if \(\lambda_0 < \lambda_1\) one has
  \begin{multline}
  \label{eq_ahSh0Fuchei0eiGhoo5bohwe}
    \smashoperator[r]{\iint_{
      \substack{
        (x, y) \in \Omega \times \Omega\\
        d_{\manifold{E}} (f (y), f (x)) \ge \lambda_1}
    }}
    \frac
    {\bigl(d_{\manifold{E}} (f (y), f (x)) - \lambda_1\bigr)^q}
    {\abs{y - x}^{m + \gamma}}
    \dif y
    \dif x
    \\[-1em]
    \le 
    2^{(\gamma - 1 - (q - 1)_+)_+}
    \biggl(
    \frac
    {\lambda_1}
    {\lambda_0}
    \biggr)^{(q - 1)_+ - \gamma + 1}
    \smashoperator{
    \iint_{
      \substack{
        (x, y) \in \Omega \times \Omega\\
        d_{\manifold{E}} (f (y), f (x)) \ge \lambda_0}
    }
    }
    \frac
    {\bigl(d_{\manifold{E}} (f (y), f (x)) - \lambda_0\bigr)^q}
    {\abs{y - x}^{m + \gamma}}
    \dif y
    \dif x
    .
  \end{multline}
\end{lemma}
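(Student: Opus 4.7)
The plan is to subdivide the segment \([x, y]\) into \(N\) equal pieces, apply the triangle inequality along the resulting chain, and then perform a change of variables on each link. Given \(x, y \in \Omega\) with \(d_{\manifold{E}}(f(y), f(x)) \ge \lambda_1\), I would set \(N \defeq \floor{\lambda_1/\lambda_0}\), which is at least \(1\) since \(\lambda_0 < \lambda_1\), and define \(z_j \defeq x + (j/N)(y - x)\) for \(j \in \set{0, \dotsc, N}\); each \(z_j\) lies in \(\Omega\) by convexity. Because \(\lambda_1 \ge N \lambda_0\), the triangle inequality \(d_{\manifold{E}}(f(y), f(x)) \le \sum_{j = 0}^{N-1} d_{\manifold{E}}(f(z_{j+1}), f(z_j))\) implies
\begin{equation*}
(d_{\manifold{E}}(f(y), f(x)) - \lambda_1)_+ \le \sum_{j = 0}^{N-1} (d_{\manifold{E}}(f(z_{j+1}), f(z_j)) - \lambda_0)_+ .
\end{equation*}
Raising to the power \(q\) and invoking the elementary inequality \((\sum_j b_j)^q \le N^{(q-1)_+} \sum_j b_j^q\) (which follows from the power-mean inequality when \(q \ge 1\) and from the subadditivity of \(t \mapsto t^q\) when \(q \in \intvc{0}{1}\)), and then dividing by \(\abs{y - x}^{m + \gamma} = N^{m + \gamma} \abs{z_{j+1} - z_j}^{m + \gamma}\), I obtain the pointwise estimate
\begin{equation*}
\frac{(d_{\manifold{E}}(f(y), f(x)) - \lambda_1)_+^q}{\abs{y - x}^{m + \gamma}} \le N^{(q-1)_+ - m - \gamma} \sum_{j=0}^{N-1} \frac{(d_{\manifold{E}}(f(z_{j+1}), f(z_j)) - \lambda_0)_+^q}{\abs{z_{j+1} - z_j}^{m + \gamma}} .
\end{equation*}

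Next, for each fixed \(j\), I change variables \((x, y) \mapsto (u, v) \defeq (z_j, z_{j+1})\); this linear bijection has Jacobian \(N^{-m}\), so \(\dif y \dif x = N^{m}\, \dif v \dif u\), and its image is contained in \(\Omega \times \Omega\) by convexity. Integrating the pointwise inequality above over \(\set{(x, y) \in \Omega \times \Omega \st d_{\manifold{E}}(f(y), f(x)) \ge \lambda_1}\) and then discarding the constraints inherited from \(x, y \in \Omega\) and from \(d_{\manifold{E}}(f(y), f(x)) \ge \lambda_1\) after the change of variable---legitimate since the integrand is nonnegative and vanishes unless \(d_{\manifold{E}}(f(v), f(u)) \ge \lambda_0\)---I collect a factor \(N^m\) from the Jacobian and \(N\) from the summation over \(j\), so that the net prefactor in front of the right-hand side of \eqref{eq_ahSh0Fuchei0eiGhoo5bohwe} is \(N^{(q-1)_+ - \gamma + 1}\). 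Finally, since \(\lambda_1 > \lambda_0\) forces \(\lambda_1/(2 \lambda_0) \le N \le \lambda_1/\lambda_0\), the monotonicity of the exponential in the sign of its exponent gives \(N^{(q-1)_+ - \gamma + 1} \le 2^{(\gamma - 1 - (q-1)_+)_+} (\lambda_1/\lambda_0)^{(q-1)_+ - \gamma + 1}\), which matches the constants in \eqref{eq_ahSh0Fuchei0eiGhoo5bohwe} exactly.

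The only delicate point is the power-mean / subadditivity bookkeeping that yields the uniform exponent \((q-1)_+\) for all \(q \ge 0\); once that is in place, the remaining computation---balancing the three \(N\)-dependent factors (pointwise \(N^{(q-1)_+ - m - \gamma}\), Jacobian \(N^m\), and cardinality \(N\)) into the single net exponent \((q-1)_+ - \gamma + 1\), and then converting from \(N\) to \(\lambda_1/\lambda_0\)---is routine, but the bookkeeping must be tracked carefully to avoid an off-by-one error.
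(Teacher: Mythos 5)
Your proof is correct, but it takes a genuinely different route from the paper's. The paper establishes a single halving step — comparing the integral at level \(\lambda_1\) to the integral at level \(\lambda_1/2\) by splitting each segment \([x,y]\) at the midpoint, applying the triangle inequality to two pieces, and changing variables \(y \mapsto 2z-x\) — then iterates this bisection inequality \(\ell\) times and chooses \(\ell\) so that \(2^{-(\ell+1)}\lambda_1 \le \lambda_0 < 2^{-\ell}\lambda_1\). You instead subdivide each segment into \(N = \floor{\lambda_1/\lambda_0}\) equal pieces in one shot, use the fact that \(N\lambda_0 \le \lambda_1\) to absorb the threshold, apply the power inequality \((\sum b_j)^q \le N^{(q-1)_+}\sum b_j^q\) once, and change variables once per link. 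The bookkeeping balances exactly the same way: your three factors \(N^{(q-1)_+-m-\gamma}\) (pointwise), \(N^m\) (Jacobian), and \(N\) (sum over links) give \(N^{(q-1)_+-\gamma+1}\), just as the paper's \(\ell\)-fold iterate of \(2^{(q-1)_+-(\gamma-1)}\) gives \(2^{\ell((q-1)_+-\gamma+1)}\); and converting from \(N\) (resp. \(2^\ell\)) to \(\lambda_1/\lambda_0\) produces the identical constant \(2^{(\gamma-1-(q-1)_+)_+}\). The paper's iterated-doubling version has slightly lighter per-step bookkeeping (only two pieces, only a halving of the threshold), while yours is more direct and avoids the iteration/choice-of-\(\ell\) step; both give the stated constant and both exploit the same three ingredients (triangle inequality on a subdivision of the segment, the \(N^{(q-1)_+}\) power estimate, and the linear change of variables made available by convexity of \(\Omega\)).
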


\begin{proof}%
[Proof of \cref{lemma_fract_gap_integral_growth}]
  Since the set \(\Omega\) is convex, for every \(x, y \in \Omega\), we have \(\frac{x + y}{2} \in \Omega\) and thus by the triangle inequality
  \begin{equation*}
   d_{\manifold{E}} (f (y), f (x)) - \lambda_1
   \le d_{\manifold{E}} \brk[\big]{f (y), f (\tfrac{x + y}{2})} - \tfrac{\lambda_1}{2}
   + d_{\manifold{E}} \brk[\big]{f (\tfrac{x + y}{2}), f (x)} - \tfrac{\lambda_1}{2},
  \end{equation*}
so that 
  \begin{equation}
  \label{eq_quailee0Reethie1eivifeeZ}
    \begin{split}
    \smashoperator[r]{
      \iint_{
        \substack{
          (x, y) \in \Omega \times \Omega\\
          d_{\manifold{E}} (f (y), f (x)) \ge \lambda_1}}
      }
      \frac
      {\bigl(d_{\manifold{E}} (f (y), f (x)) - \lambda_1\bigr)^q}
      {\abs{y - x}^{m + \gamma}}
      \dif y
      \dif x
      &\le
      2^{(q - 1)_+}
      \smashoperator{
      \iint_{
        \substack{
          (x, y) \in \Omega \times \Omega\\
          d_{\manifold{E}} (f (y), f (\frac{x + y}{2})) \ge \frac{\lambda_1}{2}
        }
      }
      }
      \frac
      {\bigl(d_{\manifold{E}} (f (y), f (\frac{x + y}{2})) - \frac{\lambda_1}{2}\bigr)^q}
      {\abs{y - x}^{m + \gamma}}
      \dif y
      \dif x\\
      &\qquad 
      +
      2^{(q - 1)_+}
      \smashoperator{
      \iint_{
        \substack{
          (x, y) \in \Omega \times \Omega\\
          d_{\manifold{E}} (f (\frac{x + y}{2}), f (x)) \ge \frac{\lambda_1}{2}
        }        
      }
      }
      \frac
      {\bigl(d_{\manifold{E}} (f (\frac{x + y}{2}), f (x)) - \frac{\lambda_1}{2}\bigr)^q}
      {\abs{y - x}^{m + \gamma}}
      \dif y
      \dif x.
    \end{split}
  \end{equation}
Therefore by symmetry between both terms in the right-hand side of \eqref{eq_quailee0Reethie1eivifeeZ} under exchange of the variables \(x\) and \(y\) in the integral, we have
  \begin{equation}
    \label{ineq_fract_double_gap_tgl}
    \begin{split}
      \smashoperator[r]{\iint_{
        \substack{
          (x, y) \in \Omega \times \Omega\\
          d_{\manifold{E}} (f (y), f (x)) \ge \lambda_1}}}
      &
      \frac
      {\bigl(d_{\manifold{E}} (f (y), f (x)) - \lambda_1\bigr)^q}
      {\abs{y - x}^{m + \gamma}}
      \dif y
      \dif x
      = 
      2^{(q - 1)_+ + 1}
      \smashoperator{
      \iint_{
        \substack{
          (x, y) \in \Omega \times \Omega\\
          d_{\manifold{E}} (f (\frac{x + y}{2}), f (x)) \ge \frac{\lambda_1}{2}
        }
      }
      }
      \frac
      {\bigl(d_{\manifold{E}} (f (\frac{x + y}{2}), f (x)) - \frac{\lambda_1}{2}\bigr)^q}
      {\abs{y - x}^{m + \gamma}}
      \dif y
      \dif x
       .
    \end{split}
  \end{equation}
By the change of variable \(y = 2 z - x\), we have \(\abs{y - x} = 2 \abs{z - x}\) and thus
  \begin{equation}
    \label{ineq_fract_double_gap_cov}
    \begin{split}
      \smashoperator[r]{\iint_{
        \substack{
          (x, y) \in \Omega \times \Omega\\
          d_{\manifold{E}} (f (\frac{x + y}{2}), f (x)) \ge \frac{\lambda_1}{2}
        }
      }}
      \frac
      {
        \bigl(
        d_{\manifold{E}} (f (\frac{x + y}{2}), f (x)) 
        -
        \frac{\lambda_1}{2}
        \bigr)^q
      }
      {\abs{y - x}^{m + \gamma}}
      \dif y
      \dif x
      &
      =
      \frac
      {1}
      {2^\gamma}
      \int_{\Omega}
      \brk[\bigg]{
      \int_{\Sigma_x}
      \frac
      {
        \bigl(
        d_{\manifold{E}} (f (z), f (x))
        - 
        \frac{\lambda_1}{2}
        \bigr)^q
      }
      {\abs{z - x}^{m + \gamma}}  
      \dif z
      }
      \dif x
      \\
      &
      \le
      \frac
      {1}
      {2^\gamma}
      \smashoperator{
      \iint_{
        \substack{
          (x, y) \in \Omega \times \Omega\\
          d_{\manifold{E}} (f (y), f (x)) \ge \frac{\lambda_1}{2}
        }
      }}
      \frac
      {
        \bigl(
        d_{\manifold{E}} (f (y), f (x)) - \frac{\lambda_1}{2}
        \bigr)^q
      }
      {\abs{y - x}^{m + \gamma}}  
      \dif y
      \dif x
       ,
    \end{split}
  \end{equation}
  where for every \(x \in \Omega\), the set \(\Sigma_x\) is defined as
  \begin{equation*}
    \Sigma_x 
    \defeq 
    \set[\big]{
    z \in \Omega 
    \st 
    2 z - x \in \Omega 
    \text{ and }
    d_{\manifold{E}} (f (z), f (x)) \ge \tfrac{\lambda_1}{2}
    }
    .
  \end{equation*}
  By \eqref{ineq_fract_double_gap_tgl} and \eqref{ineq_fract_double_gap_cov}, we deduce that for every \(\lambda_1 > 0\),
  \begin{multline}
    \label{ineq_fract_double_gap}
    \smashoperator[r]{\iint_{
      \substack{
        (x, y) \in \Omega \times \Omega\\
        d_{\manifold{E}} (f (y), f (x)) \ge \lambda_1
      }        
    }}
    \frac
    {\bigl(d_{\manifold{E}} (f (y), f (x)) - \lambda_1\bigr)^q}
    {\abs{y - x}^{m + \gamma}}
    \dif y
    \dif x
    \\[-2em]
    \le
    2^{(q - 1)_+ - (\gamma - 1)}
    \smashoperator{
    \iint_{
      \substack{
        (x, y) \in \Omega \times \Omega\\
        d_{\manifold{E}} (f (y), f (x)) \ge \frac{\lambda_1}{2}
      }        
    }}
    \frac
    {
      \bigl(
      d_{\manifold{E}} (f (y), f (x)) - \frac{\lambda_1}{2}
      \bigr)^q
    }
    {\abs{y - x}^{m + \gamma}}
    \dif y
    \dif x
    .    
  \end{multline}
Iterating the estimate \eqref{ineq_fract_double_gap}, 
we deduce that for every nonnegative integer \(\ell \in \Nset\),
  \begin{multline}
  \label{eq_ThooyieGahyeekoo6Oomeala}
    \smashoperator[r]{\iint_{
      \substack{
        (x, y) \in \Omega \times \Omega\\
        d_{\manifold{E}} (f (y), f (x)) \ge \lambda_1
      }        
    }}
    \frac
    {\bigl(d_{\manifold{E}} (f (y), f (x)) - \lambda_1\bigr)^q}
    {\abs{y - x}^{m + \gamma}}
    \dif y
    \dif x\\[-2em]
    \le
    2^{\ell ((q - 1)_+ - (\gamma - 1))}
    \smashoperator{
    \iint_{
      \substack{
        (x, y) \in \Omega \times \Omega\\
        d_{\manifold{E}} (f (y), f (x)) \ge \frac{\lambda_1}{2^\ell}
      }        
    }}
    \frac
    {(d_{\manifold{E}} (f (y), f (x)) - \frac{\lambda_1}{2^\ell} )^q}
    {\abs{y - x}^{m + \gamma}}
    \dif y
    \dif x
    .    
  \end{multline}
  If \(\lambda_0 \in (0, \lambda_1)\), we let \(\ell \in \Nset\) in \eqref{eq_ThooyieGahyeekoo6Oomeala}
  be defined by the condition \(2^{-(\ell + 1)} \lambda_1 \le \lambda_0 < 2^{-\ell} \lambda_1\) and
  we conclude that \eqref{eq_ahSh0Fuchei0eiGhoo5bohwe} holds.
\end{proof}

Our second proof for the proof of \cref{proposition_p_q_scaling_Omega} is the next elementary integral inequality \cite{VanSchaftingen_2020}*{lem.\ 5.6}.

\begin{lemma}
  [Integral estimate of truncated powers]
  \label{lemma_ineq_p_q_scaling}
  For every \(q_0, q_1 \in \intvr{0}{\infty}\) and every \(\eta \in \intvo{0}{1}\), 
  there exists a constant \(C > 0\) such that for every \(t \in \intvr{1}{\infty}\),
  \begin{equation*}
    (t - 1)^{q_1}
    \le 
    C
    \int_{\eta}^t
    \frac
    {(t - r)^{q_0}}
    {r^{1 + q_0 - q_1}}
    \dif r
     .
  \end{equation*}
\end{lemma}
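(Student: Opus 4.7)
The plan is to reduce the estimate to a uniform positive lower bound on an explicit integral by scaling. I would substitute \(r = ts\), which gives \(\dif r = t \dif s\) and \(t-r = t(1-s)\), so that
\begin{equation*}
\int_\eta^t \frac{(t-r)^{q_0}}{r^{1+q_0-q_1}} \dif r
= t^{q_1} \int_{\eta/t}^1 \frac{(1-s)^{q_0}}{s^{1+q_0-q_1}} \dif s.
\end{equation*}

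Next I would observe that for \(t \ge 1\) one has \(\eta/t \le \eta < 1\), so the integral in \(s\) is monotone in the lower endpoint and hence bounded below by
\begin{equation*}
c_\eta \defeq \int_{\eta}^1 \frac{(1-s)^{q_0}}{s^{1+q_0-q_1}} \dif s,
\end{equation*}
which is a strictly positive real number: on \(\intvc{\eta}{1}\) the factor \(s^{-(1+q_0-q_1)}\) is bounded (since \(s \ge \eta > 0\)), and \((1-s)^{q_0}\) is integrable there, while the integrand is positive on \(\intvo{\eta}{1}\). Combining this with the trivial inequality \((t-1)^{q_1} \le t^{q_1}\) valid for \(t \ge 1\) yields
\begin{equation*}
(t-1)^{q_1}
\le t^{q_1}
\le \frac{1}{c_\eta}\, t^{q_1} \int_{\eta/t}^1 \frac{(1-s)^{q_0}}{s^{1+q_0-q_1}} \dif s
= \frac{1}{c_\eta} \int_\eta^t \frac{(t-r)^{q_0}}{r^{1+q_0-q_1}} \dif r,
\end{equation*}
which is the claim with \(C = 1/c_\eta\).

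There is no serious obstacle: the only points to be careful about are that the exponent \(1+q_0-q_1\) may be negative, zero, or larger than one, but because the integration is truncated away from \(s = 0\) (thanks to \(\eta > 0\)), the integral \(c_\eta\) is always finite and positive regardless of the sign of \(1+q_0-q_1\); in particular one does not need the hypothesis from \cref{proposition_p_q_scaling_Omega} that \(q_0 \ge 1\) or \(\gamma > 1\). The scaling substitution \(r = ts\) is exactly what makes the exponent combination \(1 + q_0 - q_1\) on the left collapse to the correct power \(t^{q_1}\) on the right.
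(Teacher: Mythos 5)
Your argument is correct. The substitution \(r = ts\) cleanly extracts the factor \(t^{q_1}\), and the two observations you rely on — monotonicity of the \(s\)-integral in its lower endpoint for \(t \ge 1\), and the trivial bound \((t-1)^{q_1} \le t^{q_1}\) — close the proof with no edge cases left open: the constant \(c_\eta = \int_\eta^1 (1-s)^{q_0} s^{q_1 - q_0 - 1}\dif s\) is a finite positive number for all \(q_0, q_1 \in \intvr{0}{\infty}\) and \(\eta \in \intvo{0}{1}\) precisely because the truncation at \(\eta > 0\) removes any issue at \(s = 0\), and \((1-s)^{q_0}\) is bounded on \(\intvc{\eta}{1}\) and nonvanishing on \(\intvo{\eta}{1}\). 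The paper itself does not include a proof of this lemma; it is cited as an external result from \cite{VanSchaftingen_2020}*{lem.\ 5.6}, so there is no in-text argument to compare against. Your proof is a self-contained, scaling-based verification, and your side remark that neither \(q_0 \ge 1\) nor \(\gamma > 1\) is needed here is accurate: those hypotheses belong to \cref{proposition_p_q_scaling_Omega}, which applies this lemma, and have no role in the lemma itself.
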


\begin{proof}%
  [Proof of \cref{proposition_p_q_scaling_Omega}]
  \resetconstant
  Applying \cref{lemma_ineq_p_q_scaling} with \(t \defeq d_{\manifold{E}} (f (y), f (x))/\lambda\) at each \(x, y \in \Omega\), integrating the result and interchanging the integrals, we have
  \begin{multline}
    \label{ineq_pqsO_1}
    \smashoperator[r]{\iint_{
      \substack{
        (x, y) \in \Omega \times \Omega\\
        d_{\manifold{E}} (f (y), f (x)) \ge \lambda}
    }}
    \frac
    {\brk[\big]{
      d_{\manifold{E}} \brk{f (y), f (x)}
      -
      \lambda 
      }^{q_1}}
    {\abs{y - x}^{m + \gamma}}
    \dif y
    \dif x
    \\[-2em]
    \le
    \C
    \lambda^{q_1 - q_0}
    \int_{\eta}^{\infty}
    \smashoperator{
    \iint_{
      \substack{
        (x, y) \in \Omega \times \Omega\\
        d_{\manifold{E}} (f (y), f (x)) \ge r\lambda}
    }}
    \frac
    {\brk[\big]{
      d_{\manifold{E}} (f (y), f (x))
      -
      r \lambda
      }^{q_0}}
    {r^{1 + q_0 - q_1} \, \abs{y - x}^{m + \gamma}}
    \dif y
    \dif x 
    \dif r
    .
  \end{multline}
  Since the set \(\Omega \subseteq \Rset^m\) is convex, by \cref{lemma_fract_gap_integral_growth}, we have for every \(r \in (\eta, \infty)\), 
  \begin{multline}
    \label{ineq_pqsO_2}
    \smashoperator{\iint_{
      \substack{
        (x, y) \in \Omega \times \Omega\\
        d_{\manifold{E}} (f (y), f (x)) \ge r \lambda}
    }}
    \frac
    {\brk[big]{
      d_{\manifold{E}} (f (y), f (x))
      -
      r \lambda
      }^{q_0}}
    {\abs{y - x}^{m + \gamma}}
    \dif y
    \dif x
    \\[-2em]
    \le
    \C
    \,
    \frac{1}{r^{(\gamma - 1) - (q_0 - 1)_+}}
    \smashoperator{
    \iint_{
      \substack{
        (x, y) \in \Omega \times \Omega\\
        d_{\manifold{E}} (f (y), f (x)) \ge \eta \lambda }
    }}
    \frac
    {\bigl(
      d_{\manifold{E}} (f (y), f (x))
      -
      \eta \lambda
      \bigr)^{q_0}}
    {\abs{y - x}^{m + \gamma}}
    \dif y
    \dif x 
    .
  \end{multline}
Combining the estimates \eqref{ineq_pqsO_1} and \eqref{ineq_pqsO_2}, 
  we deduce that 
  \begin{multline}
  \label{eq_ohz7gieBoo4Sheenieseich1}
   \smashoperator[r]{\iint_{
      \substack{
        (x, y) \in \Omega \times \Omega\\
        d_{\manifold{E}} (f (y), f (x)) \ge \lambda}
    }}
    \frac
    {\bigl(
      d_{\manifold{E}} (f (y), f (x))
      -
      \lambda 
      \bigr)^{q_1}}
    {\abs{y - x}^{m + \gamma}}
    \dif y
    \dif x
    \\
    \le 
    \C 
    \lambda^{q_1 - q_0}
    \,
    \int_{\eta}^{\infty}
    \frac
      {1}
      {r^{\gamma + 1 - (1 - q_0)_+  - q_1}}
    \dif r
    \smashoperator{
    \iint_{
      \substack{
        (x, y) \in \Omega \times \Omega\\
        d_{\manifold{E}} (f (y), f (x)) \ge \eta \lambda}
    }
    }
    \frac
    {\bigl(
      d_{\manifold{E}} (f (y), f (x))
      - \eta \lambda
      \bigr)^{q_1}}
    {\abs{y - x}^{m + \gamma}}
    \dif y
    \dif x ,
  \end{multline}
  since \(q_0 - 1 - (q_0 - 1)_+ = - (1 - q_0)_+ \).
  If \(
  q_1
  <
  \gamma - (1 - q_0)_+
  \), 
  then
  \begin{equation*}
      \int_{\eta}^{\infty}
      \frac{1}{r^{\gamma + 1 - (1 - q_0)_+  - q_1}}
      \dif r
      =
      \frac{1}{(\gamma - (1 - q_0)_+ - q_1) \, \eta^{\gamma - (1 - q_0)_+ - q_1} } < \infty,
  \end{equation*}
  and 
  the estimate \eqref{eq_Adui5veiraethahki0dah4mu} follows from \eqref{eq_ohz7gieBoo4Sheenieseich1}.
 
  If \(q_0 \ge 1\), then we have proved the estimate for \(q_1 < \gamma\).
  Otherwise, \(q_0 < 1\), and we have proved the estimate 
  \eqref{eq_Adui5veiraethahki0dah4mu} for \(q_1 < q_0 + (\gamma - 1)\). 
  Iterating finitely many times the estimate we reach the interval \(q_1 \in [0, 1]\) and the conclusion \eqref{eq_Adui5veiraethahki0dah4mu} then follows for \(q_1 < \gamma\).
\end{proof}

We are now in position to prove \cref{proposition_exponent_improvement}.

\begin{proof}%
  [Proof of \cref{proposition_exponent_improvement}]%
  \resetconstant  
Since the case \(q_1 \le q_0\) follows from the fact that for every \(t \in \intvo{0}{\infty}\), we have \(t^p \wedge t^{q_0} \le t^p \wedge t^{q_1}\),
we consider the case \(q_1 > q_0\).
Letting \(\Omega \subseteq \Rset^m\) be a convex open set and the mapping \(f : \Omega \to \manifold{E}\) be measurable, and defining the set
\begin{equation*}
A \defeq\set{(x,y) \in \Omega \times \Omega \st d_{\manifold{E}} \brk{f (y), f (x)} \ge 1},
\end{equation*}
we decompose, since \(q_1 < sp < p\), the integral in the left-hand side of \eqref{eq_theeshee1Ung5ovu2buoph3I} as 
\begin{multline}
  \label{eq_Boekeo8gu6auceef9mu8eph4}
  \smashoperator[r]{
    \iint_{\Omega \times \Omega}
  } 
  \frac
    {d_{\manifold{E}} \brk{f (y), f (x)}^p \wedge d_{\manifold{E}} \brk{f (y), f (x)}^{q_1}}
    {\abs{y - x}^{m + sp}}
    \dif y 
    \dif x
\\[-1em]
=  
  \smashoperator{\iint_{\Omega \times \Omega \setminus A}} \frac
    {d_{\manifold{E}} \brk{f (y), f (x)}^p}
    {\abs{y - x}^{m + sp}}
    \dif y 
    \dif x
  + 
  \smashoperator{\iint_{A}} 
  \frac
    {d_{\manifold{E}} \brk{f (y), f (x)}^{q_1}}
    {\abs{y - x}^{m + sp}}
    \dif y 
    \dif x.
\end{multline}
On the one hand, we have immediately
\begin{equation}
\label{eq_euSheekeeQue1aiFu3thahgh}
 \smashoperator[r]{\iint_{\Omega \times \Omega \setminus A}} \frac{d_{\manifold{E}} \brk{f (y), f (x)}^p}{\abs{y - x}^{m + sp}}\dif y \dif x \le \smashoperator{\iint_{\Omega \times \Omega}} \frac{d_{\manifold{E}} \brk{f (y), f (x)}^p \wedge d_{\manifold{E}} \brk{f (y), f (x)}^{q_0}}{\abs{y - x}^{m + sp}}\dif y \dif x.
\end{equation}
On the other hand, by \cref{proposition_p_q_scaling_Omega}, since \(sp > 1\) and \(q_1 < sp\), we have
\begin{equation}
\label{eq_eipiecha1aloh6ICh1ohheeZ}
\begin{split}
\iint\limits_{A} \frac{d_{\manifold{E}} \brk{f (y), f (x)}^{q_1}}{\abs{y - x}^{m + sp}}\dif y \dif x
&\le 2^{q_1} \smashoperator{\iint_{\Omega\times \Omega}} \frac{(d_{\manifold{E}} \brk[\big]{f (y), f (x)} - \frac{1}{2})^{q_1}_+}{\abs{y - x}^{m + sp}}\dif y \dif x\\
&\le \C \smashoperator{\iint_{\substack{(x, y) \in \Omega\times \Omega\\ d_{\manifold{E}} \brk{f (y), f(x)} \ge \frac{1}{3}}}} \frac{(d_{\manifold{E}} \brk[\big]{f (y), f (x)} - \frac{1}{3})^{q_0}_+}{\abs{y - x}^{m + sp}}\dif y \dif x,
\end{split}
\end{equation}
and it follows thus from \eqref{eq_Boekeo8gu6auceef9mu8eph4}, \eqref{eq_euSheekeeQue1aiFu3thahgh} and \eqref{eq_eipiecha1aloh6ICh1ohheeZ} that 
\begin{multline}
\label{eq_rupain9gocau5Oer2zoch7if}
\smashoperator{\iint_{\Omega \times \Omega}} \frac{d_{\manifold{E}} \brk{f (y), f (x)}^p \wedge d_{\manifold{E}} \brk{f (y), f (x)}^{q_1}}{\abs{y - x}^{m + sp}}\dif y \dif x\\[-1em]
\le \C  \smashoperator{
\iint_{\Omega \times \Omega}} \frac{d_{\manifold{E}} \brk{f (y), f (x)}^p \wedge d_{\manifold{E}} \brk[\big]{f (y), f (x)}^{q_0}}{\abs{y - x}^{m + sp}} \dif y \dif x, 
\end{multline}
since \(q_0 < p\).
The announced conclusion \eqref{eq_theeshee1Ung5ovu2buoph3I} follows then from \eqref{eq_rupain9gocau5Oer2zoch7if} and the covering of \cref{lemma_manifold_diagonal_covering}.
\end{proof}

Thanks to \cref{proposition_exponent_improvement}, we can now prove \cref{proposition_equivalent_norms}.

\begin{proof}[Proof of \cref{proposition_equivalent_norms}]
This follows from \cref{proposition_exponent_improvement},
with \(\manifold{E} = \lifting{\manifold{N}}\).
\end{proof}

\subsection{Compactness in the space of liftings}
Given the estimate on the lifting of \cref{proposition_spgt1_estimate} on a set which is dense in the fractional Sobolev space \(\homog{W}^{s, p}(\manifold{M}, \manifold{N})\) \cite{Brezis_Mironescu_2015}, a classical approach to prove the existence of a lifting would be to consider the limit of the liftings of an approximating sequence.
In order to perform this, we need a compactness result on sets for which the left-hand side of \eqref{eq_hao4yeiKieju4Riel5AxaeTh} is uniformly bounded.

\begin{proposition}
\label{prop_compactness}
Let \(\manifold{M}\) be a Riemannian manifold with finite volume, let \(\manifold{E}\) be a metric space, let \(0 \le q \le p\) and let \(0 < s < 1\).
Assume that every bounded subset of \(\manifold{E}\) is totally bounded.
If \(\mathcal{S}\) is a set of measurable functions from \(\manifold{M}\) to \(\manifold{E}\) such that
\begin{equation}
\label{eq_eeQuaoPheej4xa9yich4Xuy4}
 \sup_{f \in \mathcal{S}}
 \smashoperator[r]{\iint_{\manifold{M} \times \manifold{M}}}
\frac{d_{\manifold{E}} (f (y), f (x))^p \wedge d_{\manifold{E}} (f (y), f (x))^q
}{d_\manifold{M} (y, x)^{m + sp}} \dif y \dif x < \infty,
\end{equation}
with \(m \defeq  \dim \manifold{M}\),
and such that
\begin{equation}
\label{eq_sohCh0Oosoo4xaexeK2Aecee}
\inf_{f, g \in \mathcal{S}} 
\int_{\mathcal{M}}\frac{1}{1 + d_{\manifold{E}} (g, f)}
 > 0,
\end{equation}
then the set \(\mathcal{S}\) is totally bounded for the distance
\begin{equation}
\label{eq_oht5queo0koiShijefohnguT}
 d_{\mu} (f, g) \defeq \int_{\manifold{M}} \frac{d_{\manifold{E}} (f, g)}{1 + d_{\manifold{E}} (f, g)}.
\end{equation}
\end{proposition}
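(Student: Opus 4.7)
The plan is to reduce the total boundedness statement to a vector-valued fractional Rellich--Kondrachov type compactness result via an isometric embedding of the target metric space into a Banach space, and then to extract the required Sobolev boundedness and tightness of values from the two hypotheses separately.

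First, the elementary inequalities \(t/(1+t) \le t \wedge 1 \le 2t/(1+t)\) imply that \(d_\mu\) is bi-Lipschitz equivalent to \(d_\mu^\ast (f, g) \defeq \int_{\manifold{M}} (d_{\manifold{E}} (f, g) \wedge 1)\), so it suffices to establish total boundedness for the latter. Setting \(d^\ast \defeq d_{\manifold{E}} \wedge 1\) and fixing a basepoint \(y_0 \in \manifold{E}\), I would then invoke the Kuratowski isometry
\begin{equation*}
\iota \colon (\manifold{E}, d^\ast) \to \ell^\infty (\manifold{E}), \qquad \iota (y) \defeq d^\ast (y, \cdot) - d^\ast (y_0, \cdot),
\end{equation*}
so that \(F_f \defeq \iota \compose f\) takes values in the closed unit ball of \(\ell^\infty (\manifold{E})\) and \(d_\mu^\ast (f, g) = \norm{F_f - F_g}_{L^1 (\manifold{M}; \ell^\infty (\manifold{E}))}\). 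The pointwise inequality \((d^\ast)^p \le d_{\manifold{E}}^p \wedge d_{\manifold{E}}^q\), valid because \(q \le p\), then turns \eqref{eq_eeQuaoPheej4xa9yich4Xuy4} into a uniform Gagliardo bound placing \(\set{F_f}_{f \in \mathcal{S}}\) in a bounded subset of \(W^{s, p} (\manifold{M}; \ell^\infty (\manifold{E}))\).

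Next, I would establish tightness: for every \(\epsilon > 0\), a compact \(K_\epsilon \subseteq \ell^\infty (\manifold{E})\) with \(\abs{\set{x \in \manifold{M} \st F_f (x) \notin K_\epsilon}} < \epsilon\) uniformly in \(f \in \mathcal{S}\). Fixing any \(f_0 \in \mathcal{S}\) and a bounded \(B_0 \subseteq \manifold{E}\) with \(\abs{\set{f_0 \notin B_0}} < \epsilon\), assumption \eqref{eq_sohCh0Oosoo4xaexeK2Aecee} combined with Chebyshev's inequality produces \(R_0, c_1 > 0\) depending only on the infimum in \eqref{eq_sohCh0Oosoo4xaexeK2Aecee} such that \(A_f \defeq \set{d_{\manifold{E}} (f_0, f) \le R_0}\) satisfies \(\abs{A_f} \ge c_1\); thus on \(A_f \cap \set{f_0 \in B_0}\), of measure at least \(c_1 - \epsilon\), the values of \(f\) lie in the bounded set \(\set{y \in \manifold{E} \st \dist (y, B_0) \le R_0}\). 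This partial containment would then be propagated to almost all of \(\manifold{M}\) via a fractional Poincaré-type argument based on the Sobolev bound of the previous step: any large set of \(x\) for which \(f (x)\) is far from the bulk would force \(\iint (d^\ast (f (y), f (x)))^p / d_{\manifold{M}} (y, x)^{m + sp} \dif y \dif x\) to exceed the uniform bound. The assumption that bounded subsets of \(\manifold{E}\) are totally bounded then ensures that \(\overline{\iota (B_\epsilon)}\) is compact in \(\ell^\infty (\manifold{E})\) for the resulting bounded \(B_\epsilon \subseteq \manifold{E}\).

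Finally, the uniform Gagliardo bound combined with the tightness would let me conclude via a Banach-valued fractional Rellich--Kondrachov argument: testing the \(F_f\) against a countable dense family of continuous linear functionals on \(\overline{\iota (B_\epsilon)}\) and invoking the scalar compactness \(W^{s, p} (\manifold{M}) \hookrightarrow L^1 (\manifold{M})\) coordinatewise with a diagonal extraction gives total boundedness of \(\set{F_f \st f \in \mathcal{S}}\) in \(L^1 (\manifold{M}; \ell^\infty (\manifold{E}))\), hence of \(\mathcal{S}\) in \(d_\mu\). The main obstacle is the tightness: \eqref{eq_sohCh0Oosoo4xaexeK2Aecee} only provides pairwise closeness of \(f\) and \(f_0\) on a set of positive measure and does not by itself prevent \(f\) from escaping to infinity on the complement, so extracting genuine tightness crucially combines \eqref{eq_sohCh0Oosoo4xaexeK2Aecee} with the oscillation control of \eqref{eq_eeQuaoPheej4xa9yich4Xuy4} through the fractional Poincaré-type propagation described above.
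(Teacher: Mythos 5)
Your approach --- isometric embedding of $(\manifold{E}, d_{\manifold{E}} \wedge 1)$ into $\ell^\infty(\manifold{E})$ followed by a vector-valued Rellich argument --- is genuinely different from the paper's, which constructs explicit finite $\epsilon$-nets by piecewise-constant averaging on a $k^{-1}$-grid, but two of your steps have real gaps. The tightness propagation cannot work as you describe it: you cap the integrand at $(d_{\manifold{E}} \wedge 1)^p \le 1$, so the Poincar\'e-type bound you invoke only detects whether $f(x)$ and $f(y)$ are at $d_{\manifold{E}}$-distance of order one, not how far apart they actually are, and it yields a bound on $\abs{\set{x \st d_\manifold{E}(f(x), \text{bulk}) > R}}$ that does not decay as $R \to \infty$. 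The propagation must instead use the untruncated integrand $d_\manifold{E}^p \wedge d_\manifold{E}^q$ from \eqref{eq_eeQuaoPheej4xa9yich4Xuy4}, which grows like $R^q$ on far-oscillation pairs; the finite volume of $\manifold{M}$ then forces $\abs{\set{x \st d_\manifold{E}(f(x), \text{bulk}) > R}} = O(R^{-q})$. This is exactly where $q > 0$ enters; when $q = 0 < p$ the far excursions are invisible, and the conclusion actually fails: for $\manifold{M} = \intvo{0}{1}$, $\manifold{E} = \Rset$, $sp < 1$ and $f_n = n\,\mathbf{1}_{\intvo{0}{1/2}}$, both \eqref{eq_eeQuaoPheej4xa9yich4Xuy4} and \eqref{eq_sohCh0Oosoo4xaexeK2Aecee} hold while the $f_n$ are $\tfrac14$-separated in $d_\mu$.

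The final compactness step is a second gap. Testing the $\ell^\infty(\manifold{E})$-valued maps against a countable family of functionals and invoking scalar $W^{s,p} \hookrightarrow L^1$ compactness coordinatewise with a diagonal extraction does not deliver total boundedness in $L^1(\manifold{M}; \ell^\infty(\manifold{E}))$: coordinatewise (weak-$*$) convergence in $\ell^\infty$ is strictly weaker than norm convergence, and $\ell^\infty(\manifold{E})$ is nonseparable, so a separating countable family must be produced on the compact set $K_\epsilon$, whose construction is precisely the tightness you still need. Upgrading from weak to strong $L^1$ convergence requires a Kolmogorov--Riesz-type criterion for maps into a compact metric space, and the proof of that criterion is essentially the piecewise-constant averaging that the paper runs directly; the paper then obtains boundedness of the cell values by a five-term triangle inequality passing through the positive-measure set $\set{d_\manifold{E}(f, g) \le \lambda}$ furnished by \eqref{eq_sohCh0Oosoo4xaexeK2Aecee}, and concludes using the hypothesis that bounded subsets of $\manifold{E}$ are totally bounded. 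The embedding therefore does not buy any genuine simplification, and both the tightness and the concluding compactness step would need to be reworked along those intrinsic lines.
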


Although the case \(p = q = 0\) is covered in \cref{prop_compactness},
it is not particulary interesting since in view of \cref{lemma_potential_char}, the mapping \(f\) should be constant on every connected component of \(\manifold{M}\). 

If the metric space \(\mathcal{E}\) is complete, the assumption that any of its subsets is totally bounded is equivalent to \(\mathcal{E}\) having the Bolzano--Weierstraß property or to \(\mathcal{E}\) being a proper space.

The convergence with respect to the distance \(d_\mu\) defined in \eqref{eq_oht5queo0koiShijefohnguT} is the convergence in measure. 
We first remark that this distance can be controlled on finite-measure sets by a quantity reminiscent of the integrand in \eqref{eq_eeQuaoPheej4xa9yich4Xuy4}.

\begin{lemma}
\label{lemma_measure_holder}
Let \(\mu\) be a measure on \(\Omega\) and let \(\manifold{E}\) be a metric space.
If \(0 \le q \le p\) and if the mappings \(f, g : \Omega \to \manifold{E}\) are measurable, then 
\begin{equation}
\label{eq_opai2mu8vo4ohp2phaThao0j}
 \int_{\manifold{M}} \frac{d_{\manifold{E}} (f, g)}{1 + d_{\manifold{E}} (f, g)}\dif \mu
 \le \mu (\Omega)^{(1-1/p)_+} \brk[\bigg]{\int_{\Omega} d_{\manifold{E}} (f, g)^p \wedge  d_{\manifold{E}} (f, g)^q\dif \mu}^{\frac{1}{p} \wedge 1}.
\end{equation}
\end{lemma}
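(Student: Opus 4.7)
The plan is to reduce the lemma to a pointwise bound combined with Hölder's inequality. The central elementary fact is the pointwise inequality
\[
 \brk*{\frac{t}{1+t}}^{p \vee 1} \le t^p \wedge t^q \qquad \text{for every } t \in \intvr{0}{\infty} \text{ and } 0 \le q \le p.
\]
I would verify this by distinguishing the two regimes. If \(0 \le t \le 1\), then \(\frac{t}{1+t} \le t\) gives \(\brk{\frac{t}{1+t}}^{p\vee 1} \le t^{p \vee 1} \le t^p\) (using \(t \le 1\) if \(p < 1\)), and \(t^p \wedge t^q = t^p\) since \(q \le p\) and \(t \le 1\). If \(t \ge 1\), then \(\frac{t}{1+t} \le 1 \le t^q\), and \(t^p \wedge t^q = t^q\) since \(q \le p\) and \(t \ge 1\); either way the claim holds.

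Applying this pointwise with \(t = d_{\manifold{E}}(f (x), g (x))\) and integrating over \(\Omega\), I obtain
\[
\int_{\Omega} \brk*{\frac{d_{\manifold{E}}(f, g)}{1 + d_{\manifold{E}}(f, g)}}^{p \vee 1} \dif \mu
\le \int_{\Omega} d_{\manifold{E}}(f, g)^p \wedge d_{\manifold{E}}(f, g)^q \dif \mu.
\]
When \(p \ge 1\), Hölder's inequality with exponents \(p\) and \(p/(p-1)\) yields
\[
\int_{\Omega} \frac{d_{\manifold{E}}(f, g)}{1 + d_{\manifold{E}}(f, g)} \dif \mu
\le \mu(\Omega)^{1 - 1/p} \brk[\bigg]{\int_{\Omega} \brk*{\frac{d_{\manifold{E}}(f, g)}{1 + d_{\manifold{E}}(f, g)}}^{p} \dif \mu}^{1/p},
\]
which combined with the pointwise inequality above produces the announced estimate, since in this range \((1 - 1/p)_+ = 1 - 1/p\) and \(\frac{1}{p} \wedge 1 = \frac{1}{p}\). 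When \(0 < p < 1\), the exponent \(p \vee 1 = 1\) in the pointwise bound lets me integrate it directly, and the conclusion follows since \((1 - 1/p)_+ = 0\) and \(\frac{1}{p} \wedge 1 = 1\), so the right-hand side of \eqref{eq_opai2mu8vo4ohp2phaThao0j} reduces exactly to \(\int_{\Omega} d_{\manifold{E}}(f, g)^p \wedge d_{\manifold{E}}(f, g)^q \dif \mu\).

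There is no real obstacle here; the only point requiring care is checking that the choice of exponent \(p \vee 1\) in the pointwise inequality is the right one to unify both regimes and match the prefactor \(\mu(\Omega)^{(1 - 1/p)_+}\) and the outer exponent \(\frac{1}{p} \wedge 1\) appearing in the statement.
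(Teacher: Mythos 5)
Your proof is correct and follows essentially the same route as the paper: a pointwise comparison of $t/(1+t)$ with a truncated power of $t$, followed (when $p>1$) by Hölder's inequality with exponents $p$ and $p/(p-1)$. The only cosmetic difference is that you absorb the exponent into the pointwise bound as $(t/(1+t))^{p\vee 1}\le t^p\wedge t^q$, whereas the paper writes $t/(1+t)\le t\wedge t^{q/p}$ and raises to the $p$-th power after Hölder; these are equivalent arrangements of the same argument.
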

\begin{proof}
When \(0 \le p \le 1\), \eqref{eq_opai2mu8vo4ohp2phaThao0j} follows from the fact that for every \(t \in \intvr{0}{\infty}\) one has 
\begin{equation*}
  t/(1 + t)\le t \wedge 1 \le t^p \wedge t^q, 
\end{equation*}
whereas when \(p > 1\)  \eqref{eq_opai2mu8vo4ohp2phaThao0j} follows from the fact that 
\begin{equation*}
t/(1 + t)\le t \wedge 1 \le t \wedge t^{q/p} 
\end{equation*}
 and Hölder's inequality.
\end{proof}

The proof of \cref{prop_compactness} will rely on the following inequality.
\begin{lemma}
\label{lemma_p_q_a_b}
If \(p, q \in \intvr{0}{\infty}\), then for every \(\ell \in \Nset\) and \(a_1, \dotsc, a_\ell \in \intvr{0}{\infty}\), we have 
\begin{equation*}
 \brk[\bigg]{\sum_{i = 1}^\ell a_i}^p \wedge \brk[\bigg]{\sum_{i = 1}^\ell a_i}^q
 \le \max_{i \in \set{1, \dotsc, \ell}}
 (\ell a_i)^p \wedge (\ell a_i)^q.
\end{equation*}
\end{lemma}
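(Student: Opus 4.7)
The plan is to reduce the inequality to a simple monotonicity observation on the function $\phi \colon \intvr{0}{\infty} \to \intvr{0}{\infty}$ defined by $\phi(t) \defeq t^p \wedge t^q$. The key point is that $\phi$ is nondecreasing on $\intvr{0}{\infty}$: without loss of generality assume $p \le q$; then for $t \in \intvc{0}{1}$ one has $\phi(t) = t^q$, while for $t \in \intvr{1}{\infty}$ one has $\phi(t) = t^p$, and both restrictions are nondecreasing and agree at $t = 1$.

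Once this monotonicity is established, the proof is immediate. Set $a_* \defeq \max_{i \in \set{1, \dotsc, \ell}} a_i$. Then $\sum_{i = 1}^\ell a_i \le \ell\, a_*$, so by the monotonicity of $\phi$,
\begin{equation*}
 \brk[\bigg]{\sum_{i = 1}^\ell a_i}^p \wedge \brk[\bigg]{\sum_{i = 1}^\ell a_i}^q
 = \phi\brk[\bigg]{\sum_{i = 1}^\ell a_i}
 \le \phi(\ell\, a_*)
 = (\ell\, a_*)^p \wedge (\ell\, a_*)^q.
\end{equation*}
Again by monotonicity of $\phi$, the right-hand side equals $\max_{i \in \set{1, \dotsc, \ell}} \phi(\ell\, a_i) = \max_{i \in \set{1, \dotsc, \ell}}\brk[\big]{(\ell\, a_i)^p \wedge (\ell\, a_i)^q}$, which is exactly the desired bound.

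There is no substantive obstacle here; the only thing to check carefully is the monotonicity of $\phi$, which degenerates in the edge cases $p = 0$ or $q = 0$ but still holds (if $p = 0$ then $\phi(t) = 1 \wedge t^q$, which is nondecreasing, and similarly if $q = 0$). The rest is a one-line chain of inequalities.
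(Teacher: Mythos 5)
Your argument is correct and is essentially the same as the paper's: the paper also reduces to $\sum_i a_i \le \ell a_1$ with $a_1$ the largest term and then uses that $t \mapsto t^p \wedge t^q$ is nondecreasing (implicitly) to pass to $(\ell a_1)^p \wedge (\ell a_1)^q$. You simply make the monotonicity of $\phi(t) = t^p \wedge t^q$ explicit, including the edge cases $p = 0$ or $q = 0$, which is a clean and slightly more self-contained presentation of the same idea.
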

\begin{proof}
Without loss of generality, we can assume that for each \(i \in \set{1, \dotsc, \ell}\) one has \(a_i \le a_1\), so that \(\sum_{i = 1}^\ell a_i \le \ell a_1\) and 
\begin{equation*}
\brk[\bigg]{\sum_{i = 1}^\ell a_i}^p 
\wedge 
\brk[\bigg]{\sum_{i = 1}^\ell a_i}^q
 \le (\ell a_1)^p \wedge (\ell a_1)^q = \max_{i \in \set{1, \dotsc, \ell}}
  (\ell a_i)^p \wedge (\ell a_i)^q.\qedhere
\end{equation*}
\end{proof}

\begin{proof}[Proof of \cref{prop_compactness}]
\resetconstant
By the finiteness of the volume and a local charts argument, we assume that \(\Omega = \Qset^m \defeq [0, 1]^m\).
For every \(k \in \Nset \setminus \{0\}\), we subdivide the cube \(\Qset^m\) in a set \(\mathcal{Q}^k\) of \(k^m\) cubes of edge-length \(1/k\). 
Given \(f \in \mathcal{S}\), we define the map \(f_k : \Qset^m \to \manifold{E} \) in such a way that \(f_k\) is constant on each cube \(Q \in \mathcal{Q}^k\) and for every \(x \in Q\in \mathcal{Q}^k\)
\begin{multline}
\label{eq_ap9Ei0eeKusoomiem4WiSave}
  \int_{Q} d_{\manifold{E}} (f (x), f_k (x))^p \wedge d_{\manifold{E}} (f (x), f_k (x))^q\dif x\\
   \le k^m \smashoperator{\iint_{Q \times Q}} d_{\manifold{E}} (f (y), f (x))^p \wedge d_{\manifold{E}} (f (y), f (x))^q\dif y \dif x.
\end{multline}
It follows immediately from \eqref{eq_ap9Ei0eeKusoomiem4WiSave} 
that 
\begin{multline}
\label{eq_aehidopiev7Ahcoo4vai5ki7}
  \int_{\Qset^m} d_{\manifold{E}} (f (x), f_k (x))^p \wedge d_{\manifold{E}} (f (x), f_k (x))^q\dif x\\
   \le \frac{m^\frac{m + sp}{2}}{k^{sp}} \smashoperator{\iint_{\Qset^m \times \Qset^m}} \frac{d_{\manifold{E}} (f (y), f (x))^p \wedge d_{\manifold{E}} (f (y), f (x))^q}{\abs{y - x}^{m + sp}}\dif y \dif x.
\end{multline}
and thus by \cref{lemma_measure_holder} that
\begin{equation}
\label{eq_jis0Ien9xeshai3ohrohd5ei}
\begin{split}
  \int_{\Qset^m}
  \frac{d_{\manifold{E}} (f (x), f_k (x))}{1 + d_{\manifold{E}} (f (x), f_k (x))} & \dif x \\[-1em]
  &\le 
  \brk[\bigg]{\frac{m^\frac{m + sp}{2}}{k^{sp}} \smashoperator{\iint_{\Qset^m \times \Qset^m}} \frac{d_{\manifold{E}} (f (x), f (y))^p \wedge d_{\manifold{E}} (f (x), f (y))^q}{\abs{y - x}^{m + sp}} \dif y \dif x}^{1 \wedge \frac{1}{p}}.
\end{split}
\end{equation}
The assumption \eqref{eq_eeQuaoPheej4xa9yich4Xuy4} and the estimate \eqref{eq_jis0Ien9xeshai3ohrohd5ei} imply that for \(k \in \Nset \setminus \set{0}\) large enough the set
\( \mathcal{S}
\)
is contained in an arbitrarily small neighbourhood of the set of the mappings \(f_k\). Since for every \(k \in \Nset\),  the set of mappings taking constant value on each \(Q \in \mathcal{Q}^k\) is bi-Lipschitz equivalent to the manifold \(\manifold{E}^{k^m}\) and since bounded subsets of \(\mathcal{E}\) are totally bounded,
it remains to prove that for any \(k \in \Nset\), the mappings \(f_k\) are contained in a bounded set.

For every \(\lambda \in \intvo{0}{\infty}\) and \(f,g : \manifold{M} \to \manifold{E}\), we have 
\begin{equation*}
 \int_{\Qset}\frac{1}{1 + d_{\manifold{E}} (g(x), f(x))}\dif x 
 \le \abs{\set{x \in \Qset^m \st d_{\manifold{E}} (g (x), f (x))\le \lambda }} + \frac{1}{1 + \lambda},
\end{equation*}
and therefore by our assumption \eqref{eq_sohCh0Oosoo4xaexeK2Aecee}, there exists \(\lambda \in \intvo{0}{\infty}\) and \(\eta \in \intvo{0}{\infty}\) such that for every \(f, g \in \mathcal{S}\),
\begin{equation}
\label{eq_tei7ahShoh3iceun4ja8Chac}
 \abs{\set{x \in \Qset^m \st d_{\manifold{E}} (g (x), f (x))\le \lambda }}\ge \eta.
\end{equation}
For every \(x, y \in \Qset^m\), we have by the triangle inequality,
\begin{equation*}
\begin{split}
 d_{\manifold{E}} (g_k (x), f_k (x))
 \le {}&{} d_{\manifold{E}} (g_k (x), g (x)) + d_{\manifold{E}} (g (x), g (y))\\
 &+ d_{\manifold{E}} (g (y), f (y))
 + d_{\manifold{E}} (f (y), f (x)) 
 + d_{\manifold{E}} (f (x), f_k (x)),
 \end{split}
\end{equation*}
and thus by \cref{lemma_p_q_a_b}
\begin{equation}
\label{eq_RahshohTh3Sha5Aiv0ooraeK}
\begin{split}
\int_{\Qset^m }  d_{\manifold{E}} (g_k (x), f_k (x))^p {}&{}\wedge d_{\manifold{E}} (g_k (x), f_k (x))^q\dif x \\
&\le 
\int_{\Qset^m} \brk{5 d_{\manifold{E}} (g_k (x), g (x))}^p \wedge \brk{5d_{\manifold{E}} (g_k (x), g (x))}^q \dif x\\
&\qquad + \int_{\Qset^m} \fint_{A} \brk{5d_{\manifold{E}} (g (y), g (x))}^p \wedge \brk{5 d_{\manifold{E}} (g (y), g (x))}^q \dif y \dif x\\
&\qquad + 
\fint_{A} \brk{5 d_{\manifold{E}} (g (y), f (y))}^p \wedge \brk{5 d_{\manifold{E}} (g (y), f (y))}^q \dif y \\
&\qquad + \int_{\Qset^m} \fint_{A} \brk{5 d_{\manifold{E}} (f (y), f (x))}^p \wedge \brk{5 d_{\manifold{E}} (f (y), f (x))}^q \dif y \dif x\\
&\qquad + \int_{\Qset^m} \brk{5 d_{\manifold{E}} (f_k (x), f (x))}^p \wedge \brk{5 d_{\manifold{E}} (f_k (x), f (x))}^q \dif x,
\end{split}
\end{equation}
with
\begin{equation}
\label{eq_xanauZoebaeciekeejei9Pee}
 A \defeq \set[\big]{x \in \Qset^m \st d_{\manifold{E}} (g (x), f (x))\le \lambda }.
\end{equation}
Inserting 
\eqref{eq_eeQuaoPheej4xa9yich4Xuy4},  \eqref{eq_aehidopiev7Ahcoo4vai5ki7} and  \eqref{eq_tei7ahShoh3iceun4ja8Chac}
in \eqref{eq_RahshohTh3Sha5Aiv0ooraeK} combined with \eqref{eq_xanauZoebaeciekeejei9Pee} and with \cref{lemma_measure_holder}, 
we get 
\begin{equation*}
\int_{\Qset^m } \frac{d_{\manifold{E}} (g_k (x), f_k (x))}{1 + d_{\manifold{E}} (g_k (x), f_k (x))}\dif x 
\le \C \brk[\Big]{\frac{1}{k^{sp}} + \frac{1}{\eta}  + \lambda^p \wedge \lambda^q}^{\frac{1}{p} \wedge 1},
\end{equation*}
and the announced boundedness follows.
\end{proof}

\subsection{Existence of a lifting}
The last tool we will use to prove \cref{theorem_lifting_truncated_noncompact} is the existence of local charts that cover the product. 

\begin{lemma}
\label{lemma_manifold_diagonal_covering}
If \(\manifold{M}\) is a connected compact manifold with \(m \defeq \dim \manifold{M}\), then there exists open sets \(V_1, \dotsc, V_\ell \subset \manifold{M}\) such that for each \(i \in \{1, \dotsc, \ell\}\), the set \(\Bar{V}_i\) is diffeomorphic to the closed ball \(\Bar{\mathbb{B}}_1 \subset \Rset^m\) and such that 
\begin{equation}
    \manifold{M} \times \manifold{M} 
  \subseteq 
    \bigcup_{i = 1}^\ell 
      V_i \times V_i .
\end{equation}
\end{lemma}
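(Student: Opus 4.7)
The plan is to reduce the statement to a local claim about pairs of points and then extract a finite subcover by compactness of \(\manifold{M} \times \manifold{M}\). The local claim is: for every \((x, y) \in \manifold{M} \times \manifold{M}\), there exists an open set \(V_{x, y} \subseteq \manifold{M}\) containing both \(x\) and \(y\) such that \(\Bar{V}_{x, y}\) is diffeomorphic to \(\Bar{\mathbb{B}}_1 \subset \Rset^m\).

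\textbf{Reduction via compactness.} Given such a local \(V_{x, y}\), its openness provides smaller open neighborhoods \(U_x, U_y \subseteq V_{x, y}\) of \(x\) and \(y\), so that \(U_x \times U_y\) is an open neighborhood of \((x, y)\) in \(\manifold{M} \times \manifold{M}\) contained in \(V_{x, y} \times V_{x, y}\). As \((x, y)\) ranges over the compact product \(\manifold{M} \times \manifold{M}\), the resulting open cover admits a finite subcover indexed by some pairs \((x_1, y_1), \dotsc, (x_\ell, y_\ell)\), and the associated sets \(V_i \defeq V_{x_i, y_i}\) form the required family.

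\textbf{Construction of the local set \(V_{x, y}\).} When \(x = y\), take \(V_{x, y}\) to be a geodesic ball centred at \(x\) of radius strictly smaller than \(\inj (\manifold{M})\); its closure is diffeomorphic to \(\Bar{\mathbb{B}}_1\) via the exponential map. When \(x \ne y\), use that the connected smooth manifold \(\manifold{M}\) is smoothly path-connected to pick a smoothly embedded arc \(\gamma : \intvc{-\delta}{1 + \delta} \to \manifold{M}\) with \(\gamma (0) = x\) and \(\gamma (1) = y\), for some small \(\delta > 0\). Let \(V_{x, y}\) be an open tubular neighborhood of \(\gamma (\intvc{-\delta/2}{1 + \delta/2})\) of sufficiently small radius \(\epsilon > 0\). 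Since the arc is contractible, its normal bundle is trivial and the tube is diffeomorphic to the solid cylinder \(\intvo{-\delta/2}{1 + \delta/2} \times \mathbb{B}^{m - 1}_\epsilon\); after a standard smoothing of corners at the two endpoints, its closure is diffeomorphic to \(\Bar{\mathbb{B}}_1\).

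\textbf{Main obstacle.} The compactness extraction is routine. The delicate point is, in the case \(x \ne y\), to ensure that the closure of the tubular neighborhood of \(\gamma\) is \emph{diffeomorphic}, and not only homeomorphic, to the closed unit ball \(\Bar{\mathbb{B}}_1\). This relies both on the triviality of the normal bundle over the contractible arc and on a corner-smoothing argument at the two endpoints — standard ingredients in differential topology but the essential geometric content of the proof.
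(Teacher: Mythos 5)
Your proposal is correct and follows essentially the same route as the paper: reduce to finding, for each pair \((x,y)\), an open set \(V_{x,y}\) containing both points whose closure is diffeomorphic to \(\Bar{\mathbb{B}}_1\), and then extract a finite subcover by compactness of \(\manifold{M}\times\manifold{M}\). The only difference is that the paper asserts the existence of such a \(V_{x,y}\) without proof, simply citing connectedness, whereas you spell out a construction (geodesic ball when \(x=y\); a smoothed tubular neighbourhood of an embedded arc joining \(x\) to \(y\) when \(x\ne y\)). That construction is sound, and you correctly identify that its only delicate point is upgrading the solid cylinder to a smooth ball by corner-smoothing; the rest of the argument coincides with the paper's.
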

\begin{proof}
Since the manifold \(\manifold{M}\) is connected, every doubleton \(\set{x, y} \subset \manifold{M}\)
is contained in an open set \(V \subseteq \manifold{M}\) such that \(\Bar{V}\) is diffeomorphic to the closed ball \(\Bar{B}_1 \subset \Rset^m\).
In particular \((x, y) \in V \times V\). 
We conclude by compactness of \(\manifold{M} \times \manifold{M}\).
\end{proof}

The proof of \cref{theorem_lifting_truncated_noncompact} will rely on the notion of normal covering. 
A covering map \(\pi : \lifting{\manifold{N}} \to \manifold{N}\) is \emph{normal} (or regular) whenever for every \(\lifting{y} \in \lifting{\manifold{N}}\)
we have 
\begin{equation}
\label{eq_oYa4oaRaiph3Uel9xee2uiM2}
 \pi^{-1} \brk{\set{\pi(\lifting{y})}}
 = \set{\tau(\lifting{y}) \st \tau \in \operatorname{Aut} (\pi)}
\end{equation}
where the group of deck transformations (or group of covering transformations or Galois group) of the covering \(\pi\) is the group
\begin{equation}
\label{eq_definition_automorphism_group}
  \Aut (\pi) 
  = \bigl\{ \tau : {\lifting{\manifold{N}}} \to {\lifting{\manifold{N}}} \st \tau \text{ is a homeomorphism and } \pi \compose \tau = \pi \bigr\}
\end{equation}
endowed with the composition operation \citelist{\cite{Hatcher_2002}*{\S 1.3}\cite{Spanier_1966}*{ch. 2 \S 6}}.
When \(\pi\) is a Riemannian covering, \(\pi\) is a local isometry and any \(\tau \in \Aut (\pi)\) is a global isometry of \(\lifting{\manifold{N}}\).

If \(\pi : \lifting{\manifold{N}} \to \manifold{N}\) is a universal covering, that is when \(\pi\) is surjective and \(\lifting{\manifold{N}}\) is simply-connected, then \(\pi\) is normal.

We proceed to the proof of existence of a lifting.

\begin{proof}[Proof of \cref{theorem_lifting_truncated_noncompact}]
\resetconstant
We first assume that \(\pi : \lifting{\manifold{N}} \to \manifold{N}\) is a normal covering of \(\manifold{N}\). 

Given a map \(u \in \homog{W}^{s, p}(\manifold{M},\manifold{N})\), by Brezis \& Mironescu's approximation result for fractional Sobolev mappings \cite{Brezis_Mironescu_2015}, there exists a sequence \((u_j)_{j \in \Nset}\) in \(\mathcal{R}^0_{m - 2}
(\manifold{M}, \manifold{N}) \cap \homog{W}^{s, p} (\manifold{M}, \manifold{N})\) that converges strongly to \(u\) in \( \homog{W}^{s, p}(\manifold{M},\manifold{N})\), where \(\mathcal{R}^0_{k}(\manifold{M}, \manifold{N})\) denotes for \(k\in \set{0, \dotsc, m - 1}\) the set of maps from a manifold \(\manifold{M}\) to a manifold \(\manifold{N}\) that are continuous outside a finite union of \(k\)--dimensional submanifolds with boundary of \(\manifold{M}\).

For every \(j \in \Nset\), the mapping \(u_j\) is continuous outside an \((m-2)\)--dimensional subset \(\Sigma_j \subset \manifold{M}\).
Since the manifold \(\manifold{M}\) is simply-connected, the set \(\manifold{M} \setminus \Sigma_j\) is also simply-connected and there exists \(\lifting{u}_j \in C (\manifold{M} \setminus \Sigma_j, \lifting{\manifold{N}})\) such that \(\pi \compose \lifting{u}_j=  \smash{u_j\restr{\manifold{M} \setminus \Sigma_j}}\), where \(\smash{u_j\restr{\manifold{M} \setminus \Sigma_j}}\) is the restriction of \(u_j\) to the set \(\manifold{M} \setminus \Sigma_j\). In particular we have \(\lifting{u}_j \in \mathcal{R}^0_{m - 2} (\manifold{M}, \lifting{\manifold{N}})\).
Since for every convex open set \(\Omega \subseteq \Rset^m\) we have \(\mathcal{R}^0_{m - 2} (\Omega, \lifting{\manifold{N}}) \subseteq Y (\Omega, \lifting{\manifold{N}})\) and since \(sp > 1\), by the a priori estimate on the lifting (\cref{proposition_spgt1_estimate}), by the diagonal covering (\cref{lemma_manifold_diagonal_covering}) and by \cref{proposition_exponent_improvement},
we have 
\begin{equation}
\label{eq_bia6Eitah8aeseThudohnaib}
 \sup_{j \in \Nset}
 \smashoperator[r]{\iint_{\manifold{M} \times \manifold{M}}} \frac{d_{\lifting{\manifold{N}}} (\lifting{u}_j (x), \lifting{u}_j (y))^p \wedge d_{\lifting{\manifold{N}}} (\lifting{u}_j (x), \lifting{u}_j (y))}{d_{\manifold{M}} (y, x)^{m + sp}} \dif y \dif x < \infty.
\end{equation}
By \eqref{eq_bia6Eitah8aeseThudohnaib}, there exists thus \(\lambda \in \intvo{0}{\infty}\) such that for every \(j \in \Nset\),
there exists \(x_j \in \manifold{M}\) for which if we set 
\begin{equation}
\label{eq_AiSoolejeif2Aiy5phohRaet}
 A_j \defeq \set{x \in \manifold{M} \st d_{\lifting{\manifold{N}}} (\lifting{u}_j (x), \lifting{u}_j (x_j)) \le \lambda }
\end{equation}
we have then
\begin{equation}
\label{eq_cu0ienae1HaphoRishodicie}
 \abs{A_j} \ge \tfrac{2}{3} \abs{\manifold{M}}.
\end{equation}

Since the manifold \(\manifold{N}\) is compact and since the covering \(\pi\) is normal, there exists an open bounded set \(\lifting{W} \subseteq \lifting{\manifold{N}}\) such that \(\pi (\lifting{W}) = \manifold{N}\) and
\begin{equation}
\label{eq_thoeL0ohS7lasoo3we4xov7e}
 \lifting{\manifold{N}} = \bigcup_{\tau \in \operatorname{Aut} (\pi) } \tau^{-1} (\lifting{W}),
\end{equation}
in view of \eqref{eq_oYa4oaRaiph3Uel9xee2uiM2}.
By \eqref{eq_thoeL0ohS7lasoo3we4xov7e}, for every \(j \in \Nset\), exists thus \(\tau_j \in \operatorname{Aut} (\pi) \) such that \(\tau_j (\lifting{u}_j (x_j)) \in \lifting{W}\). Without loss of generality we assume that for each \(j \in \Nset\) we have \(\tau_j= \operatorname{id}_{\lifting{\manifold{N}}}\), so that \(\lifting{u}_j (x_j) \in \lifting{W}\).

We deduce from \eqref{eq_AiSoolejeif2Aiy5phohRaet} that for every \(i, j \in \Nset\) and every \(x \in A_{i,j} \defeq A_i \cap A_j\)
\begin{equation}
\label{eq_BieNgie0thohkie1uG4Axei4}
\begin{split}
 d_{\lifting{\manifold{N}}} (\lifting{u}_j (x), \lifting{u}_i (x))
 &\le  d_{\lifting{\manifold{N}}} (\lifting{u}_j (x), \lifting{u}_j(x_j))
 +  d_{\lifting{\manifold{N}}} (\lifting{u}_j (x_j), \lifting{u}_i (x_i))
 +  d_{\lifting{\manifold{N}}} (\lifting{u}_i (x_i), \lifting{u}_i (x))\\
 &\le 2 \lambda  + \diam (\lifting{W});
 \end{split}
\end{equation}
by \eqref{eq_cu0ienae1HaphoRishodicie}, 
we have 
\begin{equation}
\label{eq_oyo1eepaRienipien5guba2A}
 \abs{A_{i,j}} = \abs{A_i \cap A_j} = 
 \abs{A_i} + \abs{A_j} - \abs{A_i \cup A_j}
 \ge \tfrac{2}{3} \abs{\manifold{M}} + \tfrac{2}{3} \abs{\manifold{M}} - \abs{\manifold{M}}
 = \tfrac{1}{3}\abs{\manifold{M}}.
\end{equation}
Therefore, we have by \eqref{eq_BieNgie0thohkie1uG4Axei4} and \eqref{eq_oyo1eepaRienipien5guba2A}
\begin{equation}
  \smashoperator[r]{\iint_{\manifold{M} \times \manifold{M}}} \frac{1}{1 + d_{\lifting{\manifold{N}}} (\lifting{u}_j, \lifting{u}_i )}
  \ge \frac{\abs{A_{i,j}}}{1 + 2 \lambda + \vphantom{\hat{W}}\diam (\lifting{W})}
  \ge \frac{\abs{\manifold{M}}}{3(1 + 2 \lambda + \vphantom{\hat{W}}\diam(\lifting{W}))},
\end{equation}
and it follows from \cref{proposition_spgt1_estimate}, from \cref{prop_compactness} and from the completeness of the manifold \(\lifting{\manifold{N}}\) that, up to a subsequence, the sequence \((\lifting{u}_j)_{j \in \Nset}\) converges almost everywhere on \(\manifold{M}\) to some mapping \(\lifting{u} : \manifold{M} \to \lifting{\manifold{N}}\);
we also have \(\pi \compose \lifting{u} = \lim_{j\to \infty} \pi \compose \lifting{u}_j = \lim_{j \to \infty} u_j = u\) almost everywhere; by Fatou's lemma, by the a priori estimate on the lifting (\cref{proposition_spgt1_estimate}) and by the diagonal covering (\cref{lemma_manifold_diagonal_covering}) we have
\begin{equation*}
\begin{split}
\smashoperator[r]{\iint_{\manifold{M} \times \manifold{M}}}
\frac{d_{\lifting{\manifold{N}}} (\lifting{u} (y), \lifting{u} (x))^p \wedge 1}{d_\manifold{M} (y, x)^{m +sp}} \dif y \dif x 
& \le \liminf_{j \to \infty} \smashoperator{\iint_{\manifold{M} \times \manifold{M}}}
\frac{d_{\lifting{\manifold{N}}} (\lifting{u}_j (y), \lifting{u}_j (x))^p \wedge 1}{d_\manifold{M} (y, x)^{m +sp}} \dif y \dif x \\
&\le \Cl{cst_LohTheifeiNooth9Eepeedei} \liminf_{j \to \infty} \smashoperator{\iint_{\manifold{M} \times \manifold{M}}}
\frac{d_{\manifold{N}} (u_j (y), u_j (x))^p}{d_\manifold{M} (y, x)^{m + sp}} \dif y \dif x \\
& = \Cr{cst_LohTheifeiNooth9Eepeedei} \smashoperator{\iint_{\manifold{M} \times \manifold{M}}}
\frac{d_{\manifold{N}} (u (y), u (x))^p}{d_\manifold{M} (y, x)^{m + sp}} \dif y \dif x,
\end{split}
\end{equation*}
which proves the statement and the estimate \eqref{eq_oGeuzoo2Aeyaifaeto0uelei} when \(\pi : \lifting{\manifold{N}} \to \manifold{N}\) is a normal covering. 

If \(\pi :  \lifting{\manifold{N}} \to \manifold{N}\) is not a normal covering, we choose 
\(\pi_* :  \lifting{\manifold{N}}_* \to \lifting{\manifold{N}}\) to be a universal covering of \(\lifting{\manifold{N}}\), so that in particular
\(\pi \compose \pi_* : \lifting{\manifold{N}}_* \to \manifold{N}\) is a universal covering of \(\manifold{N}\) and thus also a normal covering. Applying the first part of the proof, we get a mapping \(\lifting{u}_* \in \homog{W}^{s, p} (\manifold{M}, \lifting{\manifold{N}}_*)\) such that \(\pi \compose \pi_* \compose \lifting{u}_* = u\) on \(\manifold{M}\); setting \(\lifting{u} \defeq \pi_* \compose \lifting{u}_*\), we reach the conclusion in the general case.
\end{proof}

As a byproduct of the proof of \cref{theorem_lifting_truncated_noncompact}, we get under the weaker condition \(sp > 1\) the existence of a lifting with an estimate for maps that are continuous outside a submanifold of codimension \(2\).

\begin{theorem}
\label{theorem_lifting_R_Wsp}
Let \(\manifold{M}\) and let \(\manifold{N}\) be a compact Riemannian manifold, let \(m \defeq  \dim \manifold{M}\), let \(\pi : \lifting{\manifold{N}} \to \manifold{N}\) be a surjective Riemannian covering map, let \(s \in \intvo{0}{1}\) and let \(p \in \intvo{1}{\infty}\). 
If \(\manifold{M}\) is simply-connected and if \(sp > 1\), then there exists a constant \(C \in \intvo{0}{\infty}\) such that for every map \(u \in \mathcal{R}^0_{m - 2}
(\manifold{M}, \manifold{N}) \cap \homog{W}^{s, p} (\manifold{M}, \manifold{N})\) there exists a measurable map \(\lifting{u} : \manifold{M} \to \lifting{\manifold{N}}\) such that \(\pi \compose \lifting{u} = u\) almost everywhere on \(\manifold{M}\) and \eqref{eq_oGeuzoo2Aeyaifaeto0uelei} holds.
\end{theorem}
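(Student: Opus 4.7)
The plan is to extract the construction already carried out inside the proof of \cref{theorem_lifting_truncated_noncompact}, skipping its approximation-and-compactness step since the map \(u\) is by assumption already in the regular class \(\mathcal{R}^0_{m-2}\). There will be essentially three ingredients: topological lifting off a codimension-\(2\) singular set, the pointwise a priori estimate of \cref{proposition_spgt1_estimate} on charts, and the diagonal covering of \cref{lemma_manifold_diagonal_covering}.

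First, I would use the regularity of \(u\) topologically. By definition of \(\mathcal{R}^0_{m-2}\) there is a finite union \(\Sigma \subseteq \manifold{M}\) of submanifolds with boundary of dimension \(m-2\) outside which \(u\) is continuous. Since \(\manifold{M}\) is simply-connected and \(\Sigma\) has codimension at least \(2\), the open set \(\manifold{M}\setminus \Sigma\) is connected and simply-connected, so by the classical topological lifting theorem there exists a continuous map \(\lifting{u} \in C(\manifold{M}\setminus \Sigma, \lifting{\manifold{N}})\) with \(\pi \compose \lifting{u} = u\restr{\manifold{M}\setminus \Sigma}\). Extending \(\lifting{u}\) to a measurable map on \(\manifold{M}\) in any way (\(\Sigma\) has Lebesgue measure zero) yields \(\lifting{u} \in \mathcal{R}^0_{m-2}(\manifold{M}, \lifting{\manifold{N}})\) with \(\pi \compose \lifting{u} = u\) almost everywhere.

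Second, I would localize in charts. Using \cref{lemma_manifold_diagonal_covering}, choose open sets \(V_1, \dotsc, V_\ell\) with \(\Bar{V}_i\) diffeomorphic to \(\Bar{\Bset}_1 \subseteq \Rset^m\) and \(\manifold{M}\times \manifold{M} \subseteq \bigcup_{i=1}^\ell V_i \times V_i\). Pulling back \(\lifting{u}\) to a convex open set \(\Omega_i \subseteq \Rset^m\) corresponding to \(V_i\), we retain the inclusion \(\mathcal{R}^0_{m-2}(\Omega_i, \lifting{\manifold{N}})\subseteq Y(\Omega_i, \lifting{\manifold{N}})\): indeed, by a Fubini-type argument, for almost every pair \(x, y \in \Omega_i\) the segment \(\intvc{x}{y}\) avoids the \((m-2)\)-dimensional pullback of \(\Sigma\), so \(\lifting{u}\restr{\intvc{x}{y}}\) is continuous. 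Applying \cref{proposition_spgt1_estimate} to \(\lifting{u}\) on each \(\Omega_i\), since \(sp > 1\), we obtain
\begin{equation*}
\smashoperator{\iint_{\Omega_i \times \Omega_i}} \frac{d_{\lifting{\manifold{N}}}(\lifting{u}(y), \lifting{u}(x))^p \wedge 1}{\abs{y - x}^{m + sp}} \dif y \dif x \le C \smashoperator{\iint_{\Omega_i \times \Omega_i}} \frac{d_{\manifold{N}}(u(y), u(x))^p}{\abs{y - x}^{m + sp}} \dif y \dif x.
\end{equation*}

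Third, I would transfer the estimate back to \(\manifold{M}\) by exploiting the bi-Lipschitz equivalence of the Euclidean distance on \(\Omega_i\) with the Riemannian distance on \(V_i\), and summing over the finitely many charts \(V_i \times V_i\) that cover \(\manifold{M}\times\manifold{M}\). This yields the global estimate \eqref{eq_oGeuzoo2Aeyaifaeto0uelei}.

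The main obstacle is essentially absent here: the content is entirely carried by \cref{proposition_spgt1_estimate} and \cref{lemma_manifold_diagonal_covering}, and the only real verification is the inclusion \(\mathcal{R}^0_{m-2}(\Omega_i, \lifting{\manifold{N}}) \subseteq Y(\Omega_i, \lifting{\manifold{N}})\), which follows from the dimension of \(\Sigma\) and Fubini. The deeper issue sidestepped by the stronger assumption on \(u\) — namely, producing a lifting for a rough \(u \in \homog{W}^{s,p}\) without topological continuity off a small set — is precisely what required the approximation, compactness, and normal-covering machinery in the proof of \cref{theorem_lifting_truncated_noncompact}, and which forced the restriction \(sp \ge 2\).
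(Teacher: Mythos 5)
Your proof realizes exactly the argument the paper has in mind when it calls the result a ``byproduct'' of the proof of \cref{theorem_lifting_truncated_noncompact}: drop the approximation, compactness and normal-covering steps since \(u\) is already in \(\mathcal{R}^0_{m-2}\), lift continuously off \(\Sigma\), verify \(\lifting{u}\in Y(\Omega_i,\lifting{\manifold{N}})\) via a transversality/Fubini argument, apply \cref{proposition_spgt1_estimate} on each chart, and sum via the covering of \cref{lemma_manifold_diagonal_covering}. The chart localization and the inclusion \(\mathcal{R}^0_{m-2}(\Omega,\lifting{\manifold{N}})\subseteq Y(\Omega,\lifting{\manifold{N}})\) are correctly justified.

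The first step, however, rests on a false claim: removing a finite union of \((m-2)\)-dimensional submanifolds from a simply-connected manifold need \emph{not} leave a simply-connected set. Concretely, \(\mathbb{S}^2\) minus two points has fundamental group \(\mathbb{Z}\); \(\mathbb{S}^3\) minus an unknotted circle likewise. Simple connectedness is preserved under removal of codimension \(\ge 3\), not codimension \(2\). In the proof of \cref{theorem_lifting_truncated_noncompact} this is harmless because there \(sp\ge 2\), so the Brezis--Mironescu density class has singular set of dimension \(m-\floor{sp}-1\le m-3\). But in the range \(1<sp<2\) to which \cref{theorem_lifting_R_Wsp} also applies, maps in \(\mathcal{R}^0_{m-2}\cap\homog{W}^{s,p}\) can have nontrivial monodromy: the \(\mathbb{S}^1\)-valued dipole on \(\mathbb{S}^2\) (two point singularities of winding \(\pm 1\)) belongs to \(\mathcal{R}^0_0\cap\homog{W}^{s,p}\) for all \(sp<2\), has no continuous lifting on \(\mathbb{S}^2\setminus\{p,q\}\), and, combining \cref{proposition_nonlinear_sum_in_X} with \cref{proposition_lifting_X_continuous} applied on \(\mathbb{S}^2\setminus\{p,q\}\), no measurable lifting of it can make the left-hand side of \eqref{eq_oGeuzoo2Aeyaifaeto0uelei} finite. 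Your argument therefore needs either the explicit hypothesis that \(u\) lifts continuously on \(\manifold{M}\setminus\Sigma\), or a separate argument that the Sobolev regularity forces trivial monodromy around codimension-\(2\) singularities --- which, as far as the Gagliardo energy is concerned, is true for \(sp\ge 2\) but false for \(1<sp<2\). This issue is shared with the paper's own formulation of the statement, but you should not assert the simple connectedness of \(\manifold{M}\setminus\Sigma\) as automatic from codimension \(2\) alone.
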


As a consequence of \cref{theorem_lifting_R_Wsp} and \cref{prop_compactness}, 
any map which is the almost everywhere limit of a sequence of maps \((u_j)_{j \in \Nset}\) in \(\mathcal{R}^0_{m - 2} (\manifold{M}, \manifold{N}) \cap \homog{W}^{s, p} (\manifold{M}, \manifold{N})\) that is bounded in \(\homog{W}^{s, p} (\manifold{M}, \manifold{N})\) has a lifting \(\lifting{u} : \manifold{M} \to \lifting{\manifold{N}}\) satisfying
\begin{equation}
 \smashoperator{\iint_{\manifold{M} \times \manifold{M}}} \frac{d_{\lifting{\manifold{N}}} (\lifting{u} (y), \lifting{u} (x))^p \wedge 1 }{d_{\manifold{M}} (y, x)^{m + sp}} \dif y \dif x
 \le \liminf_{j \to \infty}   C \smashoperator{\iint_{\manifold{M} \times \manifold{M}}} \frac{d_{\manifold{N}} (u_j (y), u_j (x))^p}{d_{\manifold{M}} (y, x)^{m + sp}} \dif y \dif x.
\end{equation}

When \(1 < sp < 2\), the assumption that the domain \(\manifold{M}\) is simply-connected in \cref{theorem_lifting_truncated_noncompact} and \cref{theorem_lifting_R_Wsp} can be replaced by a smallness assumption on the map to be lifted. 

\begin{theorem}
\label{theorem_lifting_non_simply_connected}
Let \(\manifold{M}\) and let \(\manifold{N}\) be a compact Riemannian manifold, let \(m \defeq  \dim \manifold{M}\), let \(\pi : \lifting{\manifold{N}} \to \manifold{N}\) be a surjective Riemannian covering map, let \(s \in \intvo{0}{1}\) and let \(p \in \intvo{1}{\infty}\). 
If \(sp > 1\), then there exists constants \(\varepsilon, C \in \intvo{0}{\infty}\) such for every map \(u \in \homog{W}^{s, p} (\manifold{M}, \manifold{N})\)
satisfying 
\begin{equation}
\label{eq_Eliushamoo3goozu2uy0quah}
  \smashoperator[r]{\iint_{\manifold{M} \times \manifold{M}}} \frac{d_{\manifold{N}} (u (y), u (x))^p}{d_{\manifold{M}} (y, x)^{m + sp}} \dif y \dif x \le \varepsilon,
\end{equation}
and satisfying also 
\(u \in \mathcal{R}^0_{m - 2}
(\manifold{M}, \manifold{N})\) when \(1 < sp < 2\), there exists a measurable map \(\lifting{u} : \manifold{M} \to \lifting{\manifold{N}}\) such that \(\pi \compose \lifting{u} = u\) almost everywhere on \(\manifold{M}\) and \eqref{eq_oGeuzoo2Aeyaifaeto0uelei} holds.
\end{theorem}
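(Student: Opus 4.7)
The plan is to mirror the proof of \cref{theorem_lifting_truncated_noncompact}, using the smallness hypothesis \eqref{eq_Eliushamoo3goozu2uy0quah} in place of simple-connectedness of \(\manifold{M}\). First, as in that proof, I would reduce to the case where \(\pi\) is a normal covering by passing to a universal covering \(\pi_* : \lifting{\manifold{N}}_* \to \lifting{\manifold{N}}\). When \(sp \ge 2\), the Brezis--Mironescu approximation provides a sequence \((u_j)_{j \in \Nset}\) in \(\mathcal{R}^0_{m-2}(\manifold{M},\manifold{N}) \cap \homog{W}^{s,p}(\manifold{M},\manifold{N})\) converging strongly in \(\homog{W}^{s,p}\) to \(u\); when \(1 < sp < 2\), the hypothesis \(u \in \mathcal{R}^0_{m-2}\) lets me take \(u_j \defeq u\) constantly. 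In both cases, for \(j\) large each \(u_j\) continues to satisfy \eqref{eq_Eliushamoo3goozu2uy0quah} with \(2\varepsilon\) in place of \(\varepsilon\), and is continuous on \(\manifold{M} \setminus \Sigma_j\) for some \((m - 2)\)-dimensional singular set \(\Sigma_j\).

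The core new step is, for each such \(u_j\), to produce a continuous lifting \(\lifting{u}_j : \manifold{M} \setminus \Sigma_j \to \lifting{\manifold{N}}\); in the simply-connected case this is automatic. I would extract it from smallness as follows. By the fractional Poincaré inequality applied after Nash's isometric embedding \(\manifold{N} \hookrightarrow \Rset^\nu\), followed by projection back to \(\manifold{N}\), there exists \(y_j \in \manifold{N}\) with \(\int_{\manifold{M}} d_{\manifold{N}}(u_j, y_j)^p \le C \varepsilon\); by Chebyshev, for any fixed \(\rho \in \intvo{0}{\inj(\manifold{N})/4}\), the bad set \(B_j \defeq \set{x \in \manifold{M} \st d_{\manifold{N}}(u_j(x), y_j) \ge \rho}\) has measure as small as desired when \(\varepsilon\) is small. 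The classical lifting criterion (see \cite{Hatcher_2002}*{prop.\ 1.33}) says the continuous map \(u_j\restr{\manifold{M} \setminus \Sigma_j}\) lifts iff its induced homomorphism on fundamental groups takes values in \(\pi_*(\pi_1(\lifting{\manifold{N}}))\); since \(\manifold{M}\) is compact, \(\pi_1(\manifold{M})\) admits a finite generating set \(\gamma_1, \dotsc, \gamma_k\), and it suffices to verify the condition on each \(\gamma_i\). For \(\varepsilon\) small enough, a slicing/Fubini argument over a tubular neighborhood of each \(\gamma_i\) realizes \(\gamma_i\) by a homotopic loop avoiding \(B_j \cup \Sigma_j\); on such a loop \(u_j\) takes values in the contractible geodesic ball \(B_{\manifold{N}}(y_j, \rho)\) and is therefore null-homotopic in \(\manifold{N}\).

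Once \(\lifting{u}_j \in \mathcal{R}^0_{m-2}(\manifold{M}, \lifting{\manifold{N}})\) is in hand, the a priori estimate of \cref{proposition_spgt1_estimate} combined with \cref{lemma_manifold_diagonal_covering} and \cref{proposition_exponent_improvement} yields a uniform bound on the truncated Gagliardo energies of the \(\lifting{u}_j\). The compactness argument from the proof of \cref{theorem_lifting_truncated_noncompact}---using normality of \(\pi\) to translate each \(\lifting{u}_j\) by a deck transformation into a fixed bounded set \(\lifting{W} \subset \lifting{\manifold{N}}\), and invoking \cref{prop_compactness}---then extracts a subsequence converging almost everywhere to a measurable \(\lifting{u} : \manifold{M} \to \lifting{\manifold{N}}\) with \(\pi \compose \lifting{u} = u\) almost everywhere, and Fatou's lemma delivers the estimate \eqref{eq_oGeuzoo2Aeyaifaeto0uelei}.

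The hard part will be the topological step in the second paragraph: although small \(L^p\)-closeness of \(u_j\) to a constant strongly suggests a trivial \(\pi_1\)-action, the bad set \(B_j\) may be geometrically thick (of positive \(m\)-dimensional measure), so finding a loop in a prescribed homotopy class that entirely avoids \(B_j\) is not immediate from Chebyshev alone. Making this rigorous---for instance by representing each \(\gamma_i\) as the core of a tubular neighborhood and using Fubini over its transverse disk to select a parallel loop on which the \(1\)-dimensional trace of \(B_j\) is small, then bridging any remaining bad arcs by short null-homotopic detours---is the genuinely delicate component of the argument.
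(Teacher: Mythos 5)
Your overall approach matches the paper's sketch: reduce to normal coverings, approximate by maps in $\mathcal{R}^0_{m-2}$ (or take $u_j = u$ when $1 < sp < 2$), observe that $\pi_1(\manifold{M})$ is finitely generated, use smallness to lift $u_j$ on a finite set of generating loops avoiding $\Sigma_j$, and then run the compactness/Fatou argument from \cref{theorem_lifting_truncated_noncompact}. The gap lies exactly where you flag it: the $L^p$-closeness-to-a-constant/Chebyshev/Fubini route does not control the restriction of $u_j$ to a loop pointwise. Even if $\lvert B_j\rvert \le C\varepsilon/\rho^p$ is arbitrarily small, $B_j$ can be a thin \emph{sheet} meeting every parallel fiber of the tubular neighborhood, so Fubini only bounds the average one-dimensional measure of $\gamma^{-1}(B_j)$, not its emptiness. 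The proposed patch (``bridging any remaining bad arcs by short null-homotopic detours'') is not well-defined: a small one-dimensional trace of $B_j$ does not bound the excursion $u_j$ makes away from $y_j$ on those arcs, so a ``detour'' could itself carry the topology you are trying to rule out.

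The tool that closes the gap, and which the paper uses systematically (\cref{proposition_spgt1_estimate}, \cref{lemma_truncated_Morrey}, \cref{lemma_fractional_integration}), is the one-dimensional fractional Morrey--Sobolev embedding, available precisely because $sp > 1$. Rather than controlling the measure of a bad set, one uses Fubini in the form of the rotation method (as in \cref{lemma_fractional_integration} or \cref{lemma_Fubini_X}) to conclude that for almost every parallel loop $\gamma$ in a tubular neighborhood of $\gamma_i$, the one-dimensional Gagliardo energy of $u_j\restr{\gamma}$ is controlled on average by the global energy, hence by $\varepsilon$. The Morrey inequality then bounds the full \emph{oscillation} of $u_j$ along that loop by $C\,\varepsilon^{1/p}$ times a power of the length; for $\varepsilon$ small enough this falls below $\inj(\manifold{N})$, so $u_j\restr{\gamma}$ takes values in a contractible geodesic ball and is null-homotopic, which verifies the lifting criterion on the class $[\gamma_i]$. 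That pointwise oscillation control on one-dimensional slices, not an $L^p$-smallness argument, is what your proposal is missing.
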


When \(\pi: \Rset\to \Sset^1\) is the universal covering of the circle, \cref{theorem_lifting_non_simply_connected} is a reformulation of a result of Brezis \& Mironescu \cite{Brezis_Mironescu_2021}*{th.\ 14.5 \& \S 14.6.2}.

\begin{proof}[Proof of \cref{theorem_lifting_non_simply_connected}]
We follow the proof of \cref{theorem_lifting_truncated_noncompact}, noting that \(\pi_{1} (\mathcal{M})\) has finitely many generators, so that if \(\varepsilon \in \intvo{0}{\infty}\) is taken small enough, the smallness assumption \eqref{eq_Eliushamoo3goozu2uy0quah} implies that \(u_j\) has a lifting on a finite set of loops generating \(\pi_1 (\manifold{N})\) and not intersecting the singular set of \(u_j\) and hence \(u_j\) has a lifting outside its singular set.
\end{proof}

\subsection{Uniqueness of the lifting}

The lifting given by \cref{theorem_lifting_truncated_noncompact} turns out to be essentially unique, as it is well established for the lifting in fractional Sobolev spaces \citelist{\cite{Bourgain_Brezis_Mironescu_2000}*{Appendix B}\cite{Bethuel_Chiron_2007}*{lem.\ A.4}}.

The main analytical tool is the following result of Bourgain, Brezis \& Mironescu \citelist{\cite{Brezis_2002}\cite{Bourgain_Brezis_Mironescu_2001}*{Appendix B}} (see also \citelist{\cite{Brezis_Mironescu_2021}*{cor.\ 6.4}\cite{DeMarco_Mariconda_Solimini_2008}\cite{RanjbarMotlagh_2020}}).

\begin{lemma}
\label{lemma_potential_char}
Let \(\manifold{M}\) be a connected Riemannian manifold with \(m \defeq \dim \manifold{M}\).
If the set \(A \subseteq \manifold{M}\) is measurable and if
\begin{equation*}
 \int_{A} \int_{\manifold{M} \setminus A}
 \frac{1}{\dist_{\manifold{M}} (y, x)^{m + 1}} \dif y \dif x<\infty,
\end{equation*}
then either \(\abs{A}=0\) or \(\abs{\manifold{M} \setminus A} = 0\).
\end{lemma}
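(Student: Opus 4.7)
My plan is to give a direct mollification argument that first proves the local (Euclidean-chart) version of the statement, and then globalize via connectedness of \(\manifold{M}\). Choose a small open set \(\Omega \subseteq \manifold{M}\) diffeomorphic to a bounded convex Euclidean set under which the Riemannian distance \(d_{\manifold{M}}\) restricted to \(\Omega\) is bi-Lipschitz equivalent to the Euclidean distance on the chart. The global finiteness hypothesis, written as
\begin{equation*}
\iint_{\manifold{M} \times \manifold{M}} \frac{\abs{\chi_A(y) - \chi_A(x)}}{d_{\manifold{M}}(y,x)^{m+1}}\dif y \dif x < \infty,
\end{equation*}
then descends (up to a multiplicative constant coming from the bi-Lipschitz comparison of kernels) to
\begin{equation*}
\iint_{\Omega \times \Omega} \frac{\abs{\chi_A(y) - \chi_A(x)}}{\abs{y-x}^{m+1}}\dif y \dif x < \infty.
\end{equation*}
It thus suffices to show that any bounded open convex set \(\Omega \subseteq \Rset^m\) with this Euclidean integral finite forces \(\chi_A\) to be essentially constant.

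For the local step, fix a standard smooth mollifier \(\rho_\epsilon\) supported in the Euclidean ball \(B_\epsilon(0)\) and set \(f_\epsilon \defeq \chi_A * \rho_\epsilon\) on \(\Omega_\epsilon \defeq \set{x \in \Omega \st B_\epsilon(x) \subseteq \Omega}\). Since \(\int \nabla \rho_\epsilon = 0\),
\begin{equation*}
\nabla f_\epsilon(x) = \int_{B_\epsilon(x)} (\chi_A(y) - \chi_A(x))\,\nabla \rho_\epsilon(x - y)\dif y,
\end{equation*}
and the pointwise bound \(\abs{\nabla \rho_\epsilon(z)} \le C \epsilon^{-m-1} \le C \abs{z}^{-m-1}\), valid on \(B_\epsilon \setminus \set{0}\), yields
\begin{equation*}
\int_{\Omega_\epsilon} \abs{\nabla f_\epsilon} \dif x
\le C \iint_{\substack{(x,y) \in \Omega \times \Omega \\ \abs{y-x} < \epsilon}}\frac{\abs{\chi_A(y) - \chi_A(x)}}{\abs{y-x}^{m+1}}\dif y \dif x,
\end{equation*}
which tends to \(0\) as \(\epsilon \to 0\) by the dominated convergence theorem (the integrand is in \(L^1(\Omega\times\Omega)\) and the region of integration shrinks to a null set). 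The Poincaré--Wirtinger inequality on the convex set \(\Omega_\epsilon\) then gives \(\norm{f_\epsilon - (f_\epsilon)_{\Omega_\epsilon}}_{L^1(\Omega_\epsilon)} \to 0\); combined with the standard convergence \(f_\epsilon \to \chi_A\) in \(L^1_{\mathrm{loc}}(\Omega)\), this forces \(\chi_A\) to coincide with its limiting average a.e.\ on \(\Omega\). Since \(\chi_A \in \set{0,1}\), either \(\abs{A \cap \Omega} = 0\) or \(\abs{\Omega \setminus A} = 0\).

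For the global step, define the disjoint open sets
\begin{equation*}
 U_A \defeq \set{x \in \manifold{M} \st \abs{A \cap V} = 0 \text{ for some open neighborhood } V \text{ of } x}
\end{equation*}
and \(U_{\manifold{M} \setminus A}\) symmetrically. Applying the local step to any chart covering \(\manifold{M}\) shows \(U_A \cup U_{\manifold{M}\setminus A} = \manifold{M}\), so connectedness forces one of them to be all of \(\manifold{M}\), and a countable (Lindelöf) subcovering then yields \(\abs{A} = 0\) or \(\abs{\manifold{M}\setminus A} = 0\). I do not expect a serious obstacle: the mild bookkeeping items are the descent of the hypothesis to each chart (immediate from the bi-Lipschitz comparison) and the uniform-in-\(\epsilon\) validity of the Poincaré--Wirtinger constant on \(\Omega_\epsilon\) (immediate from convexity and the uniform diameter bound).
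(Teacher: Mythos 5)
Your argument is correct. Note that the paper does not prove this lemma itself: it is cited as a known result of Bourgain, Brezis and Mironescu (Brezis 2002, prop.~2; Bourgain--Brezis--Mironescu 2001, Appendix~B), so there is no internal proof to compare against. Your mollification-plus-Poincar\'e--Wirtinger argument is, however, essentially the classical BBM route, and all the steps check out.

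A few points I verified in detail. The kernel bound $\abs{\nabla\rho_\epsilon(z)} \le \norm{\nabla\rho}_{L^\infty}\,\epsilon^{-m-1} \le \norm{\nabla\rho}_{L^\infty}\,\abs{z}^{-m-1}$ on $B_\epsilon\setminus\{0\}$ is correct (the second inequality holds precisely because $\abs{z}\le\epsilon$), and the constant is uniform in $\epsilon$. The passage from $\int_{\Omega_\epsilon}\abs{\nabla f_\epsilon}$ to the truncated double integral uses only $B_\epsilon(x)\subseteq\Omega$ for $x\in\Omega_\epsilon$, which is exactly how $\Omega_\epsilon$ is defined, and dominated convergence then gives the vanishing because the integrand is in $L^1(\Omega\times\Omega)$ and $\{\abs{y-x}<\epsilon\}$ shrinks to a null set. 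The $L^1$ Poincar\'e--Wirtinger constant on the convex inner parallel sets $\Omega_\epsilon$ is controlled by $\operatorname{diam}(\Omega)$ uniformly in $\epsilon$ (Acosta--Dur\'an), so the averaged differences tend to zero; combining with $f_\epsilon\to\chi_A$ in $L^1_{\mathrm{loc}}$ and the subsequential convergence of the averages $(f_\epsilon)_{\Omega_\epsilon}\in[0,1]$ gives $\chi_A\equiv c$ a.e.\ on $\Omega$, hence $c\in\{0,1\}$. The globalization via the disjoint open sets $U_A$ and $U_{\manifold{M}\setminus A}$ together with connectedness and the Lindel\"of property is standard and correct, as is the bi-Lipschitz descent of the hypothesis to charts (Riemannian distance and volume are comparable to their Euclidean counterparts on a small coordinate chart). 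For context, the alternative route often used in the literature (e.g.\ De Marco--Mariconda--Solimini) is a direct one-dimensional slicing argument avoiding mollification; your approach is marginally more technical but no less rigorous.
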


\begin{proof}[Proof of \cref{proposition_lifting_unique}]
We define the set 
\begin{equation}
\label{eq_tee6euGaeKizah5Aeng6pohT}
 A \defeq \set{x \in \manifold{M} \st \lifting{u}_1 (x) = \lifting{u}_0 (x)}.
\end{equation}
We observe that if \(x \in A\) and \(y \in \manifold{M} \setminus A\), then by \cref{lemma_small_isometry} and by the triangle inequality
\begin{equation*}
\begin{split}
 \inj(\manifold{N}) &\le  d_{\lifting{\manifold{N}}} (\lifting{u}_1 (y), \lifting{u}_0 (y))\\
 &\le  d_{\lifting{\manifold{N}}} (\lifting{u}_1 (y), \lifting{u}_1 (x)) +  d_{\lifting{\manifold{N}}} (\lifting{u}_1 (x), \lifting{u}_0 (x)) + d_{\lifting{\manifold{N}}} (\lifting{u}_0 (x), \lifting{u}_0 (y))\\
 &= d_{\lifting{\manifold{N}}} (\lifting{u}_1 (y), \lifting{u}_1 (x)) +  d_{\lifting{\manifold{N}}} (\lifting{u}_0 (x), \lifting{u}_0 (y)),
\end{split}
\end{equation*}
and thus either \(d_{\lifting{\manifold{N}}} (\lifting{u}_0 (x), \lifting{u}_0 (y)) \ge
\inj(\manifold{N})/2\) or \(d_{\lifting{\manifold{N}}} (\lifting{u}_1 (x), \lifting{u}_1 (y)) \ge \inj(\manifold{N})/2\),
and thus 
\begin{equation*}
 \int_{A} \int_{\manifold{M} \setminus A} \frac{1}{d_{\manifold{M}} (x, y)^{m + 1}}\dif y \dif x
 \le \sum_{j \in \{0, 1\}}\hspace{1em}\smashoperator{\iint_{\substack{x, y \in \manifold{M}\\ d_{\lifting{\manifold{N}}} (\lifting{u}_j (x), \lifting{u}_j (y))\ge \inj(\manifold{N})/2}}}  \frac{1}{d_{\manifold{M}} (x, y)^{m + 1}} \dif y \dif x  < \infty.
\end{equation*}
It follows then from \cref{lemma_potential_char} that either \(\abs{A} = 0\) or \(\abs{\manifold{M} \setminus A} = 0\) and the conclusion follows from the definition of \(A\) in \eqref{eq_tee6euGaeKizah5Aeng6pohT}.
\end{proof}

The space \(X (\manifold{M}, \lifting{\manifold{N}})\) contains all the functions such that the left-hand side of \eqref{eq_oGeuzoo2Aeyaifaeto0uelei} in \cref{theorem_lifting_truncated_noncompact} is finite.

\begin{proposition}
\label{proposition_nonlinear_sum_in_X}
 Let \(\manifold{M}\) be a compact Riemannian manifold, let \(\pi : \lifting{\manifold{N}} \to \manifold{N} \) be a Riemannian covering, let \(s \in \intvo{0}{1}\), let \(p \in \intvo{1}{\infty}\) and let \(q \in \intvr{0}{\infty}\). If \(sp > 1\) and if the mapping \(u : \manifold{M} \to \lifting{\manifold{N}}\) is measurable and satisfies 
\begin{equation*}
  \smashoperator[r]{\iint_{\manifold{M} \times \manifold{M}}} \frac{d_{\lifting{\manifold{N}}} (\lifting{u} (x), \lifting{u} (y))^p \wedge d_{\lifting{\manifold{N}}} (\lifting{u} (x), \lifting{u} (y))^q}{d_{\manifold{M}} (y, x)^{m + sp}} \dif y \dif x < \infty,
\end{equation*}
with \(m \defeq  \dim \manifold{M}\), then \(\lifting{u} \in X (\manifold{M}, \lifting{\manifold{N}})\).
\end{proposition}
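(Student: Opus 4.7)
The proof should be quite short and direct. The plan is to combine two trivial observations: first, that the condition $sp>1$ together with compactness of $\manifold{M}$ allows one to trade a $d_{\manifold{M}}^{-(m+1)}$ singularity for the stronger $d_{\manifold{M}}^{-(m+sp)}$; second, that on the region where $d_{\lifting{\manifold{N}}}(\lifting{u}(x),\lifting{u}(y))\ge \inj(\manifold{N})/2$, the integrand $d_{\lifting{\manifold{N}}}^p\wedge d_{\lifting{\manifold{N}}}^q$ is bounded below by a positive constant.

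In detail, first set $\delta\defeq \inj(\manifold{N})/2$ and $D\defeq \diam(\manifold{M})<\infty$ by compactness. Consider the set
\begin{equation*}
 E \defeq \set[\big]{(x,y)\in\manifold{M}\times\manifold{M} \st d_{\lifting{\manifold{N}}}(\lifting{u}(x),\lifting{u}(y)) \ge \delta}.
\end{equation*}
On $E$ one has $d_{\lifting{\manifold{N}}}(\lifting{u}(x),\lifting{u}(y))^p \wedge d_{\lifting{\manifold{N}}}(\lifting{u}(x),\lifting{u}(y))^q \ge \delta^p\wedge \delta^q \eqdef c_\delta > 0$, and therefore the hypothesis gives
\begin{equation*}
 c_\delta \smashoperator{\iint_E} \frac{1}{d_{\manifold{M}}(x,y)^{m+sp}}\dif y \dif x \le \smashoperator{\iint_{\manifold{M}\times \manifold{M}}} \frac{d_{\lifting{\manifold{N}}}(\lifting{u}(x),\lifting{u}(y))^p \wedge d_{\lifting{\manifold{N}}}(\lifting{u}(x),\lifting{u}(y))^q}{d_{\manifold{M}}(x,y)^{m+sp}} \dif y \dif x < \infty.
\end{equation*}

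Since $sp>1$ and $d_{\manifold{M}}(x,y)\le D$ for every $x,y\in\manifold{M}$, the pointwise inequality
\begin{equation*}
 \frac{1}{d_{\manifold{M}}(x,y)^{m+1}} = \frac{d_{\manifold{M}}(x,y)^{sp-1}}{d_{\manifold{M}}(x,y)^{m+sp}} \le \frac{D^{sp-1}}{d_{\manifold{M}}(x,y)^{m+sp}}
\end{equation*}
holds on $E$, so that integrating over $E$ and combining with the previous display yields the finiteness of $\iint_E d_{\manifold{M}}(x,y)^{-(m+1)}\dif y\dif x$, which is exactly the condition $\lifting{u}\in X(\manifold{M},\lifting{\manifold{N}})$. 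There is no genuine obstacle here; the proof is essentially a lower bound on the integrand where the covering distance is large combined with trading exponents using the diameter of $\manifold{M}$, and in particular does not require \cref{proposition_equivalent_norms} since one only needs a one-sided comparison on the superlevel set.
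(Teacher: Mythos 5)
Your proposal is correct and follows essentially the same argument as the paper's proof: both lower-bound the numerator by $(\inj(\manifold{N})/2)^p \wedge (\inj(\manifold{N})/2)^q$ on the superlevel set and trade the exponent $m+1$ for $m+sp$ using $d_{\manifold{M}} \le \diam(\manifold{M})$ and $sp>1$.
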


\begin{proof}
We have
\begin{multline}
\label{eq_quiePhuo7diethai1xat6ier}
  \smashoperator{\iint_{\substack{x, y \in \manifold{M}\\ d_{\lifting{\manifold{N}}} (\lifting{u} (x), \lifting{u} (y))\ge \inj(\manifold{N})/2}}} \frac{(\inj(\manifold{N})/2)^p \wedge (\inj(\manifold{N})/2)^q}{d_{\manifold{M}} (x, y)^{m + 1}} \dif y \dif x\\ 
 \le 
 \diam(\manifold{M})^{sp - 1}\smashoperator{\iint_{\manifold{M} \times \manifold{M}}} \frac{d_{\lifting{\manifold{N}}} (\lifting{u} (x), \lifting{u} (y))^p \wedge d_{\lifting{\manifold{N}}} (\lifting{u} (x), \lifting{u} (y))^q}{d_{\manifold{M}} (y, x)^{m + sp}} \dif y \dif x < \infty.
\end{multline}
\end{proof}

Classical fractional uniqueness results for \(u_0, u_1 \in \homog{W}^{s, p} (\manifold{M}, \lifting{\manifold{N}})\) with \(0 < s < 1\) \citelist{\cite{Bourgain_Brezis_Mironescu_2001}\cite{Bethuel_Chiron_2007}} can be recovered from \cref{proposition_lifting_unique} and \cref{proposition_nonlinear_sum_in_X}.

The uniqueness property of the lifting also allows one to write any lifting in term of a fixed lifting over a normal covering.

\begin{proposition}
\label{proposition_uniqueness_automorphism}
Let \(\manifold{M}\) be a Riemannian manifold, let \(\pi_\sharp : \lifting{\manifold{N}}_\sharp \to \lifting{\manifold{N}}_\flat\) and \(\pi_\flat : \lifting{\manifold{N}}_\flat \to \manifold{N}\) be Riemannian coverings,
and let \(\lifting{u}_\sharp \in X (\manifold{M}, \lifting{\manifold{N}}_\sharp)\) and \(\lifting{u}_\flat \in X (\manifold{M},\lifting{\manifold{N}}_\flat)\).
If \(\manifold{M}\) is connected, if the covering \(\pi_{\sharp}\) is surjective, if the covering \(\pi_\flat \compose \pi_{\sharp}\) is normal, 
and if \(\pi_{\flat} \compose  \lifting{u}_\flat= \pi_{\flat} \compose \pi_{\sharp} \compose \lifting{u}_\sharp\) almost everywhere on \(\manifold{M}\),
then there exists \(\tau \in \Aut(\pi_\flat \compose \pi_{\sharp})\) such that 
\(\lifting{u}_\flat = \pi_\sharp \compose \tau \compose \lifting{u}_\sharp\) almost everywhere on \(\manifold{M}\).
\end{proposition}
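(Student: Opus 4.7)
The plan is to apply \cref{proposition_lifting_unique} to a countable family of candidate liftings indexed by the deck transformation group \(\Aut(\pi_\flat \compose \pi_\sharp)\), and then to extract a good representative by a pointwise existence argument together with a countable decomposition.

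First, for each \(\tau \in \Aut(\pi_\flat \compose \pi_\sharp)\), I would introduce the candidate map \(\lifting{v}_\tau \defeq \pi_\sharp \compose \tau \compose \lifting{u}_\sharp : \manifold{M} \to \lifting{\manifold{N}}_\flat\). Since \(\pi_\flat \compose \pi_\sharp\) is a Riemannian covering, \(\tau\) is a global isometry of \(\lifting{\manifold{N}}_\sharp\), and since the Riemannian covering \(\pi_\sharp\) is distance-non-increasing (any curve in \(\lifting{\manifold{N}}_\sharp\) projects to a curve in \(\lifting{\manifold{N}}_\flat\) of the same length), we have the pointwise bound
\begin{equation*}
d_{\lifting{\manifold{N}}_\flat}(\lifting{v}_\tau(x), \lifting{v}_\tau(y))
\le d_{\lifting{\manifold{N}}_\sharp}(\tau(\lifting{u}_\sharp(x)), \tau(\lifting{u}_\sharp(y)))
= d_{\lifting{\manifold{N}}_\sharp}(\lifting{u}_\sharp(x), \lifting{u}_\sharp(y))
\end{equation*}
for every \(x, y \in \manifold{M}\). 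The superlevel set at threshold \(\inj(\manifold{N})/2\) defining membership of \(\lifting{v}_\tau\) in \(X(\manifold{M}, \lifting{\manifold{N}}_\flat)\) is therefore contained in the corresponding superlevel set for \(\lifting{u}_\sharp\), which gives \(\lifting{v}_\tau \in X(\manifold{M}, \lifting{\manifold{N}}_\flat)\).

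Next, by construction \(\pi_\flat \compose \lifting{v}_\tau = (\pi_\flat \compose \pi_\sharp) \compose \tau \compose \lifting{u}_\sharp = (\pi_\flat \compose \pi_\sharp) \compose \lifting{u}_\sharp = \pi_\flat \compose \lifting{u}_\flat\) almost everywhere on \(\manifold{M}\), so \cref{proposition_lifting_unique} applied to the pair \(\lifting{v}_\tau, \lifting{u}_\flat \in X(\manifold{M}, \lifting{\manifold{N}}_\flat)\) with respect to the covering \(\pi_\flat : \lifting{\manifold{N}}_\flat \to \manifold{N}\) gives, for each \(\tau\), the dichotomy that either \(\lifting{v}_\tau = \lifting{u}_\flat\) almost everywhere on \(\manifold{M}\) or \(\lifting{v}_\tau \ne \lifting{u}_\flat\) almost everywhere on \(\manifold{M}\). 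To select a good \(\tau\), I would then observe that at every point \(x\) where the hypothesis holds, any preimage \(\lifting{z} \in \pi_\sharp^{-1}(\lifting{u}_\flat(x))\) (which exists by surjectivity of \(\pi_\sharp\)) lies in \((\pi_\flat \compose \pi_\sharp)^{-1}(u(x))\); by normality of \(\pi_\flat \compose \pi_\sharp\) this fibre equals the orbit of \(\lifting{u}_\sharp(x)\) under \(\Aut(\pi_\flat \compose \pi_\sharp)\), so some \(\tau_x\) sends \(\lifting{u}_\sharp(x)\) to \(\lifting{z}\), whence \(\lifting{v}_{\tau_x}(x) = \lifting{u}_\flat(x)\).

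Since \(\Aut(\pi_\flat \compose \pi_\sharp)\) acts freely and transitively on the at-most-countable discrete fibres of the covering \(\pi_\flat \compose \pi_\sharp\), the group is itself at most countable; hence the decomposition
\begin{equation*}
 \manifold{M} = \bigcup_{\tau \in \Aut(\pi_\flat \compose \pi_\sharp)} \{x \in \manifold{M} : \lifting{v}_\tau(x) = \lifting{u}_\flat(x)\}
\end{equation*}
(valid modulo a null set) is countable, so some term has positive measure. By the dichotomy above, for that \(\tau\) one has in fact \(\lifting{v}_\tau = \lifting{u}_\flat\) almost everywhere on \(\manifold{M}\), which is the desired equality \(\pi_\sharp \compose \tau \compose \lifting{u}_\sharp = \lifting{u}_\flat\). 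The main obstacle is the first step, namely verifying that each candidate \(\lifting{v}_\tau\) lands in \(X(\manifold{M}, \lifting{\manifold{N}}_\flat)\); this hinges on the isometric character of deck transformations combined with the distance-non-increasing property of \(\pi_\sharp\), while the rest reduces to the uniqueness dichotomy of \cref{proposition_lifting_unique} together with the countability of the deck group.
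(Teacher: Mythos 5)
Your proof is correct and follows essentially the same route as the paper: pointwise existence of a suitable deck transformation via surjectivity of \(\pi_\sharp\) and normality of \(\pi_\flat \compose \pi_\sharp\), a countable decomposition of \(\manifold{M}\), and an upgrade from positive measure to full measure via \cref{proposition_lifting_unique}. Your first step — explicitly checking that each \(\lifting{v}_\tau = \pi_\sharp \compose \tau \compose \lifting{u}_\sharp\) lies in \(X(\manifold{M}, \lifting{\manifold{N}}_\flat)\) via the isometry of \(\tau\) and the distance-non-increasing property of the local isometry \(\pi_\sharp\) — is a hypothesis of \cref{proposition_lifting_unique} that the paper leaves implicit, so it is a worthwhile addition.
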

\begin{proof}
Since the covering \(\pi_{\sharp}\) is surjective, for every \(x \in \manifold{M}\),
there exists \(\lifting{y}_\sharp \in \lifting{\manifold{N}}_\sharp\) such that \(\pi_{\sharp} (\lifting{y}_\sharp) = \lifting{u}_{\flat} (x)\).
For almost every \(x \in \manifold{M}\), since \(\pi_{\flat} (\pi_{\sharp} (\lifting{y}_\sharp)) = \pi_\flat (\lifting{u}_\flat (x))= \pi_{\flat} (\pi_{\sharp} (\lifting{u}_{\sharp}(x)))\) and since the covering \(\pi_\flat \compose \pi_\sharp\) is normal, there exists \(\tau \in \Aut(\pi_{\flat} \compose \pi_\sharp)\) such that \(\lifting{y}_\sharp = \tau(\lifting{u}_\sharp (x))\) and thus \(\lifting{u}_\flat(x) = \pi_{\sharp} (\tau (\lifting{u}_\sharp (x)))\).
Hence we have 
\begin{equation}
 \manifold{M}
 = \smashoperator{\bigcup_{\tau \in \Aut(\pi_{\flat} \compose \pi_{\sharp})} }
 \set{x \in \manifold{M} \st \lifting{u}_\flat(x) = \pi_{\sharp} (\tau (\lifting{u}_\sharp (x)))} \cup E,
\end{equation}
where \(E \subseteq \manifold{M}\) satisfies \(\abs{E} = 0\).
Since the set \(\Aut (\pi_\flat \compose \pi_\sharp)\) is countable, there exists \(\tau \in \Aut(\pi_\flat \compose \pi_{\sharp})\) such that 
\( \lifting{u}_\flat = \pi_{\sharp} \compose \tau \compose \lifting{u}_\sharp\) on a set of positive measure of \(\manifold{M}\) and the identity then holds outside a null set by the uniqueness of lifting (\cref{proposition_lifting_unique}) since \(\manifold{M}\) is connected.
\end{proof}

As a consequence of \cref{proposition_uniqueness_automorphism}, we get that a lifting in \(X(\manifold{M}, \lifting{\manifold{N}})\) of a continuous map is necessarily essentially continuous.

\begin{proposition}
\label{proposition_lifting_X_continuous}
Let \(\manifold{M}\) be a Riemannian manifold, let \(\pi : \lifting{\manifold{N}} \to \lifting{\manifold{N}}\)
by a Riemannian covering.
If \(\lifting{u} \in X (\manifold{M}, \lifting{\manifold{N}})\) and if \(u = \pi \compose \lifting{u}\) is continuous, then there exists \(\lifting{v} \in C (\manifold{M}, \lifting{\manifold{N}})\) such that \(\lifting{v} = \lifting{u}\) almost everywhere on \(\manifold{M}\).
\end{proposition}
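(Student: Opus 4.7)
The plan is to produce the continuous representative $\lifting{v}$ locally and patch the local pieces via the uniqueness of lifting (\cref{proposition_lifting_unique}).

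First, fix an arbitrary $x_0 \in \manifold{M}$ and pick an open geodesic ball $U \subseteq \manifold{M}$ centred at $x_0$ small enough that $U$ is connected, simply-connected and has compact closure. Since $u \in C(\manifold{M}, \manifold{N})$ and $U$ is simply-connected, the classical topological lifting theorem (\cite{Hatcher_2002}*{prop.\ 1.33}) produces, for each point $\lifting{y}_0 \in \pi^{-1}(\set{u(x_0)})$, a unique continuous lift $\lifting{v}^{(\lifting{y}_0)} \in C (U, \lifting{\manifold{N}})$ of $u\restr{U}$ with $\lifting{v}^{(\lifting{y}_0)}(x_0) = \lifting{y}_0$, and every continuous lifting of $u\restr{U}$ arises this way. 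The fibre $\pi^{-1}(\set{u(x_0)})$ is countable, and each $\lifting{v}^{(\lifting{y}_0)}$ is uniformly continuous on $\Bar{U}$, so there exists $\delta > 0$ with $d_{\lifting{\manifold{N}}}(\lifting{v}^{(\lifting{y}_0)}(x), \lifting{v}^{(\lifting{y}_0)}(y)) < \inj(\manifold{N})/2$ whenever $d_{\manifold{M}}(x, y) < \delta$; this yields $\lifting{v}^{(\lifting{y}_0)} \in X(U, \lifting{\manifold{N}})$, because the set over which integration is performed in the definition of $X$ is then contained in $\set{(x, y) \st d_{\manifold{M}}(x, y) \ge \delta}$, which has finite $|y - x|^{-(m+1)}$-integral on $U \times U$.

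Next, I match one continuous lift to $\lifting{u}\restr{U}$ almost everywhere. For almost every $x \in U$, $\lifting{u}(x)$ belongs to the countable fibre $\pi^{-1}(\set{u(x)}) = \set{\lifting{v}^{(\lifting{y}_0)}(x) \st \lifting{y}_0 \in \pi^{-1}(\set{u(x_0)})}$. Hence up to a null set
\begin{equation*}
 U = \bigcup_{\lifting{y}_0 \in \pi^{-1}(\set{u(x_0)})}
 \set{x \in U \st \lifting{u}(x) = \lifting{v}^{(\lifting{y}_0)}(x)}.
\end{equation*}
By countable additivity some term has positive measure, so for this particular $\lifting{y}_0$ the set where $\lifting{u} = \lifting{v}^{(\lifting{y}_0)}$ has positive measure. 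Applying \cref{proposition_lifting_unique} on $U$ (its proof via \cref{lemma_potential_char} does not use compactness of the domain and uses only connectedness of $U$ and the fact that both maps lie in $X(U, \lifting{\manifold{N}})$) to the two liftings $\lifting{u}\restr{U}$ and $\lifting{v}^{(\lifting{y}_0)}$, we exclude the alternative that they disagree almost everywhere and conclude $\lifting{u} = \lifting{v}^{(\lifting{y}_0)}$ almost everywhere on $U$.

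Finally, I patch. Cover $\manifold{M}$ by countably many balls $U_\alpha$ of the above type and call $\lifting{v}_\alpha \in C(U_\alpha, \lifting{\manifold{N}})$ the continuous lift obtained on $U_\alpha$. On each connected component $W$ of an overlap $U_\alpha \cap U_\beta$, both $\lifting{v}_\alpha$ and $\lifting{v}_\beta$ are continuous and satisfy $\lifting{v}_\alpha = \lifting{u} = \lifting{v}_\beta$ almost everywhere on $W$, hence everywhere on $W$ by continuity and density. Therefore the maps glue into a single continuous $\lifting{v} \in C(\manifold{M}, \lifting{\manifold{N}})$ with $\lifting{v} = \lifting{u}$ almost everywhere on $\manifold{M}$.

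The only real step that is not routine is the local matching in the middle paragraph: we need to ensure that \emph{some} continuous local lift actually coincides with $\lifting{u}$ on a set of positive measure before we can invoke the uniqueness dichotomy. The countability of the fibre of $\pi$, together with the measurability of $\lifting{u}$, is exactly what makes this work; after that, the proof is a straightforward patching argument.
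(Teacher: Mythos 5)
Your proof is correct and takes a genuinely different, somewhat more direct route than the paper's. The paper first lifts $u$ continuously to a universal cover $\lifting{\manifold{N}}_*$ of $\lifting{\manifold{N}}$, then invokes \cref{proposition_uniqueness_automorphism}, which requires normality (hence the detour through $\lifting{\manifold{N}}_*$) and reduces the matching to the countability of the deck group $\Aut(\pi_\flat \compose \pi_\sharp)$. You avoid the universal cover entirely by parametrizing the continuous lifts of $u\restr{U}$ directly by the countable fibre $\pi^{-1}(\set{u(x_0)})$; since $U$ is simply-connected, these lifts are in bijection with the fibre and their values at any fixed $x$ exhaust $\pi^{-1}(\set{u(x)})$, which is the observation that makes the countable additivity step work. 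Both arguments share the core mechanism (countable family of continuous candidates, one matches on a set of positive measure, conclude by the dichotomy of \cref{proposition_lifting_unique}), but yours bypasses the normal-covering machinery. Two small points deserve an explicit word. First, you need $\lifting{u}\restr{U} \in X(U, \lifting{\manifold{N}})$ to invoke \cref{proposition_lifting_unique} on $U$; this does follow from $\lifting{u} \in X(\manifold{M}, \lifting{\manifold{N}})$ by monotonicity (restricting the domain of integration only decreases the integral, and the intrinsic distance on a geodesic ball dominates the ambient one), but you should say so, since it is part of verifying the hypotheses. Second, a continuous lift produced on the open ball $U$ is not automatically uniformly continuous on $\Bar{U}$; simply construct the lift on a slightly larger simply-connected ball $U' \supset \Bar{U}$ and restrict, which is what you implicitly intend.
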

\begin{proof}
We first assume that the manifold \(\manifold{M}\) is simply-connected.
We apply \cref{proposition_uniqueness_automorphism} with \(\pi_\flat = \pi : \lifting{\manifold{N}} \to \manifold{N}\), \(\pi_{\sharp} : \lifting{\manifold{N}}_* \to \lifting{\manifold{N}}\) a universal covering and \(\lifting{v} \in C (\manifold{M}, \lifting{\manifold{N}}_*)\) such that \(\pi \compose \lifting{v} = \pi \compose \lifting{u}\). The conclusion then follows from \cref{proposition_uniqueness_automorphism}.

In the general case, we cover the manifold \(\manifold{M}\) by simply-connected open sets \(U_j \subseteq \manifold{M}\), with \(j \in J\).
By the first part of the proof, for every \(j \in J\), there exists a mapping \(\lifting{v}_j \in C (U_j, \lifting{\manifold{N}})\) such that \(\lifting{u} = \lifting{v}_j\) almost everywhere in \(U_j\).
For every \(j, \ell \in J\), it follows in view of the continuity of the mappings \(\lifting{v}_j\) and \(\lifting{v}_\ell\) that \(\lifting{v}_j = \lifting{v}_\ell\) everywhere in \(U_j \cap U_\ell\). 
Therefore the map \(\lifting{v}\) can be defined in such a way that for every \(j \in J\) its restriction \(\lifting{v} \restr{U_j}\) to the set \(U_j\) satisfies \(\lifting{v} \restr{U_j} = \lifting{v}_j\) and that \(\lifting{v}\) is continuous on \(\manifold{M}\).
\end{proof}

\subsection{A priori estimate on the lifting}
\Cref{theorem_lifting_truncated_noncompact_apriori} will be proved as a consequence of \cref{proposition_X_lifting_of_Wsp_in_Y}, once one notices that liftings in \(X(\Omega, \lifting{\manifold{N}})\) of maps in \(\homog{W}^{s, p} (\Omega, \manifold{N})\) with \(sp > 1\) turn out to be in \(Y(\Omega, \lifting{\manifold{N}})\).

\begin{proposition}
\label{proposition_X_lifting_of_Wsp_in_Y}
Let \(m \in \Nset \setminus \set{0}\), let \(\Omega \subseteq \Rset^m\) be open and convex, let \(\manifold{N}\) be a compact Riemannian manifold, let \(\pi : \lifting{\manifold{N}} \to \manifold{N}\) be a Riemannian covering map, let \(s \in \intvo{0}{1}\) and let \(p \in \intvo{1}{\infty}\). 
If \(sp > 1\), if \(\lifting{u} \in X (\Omega, \lifting{\manifold{N}})\)
and if \(u \defeq \pi \compose \lifting{u} \in \homog{W}^{s, p} (\Omega, \manifold{N})\),
then \(\lifting{u} \in Y (\Omega, \lifting{\manifold{N}})\).
\end{proposition}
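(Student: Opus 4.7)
The plan is to reduce the statement to a one-dimensional analysis along almost every segment of \(\Omega\). Applying Fubini's theorem to the hypotheses \(\lifting{u} \in X(\Omega, \lifting{\manifold{N}})\) and \(u \in \homog{W}^{s,p}(\Omega, \manifold{N})\), together with a slicing into segments (polar coordinates in the Gagliardo double integrals), I would restrict to the full-measure set of pairs \((x, y) \in \Omega \times \Omega\) such that the restriction \(u\restr{\intvc{x}{y}}\) has finite one-dimensional Gagliardo seminorm of exponents \((s, p)\), the restriction of the \(X\)--integral to \(\intvc{x}{y} \times \intvc{x}{y}\) with weight \(\abs{w - z}^{-2}\) is finite, the single-variable slice \(\int_{\intvc{x}{y}} \mathbf{1}_{d_{\lifting{\manifold{N}}}(\lifting{u}(z), \lifting{u}(x)) \ge \inj(\manifold{N})/2}\, \abs{z - x}^{-2}\dif z\) is finite, and both \(x\) and \(y\) are Lebesgue points of \(\lifting{u}\).

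For such a pair \((x, y)\), since \(sp > 1\), the one-dimensional fractional Morrey--Sobolev embedding yields a continuous representative \(u_{x,y} \in C(\intvc{x}{y}, \manifold{N})\) of \(u\restr{\intvc{x}{y}}\) with \(u_{x,y}(x) = u(x)\) and \(u_{x,y}(y) = u(y)\). Since \(\intvc{x}{y}\) is simply connected, the classical covering space lifting theorem produces a continuous map \(\lifting{u}_{x,y} \in C(\intvc{x}{y}, \lifting{\manifold{N}})\) with \(\pi \compose \lifting{u}_{x,y} = u_{x,y}\), uniquely determined by the initial value; I would set \(\lifting{u}_{x,y}(x) \defeq \lifting{u}(x)\), a valid choice because \(\pi(\lifting{u}(x)) = u(x) = u_{x,y}(x)\).

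The crucial step is to show that \(\lifting{u}_{x,y} = \lifting{u}\restr{\intvc{x}{y}}\) almost everywhere. Let \(B \defeq \{z \in \intvc{x}{y} : \lifting{u}(z) \ne \lifting{u}_{x,y}(z)\}\) and \(A \defeq \intvc{x}{y} \setminus B\). Since \(\pi \compose \lifting{u} = u_{x,y} = \pi \compose \lifting{u}_{x,y}\) almost everywhere, for almost every \(z \in B\) the points \(\lifting{u}(z)\) and \(\lifting{u}_{x,y}(z)\) are distinct points of the same discrete fiber, so \cref{lemma_small_isometry} gives \(d_{\lifting{\manifold{N}}}(\lifting{u}(z), \lifting{u}_{x,y}(z)) \ge \inj(\manifold{N})\). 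For \((z_1, z_2) \in A \times B\) the triangle inequality then forces \(d_{\lifting{\manifold{N}}}(\lifting{u}(z_1), \lifting{u}(z_2)) \ge \inj(\manifold{N})/2\) or \(d_{\lifting{\manifold{N}}}(\lifting{u}_{x,y}(z_1), \lifting{u}_{x,y}(z_2)) \ge \inj(\manifold{N})/2\); the integral of \(\abs{z_1 - z_2}^{-2}\) over \(A \times B\) is therefore controlled by the one-dimensional \(X\)--integrability of \(\lifting{u}\) on \(\intvc{x}{y}\) together with the uniform continuity of \(\lifting{u}_{x,y}\) (which confines its level set at distance \(\inj(\manifold{N})/2\) to a region bounded away from the diagonal). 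By the one-dimensional instance of \cref{lemma_potential_char}, it follows that \(\abs{A} = 0\) or \(\abs{B} = 0\).

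The main obstacle is ruling out \(\abs{A} = 0\), for which I would use the single-variable \(X\)--finiteness at \(x\) noted in the first paragraph: it forces the one-dimensional Lebesgue density at \(x\) of the set \(E_x \defeq \{z \in \intvc{x}{y} : d_{\lifting{\manifold{N}}}(\lifting{u}(z), \lifting{u}(x)) \ge \inj(\manifold{N})/2\}\) to vanish. If \(\abs{A} = 0\) then, choosing \(\delta > 0\) small enough that \(d_{\lifting{\manifold{N}}}(\lifting{u}_{x,y}(z), \lifting{u}_{x,y}(x)) < \inj(\manifold{N})/2\) on \(\intvc{x}{x+\delta}\), the triangle inequality would place \(\intvc{x}{x+\delta}\) inside \(E_x\) up to a null set, contradicting this vanishing density. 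Hence \(\abs{B} = 0\), so \(\lifting{u}_{x,y} = \lifting{u}\) almost everywhere on \(\intvc{x}{y}\); the endpoint identity \(\lifting{u}_{x,y}(y) = \lifting{u}(y)\) then follows from the continuity of \(\lifting{u}_{x,y}\) at \(y\) and the Lebesgue-point property of \(\lifting{u}\) at \(y\), and we conclude that \(\lifting{u} \in Y(\Omega, \lifting{\manifold{N}})\).
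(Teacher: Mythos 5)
Your proof is correct, and it reaches the conclusion by a genuinely different organization than the paper's. The paper's proof is modular: it applies the Fubini slicing (\cref{lemma_Fubini_X}) to see that \(\lifting{u}\restr{\Omega\cap L}\in X(\Omega\cap L,\lifting{\manifold{N}})\) for almost every line \(L\), then invokes \cref{proposition_lifting_X_continuous} (which itself rests on \cref{proposition_uniqueness_automorphism} and \cref{proposition_lifting_unique}) to obtain a continuous representative of \(\lifting{u}\) on \(\Omega\cap L\), and the endpoint conditions in the definition \eqref{eq_definition_Y} of \(Y\) are then recovered a posteriori by a further almost-everywhere argument. You instead re-derive the one-dimensional uniqueness statement directly on the segment: you build the continuous lift \(\lifting{u}_{x,y}\) with the prescribed initial value \(\lifting{u}_{x,y}(x)=\lifting{u}(x)\), and then show it agrees with \(\lifting{u}\) almost everywhere by essentially reimplementing the proof of \cref{proposition_lifting_unique} on \(\intvc{x}{y}\) (dichotomy via \cref{lemma_small_isometry} and the one-dimensional version of \cref{lemma_potential_char}). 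The price you pay for fixing the initial value up front is the extra step ruling out \(\abs{A}=0\), which you handle with the vanishing-density argument based on a further Fubini slice of the \(X\)--integral; that slice is finite for almost every \(x\), and the elementary estimate \(\abs{E_x\cap\intvc{x}{x+\delta}}/\delta\le\delta\int_0^\delta\mathbf{1}_{E_x}(x+t)\,t^{-2}\dif t\to 0\) does give vanishing density, so the contradiction is sound. (Alternatively, approximate continuity of \(\lifting{u}\) at \(x\) would suffice here, or one could use the countability of the fiber \(\pi^{-1}(u(x))\) to pick the right sheet.) In short: same ingredients, different factorization; yours is more self-contained on the segment, the paper's leans harder on the already-proved uniqueness machinery.
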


In order to prove \cref{proposition_X_lifting_of_Wsp_in_Y}, we will use the following consequence of Fubini's theorem, which implements the rotation method on the space \(X(\Omega,\lifting{\manifold{N}})\).

\begin{lemma}
\label{lemma_Fubini_X}
For every \(m \in \Nset \setminus \set{0}\), there exists a constant \(C \in \intvo{0}{\infty}\) such that for every convex open set \(\Omega \subset \Rset^m\), every metric space \(\mathcal{E}\), every \(\delta \in \intvo{0}{\infty}\) and every measurable function \(f : \Omega \to \manifold{E}\), we have
\begin{equation}
\smashoperator{\iint_{\substack{x, y \in \Omega\\ d_{\manifold{E}} (f (y), f (x))\ge \delta}}} \frac{1}{\abs{y -  x}^{m + 1}} \dif y \dif x
= C 
\int_{\Sset^{m - 1}} 
\int_{w^\perp} \smashoperator{\iint_{\substack{x, y \in \Omega \cap (z + \Rset w )\\ d_{\manifold{E}} (f (y), f (x))\ge \delta}}} \frac{1}{\abs{y - x}^{2}} \dif y \dif x \dif z \dif w .
\end{equation}

\end{lemma}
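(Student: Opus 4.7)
The identity is the standard rotation/Fubini decomposition of a symmetric double integral on $\Omega \times \Omega$, in the spirit of the coarea/rotation methods used to reduce multidimensional fractional seminorms to one-dimensional ones. My plan is to parametrise every pair $(x,y) \in \Omega \times \Omega$ with $x \ne y$ by the triple $(w, z, (t,s)) \in \Sset^{m-1} \times w^\perp \times \Rset^2$ via
\begin{equation*}
w = \frac{y - x}{\abs{y-x}}, \qquad z = \text{orthogonal projection of } x \text{ on } w^\perp, \qquad x = z + tw, \quad y = z + sw,
\end{equation*}
compute the Jacobian, and notice that the factor $|s - t|^{m-1}$ produced by the change of variables together with the weight $1/\abs{y-x}^{m+1}$ leaves exactly $1/|s-t|^2 = 1/\abs{y-x}^2$.

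In more detail, first I would write, for any measurable $F \ge 0$ symmetric in $(x,y)$,
\begin{equation*}
\iint_{\Rset^m \times \Rset^m} F(x,y) \dif y \dif x = \int_{\Rset^m} \int_0^\infty \int_{\Sset^{m-1}} F(x, x + rw)\, r^{m-1} \dif w \dif r \dif x,
\end{equation*}
by spherical coordinates on $y - x$. Then decompose $x = z + tw$ with $z \in w^\perp$ and $t \in \Rset$, set $s = t + r$, and apply Fubini to exchange the $w$ integration with the $(z, t, s)$ integration. Since $F$ is symmetric, extending the $r > 0$ integration to $s \in \Rset$ via $r = |s - t|$ costs a factor $\tfrac{1}{2}$. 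The result is the identity
\begin{equation*}
\iint_{\Rset^m \times \Rset^m} F(x,y) \dif y \dif x = \tfrac{1}{2} \int_{\Sset^{m-1}} \int_{w^\perp} \iint_{\Rset \times \Rset} F(z + tw, z + sw) \abs{s-t}^{m-1} \dif s \dif t \dif z \dif w.
\end{equation*}

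The claim then follows by applying this identity to
\begin{equation*}
F(x, y) \defeq \mathbf{1}_{\Omega \times \Omega}(x, y)\, \mathbf{1}_{\set{d_{\manifold{E}}(f(y), f(x)) \ge \delta}}(x, y)\, \frac{1}{\abs{y-x}^{m+1}},
\end{equation*}
which is symmetric in $x, y$ (the distance $d_\manifold{E}$ is symmetric, and $\Omega \times \Omega$ is symmetric). After the change of variables, $|y - x|^{m+1} = |s - t|^{m+1}$ combines with the Jacobian $|s - t|^{m - 1}$ to yield $|s - t|^{-2}$; the constraint $x, y \in \Omega$ becomes $z + tw, z + sw \in \Omega$, i.e.\ $z + tw, z + sw \in \Omega \cap (z + \Rset w)$; and the constraint on the distance becomes $d_\manifold{E}(f(z + sw), f(z + tw)) \ge \delta$. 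This yields precisely the right-hand side with $C = \tfrac{1}{2}$.

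The entire proof is bookkeeping for the change of variables; convexity of $\Omega$ is not required for the identity itself (and is harmless since it only makes the fibre $\Omega \cap (z + \Rset w)$ a single interval). The only step that needs a minimum of care is the factor $\tfrac{1}{2}$ arising from the $(x,y) \leftrightarrow (y,x)$ symmetry when extending $r > 0$ to all $s - t \in \Rset$; no analytic difficulty is expected.
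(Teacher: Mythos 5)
Your proof is correct, and it is precisely the Fubini/rotation computation that the paper alludes to but does not spell out (the paper introduces the lemma as a "consequence of Fubini's theorem, which implements the rotation method" and gives no further details). The spherical-coordinate change, the orthogonal decomposition $x = z + tw$ with $z \in w^\perp$, the substitution $s = t + r$, and the symmetrization producing the factor $\tfrac{1}{2}$ are exactly the standard bookkeeping, and the cancellation $|s-t|^{m-1}/|s-t|^{m+1} = |s-t|^{-2}$ yields the stated identity with $C = \tfrac12$.
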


\begin{proof}[Proof of \cref{proposition_X_lifting_of_Wsp_in_Y}]
Since \(sp > 1\), by Fubini's theorem and the fractional Morrey--Sobolev embedding, for every straight line \(L \subseteq \Rset^m\), there exists a mapping \(u_L \in C (\Omega \cap L, \lifting{\manifold{N}})\) such that 
\(u \restr{\Omega \cap L} = u_L = \pi \compose \lifting{u} \restr{\Omega \cap L}\) almost everywhere in \(\Omega \cap L\).
Similarly, by \cref{lemma_Fubini_X}, we have \(\lifting{u}\restr{\Omega \cap L} \in X (\Omega \cap L, \manifold{N})\). By \cref{proposition_lifting_X_continuous}, there exists a mapping \(\lifting{u}_L \in C (\Omega \cap L, \lifting{\manifold{N}})\) such that \(\lifting{u}\restr{\Omega \cap L} = \lifting{u}_L\) almost everywhere on \(\Omega \cap L\).
It follows thus by definition \eqref{eq_definition_Y} that \(\lifting{u} \in Y (\Omega, \lifting{\manifold{N}})\). 
\end{proof}

\begin{proof}[Proof of \cref{theorem_lifting_truncated_noncompact_apriori}]
By \cref{proposition_X_lifting_of_Wsp_in_Y}, the a priori estimate \cref{proposition_spgt1_estimate} holds on any local chart.
We reach the conclusion by the covering of \cref{lemma_manifold_diagonal_covering}.
\end{proof}

\section{Relationship to linear Sobolev spaces}

\subsection{Characterization as a sum of Sobolev spaces}
Our proof of \cref{theorem_nonlinear_sum} that characterizes the space of liftings appearing in \cref{theorem_lifting_truncated_noncompact}  and \cref{proposition_equivalent_norms} will use the following density result.

\begin{proposition}
\label{proposition_dense_truncated}
Let \(m \in \Nset \setminus \set{0}\), let \(s \in \intvo{0}{1}\) and let \(p \in \intvr{1}{\infty}\).
If \(U \subseteq \Rset^m\) is open and if \(f : U \to \Rset\) is a measurable function satisfying
\begin{equation}
\label{eq_ash6Iehahs7quie3thebi0ea}
  \smashoperator{\iint_{U \times U}}
 \frac{\abs{f (y) - f (x)}^p \wedge \abs{f (y) - f (x)}}{\abs{y - x}^{m + sp}}\dif y \dif x  < \infty,
\end{equation}
then for every set \(\Omega \subseteq U\) such that \(\dist(\Omega, \Rset^m \setminus U) > 0\), 
there exists a sequence \((f_j)_{j \in \Nset}\) in \(C^\infty (\Bar{\Omega}, \Rset)\) such that \(f_j \to f\) almost everywhere in \(\Omega\) as \(j \to \infty\) and 
\begin{equation}
\label{eq_tuph2EiDiecoVung9uosha5a}
\sup_{j \in \Nset}
\smashoperator{\iint_{\Omega \times \Omega}}
 \frac{\abs{f_j (y) - f_j (x)}^p \wedge \abs{f_j (y) - f_j (x)}}{\abs{y - x}^{m + sp}}\dif y \dif x < \infty.
\end{equation}
\end{proposition}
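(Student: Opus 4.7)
The plan is to mollify $f$ and to control the resulting truncated seminorm via Jensen's inequality, after replacing the nonconvex integrand $\phi(t) \defeq t^p \wedge t$ by an equivalent convex surrogate. Concretely, I would introduce
\[
\Phi(t) \defeq
\begin{cases}
t^p & \text{if } 0 \le t \le 1,\\
1 + p(t-1) & \text{if } t \ge 1,
\end{cases}
\]
and check by an elementary computation that $\Phi$ is $C^1$, nondecreasing and convex on $\intvr{0}{\infty}$, and satisfies the two-sided bound $\phi \le \Phi \le p\,\phi$; indeed $\Phi = \phi$ on $\intvc{0}{1}$, whereas for $t \ge 1$ one has $\Phi(t) - \phi(t) = (p-1)(t-1) \ge 0$ and $\Phi(t)/\phi(t) = p - (p-1)/t \le p$. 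This surrogate is the crucial device, since the concavity of $\phi$ on $\intvr{1}{\infty}$ prevents a direct application of Jensen's inequality.

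Fix a nonnegative mollifier $\rho \in C^\infty_c(B_1)$ with $\int \rho = 1$ and set $\rho_\epsilon(z) \defeq \epsilon^{-m} \rho(z/\epsilon)$. In order to bypass a possible failure of $f \in L^1_{\mathrm{loc}}(U)$, I would first truncate: for $K \in \Nset$ the map $T_K f \defeq (-K) \vee (f \wedge K)$ is bounded, and since $T_K$ is $1$--Lipschitz one has $\abs{T_K f(y) - T_K f(x)} \le \abs{f(y) - f(x)}$, so the truncated seminorm of $T_K f$ is controlled by that of $f$. Then define $f_{K, \epsilon} \defeq \rho_\epsilon \ast T_K f$ on $\Omega$, which is smooth provided $\epsilon < \dist(\Omega, \Rset^m \setminus U)$.

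The key estimate follows from applying Jensen's inequality to the convex nondecreasing function $\Phi$ and to the pointwise bound $\abs{f_{K, \epsilon}(y) - f_{K, \epsilon}(x)} \le \int \rho_\epsilon(z) \abs{T_K f(y-z) - T_K f(x-z)} \dif z$. Integrating over $\Omega \times \Omega$, applying Fubini's theorem, translating variables, and using that $\Omega - z \subseteq U$ for every $z$ in the support of $\rho_\epsilon$, one obtains
\[
\iint_{\Omega \times \Omega} \frac{\Phi(\abs{f_{K, \epsilon}(y) - f_{K, \epsilon}(x)})}{\abs{y - x}^{m+sp}} \dif y \dif x \le \iint_{U \times U} \frac{\Phi(\abs{T_K f(y) - T_K f(x)})}{\abs{y - x}^{m+sp}} \dif y \dif x.
\]
Sandwiching both sides by $\phi \le \Phi \le p\,\phi$ and invoking monotonicity of $\phi$ together with the contraction estimate $\abs{T_K f(y) - T_K f(x)} \le \abs{f(y) - f(x)}$ yields the desired $K$-- and $\epsilon$--independent bound in terms of the left-hand side of \eqref{eq_ash6Iehahs7quie3thebi0ea}.

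To conclude, I would diagonalize: the Lebesgue differentiation theorem gives $f_{K, \epsilon} \to T_K f$ almost everywhere on $\Omega$ as $\epsilon \to 0$ for each fixed $K$, and $T_K f \to f$ pointwise on $\Omega$ as $K \to \infty$ since $f$ is real-valued. A standard diagonal extraction produces sequences $K_j \to \infty$ and $\epsilon_j \to 0$ for which $f_j \defeq f_{K_j, \epsilon_j}$ converges to $f$ almost everywhere on $\Omega$ while satisfying the uniform seminorm bound \eqref{eq_tuph2EiDiecoVung9uosha5a}. The main obstacle in the argument is the construction and verification of the convex majorant $\Phi$; once this observation is in hand, the remainder is routine mollification and truncation.
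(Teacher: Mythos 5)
Your proof is correct and uses the same central device as the paper: replace the nonconvex truncated power $\phi(t) = t^p \wedge t$ by the $C^1$ convex surrogate $\Phi$, which is comparable to $\phi$ up to a factor $p$, apply Jensen's inequality to the mollification, and conclude by a diagonal extraction. The only difference is the preliminary truncation $T_K f$, which is harmless but unnecessary: the finiteness of \eqref{eq_ash6Iehahs7quie3thebi0ea} already implies $f \in L^1_{\mathrm{loc}}(U)$ (integrate in $y$ over a small ball around a typical $x$, where the kernel $\abs{y-x}^{-m-sp}$ is bounded away from zero, and observe that $\phi(t)$ grows linearly in $t$ for $t \ge 1$), which is exactly the observation the paper uses to mollify $f$ directly.
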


\begin{proof}
We define the function \(\Phi : \Rset \to \Rset\) for each \(t \in \Rset\) by 
\begin{equation}
\label{eq_yooSoo9IaVaoWeeZieco9si0}
 \Phi (t)\defeq
 \begin{cases}
  \abs{t}^p &\text{if \(\abs{t} \le 1\)},\\
  1 + p \brk{\abs{t} - 1} &\text{if \(\abs{t} \ge 1\)}.
 \end{cases}
\end{equation}
We observe that the function \(\Phi\) is convex and that it satisfies for every \(t \in \Rset\)
\begin{equation}
\label{eq_saw8ues2tai8siWahsoh2eem}
 \abs{t}^p \wedge \abs{t} \le \Phi (t) \le \abs{t}^p \wedge (p \abs{t})
 \le p (\abs{t}^p \wedge \abs{t}).
\end{equation}
We fix a function \(\eta \in C^\infty_c (\Rset^m, \Rset)\) such that \(\eta \ge 0\) and \(\int_{\Rset^m} \eta = 1\).
Since the condition \eqref{eq_ash6Iehahs7quie3thebi0ea} implies the local integrability of the function \(f\), there exists a sequence \((\delta_j)_{j \in \Nset}\) in \(\intvo{0}{\infty}\) such that the function \(f_j : \smash{\Bar{\Omega}} \to \Rset\) defined for each \(x \in \Omega\) by 
\begin{equation}
 f_j (x) \defeq \int_{\Rset^m} \eta (z) f (x - \delta_j z) \dif z,
\end{equation}
is well-defined and \(f_j \to f\) almost everywhere in \(\Omega\) as \(j \to \infty\).
Moreover, since the function \(\Phi\) is convex, we have for every \(j \in \Nset\),
\begin{equation}
\label{eq_Needai8echei0neef1eghohZ}
 \smashoperator{\iint_{\Omega \times \Omega}}
 \frac{\Phi (f_j (y) - f_j (x))}{\abs{y - x}^{m + sp}}\dif y \dif x
 \le  \smashoperator{\iint_{U \times U}}
 \frac{\Phi (f (y) - f (x))}{\abs{y - x}^{m + sp}}\dif y \dif x,
\end{equation}
and \eqref{eq_tuph2EiDiecoVung9uosha5a} follows from \eqref{eq_saw8ues2tai8siWahsoh2eem} and \eqref{eq_Needai8echei0neef1eghohZ}.
\end{proof}

We now prove the characterization of the sum \(W^{s, p} (\manifold{M}, \Rset) + W^{1, sp} (\manifold{M}, \Rset)\).

\begin{proof}%
[Proof of \cref{theorem_nonlinear_sum}]%
\resetconstant%
In order to prove the inclusion \(\subseteq\) in \eqref{eq_eahep6eet2yoo2ohBeePee9w}, 
we first assume that the set \(\Omega \subseteq \Rset^m\) is bounded and open with a smooth boundary \(\partial \Omega\), that \(\Omega \subseteq U\), with \(\dist(\Omega, \Rset^m \setminus U) > 0\) for some open set \(U \supseteq \Bar{\Omega}\), and that, in view of \cref{proposition_equivalent_norms}, 
\begin{multline}
\label{eq_icae0Aujohh5Uo6nahle4eiZ}
  \smashoperator{
    \iint_{U \times U}
  }
  \frac
    {\abs{f (x) - f (y)}^p \wedge \abs{f (x) - f (x)}}
    {\abs{y - x}^{m + sp}}
    \dif y 
    \dif x
 \\[-1em]
  \le 
    \smashoperator{\iint_{U \times U}}
    \frac
      {\abs{f (x) - f (y)}^p \wedge \abs{f (x) - f (x)}^q}
      {\abs{y - x}^{m + sp}}
      \dif y 
      \dif x 
      < 
      \infty
      .
\end{multline}
By \cref{proposition_dense_truncated} and by \eqref{eq_icae0Aujohh5Uo6nahle4eiZ}, there exists a sequence \((f_j)_{j \in \Nset}\) in \(C^\infty (\Bar{\Omega}, \Rset)\) such that \(f_j \to f\) almost everywhere in \(\Omega\)
and 
\begin{equation}
\label{eq_uMae7bei1ahNeece9Ohno4Uo}
  \sup_{j \in \Nset} \smashoperator{\iint_{\Omega \times \Omega}}
 \frac{\abs{f_j (y) - f_j (x)}^p \wedge 1}{\abs{y -x}^{m + sp}}\dif y \dif x \le 
 \C \smashoperator{\iint_{U \times U}}
 \frac{\abs{f (y) - f (x)}^p \wedge \abs{f (y) - f (x)}}{\abs{y - x}^{m + sp}}\dif y \dif x < \infty.
\end{equation}
For every \(j \in \Nset\),
setting \(u_j \defeq e^{i f_j}\), we have  
\begin{equation}
\label{eq_othiu9veix2aib9YohDiwoob}
  \smashoperator{\iint_{\Omega \times \Omega}}
 \frac{\abs{u_j (y) - u_j (x)}^p}{\abs{y - x}^{m + sp}} \dif y \dif x 
 \le 
 \C \smashoperator{\iint_{\Omega \times \Omega}}
 \frac{\abs{f_j (x) - f_j (y)}^p \wedge 1}{\abs{y - x}  ^{m + sp}}\dif y \dif x < \infty.
\end{equation}
Since \(sp > 1\), by the lifting in the sum of Sobolev spaces \citelist{\cite{Brezis_Mironescu_2021}*{th.\ 8.8}\cite{Mironescu_preprint}*{th.\ 2}} (see also \citelist{\cite{Mironescu_2010_Decomposition}\cite{Nguyen_2008}\cite{Mironescu_2008}\cite{Bourgain_Brezis_Mironescu_2001}}), we can write
\(f_j = g_j + h_j\) with the functions \(g_j \in \homog{W}^{s, p} (\Omega, \Rset)\) and \(h_j \in \homog{W}^{1, sp} (\Omega, \Rset)\) satisfying the estimates
\begin{gather} 
\label{eq_OK5EL0Gaphaer7vie1oozoon}
\smashoperator{\iint_{\Omega \times \Omega}}
 \frac{\abs{g_j (y) - g_j (x)}^p}{\abs{y - x}^{m + sp}}\dif y \dif x
\le \C
 \smashoperator{\iint_{\Omega \times \Omega}}
 \frac{\abs{u_j (x) - u_j (y)}^p}{\abs{y - x}^{m + sp}}\dif y \dif x,
 \\
\label{eq_oopheeSh2iequael1hagei8u}
 \int_{\Omega} \abs{D h_j}^{sp}
\le \C
\smashoperator{\iint_{\Omega \times \Omega}}
 \frac{\abs{u_j (x) - u_j (y)}^p}{\abs{y - x}^{m + sp}}\dif y \dif x,\\
\intertext{and the conditions}
\label{eq_Pool4xainajiepaoJ3shohti}
\int_{\Omega} g_j = \int_{\Omega} h_j = \frac{1}{2} \int_{\Omega} f_j .
\end{gather}
Up to a subsequence, we can assume that \(g_j \to g\) and \(h_j \to h\) almost everywhere in \(\Omega\) as \(j \to \infty\), with the functions \(g \in \homog{W}^{s, p} (\Omega, \Rset)\) and \(h \in \homog{W}^{1, s p} (\Omega, \Rset)\) satisfying in view of \eqref{eq_uMae7bei1ahNeece9Ohno4Uo}, \eqref{eq_othiu9veix2aib9YohDiwoob}, \eqref{eq_OK5EL0Gaphaer7vie1oozoon} and \eqref{eq_oopheeSh2iequael1hagei8u}
\begin{gather} 
\label{eq_ahWai8ooch0eiwoohohlain2}
\smashoperator{\iint_{\Omega \times \Omega}}
 \frac{\abs{g (y) - g (x)}^p}{\abs{y - x}^{m + sp}}\dif y \dif x
\le 
\C
 \smashoperator{\iint_{U \times U}}
 \frac
  {\abs{f (x) - f (y)}^p \wedge \abs{f (x) - f (x)}}
  {\abs{y - x}^{m + sp}}
 \dif y 
 \dif x,
 \\
\label{eq_xawooDeephuRohmeeg6ox1ge}
 \int_{\Omega} \abs{D h}^{sp}
\le  \C
 \smashoperator{\iint_{U \times U}}
 \frac{\abs{f (x) - f (y)}^p \wedge \abs{f (x) - f (x)}}{\abs{y - x}^{m + sp}}\dif y \dif x,
 \intertext{and in view of \eqref{eq_Pool4xainajiepaoJ3shohti}}
 \label{eq_hoo3chaeles9Xaidoo2AeChu}
 \int_{\Omega} g = \int_{\Omega} h = \frac{1}{2} \int_{\Omega} f .
\end{gather}

In the general case we follow Rodiac \& Van Schaftingen \cite{Rodiac_VanSchaftingen_2021}*{proof of prop.\ 4.1}.  
Since \(\manifold{M}\) is a compact manifold with boundary, there exist \(N \in \Nset\), and for \(k \in \set{1, \dotsc, N}\), a diffeomorphism \(\psi_k : U_k \to \Rset^m\) such that  either \(\psi_k (U_k) = \Bset^m \subset \Rset^m\) or \(\psi_k (U_k) = \Bset^m \cap \Rset^{m - 1} \times \intvr{0}{\infty}\)
and such that \(\manifold{M} =\bigcup_{k=1}^N U_k\). 
We take a partition of unity \((\varphi_k)_{1 \le k \le N}\) associated to the sets \(U_k\), that is, for every \(k \in \set{1, \dotsc, N}\), \(\varphi_k \in C^1 (\manifold{M}, \Rset)\) and \(\varphi_k = 0\) in \(\manifold{M} \setminus U_k\),
and \(\sum_{i = 1}^{N} \varphi_k = 1\) on \(\manifold{M}\).
Given a measurable function \(f : \manifold{M} \to \Rset\), for each \(k \in \set{1, \dotsc, N}\), 
we define the function \(f_k\defeq f \compose \psi_k^{-1} : \psi_k (U_k) \to \Rset\)
to which we apply the first part of the proof which yields functions \(g_k\in  \homog{W}^{s, p} (\psi_k (U_k), \Rset)\) and \(h_k\in  \homog{W}^{1, s p} (\psi_k (U_k), \Rset)\) satisfying \eqref{eq_ahWai8ooch0eiwoohohlain2}, \eqref{eq_xawooDeephuRohmeeg6ox1ge} and \eqref{eq_hoo3chaeles9Xaidoo2AeChu} with \(\Omega = \psi_k (U_k)\).
Defining the functions
\begin{gather*}
 g_* \defeq \sum_{k = 1}^N \varphi_k \brk[\bigg]{g_k \compose \psi_k - \fint_{U_k} g_k\compose \psi_k},
 \\
 h_* 
 \defeq 
 \sum_{k = 1}^N 
 \varphi_k 
 \brk[\bigg]{h_k \compose \psi_k - \fint_{U_k} h_k\compose \psi_k},
\end{gather*}
and the low frequency component
\[
  f_0 \defeq \sum_{k = 1}^N \varphi_k \fint_{U_k} f,
\]
we have 
\(f = f_0 + g_* + h_*\)
on \(\manifold{M}\).
Moreover, since
\[
  f_0 \defeq \sum_{k = 1}^N \varphi_k \brk[\bigg]{\fint_{U_k} f - \fint_{\manifold{M}} f} + \fint_{\manifold{M}} f,
\]
where the last term is constant,
we have 
\[
\norm{D f_0}_{L^\infty(\manifold{M})}
\le 
\C \smashoperator{\iint_{\manifold{M} \times \manifold{M}}} \abs{f (y) - f (x)} \dif y \dif x,
\]
and 
\[
 \Phi (\norm{D f_0}_{L^\infty(\manifold{M})})
\le 
\C \smashoperator{\iint_{\manifold{M} \times \manifold{M}}} \Phi (\abs{f (y) - f (x)})\dif y \dif x,
\]
with the convex function \(\Phi\) defined as in \eqref{eq_yooSoo9IaVaoWeeZieco9si0}.
Since \(1 < sp < p\), by \eqref{eq_saw8ues2tai8siWahsoh2eem}, we have
\begin{equation*}
\begin{split}
 \norm{D f_0}_{L^\infty(\manifold{M})}^p \wedge
 \norm{D f_0}_{L^\infty(\manifold{M})}^{sp}
 &\le \norm{D f_0}_{L^\infty(\manifold{M})}^p \wedge
 \norm{D f_0}_{L^\infty(\manifold{M})}\\
&\le \C \smashoperator{\iint_{\manifold{M} \times \manifold{M}}}
 \frac{\abs{f (y) - f (x)}^p \wedge \abs{f (y) - f (x)}}{d_{\manifold{M}} (y, x)^{m + sp}}\dif y \dif x,
 \end{split}
\end{equation*}
so that 
\begin{multline*}
  \smashoperator{\iint_{\manifold{M} \times \manifold{M}}}
 \frac{\abs{f_0 (x) - f_0 (y)}^p}{d_{\manifold{M}} (y, x)^{m + sp}}\dif y \dif x 
 \wedge \int_{\manifold{M}} \abs{D f_0}^{sp}\\[-1em]
\le \C \smashoperator{\iint_{\manifold{M} \times \manifold{M}}}
 \frac{\abs{f (x) - f (y)}^p \wedge \abs{f (x) - f (x)}}{d_{\manifold{M}} (y, x)^{m + sp}}\dif y \dif x,
\end{multline*}
By either taking \(g \defeq g_*\) and \(h \defeq h_* + f_0\) or \(g \defeq g_* + f_0\) and \(h \defeq h_*\), we finally get in view of \cref{proposition_equivalent_norms},
\begin{equation*}
\smashoperator[r]{\iint_{\manifold{M} \times \manifold{M}}}
\frac{\abs{g (y) - g (x)}^p
}{d_\manifold{M} (y, x)^{m + sp}} \dif y \dif x
+ \int_{\manifold{M}} \abs{Dh}^{sp}
\le \C \smashoperator{\iint_{\manifold{M} \times \manifold{M}}}
\frac{\abs{f (y) - f(x)}^p \wedge \abs{f (y) - f (x)}^{q}
}{d_\manifold{M} (y, x)^{m + sp}} \dif y \dif x ,
\end{equation*}
which gives the first estimate and inclusion.

We now prove the reverse inclusion \(\supseteq\) in \eqref{eq_eahep6eet2yoo2ohBeePee9w}.
If \(f = g+ h\) with \(g \in \homog{W}^{s, p}(\manifold{M}, \Rset)\) and \(h \in \homog{W}^{1, sp}(\manifold{M}, \Rset)\), there exists sequences of smooth maps \((g_j)_{j \in \Nset}\) and \((h_j)_{j \in \Nset}\) in \(C^\infty (\manifold{M}, \Rset)\), such that, as \(j \to \infty\), \(g_j \to g\) in \(\homog{W}^{s, p} (\manifold{M}, \Rset)\) and \(h_j \to h\) in \(\homog{W}^{1, sp} (\manifold{M}, \Rset)\). 
For every \(j \in \Nset\), defining \(f_j \defeq g_j + h_j\) and \(u_j \defeq e^{i f_j}\), we have by the fractional Gagliardo--Nirenberg interpolation inequality (see for example \citelist{\cite{Berzis_Mironescu_2001}*{cor.\ 3.2}\cite{Runst_1986}*{lem.\ 2.1}\cite{Brezis_Mironescu_2018}}), since \(sp > 1\) and \(\abs{e^{i h_j}} \le 1\),
\begin{equation}
\begin{split}
 \smashoperator[r]{\iint_{\manifold{M} \times \manifold{M}}}&
 \frac{\abs{u_j (y) - u_j (x)}^p}{d_{\manifold{M}} (y, x)^{m + sp}}\dif y \dif x\\
 &\le \smashoperator[r]{\iint_{\manifold{M} \times \manifold{M}}}
 \frac{\abs{e^{ig_j (y)} - e^{ig_j (x)}}^p}{d_{\manifold{M}} (y, x)^{m + sp}}\dif y \dif x
 +\smashoperator[r]{\iint_{\manifold{M} \times \manifold{M}}}
 \frac{\abs{e^{i h_j (y)} - e^{i h_j (x)}}^p}{d_{\manifold{M}} (y, x)^{m + sp}}\dif y \dif x
 \\
 &\le \C
 \brk[\Bigg]{ \smashoperator[r]{\iint_{\manifold{M} \times \manifold{M}}}
 \frac{\abs{g_j (y) - g_j (x)}^p}{d_{\manifold{M}} (y, x)^{m + sp}}\dif y \dif x
 + \int_{\manifold{M}} \abs{D h_j}^{sp}}.
 \end{split}
\end{equation}
By \cref{theorem_lifting_truncated_noncompact_apriori}, we have for every \(j \in \Nset\)
\begin{equation}
\label{eq_ohsheej0ahx1GoitheNgahsh}
\smashoperator[r]{\iint_{\manifold{M} \times \manifold{M}}}
 \frac{\abs{f_j (y) - f_j (x)}^p\wedge 1}{d_{\manifold{M}} (y, x)^{m + sp}}\dif y \dif x
 \le
\smashoperator{\iint_{\manifold{M} \times \manifold{M}}}
 \frac{\abs{u_j (y) - u_j (x)}^p}{d_{\manifold{M}} (y, x)^{m + sp}}\dif y \dif x.
\end{equation}
Letting \(j \to \infty\) in \eqref{eq_ohsheej0ahx1GoitheNgahsh} and \eqref{eq_ohsheej0ahx1GoitheNgahsh} and applying \cref{proposition_equivalent_norms}, we get 
\begin{multline*}
\smashoperator[l]{\iint_{\manifold{M} \times \manifold{M}}}
\frac{\abs{f (y) - f(x)}^p \wedge \abs{f (y) - f (x)}^{q}
}{d_\manifold{M} (y, x)^{m + sp}} \dif y \dif x \\[-2em]
\le\C \brk[\Bigg]{\,\smashoperator[r]{\iint_{\manifold{M} \times \manifold{M}}}
\frac{\abs{g (y) - g (x)}^p
}{d_\manifold{M} (y, x)^{m + sp}} \dif y \dif x
+ \int_{\manifold{M}} \abs{Dh}^{sp}},
\end{multline*}
which proves the announced reverse inclusion and estimate.
\end{proof}

\subsection{About the critical lower exponent}
If the function \(f : \Bset^m \to \Rset\) is measurable and if \(q \in \intvr{1}{\infty}\), then it is known that 
\begin{equation}
 \smashoperator{\iint_{\Bset^m \times \Bset^m}} \frac{\abs{f (y) - f (x)}^{q}}{\abs{y - x}^{m + q}} \dif y \dif x = \infty
\end{equation}
unless the function \(f\) is constant \cite{Brezis_2002}*{prop.\ 2}.
Although the integral restricted to a region of large oscillation 
\begin{equation}
\label{eq_cebie5AiSh9iTh8EeSh5Rofo}
 \smashoperator{\iint_{\substack{(x, y) \in \Bset^m \times \Bset^m\\ \abs{f (y) - f (x)} \ge 1}}} \frac{\abs{f (y) - f (x)}^{q}}{\abs{y - x}^{m + q}} \dif y \dif x = \infty
\end{equation}
might be finite for a function of small oscillation, there are still Sobolev functions for which the large oscillation part of the integral \eqref{eq_cebie5AiSh9iTh8EeSh5Rofo} blows up.

\begin{proposition}
\label{proposition_counterexample_critical}
Let \(m \in \Nset \setminus \set{0, 1}\).
If \(1 \le q < m\), then there exists a function \(f \in \homog{W}^{1, q}_0 (\Bset^m, \Rset)\) such that 
\begin{equation}
 \smashoperator{\iint_{\substack{(x, y) \in \Bset^m \times \Bset^m\\ \abs{f (y) - f (x)} \ge 1}}} \frac{\abs{f (y) - f (x)}^{q}}{\abs{y - x}^{m + q}} \dif y \dif x = \infty.
\end{equation}
\end{proposition}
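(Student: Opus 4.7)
The idea is to construct $f$ as a lacunary sum of dilated, translated copies of a single smooth bump at dyadic scales, with heights tuned so that the Sobolev norm $\int \abs{Df}^q$ is summable while the truncated nonlinear integral diverges through a logarithmic amplification. Fix a radial $\phi \in C^\infty_c(B_{1/2}, \intvc{0}{1})$ with $\phi(0) = 1$; since $\phi$ is not constant, there exist a point $v_0 \in B_{1/2}$ and a ball $V$ around it on which $\abs{\nabla \phi} \ge 1/2$. Choose points $(x_k)_{k \in \Nset} \subset \Bset^m$ and radii $r_k \defeq 2^{-k}$ so that the closed balls $\overline{B}(x_k, r_k)$ are pairwise disjoint and contained in $\Bset^m$, set heights $a_k \defeq k^{-2/q}\, r_k^{-(m - q)/q}$, and define
\begin{equation*}
  f(x) \defeq \sum_{k \in \Nset} a_k\, \phi\bigl(r_k^{-1}(x - x_k)\bigr).
\end{equation*}
By disjointness of the supports, $\int_{\Bset^m} \abs{Df}^q = C_\phi \sum_k a_k^q\, r_k^{m - q} = C_\phi \sum_k k^{-2} < \infty$, where $C_\phi \defeq \int \abs{D\phi}^q$, so $f \in \homog{W}^{1, q}_0(\Bset^m, \Rset)$.

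For the lower bound on the truncated integral, disjointness and the change of variable $u = r_k^{-1}(x - x_k)$, $v = r_k^{-1}(y - x_k)$ reduce the contribution of pairs in $B(x_k, r_k) \times B(x_k, r_k)$ to
\begin{equation*}
  a_k^q\, r_k^{m - q} \iint\limits_{\substack{(u, v) \in B_1 \times B_1 \\ \abs{\phi(v) - \phi(u)} \ge 1/a_k}} \frac{\abs{\phi(v) - \phi(u)}^q}{\abs{v - u}^{m + q}} \dif v \dif u,
\end{equation*}
and summation over $k$ yields a lower bound on the integral in the proposition. The central analytic ingredient is the single-bump logarithmic lower bound
\begin{equation*}
  \iint\limits_{\substack{(u, v) \in B_1 \times B_1 \\ \abs{\phi(v) - \phi(u)} \ge \epsilon}} \frac{\abs{\phi(v) - \phi(u)}^q}{\abs{v - u}^{m + q}} \dif v \dif u \ge c\, \log(1/\epsilon) \quad \text{for all sufficiently small } \epsilon \in \intvo{0}{1/2}.
\end{equation*}
To establish it, fix $v \in V$ and restrict the $h$-integral to $\set{h \st \abs{h} \in \intvc{16 \epsilon}{\delta_0},\ \abs{\nabla \phi(v) \cdot h} \ge \abs{h}/4}$, where $\delta_0 > 0$ is small enough for Taylor's formula to give $\abs{\phi(v + h) - \phi(v)} \ge \abs{h}/16 \ge \epsilon$; on this set the integrand dominates $16^{-q}\, \abs{h}^{-m}$, and integration in polar coordinates produces the factor $\log(\delta_0/(16\epsilon))$, integration in $v$ over $V$ concluding.

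Combining the two estimates, the full truncated integral is bounded below by $c' \sum_k a_k^q\, r_k^{m - q}\, \log a_k$. From the choice of $a_k$ one computes $\log a_k = \tfrac{m - q}{q}\, k \log 2 - \tfrac{2}{q}\, \log k$, which is asymptotically linear in $k$ because $q < m$; hence $a_k^q\, r_k^{m - q}\, \log a_k \gtrsim k^{-1}$ and $\sum_k k^{-1} = \infty$. The main technical obstacle is the single-bump logarithmic estimate; it is essentially sharp, and reflects the fact that a smooth function with nonvanishing gradient contributes order unity to the truncated integral at each dyadic scale between $\epsilon$ and a fixed size.
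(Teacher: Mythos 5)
Your proof is correct and uses essentially the same mechanism as the paper: a sum of rescaled bumps at many scales, with the truncated integral picking up a logarithmic amplification over each bump that makes the series diverge while the Gagliardo/Sobolev norm stays summable. The differences are minor tuning choices. The paper takes a cutoff of the linear function $x_1$ rather than a generic bump, which makes the single-scale logarithmic lower bound an explicit computation with no Taylor expansion; you use a generic smooth bump and a localized first-order Taylor argument near a point of nonvanishing gradient, which is slightly more work but just as valid. The paper also chooses heights $\lambda_j = \lambda_0 2^{j^2}$ and then \emph{defines} the radii so that each piece contributes a fixed constant to the truncated integral and $\sim 1/j^2$ to the energy; you instead fix geometric radii $r_k = 2^{-k}$ and tune the heights so that each piece contributes $\sim 1/k$ to the truncated integral and $\sim 1/k^2$ to the energy. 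Both bookkeeping schemes deliver the same conclusion. Two small points you should make explicit in a final write-up: the indexing has to start high enough that $\sum r_k^m$ is small enough to allow pairwise-disjoint balls in $\Bset^m$; and the constant $1/2$ in ``$\abs{\nabla\phi}\ge 1/2$ on $V$'' should be justified (for instance via the mean value theorem along a radius, since $\phi(0)=1$ and $\phi$ vanishes at radius $1/2$) or replaced by an unnamed positive constant.
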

As a consequence of \cref{proposition_counterexample_critical}, if \(1 \le sp < m\), there exists a function \(f \in \homog{W}^{1, sp}_0 (\Bset^m_1, \Rset)\) such that 
\begin{equation*}
 \smashoperator{\iint_{\substack{(x, y) \in \manifold{M} \times \manifold{M}\\ \abs{f (y) - f (x)} \ge 1}}} \frac{\abs{f (y) - f (x)}^{p} \wedge \abs{f (y) - f (x)}^{sp}}{d_{\manifold{M}} (y, x)^{m + sp}} \dif y \dif x 
 = \infty
\end{equation*}
and thus the noninclusion \eqref{eq_ia4oDaeZaXahPoojahthie6a} holds.

\begin{proof}[Proof of \cref{proposition_counterexample_critical}]
\resetconstant
We choose a function \(\psi \in C^\infty_c (\Rset^m, \Rset)\) such that \(\psi (x) = x_1\) when \(x \in B_{1/2}\) and \(\supp \psi \subset B_1\).
For every \(\lambda \in \intvo{2}{\infty}\), we have
\begin{equation}
\label{eq_aereeXahjuul9kieGhaesoo6}
\begin{split}\smashoperator{\iint_{\substack{(x, y) \in \Bset^m \times \Bset^m\\ \abs{\lambda \psi (y) - \lambda \psi (x)} \ge 1}}} \frac{\abs{\lambda \psi (y) - \lambda \psi (x)}^{q}}{\abs{y - x}^{m + q}} \dif y \dif x 
&\ge \lambda^q \smashoperator{\iint_{\substack{(x, y) \in B_{1/2} \times B_{1/2}\\ \lambda \abs{y_1 - x_1} \ge 1}}} \frac{1}{\abs{y - x}^{m}} \dif y \dif x \\
&\ge \Cl{cst_Eet3ahz0rooquei3phoh8aib} \lambda^q \ln (\tfrac{\lambda}{2} - 1),
\end{split}
\end{equation}
for some constant \(\Cr{cst_Eet3ahz0rooquei3phoh8aib} \in \intvo{0}{\infty}\).
We now define for each \(j \in \Nset\) the numbers 
\begin{align}
\label{eq_Xo3ye9aesaev2choph6ohzoy}
\lambda_j & \defeq \lambda_0 2^{j^2}&&\text{ and } & \rho_j &\defeq \brk[\big]{\lambda_j^{q} \ln (\tfrac{\lambda_j}{2} - 1)}^{-1/(m - q)},
\end{align}
with \(\lambda_0 \in \intvo{2}{\infty}\)
 large enough so that there exists a sequence of points \((a_j)_{j \in \Nset}\) for which the closed balls \(\Bar{B}_{\rho_j} (a_j)\) are pairwise disjoint and all contained in \(\Bset^m\) (this is possible since \(q < m\)). 
We define the function \(f: \Bset^m \to \Rset\) for every \(x \in \Bset^m\) by 
\begin{equation*}
 f (x)
 \defeq
 \begin{cases}
  \lambda_j \psi(\frac{x - a_j}{\rho_j}) & \text{if \(x \in B_{\rho_j} (a_j)\)},\\
  0 & \text{otherwise}.
 \end{cases}
\end{equation*}
By the disjointness of the balls, by scaling and by \eqref{eq_Xo3ye9aesaev2choph6ohzoy}, we have
\begin{equation*}
\begin{split}
 \int_{\Bset^m} \abs{D f}^q
 &= \sum_{j \in \Nset}
 \int_{B_{\rho_j} (a_j)} \abs{D f}^q
 = \sum_{j \in \Nset}\lambda_j^q\rho_j^{m - q}
 \int_{\Bset^m} \abs{D \psi}^q\\
 &
 = \sum_{j \in \Nset} \frac{1}{\ln (\lambda_0 2^{j^2 - 1}- 1)} \int_{\Bset^m} \abs{D \psi}^q < \infty,
\end{split}
\end{equation*}
so that \(f \in W^{1, q}_0 (\Bset^m, \Rset)\).
On the other hand by the disjointness of the balls, by scaling, by \eqref{eq_aereeXahjuul9kieGhaesoo6} and by \eqref{eq_Xo3ye9aesaev2choph6ohzoy}, we have 
\begin{equation*}
\begin{split}
 \smashoperator{\iint_{\substack{(x, y) \in \Bset^m \times \Bset^m\\ \abs{f (y) - f (x)} \ge 1}}} \frac{\abs{f (y) - f (x)}^{q}}{\abs{y - x}^{m + q}} \dif y \dif x 
 &\ge \sum_{j \in \Nset} \rho_j^{m - q}
 \smashoperator{\iint_{\substack{(x, y) \in \Bset^m \times \Bset^m\\ \abs{\lambda_j \psi (y) - \lambda_j \psi (x)} \ge 1}}} \frac{\abs{\lambda_j \psi (y) - \lambda_j \psi (x)}^{q}}{\abs{y - x}^{m + q}} \dif y \dif x \\
 &\ge \sum_{j \in \Nset}\Cr{cst_Eet3ahz0rooquei3phoh8aib} = \infty.\qedhere
 \end{split}
\end{equation*}
\end{proof}

\section{Estimate of the lifting in subcritical dimension}

This section is devoted to the proof of \cref{theorem_estimate_lifting_noncompact}.
We first observe that by \cref{theorem_lifting_truncated_noncompact_apriori}, for every \(\delta \in \intvo{0}{\infty}\), 
the map \(\lifting{u} : \manifold{M} \to \lifting{\manifold{N}}\) immediately satisfies the \emph{small-scale estimate}
\begin{equation*}
 \smashoperator{\iint_{\substack{(x, y) \in \manifold{M} \times \manifold{M}\\d_{\lifting{\manifold{N}}} (\lifting{u}(y), \lifting{u}(x))\le \delta}}} \frac{d_{\lifting{\manifold{N}}} (\lifting{u} (y), \lifting{u} (x))^p}{d_{\manifold{M}} (y, x)^{m + sp}} \dif y \dif x
 \le  C \smashoperator{\iint_{\manifold{M} \times \manifold{M}}} \frac{d_{\manifold{N}} (u (y), u (x))^p}{d_{\manifold{M}} (y, x)^{m + sp}} \dif y \dif x,
\end{equation*}
so that it will be sufficient to estimate the \emph{large-scale integral}:
\begin{equation}
 \smashoperator{\iint_{\substack{(x, y) \in \manifold{M} \times \manifold{M}\\d_{\lifting{\manifold{N}}} (\lifting{u}(y), \lifting{u}(x))> \delta}}} \frac{d_{\lifting{\manifold{N}}} (\lifting{u} (y), \lifting{u} (x))^p}{d_{\manifold{M}} (y, x)^{m + sp}} \dif y \dif x.
\end{equation}
We will prove the following counterpart of \cref{proposition_spgt1_estimate} for \emph{large-scale oscillations}.

\begin{proposition}
\label{proposition_large_oscillation_estimate}
Let \(m \in \Nset \setminus \set{0}\), let \(s, s_* \in \intvo{0}{1}\) and let \(p, p_* \in \intvr{1}{\infty}\).
If \(s p > 1\), then there exists a constant \(C \in \intvo{0}{\infty}\) such that if \(\pi : \lifting{\manifold{N}} \to \manifold{N}\) is a Riemannian covering, if \(\Omega \subseteq \Rset^m\) is open and convex, if \(\lifting{u} \in Y (\Omega, \lifting{\manifold{N}})\), if \(u \defeq \pi \compose \lifting{u}\), if \(\delta \le \inj (\manifold{N})\), and if 
\begin{equation}
\label{eq_Foreigie4zaiw2ze0dah8nah}
\frac{1 - s_*}{m} = \frac{1}{sp} - \frac{1}{p_*}
,
\end{equation}
then  
\begin{equation}
\label{eq_aeghula1ao9kiJath8ais2ie}
\smashoperator[r]{\iint_{\substack{(x, y) \in \Omega \times \Omega\\
 d_{\lifting{\manifold{N}}}(\lifting{u} (y), \lifting{u} (x))\ge \delta}}} 
 \frac{d_{\lifting{\manifold{N}}} (\lifting{u} (y), \lifting{u} (x))^{p_*}}{\abs{y - x}^{m + s_*p_*}} \dif y \dif x 
 \\
 \le 
 C
 \brk[\Bigg]{
 \frac{1}{\delta^{(1 - s)p}}
 \smashoperator{\iint_{\Omega\times \Omega}} \frac{d_{\manifold{N}} (u (y), u (x))^{p}}{\abs{y - x}^{m + s p}}  \dif y \dif x
 }^\frac{p_*}{sp}. 
\end{equation}
\end{proposition}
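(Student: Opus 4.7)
The approach is to adapt the Marcinkiewicz/Sobolev‐by‐interpolation strategy to the noncompact covering setting. Let \(A \defeq \iint_\Omega d_\manifold{N}(u(y),u(x))^p/\abs{y-x}^{m+sp}\,dy\,dx\) and \(E_\lambda \defeq \{(x,y) \in \Omega \times \Omega : d_{\lifting{\manifold{N}}}(\lifting{u}(y), \lifting{u}(x)) > \lambda\}\). The scaling \((1-s_*)/m = 1/(sp) - 1/p_*\) is precisely that of the Sobolev embedding \(\homog{W}^{1, sp} \hookrightarrow \homog{W}^{s_*, p_*}\), so heuristically the target \eqref{eq_aeghula1ao9kiJath8ais2ie} expresses this embedding applied to \(\lifting{u}\), whose \emph{effective} \(W^{1, sp}\)-energy is quantified by \(A/\delta^{(1-s)p}\) via \cref{proposition_spgt1_estimate} applied with truncation at \(\delta\) rather than \(1\), which yields \(\iint_\Omega (d_{\lifting{\manifold{N}}}^p \wedge \delta^p)/\abs{y-x}^{m+sp} \le CA\).

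The first main step is to extend the chain argument from the proof of \cref{proposition_spgt1_estimate} to extract a pointwise control of the lift displacement at each large‐oscillation level. For \((x, y) \in E_\lambda\) with \(\lambda \ge \delta\), the membership \(\lifting{u} \in Y(\Omega,\lifting{\manifold{N}})\) furnishes a continuous lift \(\lifting{u}_{x,y}\) along \([x,y]\); the intermediate value theorem applied iteratively yields \(0 = t_0 < t_1 < \dotsb < t_N \le 1\) with \(N \ge 2\lambda/\delta - 1\) such that, setting \(z_i \defeq (1-t_i)x + t_iy\), consecutive lift distances are exactly \(\delta/2 \le \inj(\manifold{N})/2\). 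By \cref{lemma_small_isometry}, \(d_{\manifold{N}}(u(z_{i-1}), u(z_i)) = \delta/2\). Fixing \(\sigma \in \intvo{1/p}{s}\) and applying the one‐dimensional fractional Morrey--Sobolev embedding on each \([t_{i-1}, t_i]\) to the map \(t \mapsto u((1-t)x+ty)\) (which lies in \(W^{\sigma, p}(0,1)\) on almost every segment by Fubini), then summing over \(i\) and using \(\sum_i (t_i - t_{i-1}) \le 1\) via Hölder, gives the pointwise chain bound
\begin{equation*}
\lambda^{\sigma p - 1}\,\delta^{1+p(1-\sigma)} \le C_\sigma \,\Phi_\sigma(x,y),
\end{equation*}
where \(\Phi_\sigma(x,y) \defeq \iint_{[0,1]^2} d_\manifold{N}(u((1-t)x+ty),u((1-r)x+ry))^p /\abs{t-r}^{1+\sigma p}\,dt\,dr\).

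Integrating this pointwise bound against the weight \(\abs{y-x}^{-(m+sp)}\) and invoking \cref{lemma_fractional_integration} produces the family of weighted weak‐type estimates \(\iint_{E_\delta} \lambda^\beta/\abs{y-x}^{m+sp}\,dy\,dx \le CA/\delta^{p-\beta}\) indexed by \(\beta = \sigma p - 1 \in \intvo{0}{sp-1}\). A Marcinkiewicz‐style layer‐cake representation of the target, together with a variant of the change‐of‐variables computation from the proof of \cref{lemma_fractional_integration} producing the outer weight \(\abs{y-x}^{-(m+s_*p_*)}\) in place of \(\abs{y-x}^{-(m+sp)}\), transfers the weak‐type estimates to the correct weight and combines them into the sought strong‐type bound \eqref{eq_aeghula1ao9kiJath8ais2ie}. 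The main obstacle will be matching the powers of \(\delta\): each individual chain bound produces a \(\delta\)-exponent depending on \(\sigma\), and only the Sobolev scaling \((1-s_*)/m = 1/(sp) - 1/p_*\) permits the Marcinkiewicz interpolation between the chain bound and the small‐scale estimate to yield the required exponent \(p_*(1-s)/s\); closing this matching via a dyadic decomposition \(E_\delta = \bigsqcup_{k\ge 0}(E_{2^k\delta}\setminus E_{2^{k+1}\delta})\) is the technical heart of the proof.
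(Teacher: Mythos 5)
Your high-level outline — chain argument, weak-type estimates, Marcinkiewicz-style interpolation — is the right philosophy, and the pointwise chain bound you derive (essentially $\Phi_\sigma(x,y) \gtrsim \delta^{p(1-\sigma)}\lambda^{\sigma p}$ on $E_\lambda$, modulo a small exponent slip) is correct and matches the paper's \cref{lemma_large_scale_1d_estimate} with $\mu = 0$. However, the interpolation machinery as you describe it does not close, and the gap is real.

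The central problem is the proposed transfer from the weight $\abs{y-x}^{-(m+sp)}$ to $\abs{y-x}^{-(m+s_*p_*)}$. Redoing the change-of-variables of \cref{lemma_fractional_integration} with outer weight $\abs{y-x}^{-(m+s_*p_*)}$ produces, after the substitution $(z,w) = ((1-t)x+ty,(1-r)x+ry)$, a $(t,r)$-integral with kernel $\abs{t-r}^{-(1+\sigma p - s_*p_*)}$ \emph{and} an outer weight $\abs{z-w}^{-(m+s_*p_*)}$ — not $\abs{z-w}^{-(m+sp)}$. So even if the $(t,r)$-integral converges (which requires $1 < \sigma p < s_*p_*$, and $s_*p_*$ can be $\le 1$ for admissible $(s_*,p_*)$ when $sp<m$), the resulting bound is in terms of the $\homog{W}^{s_*,p_*}$ Gagliardo energy of $u$, which is not assumed finite. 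The chain estimate by itself only controls $\iint_{E_\lambda}\abs{y-x}^{-(m+sp)}$ in terms of $A \defeq \iint_\Omega d_{\manifold{N}}^p/\abs{y-x}^{m+sp}$.

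The second problem is that a layer-cake in $\lambda$ alone cannot produce the nonlinear power $A^{p_*/(sp)}$. The best weak-type you get (with $\sigma = s$) is $\lambda^{sp}\,\iint_{E_\lambda}\abs{y-x}^{-(m+sp)} \le C A/\delta^{(1-s)p}$, but $p_* > sp$, so $\int_\delta^\infty \lambda^{p_*-1-sp}\,d\lambda$ diverges; there is no hard cap on $\lambda$ because $\Phi_\sigma(x,y)$ can be large on small sets. The paper gets around this by doing the Marcinkiewicz interpolation in a \emph{two-parameter} $(\lambda,r)$ plane at the level of mean oscillations $\fint_{B_r(x)^2}(d_{\lifting{\manifold{N}}}-\delta)_+$ rather than at the level of the Gagliardo integral, restricting the integral to the hyperbolic region $H = \{\lambda^{sp}r^{m-sp} \le \bar t\}$, and then integrating $t^{p_*/(sp)-2}$ up to $\bar t \sim A/\delta^{(1-s)p}$ to generate precisely the power $p_*/(sp)$. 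That mechanism has no analogue in a one-parameter layer-cake.

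The third and most important missing ingredient is the tunable truncation $\mu$ in the one-dimensional chain bound (\cref{lemma_truncated_Morrey} and \cref{lemma_large_scale_1d_estimate}). You work with a fixed Morrey exponent $\sigma < s$ and with $\mu = 0$; the paper instead keeps $\sigma = s$ and tunes $\mu = \mu^r_\lambda \sim \lambda^s\delta^{1-s}/r^s$ across scales, producing the truncated kernel $\bigl(d_{\manifold{N}}/\abs{w-v}^s - \mu\bigr)_+^q$ with $q < p$. It is exactly the $\int_0^\infty(\cdot - \mu)_+^q\,\mu^{p-q-1}\,d\mu$ integration that reconstitutes the $p$-th power while giving the extra flexibility needed for the interpolation. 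Without that parameter the chain estimates at different $\sigma$ do not combine into anything stronger than what $\sigma = s$ already gives. Finally, the passage back from the mean-oscillation functional to the Gagliardo seminorm requires the geometric $\delta_j$-splitting in the proof of \cref{proposition_large_oscillation_estimate}, which is distinct from (and subtler than) the dyadic level-set decomposition you invoke.
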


We recall that the space \(Y (\Omega, \lifting{\manifold{N}})\) was defined in \eqref{eq_definition_Y} as the set of maps whose restriction on almost every segment coincides almost everywhere with a continuous function taking the same value at the extremities.

\begin{remark}
\label{remark_Sobolev_embedding}
\Cref{proposition_large_oscillation_estimate} implies a fractional Sobolev embedding: 
for \(s_*\in \intvo{0}{1}\) and \(p, p_* \in \intvo{1}{\infty}\) such that \(1/p_* = 1/p - (1-s_*)/m\), 
letting \(\pi : \Rset \to \Sset^1\) be the universal covering of the circle and choosing \(\lifting{u} \defeq t f\) for \(t > 0\) in \eqref{eq_aeghula1ao9kiJath8ais2ie} with \(\delta = 1\), one gets by the fractional Gagliardo--Nirenberg interpolation inequality, since \(\abs{e^{itf}} \le 1\) in \(\Omega\),
\begin{equation}
\label{eq_teTeizo0Map9ieMediBohjei}
\begin{split}
\smashoperator[r]{\iint_{\substack{(x, y) \in \Omega \times \Omega\\
 \abs{f (y) - f (x)}\ge t }}} \frac{\abs{f (y) - f (x)}^{p_*}}{\abs{y - x}^{m + s_*p_*}} \dif y \dif x 
 &\le 
 \frac{\C}{t^{p_*}}
 \brk[\Bigg]{\;\smashoperator[r]{\iint_{\Omega \times \Omega}} \frac{\abs{e^{i t f (y)} - e^{i t f(x)}}^{p/s_*}}{\abs{y - x}^{m + p}}  \dif y \dif x}^{p_*/p}\\[-1em]
&\le \frac{\Cl{cst_ziezeiph1eiQuo7eiH6Cohdi}}{t^{p_*}}
 \brk[\bigg]{\int_{\Omega} \abs{D e^{i tf}}^p}^{p_*/p}
 = \Cr{cst_ziezeiph1eiQuo7eiH6Cohdi}
 \brk[\bigg]{\int_{\Omega} \abs{D f}^p}^{p_*/p};
\end{split}
\end{equation}
letting \(t \to 0\) in \eqref{eq_teTeizo0Map9ieMediBohjei}, one gets the fractional Sobolev embedding 
\begin{equation*}
\smashoperator[r]{\iint_{\substack{\Omega \times \Omega}}} \frac{\abs{f (y) - f (x)}^{p_*}}{\abs{y - x}^{m + s_*p_*}} \dif y \dif x 
 \le \Cr{cst_ziezeiph1eiQuo7eiH6Cohdi}
 \brk[\bigg]{\int_{\Omega} \abs{D f}^p}^{p_*/p}.
\end{equation*}
\end{remark}

\subsection{One-dimensional estimates}

Our first tool towards the proof of \cref{proposition_large_oscillation_estimate} is the following truncated fractional Morrey--Sobolev inequality.

\begin{lemma}
\label{lemma_truncated_Morrey}
Let \(s \in \intvo{0}{1}\) and let \(p \in \intvr{1}{\infty}\). 
If \(sp > 1\), then there exists a constant \(C \in \intvo{0}{\infty}\) such that if \(I \subseteq \Rset\) is an interval, if \(\manifold{N}\) is a Riemannian manifold, if the mapping \(u : I \to \manifold{N}\) is measurable and if \(\mu \in \intvr{0}{\infty}\), then for almost every \(x, y \in I\), we have
\begin{multline}
\label{eq_aeNg6Aechie0Aivohchaijee}
 d_{\manifold{N}} (u (y), u (x)) 
 \\[-.5em]
 \le C
 \brk[\Bigg]{
\brk[\Bigg]{\ \smashoperator[r]{\iint_{\intvc{x}{y}\times \intvc{x}{y}}} \brk[\bigg]{\frac{d_{\manifold{N}} (u (w), u (v))}{\abs{w - v}^s} - \mu}_+^p \frac{\dif w \dif v}{\abs{w - v}} 
}^\frac{1}{p}  \abs{y - x}^{s  - \frac{1}{p}} + \mu \abs{y - x}^{s} }.
\end{multline}
\end{lemma}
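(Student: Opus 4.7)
The plan is to reduce the inequality to a scalar one-dimensional Campanato chaining. Fix $x \in I$ and introduce the scalar function $f(\xi) \defeq d_{\manifold{N}}(u(\xi), u(x))$ for $\xi \in I$; by the triangle inequality one has $|f(\xi) - f(\eta)| \le d_{\manifold{N}}(u(\xi), u(\eta))$, and since $t \mapsto (t - \mu)_+$ is nondecreasing, the integral on the right-hand side of \eqref{eq_aeNg6Aechie0Aivohchaijee} dominates
\begin{equation*}
E \defeq \iint_{\intvc{x}{y} \times \intvc{x}{y}} \brk[\bigg]{\frac{|f(\xi) - f(\eta)|}{|\xi - \eta|^s} - \mu}_+^p \frac{\dif\xi \dif\eta}{|\xi - \eta|}.
\end{equation*}
Because $f(x) = 0$ gives $|f(y) - f(x)| = d_{\manifold{N}}(u(y), u(x))$, it suffices to prove the scalar counterpart of \eqref{eq_aeNg6Aechie0Aivohchaijee} for $f$.

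Setting $\phi(\xi, \eta) \defeq (|f(\xi) - f(\eta)| - \mu |\xi - \eta|^s)_+$ so that $|f(\xi) - f(\eta)| \le \mu |\xi - \eta|^s + \phi(\xi, \eta)$ and $\iint_{I^2} \phi^p / |\xi-\eta|^{1+sp} \le E$, I would assume $x < y$, set $h \defeq y - x$, and introduce the nested dyadic intervals $J_k \defeq \intvc{x}{x + h 2^{-k}}$ for $k \in \Nset$ with their averages $f_k \defeq \fint_{J_k} f$. Hölder's inequality applied on $J_k \times J_{k+1}$ to the pairing of $\phi \, |\xi - \eta|^{-(1+sp)/p}$ against $|\xi - \eta|^{(1+sp)/p}$, together with $|\xi - \eta| \le h 2^{-k}$, yields
\begin{equation*}
|f_k - f_{k+1}| \le \mu (h 2^{-k})^s + \fint_{J_k}\fint_{J_{k+1}} \phi \le \mu (h 2^{-k})^s + C \, E^{1/p} h^{s - 1/p} \, 2^{-k(sp - 1)/p}.
\end{equation*}

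Since $s > 0$ and $sp > 1$, both geometric series $\sum_k 2^{-ks}$ and $\sum_k 2^{-k(sp-1)/p}$ converge, so telescoping gives $\sum_{k \ge 0}|f_k - f_{k+1}| \le C(\mu h^s + E^{1/p} h^{s - 1/p})$. A Fubini--Hölder argument exploiting the finiteness of $E$ shows $f \in L^1_{\mathrm{loc}}(I)$, and Lebesgue's differentiation theorem then yields $f_k \to f(x) = 0$ for almost every $x$ (which is simultaneously a Lebesgue point of $f$ and an approximate continuity point of $u$). Running the symmetric chaining from the right with $J'_k \defeq \intvc{y - h 2^{-k}}{y}$ yields the analogous bound on $|\fint_I f - f(y)|$, and the triangle inequality produces \eqref{eq_aeNg6Aechie0Aivohchaijee}. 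The main technical point is to calibrate the Hölder exponents so that the resulting two geometric series converge precisely in the range $sp > 1$ and $s > 0$; once this calibration is in place, the argument is a routine Campanato chaining.
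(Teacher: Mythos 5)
Your proof is correct and follows essentially the same dyadic chaining argument as the paper's: nested intervals shrinking to a Lebesgue point, a Hölder step that splits off the $\mu$-term and produces the geometric decay rate $2^{-k(sp-1)/p}$, and symmetrization via the triangle inequality from the two endpoints. The only cosmetic difference is that you reduce to $L^1$-averages of the scalar function $f(\xi) = d_{\manifold{N}}(u(\xi), u(x))$, whereas the paper works directly with $L^p$-mean oscillations of $d_{\manifold{N}}(u(\cdot),u(\cdot))$ through its Minkowski lemma for mean oscillations (\cref{lemma_Minkowski_average}); both telescoping schemes are equivalent here.
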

When \(\mu = 0\), the estimate \eqref{eq_aeNg6Aechie0Aivohchaijee} reduces to the fractional Morrey--Sobolev inequality; when \(\mu > 0\), \eqref{eq_aeNg6Aechie0Aivohchaijee} shows that when small values of the difference quotient are removed, one still gets some truncated uniform bound.

The proof will use the following Minkowski inequality for mean oscillations.

\begin{lemma}
\label{lemma_Minkowski_average}
Let  \(m \in \Nset \setminus\set{0}\), let \(p \in \intvr{1}{+\infty}\), let \(\Omega \subseteq \Rset^m\) be measurable and let the mapping \(u : \Omega \to \manifold{N}\) be measurable.
For every \(k \in \Nset\) and measurable sets \(A_0, \dotsc, A_k \subseteq \Omega\) such that for every \(j \in \set{0, \dotsc, k}\), \(\mathcal{L}^d (A_j) > 0\), one has 
\begin{equation}
\label{eq_ceejee4igood8ChiVah7ooxa}
  \brk[\bigg]{\fint_{A_0} \fint_{A_k} d_{\manifold{N}} (u (y), u (x))^p \dif y \dif x}^{\frac{1}{p}}
  \le \sum_{j = 0}^{k - 1} \brk[\bigg]{\fint_{A_j} \fint_{A_{j + 1}} d_{\manifold{N}} (u (y), u (x))^p \dif y \dif x}^\frac{1}{p}.
\end{equation}
\end{lemma}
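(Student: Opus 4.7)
The plan is to combine the triangle inequality for the target distance \(d_{\manifold{N}}\) with Minkowski's integral inequality for the \(L^p\) norm. The role of the intermediate sets \(A_1, \dotsc, A_{k-1}\) is to introduce auxiliary integration variables \(z_1, \dotsc, z_{k-1}\) along a telescoping chain from \(x \in A_0\) to \(y \in A_k\).

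First I would fix \(x \in A_0\) and \(y \in A_k\). For any choice of points \(z_j \in A_j\) with \(j \in \set{1, \dotsc, k - 1}\), and setting \(z_0 \defeq x\) and \(z_k \defeq y\), the triangle inequality gives the pointwise bound
\begin{equation*}
 d_{\manifold{N}} (u (y), u (x))
 \le \sum_{j = 0}^{k - 1} d_{\manifold{N}} (u (z_{j + 1}), u (z_j)).
\end{equation*}
Since the left-hand side does not depend on the intermediate variables \(z_1, \dotsc, z_{k-1}\), I would then take the normalized average of the \(p\)-th power of this inequality over \(A_1 \times \dotsb \times A_{k-1}\) and subsequently over \(A_0 \times A_k\), to obtain
\begin{equation*}
 \fint_{A_0} \fint_{A_k} d_{\manifold{N}} (u (y), u (x))^p \dif y \dif x
 \le \fint_{A_0} \dotsb \fint_{A_k}  \brk[\bigg]{\sum_{j = 0}^{k - 1} d_{\manifold{N}} (u (z_{j + 1}), u (z_j))}^p \dif z_k \dotsb \dif z_0.
\end{equation*}

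Next I would apply Minkowski's inequality for the \(L^p\) norm on the product measure space \(A_0 \times \dotsb \times A_k\) (endowed with the product of the normalized measures) to the finite sum inside the \(p\)-th power, which yields
\begin{equation*}
 \brk[\bigg]{\fint_{A_0} \dotsb \fint_{A_k}  \brk[\bigg]{\sum_{j = 0}^{k - 1} d_{\manifold{N}} (u (z_{j + 1}), u (z_j))}^p}^\frac{1}{p}
 \le \sum_{j = 0}^{k - 1} \brk[\bigg]{\fint_{A_0} \dotsb \fint_{A_k}  d_{\manifold{N}} (u (z_{j + 1}), u (z_j))^p }^\frac{1}{p}.
\end{equation*}
Finally, for each fixed \(j\), the integrand depends only on the two variables \(z_j\) and \(z_{j+1}\), so the normalized integrals over the remaining sets \(A_\ell\) with \(\ell \notin \set{j, j+1}\) reduce to the integral of a constant and drop out. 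Taking the \(p\)-th root of the previous two displays then gives the announced inequality \eqref{eq_ceejee4igood8ChiVah7ooxa}.

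There is no substantial obstacle: the argument is a clean combination of the triangle inequality and Minkowski's inequality. The only point requiring minor care is the bookkeeping of the normalized averages and ensuring the \(p = \infty\) case is handled correctly, which follows from the same argument with \(L^\infty\) norms in place of \(L^p\) norms (or by passing to the limit).
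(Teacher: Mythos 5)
Your proposal is correct and is essentially the paper's own argument: you introduce auxiliary variables $z_1, \dotsc, z_{k-1}$ so the double average becomes an average over the full product $A_0 \times \dotsb \times A_k$, apply the triangle inequality and Minkowski's inequality for the $L^p$ norm on that product, and then observe that each summand depends on only two of the variables so the remaining averages drop out. (As a minor aside, the statement has $p \in \intvr{1}{\infty}$, i.e. $p < \infty$, so the $p = \infty$ remark is unnecessary, though harmless.)
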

\begin{proof}
We have by the triangle inequality and by Minkowski's inequality
\begin{equation*}
\begin{split}
 \brk[\bigg]{\fint_{A_0} \fint_{A_k} d_{\manifold{N}} (u (y), u (x))^p \dif y \dif x}^\frac{1}{p}
 & = \brk[\bigg]{\fint_{A_0} \dotsi \fint_{A_k} d_\manifold{N} (u (x_k),u (x_0))^p \dif x_k \, \dotsm \dif x_0}^\frac{1}{p}\\
 & \le \sum_{j = 0}^{k - 1} \brk[\bigg]{\fint_{A_0} \dotsi \fint_{A_k} d_{\manifold{N}} (u (x_{j + 1}), u (x_j))^p \dif x_k \, \dotsm \dif x_0}^\frac{1}{p}\\
 & = \sum_{j = 0}^{k - 1} \brk[\bigg]{\fint_{A_j} \fint_{A_{j + 1}} d_{\manifold{N}}(u (y), u (x))^p \dif y \dif x}^\frac{1}{p},
\end{split}
\end{equation*}
which proves \eqref{eq_ceejee4igood8ChiVah7ooxa}.
\end{proof}

We now prove the truncated fractional Morrey--Sobolev embedding.

\begin{proof}[Proof of \cref{lemma_truncated_Morrey}]
\resetconstant
Since the mapping \(u\) is measurable, we can assume without loss of generality that \(x\) and \(y\) are Lebesgue points of \(u\) and that \(I = \intvo{x}{y}\).
We define for each \(j \in \Nset\) the set \(I^x_j \defeq  x + 2^{-j} (I - x) \subseteq I\).
Since \(x\) is a Lebesgue point of \(u\), we have
\begin{equation}
\label{eq_fi3Aijiap5shoh4yial1un6u}
\lim_{j \to \infty} \fint_{I_j} d_{\manifold{N}} (u (x), u (z))^p \dif z = 0,
\end{equation}
and then by \eqref{eq_fi3Aijiap5shoh4yial1un6u}, by \cref{lemma_Minkowski_average} and by Minkowski's inequality
\begin{equation}
\label{eq_eingo6Vod3Aaxeimeubeisha}
\begin{split}
    \brk[\bigg]{
      \fint_I 
        d_{\manifold{N}} (u (x), u (z))^p 
        \dif z
      }^\frac{1}{p} 
    &
    \le 
      \sum_{j = 0}^{\infty} 
        \brk[\bigg]{
          \fint_{I^x_j}  
          \fint_{I^x_{j + 1}} 
            d_{\manifold{N}} (u (w), u (v))^p 
          \dif w \dif v
          }^\frac{1}{p}
          \\
    &
    \le 
      \sum_{j = 0}^{\infty} 
      \brk[\bigg]{
        \fint_{I^x_j} \fint_{I^x_{j + 1}} 
          \brk[\big]{d_{\manifold{N}} (u (w), u (v)) - \mu \abs{w - v}^s}_+^p 
          \dif w \dif v
        }^\frac{1}{p}
        \\
    & 
    \qquad 
    +
      \sum_{j = 0}^{\infty} 
        \brk[\bigg]{
          \fint_{I^x_j} 
          \fint_{I^x_{j + 1}} 
            \mu^p 
            \abs{w - v}^{sp}
            \dif w 
            \dif v
          }^\frac{1}{p}
          .
\end{split}
\end{equation}
For the first term in the right-hand side of \eqref{eq_eingo6Vod3Aaxeimeubeisha}, we have for every \(j \in \Nset\), since \(sp \ge 1\),
\begin{equation}
\label{eq_Thix0kuwai1gochi3fameeka}
\begin{split}
\fint_{I^x_j}  \fint_{I^x_{j + 1}}& \brk[\big]{d_{\manifold{N}} (u (w), u (v)) - \mu \abs{w - v}^s}_+^p \dif w \dif v\\
    &\le  
    \frac{2 \diam(I)^{sp - 1}}{ 2^{j(sp -1)}} 
    \smashoperator{\iint_{I \times I}}
    \brk[\Big]{ \frac{d_{\manifold{N}} (u (w), u (v))}{\abs{w - v}^s} - \mu}_+^p \frac{\dif w \dif v}{\abs{w - v}},
\end{split}
\end{equation}
while for the second term  in the right-hand side of \eqref{eq_eingo6Vod3Aaxeimeubeisha}, we have for every \(j \in \Nset\),  
\begin{equation}
\label{eq_XeFau1Oothaig8eg3cahBu2e}
\fint_{I^x_j} 
    \fint_{I^x_{j + 1}} \abs{w - v}^{sp} \dif w \dif v
    \le \frac{\C  \diam (I)^{sp}}{2^{j{sp}}}.
\end{equation}
Inserting \eqref{eq_Thix0kuwai1gochi3fameeka} and \eqref{eq_XeFau1Oothaig8eg3cahBu2e} into \eqref{eq_eingo6Vod3Aaxeimeubeisha}, we get, since \(sp > 1\),
\begin{multline}
\label{eq_eiT2Izi4ahciH1ue4yaiquoo}
    \brk[\Big]{\fint_I d_{\manifold{N}} (u (x), u (z))^p \dif z}^\frac{1}{p} \\
    \le 
     \C
    \brk[\Big]{\diam (I)^{sp - 1}
    \smashoperator{\iint_{I \times I}}
    \brk[\bigg]{ \frac{d_{\manifold{N}} (u (w), u (v))}{\abs{w - v}^s} - \mu}_+^p \frac{\dif w \dif v}{\abs{w - v}}
    + \mu^p \diam (I)^{sp}}^\frac{1}{p},
\end{multline}
and the conclusion \eqref{eq_aeNg6Aechie0Aivohchaijee} follows from \eqref{eq_eiT2Izi4ahciH1ue4yaiquoo} and the triangle inequality.
\end{proof}

Next we use \cref{lemma_truncated_Morrey} to estimate the large scale oscillations of a lifting by a truncated fractional Sobolev semi-norm.

\begin{lemma}
\label{lemma_large_scale_1d_estimate}
Let \(s \in \intvo{0}{1}\) and let \(p \in \intvr{1}{\infty}\).
If \(sp>1\), 
then there exists a constant \(C \in \intvo{0}{\infty}\) such that 
if \(I \subseteq \Rset\) is an interval, 
if \(\pi : \lifting{\manifold{N}} \to \manifold{N}\) is a Riemannian covering, 
if \(\lifting{u} \in C (I, \lifting{\manifold{N}})\) 
and if \(u \defeq \pi \compose \lifting{u}\), 
then for almost every \(x, y \in I\), 
every \(\mu \in \intvr{0}{\infty}\)
and every \(\delta \in \intvr{0}{\inj (\manifold{N})}\), one has
\begin{multline}
\label{eq_iesh4ejiotahn2Ieb3oo3oog}
 \brk[\big]{
  d_{\lifting{\manifold{N}}} (\lifting{u} (y), \lifting{u} (x)) -\delta
  }_+^{sp}
 \\
 \le  
 \frac
  {C}
  {\delta^{p (1 - s)}} 
  \brk[\bigg]{
  \smashoperator[r]{
    \iint_{\intvc{x}{y}\times \intvc{x}{y}}}
        \brk[\bigg]{
          \frac
            {d_{\manifold{N}} (u (w), u (v))}
            {\abs{w - v}^s} 
          -
          \mu 
          }_+^p
        \frac
          {\dif w \dif v}
          {\abs{w - v}}
        \abs{y - x}^{sp - 1} 
      + 
        \mu^p 
        \abs{y - x}^{sp}}.
\end{multline}
\end{lemma}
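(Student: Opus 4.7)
The plan is to cover $\intvc{x}{y}$ by finitely many sub-intervals on each of which $\lifting{u}$ oscillates by at most $\delta \le \inj (\manifold{N})$, so that \cref{lemma_small_isometry} identifies $d_{\lifting{\manifold{N}}}$ with $d_{\manifold{N}}$ on each piece; one then applies the truncated Morrey--Sobolev estimate of \cref{lemma_truncated_Morrey} locally and sums via Hölder's inequality. Writing $D \defeq d_{\lifting{\manifold{N}}}(\lifting{u} (y), \lifting{u}(x))$ and $L \defeq \abs{y - x}$, the inequality is trivial when $D \le \delta$, so I shall assume that $D > \delta$.

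Using the continuity of $\lifting{u}$ and the intermediate value theorem, I build inductively a partition $x = t_0 < t_1 < \dotsb < t_k = y$ by setting
\begin{equation*}
 t_i \defeq \inf \set{t \in \intvo{t_{i-1}}{y} \st d_{\lifting{\manifold{N}}}(\lifting{u}(t), \lifting{u}(t_{i-1})) \ge \delta}
\end{equation*}
as long as this set is non-empty, and $t_k \defeq y$ otherwise. Then $d_{\lifting{\manifold{N}}}(\lifting{u}(t_i), \lifting{u}(t_{i-1})) = \delta$ for $1 \le i \le k-1$ and $d_{\lifting{\manifold{N}}}(\lifting{u}(t_k), \lifting{u}(t_{k-1})) \le \delta$. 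Uniform continuity of $\lifting{u}$ on $\intvc{x}{y}$ prevents the $t_i$ from accumulating (a fixed jump $\delta$ cannot persist as $t_i$ converges), so $k < \infty$, and $D > \delta$ forces $k \ge 2$. Each jump has size at most $\delta \le \inj(\manifold{N})$, so \cref{lemma_small_isometry} gives $d_{\lifting{\manifold{N}}}(\lifting{u}(t_i), \lifting{u}(t_{i-1})) = d_{\manifold{N}}(u(t_i), u(t_{i-1}))$, which in particular equals $\delta$ for $1 \le i \le k - 1$.

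Since $u = \pi \compose \lifting{u}$ is continuous, the pointwise estimate of \cref{lemma_truncated_Morrey} can be invoked at every pair of endpoints. Setting $L_i \defeq t_i - t_{i-1}$ and writing $J_i$ for the truncated Gagliardo energy of $u$ on $\intvc{t_{i-1}}{t_i}^2$, this gives for each $1 \le i \le k - 1$
\begin{equation*}
\delta = d_{\manifold{N}}(u(t_i), u(t_{i-1})) \le C \brk[\big]{J_i^{1/p} L_i^{s - 1/p} + \mu L_i^s}.
\end{equation*}
Summing these $k - 1$ inequalities and applying Hölder's inequality with conjugate exponents $(p, p')$ to the first sum, combined with Jensen's inequality for the concave maps $t \mapsto t^{(sp-1)/(p-1)}$ and $t \mapsto t^s$ (whose exponents both lie in $(0, 1)$ precisely because $1 < sp < p$), together with the disjointness bounds $\sum_i J_i \le J$ and $\sum_i L_i = L$, one obtains
\begin{equation*}
(k - 1)\delta \le C' k^{1 - s} \brk[\big]{J^{1/p} L^{s - 1/p} + \mu L^s}.
\end{equation*}

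Since $k \ge 2$ implies $k - 1 \ge k/2$, this rearranges to $k^s \delta \le 2 C' \brk[\big]{J^{1/p} L^{s - 1/p} + \mu L^s}$, and combined with the triangle inequality $D \le k \delta$ yields $D^s \le C'' \delta^{s - 1} \brk[\big]{J^{1/p} L^{s - 1/p} + \mu L^s}$. Raising to the $p$-th power and using $(a + b)^p \le 2^{p - 1}(a^p + b^p)$ produces the desired bound on $D^{sp}$, from which \eqref{eq_iesh4ejiotahn2Ieb3oo3oog} follows at once since $(D - \delta)_+^{sp} \le D^{sp}$. The delicate point of the argument is the Hölder/Jensen estimate controlling $\sum L_i^{(sp-1)/(p-1)}$, which works only because the exponent lies in $(0, 1)$; this is exactly where the hypothesis $sp > 1$ is essential.
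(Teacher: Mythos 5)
Your proof is correct and follows essentially the same strategy as the paper: a greedy $\delta$-partition of $[x, y]$ obtained by continuity, the truncated Morrey--Sobolev estimate of \cref{lemma_truncated_Morrey} on each piece (after converting $d_{\lifting{\manifold{N}}}$ to $d_{\manifold{N}}$ via \cref{lemma_small_isometry}), and a summation controlled by Hölder. The only cosmetic difference is in the summation step: the paper first raises the per-interval estimate to the power $1/s$ so that the length exponents add to $1$ and Hölder with exponents $(sp,(sp)')$ leaves no residual dependence on the number of pieces, whereas you apply Hölder with $(p,p')$ directly and use Jensen's inequality to absorb the resulting factor of order $k^{1-s}$ into the factor $k-1$ produced on the left---both routes hinge on $sp>1$ in exactly the same way.
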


\Cref{lemma_large_scale_1d_estimate} gives a growth estimate corresponding to what the Morrey--Sobolev embedding would give if one had \(\lifting{u} \in \homog{W}^{1, sp} (I, \lifting{\manifold{N}})\).

When \(\mu = 0\), \cref{lemma_large_scale_1d_estimate} shows that on large scale the lifting \(\lifting{u}\) behaves like a Hölder--continuous mapping of exponent \(1 - 1/sp\), which is not as good as the exponent \(s - 1/p\) that the fractional Morrey--Sobolev embedding gives on the original function \(u\);
this generalizes the results obtained for the universal covering of the circle by Merlet \cite{Merlet_2006} and Mironescu \& Molnar \cite{Mironescu_Molnar_2015}*{lemma 8.25}.

\begin{proof}%
[Proof of \cref{lemma_large_scale_1d_estimate}]%
\resetconstant
Let \(\ell \defeq \floor{d_{\lifting{\manifold{N}}} (\lifting{u} (x), \lifting{u} (y))/\delta}\), so that 
\begin{equation}
\label{eq_oe7baphee3voV2eet8chiy8w}
 (d_{\lifting{\manifold{N}}} (\lifting{u} (y), \lifting{u} (x)) -\delta)_+ \le \ell \delta. 
\end{equation}
Since the mapping \(\lifting{u}\) is continuous, by the intermediate value theorem, there exist points \(z_0 = x \le z_1 \le z_2 \le \dotsb \le z_\ell \le y\) such that for every \(i \in \set{1, \dotsc, \ell}\), one has
\(
 d_{\lifting{\manifold{N}}}(\lifting{u} (z_i), \lifting{u} (z_{i - 1})) = \delta\).
 Since \(\delta \le \inj(\manifold{N})\), by \cref{lemma_small_isometry},
 we also have \(d_{\manifold{N}}(u (z_i), u (z_{i - 1})) = \delta\).
Therefore, since \(sp > 1\), it follows from \cref{lemma_truncated_Morrey} that for each \(i \in \set{1, \dotsc,\ell}\),
\begin{equation}
\label{eq_EXah3ooN7ohlain1quiej5az}
\delta \le 
\Cl{cst_sohph6Yainahlu7ain3fei7u} \brk[\Bigg]{\brk[\bigg]{\smashoperator[r]{\iint_{\intvc{z_{i - 1}}{z_i}^2}}
 \brk[\bigg]{\frac{d_{\manifold{N}} (u (z), u (w))}{\abs{z - w}^{s}} - \mu}_+^p
 \frac{\dif z \dif w}{\abs{z - w}}\abs{z_i - z_{i-1}}^{sp - 1}}^\frac{1}{p} + \mu \abs{z_i - z_{i - 1}}^s} .
\end{equation}
Summing \eqref{eq_EXah3ooN7ohlain1quiej5az} we have
\begin{equation}
\label{eq_ibie1ooceeJah6Iu7thooGiu}
\ell 
\le \brk[\bigg]{\frac{\Cr{cst_sohph6Yainahlu7ain3fei7u}}{\delta}}^\frac{1}{s}\sum_{i = 1}^\ell \brk[\Bigg]{\brk[\bigg]{\smashoperator[r]{\iint_{\intvc{z_{i - 1}}{z_i}^2}}\brk[\bigg]{\frac{d_{\manifold{N}} (u (z), u (w))}{\abs{z - w}^{s}} - \mu}_+^p
 \frac{\dif z \dif w}{\abs{z - w}}}^\frac{1}{sp}\abs{z_i - z_{i-1}}^{1 - \frac{1}{sp}} + \mu^\frac{1}{s} \abs{z_i - z_{i - 1}}}.
\end{equation}
Applying the discrete Hölder inequality to the right-hand side of  \eqref{eq_ibie1ooceeJah6Iu7thooGiu}, we get 
\begin{equation}
\label{eq_xi7Ohg9Ahph9ahVaiS7Veegh}
\begin{split}
 \ell &\le\brk[\Bigg]{ \frac{\Cr{cst_sohph6Yainahlu7ain3fei7u}}{\delta}}^\frac{1}{s}\brk[\Bigg]{\brk[\bigg]{\sum_{i = 1}^\ell\smashoperator[r]{\iint_{\intvc{z_{i - 1}}{z_i}^2}}\brk[\bigg]{\frac{d_{\manifold{N}} (u (z), u (w))}{\abs{z - w}^{s}} - \mu}_+^p
 \frac{\dif z \dif w}{\abs{z - w}}}^\frac{1}{sp}
 \brk[\bigg]{\sum_{i = 1}^\ell\abs{z_i - z_{i-1}}}^{1 - \frac{1}{sp}} \\
 &\hspace{8cm}+ \mu^\frac{1}{s} \sum_{i = 1}^\ell\abs{z_i - z_{i-1}}}.
  \end{split}
  \end{equation}
Since \(x \le z_0  \le z_1\le \dotsb \le z_\ell \le y\), the sets \(\intvo{z_{i - 1}}{z_i}^2\) are disjoint subsets of \(\intvc{x}{y}^2\) and we deduce from \eqref{eq_xi7Ohg9Ahph9ahVaiS7Veegh} that 
\begin{equation}
\label{eq_oquovu6iephei0jei7thai4Y}
 \ell \le \brk[\Bigg]{\frac{\Cr{cst_sohph6Yainahlu7ain3fei7u}}{\delta}}^\frac{1}{s}\brk[\bigg]{\brk[\bigg]{\smashoperator[r]{\iint_{\intvc{x}{y}^2}}\brk[\bigg]{\frac{d_{\manifold{N}} (u (z), u (w))}{\abs{z - w}^{s}} - \mu}_+^p
 \frac{\dif z \dif w}{\abs{z - w}}}^\frac{1}{sp}
 \abs{y - x}^{1 - \frac{1}{sp}}+ \mu^\frac{1}{s} \abs{y - x}}.
\end{equation}
Recalling \eqref{eq_oe7baphee3voV2eet8chiy8w}, the conclusion \eqref{eq_iesh4ejiotahn2Ieb3oo3oog} follows from \eqref{eq_oquovu6iephei0jei7thai4Y}.
\end{proof}

\subsection{Mean integral oscillation estimates}
Integrating the estimate of \cref{lemma_large_scale_1d_estimate}, we will obtain the following estimate on truncated mean oscillation by a truncated fractional Sobolev norm.

\begin{lemma}
\label{lemma_oscillation_estimate}
Let \(m \in \Nset \setminus \set{0}\), let \(s \in \intvo{0}{1}\) and let \(p \in \intvo{1}{\infty}\). 
If \(sp > 1\), then there exists a constant \(C \in \intvo{0}{\infty}\) such that 
if the set \(\Omega\subset\Rset^m\) is bounded and convex, 
if \(\pi : \lifting{\manifold{N}} \to \manifold{N}\) is a Riemannian covering map, 
if \(\lifting{u} \in Y (\Omega, \lifting{\manifold{N}})\), 
if \(u \defeq \pi \compose \lifting{u}\), 
if \(\delta \in \intvl{0}{\inj (\manifold{N})}\),
and if \(\mu \in \intvr{0}{\infty}\),
then 
\begin{multline}
\label{eq_ca0nuovae4eevemie5Zae1ne}
 \smashoperator{\iint_{\Omega \times \Omega}}
\brk[\big]{d_{\lifting{\manifold{N}}} (\lifting{u} (y), \lifting{u} (z)) - \delta}_+^{s p}  
 \dif y 
 \dif z
 \\
 \le \frac{\C}{\delta^{(1 - s)p}} \brk[\Bigg]{\frac{\diam (\Omega)^{m + sp}}{ m + sp } \smashoperator{\iint_{\Omega\times \Omega}} \brk[\bigg]{\frac{d_{\manifold{N}} (u (y), u (x))}{\abs{y - x}^{s}} - \mu}_+^p \frac{ \dif y \dif x}{\abs{y - x}^{m}} + \mu^p \diam (\Omega)^{2m + sp}}.
\end{multline}
\end{lemma}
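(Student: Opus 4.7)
The plan is to combine the pointwise one-dimensional estimate of \cref{lemma_large_scale_1d_estimate} with a polar/Fubini decomposition of the integral over $\Omega \times \Omega$.

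\emph{Pointwise reduction.}  For almost every $(y, z) \in \Omega \times \Omega$, convexity of $\Omega$ gives $\intvc{y}{z} \subseteq \Omega$, and the hypothesis $\lifting{u} \in Y(\Omega, \lifting{\manifold{N}})$ ensures that $\lifting{u}\restr{\intvc{y}{z}}$ coincides almost everywhere with a continuous $1$D lifting of $u\restr{\intvc{y}{z}}$. Applying \cref{lemma_large_scale_1d_estimate} on this segment and writing $H(w, v) \defeq \brk[\big]{d_{\manifold{N}}(u(w), u(v))/\abs{w-v}^{s} - \mu}_+$ yields the pointwise bound
\[
\brk[\big]{d_{\lifting{\manifold{N}}}(\lifting{u}(y), \lifting{u}(z)) - \delta}_+^{sp} \le \frac{C}{\delta^{(1-s)p}} \brk[\bigg]{\abs{z-y}^{sp-1} \iint_{\intvc{y}{z}^2} \frac{H(w, v)^p}{\abs{w - v}} \dif w \dif v + \mu^p \abs{z-y}^{sp}}.
\]
Integrated over $(y, z) \in \Omega \times \Omega$, the $\mu^{p}\abs{z - y}^{sp}$ term is trivially bounded by $C \mu^p \diam(\Omega)^{2m + sp}$, producing the second summand in the claim.

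\emph{Main term via polar coordinates.} For the main term $A$, I would introduce polar coordinates around $y$: set $z = y + \rho \omega$ with $\rho \in \intvl{0}{R(y, \omega)}$ and $\omega \in \Sset^{m-1}$, where $R(y, \omega) \le \diam(\Omega)$ is the exit parameter; the segment is then arc-length parametrized by $w = y + \tau \omega$, $v = y + \tau' \omega$ with $\tau, \tau' \in \intvc{0}{\rho}$ and $\abs{w - v} = \abs{\tau - \tau'}$. Swapping the $\rho$-integration with $(\tau, \tau')$, the remaining $\rho$-factor $\int_{\max(\tau, \tau')}^{R(y, \omega)} \rho^{m + sp - 2} \dif \rho$ is bounded by $\diam(\Omega)^{m + sp - 1}/(m + sp - 1)$ (the assumption $sp > 1$ is used precisely here). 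Next, for fixed $\omega$, I would decompose $y = \xi + s\omega$ with $\xi \in \omega^\perp$ and $s \in J_\xi \defeq \Omega \cap (\xi + \Rset \omega)$, and substitute $\sigma \defeq s + \tau$, $\sigma' \defeq s + \tau'$; the integrand becomes independent of $s$, so integrating $s$ over $(s_\xi^-, \min(\sigma, \sigma'))$ contributes an additional factor $\le \diam(\Omega)$. Finally, the Fubini-for-lines identity underlying \cref{lemma_Fubini_X}---equivalently, polar coordinates applied in reverse to the pair $(w, v)$---gives
\[
\int_{\Sset^{m-1}} \int_{\omega^\perp} \iint_{J_\xi^2} \frac{H(\xi + \sigma\omega, \xi + \sigma'\omega)^p}{\abs{\sigma - \sigma'}} \dif \sigma \dif \sigma' \dif \xi \dif \omega = 2 \iint_{\Omega \times \Omega} \frac{H(w, v)^p}{\abs{w - v}^m} \dif w \dif v.
\]
Collecting the prefactors $\diam(\Omega)^{m + sp - 1} \cdot \diam(\Omega) = \diam(\Omega)^{m + sp}$ and absorbing all numerical constants into $C$ yields the announced estimate.

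\emph{Main obstacle.} The substantive difficulty is that the inner integral $\iint_{\intvc{y}{z}^2}$ lives on the $2$-dimensional segment $\intvc{y}{z}^2 \subseteq \Rset^{2m}$, a measure-zero slice of $\Omega \times \Omega$; a direct change of variables $(y, z) \mapsto (w, v)$ is therefore not viable, because for each $(w, v)$ the pairs $(y, z)$ whose segment contains both force collinearity of $y, z, w, v$, and a naive Fubini produces the divergent weight $\iint_{\intvc{0}{1}^{2}} \abs{t - r}^{-1 - m - sp} \dif t \dif r$. The polar decomposition sidesteps this by first fixing the direction $\omega$, which turns the segment into a genuine $1$D object on the line $y + \Rset \omega$, and then reversing polar coordinates on $(w, v)$ to restore the target weight $\abs{w - v}^{-m}$. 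A secondary bookkeeping point is that the two applications of $\diam(\Omega)$ (from the $\rho$-integral and from the $s$-integral) combine with $\diam(\Omega)^{-1}$-type factors absent elsewhere, giving exactly $\diam(\Omega)^{m + sp}$ as required.
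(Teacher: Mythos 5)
Your proof is correct and reaches the right conclusion, but it takes a genuinely different route from the paper for the main term. The first step — integrating the pointwise estimate from \cref{lemma_large_scale_1d_estimate} over $\Omega \times \Omega$ and bounding the $\mu^p\abs{z-y}^{sp}$ contribution trivially by $C\mu^p\diam(\Omega)^{2m+sp}$ — is identical to the paper's. For the main term, the paper invokes a compact integration lemma (\cref{lemma_integration_2}): it converts $\iint_{\intvc{x}{y}^2}$ to a $(t,r)$-integral over $\intvc{0}{1}^2$, performs the linear change of variables $(w, v) = ((1-t)x + ty, (1-r)x + ry)$ with Jacobian $\abs{t-r}^{m}$, and then bounds the remaining $(t,r)$-integral over the admissibility set $\Sigma_{v,w}$ using the constraint $\abs{t-r} \ge \abs{w-v}/\diam(\Omega)$. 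You instead run a rotation-method argument: polar coordinates $z = y + \rho\omega$, Fubini on $\rho$ against $(\tau,\tau')$, slicing $y = \xi + s\omega$ along lines, Fubini on $s$ against $(\sigma,\sigma')$, and finally undoing the polar decomposition via the Fubini-for-lines identity (your factor-of-$2$ constant and the Jacobian bookkeeping all check out, including the requirement that $J_\xi$ be an interval of length $\le \diam(\Omega)$, which holds by convexity). Both computations ultimately exploit the boundedness of $\Omega$ in the same place — the upper cutoff $R(y,\omega) \le \diam(\Omega)$ in your $\rho$-integral plays exactly the role of the constraint $\abs{t-r} \ge \abs{w-v}/\diam(\Omega)$ in the paper's $\Sigma_{v,w}$ — and they yield the same power $\diam(\Omega)^{m+sp}$ up to a benign constant ($\frac{2}{m+sp-1}$ for you versus $\frac{2}{m+sp}$ for the paper, both absorbed in $C$). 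The paper's route is more economical and generalizes to any $\gamma > -m$; yours is more geometrically transparent in isolating the radial and angular pieces.

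Two small points of bookkeeping. First, you assert that $sp > 1$ is "used precisely" to bound $\int_{\max(\tau,\tau')}^{R(y,\omega)} \rho^{m+sp-2}\,\dif\rho$, but that integral only needs $m + sp > 1$, which holds for any $s \in \intvo{0}{1}$ and $p \ge 1$ since $m \ge 1$; the real place $sp > 1$ enters is \cref{lemma_large_scale_1d_estimate}, which you are invoking as a black box. Second, in the "main obstacle" paragraph, the claim that "a direct change of variables $(y,z) \mapsto (w,v)$ is therefore not viable" overstates the difficulty: the paper performs precisely that change of variables (for fixed $(t,r)$), and the divergence you worry about is avoided not by your polar trick but by restricting the $(t,r)$-integration to $\Sigma_{v,w}$. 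Your approach is a valid alternative, not the only workaround.
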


\Cref{lemma_oscillation_estimate} will be deduced from \cref{lemma_oscillation_estimate} and the next integral estimate.

\begin{lemma}
\label{lemma_integration_2}
Let \(m \in \Nset \setminus \set{0}\).
If the set \(\Omega \subseteq \Rset^m\) is open and convex, 
if the function \(F : \Omega \times \Omega \to \intvr{0}{\infty} \) is measurable,
and if \(\gamma > - m\), then 
\begin{equation}
\label{eq_Taxeiyoo8ziephah7bienee3}
 \smashoperator{\iint_{\Omega\times \Omega}}\brk[\Bigg]{\;
 \smashoperator[r]{\iint_{\intvc{x}{y} \times \intvc{x}{y}}} F (w, v)  \dif w \dif v} 
 \frac{\dif y \dif x}{\abs{y - x}^{1-\gamma}}
 \le 
  \frac{2 \diam (\Omega)^{m + \gamma}}{ m + \gamma } \smashoperator{\iint_{\Omega\times \Omega}}
  \frac{F(x, y)}{\abs{y - x}^{m - 1}} \dif y \dif x.
\end{equation}
\end{lemma}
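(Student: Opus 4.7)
The plan is to parametrize the segment $\intvc{x}{y}$ by affine coordinates, apply Fubini to move the parameter integration outside, and then, for each fixed pair of parameters, exchange the roles of $(x, y)$ and $(w, v)$ via a linear change of variables. The boundedness and convexity of $\Omega$ will cut down the resulting parameter domain so that the integral remains convergent, and the announced constant will emerge from an elementary one-dimensional bound.

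First I would write $w = (1 - s) x + s y$ and $v = (1 - t) x + t y$ with $s, t \in \intvc{0}{1}$. Since the one-dimensional Hausdorff measure on the segment $\intvc{x}{y}$ satisfies $\dif w = \abs{y - x} \dif s$, this recasts the left-hand side of \eqref{eq_Taxeiyoo8ziephah7bienee3} as
\begin{equation*}
\smashoperator{\iint_{\intvc{0}{1} \times \intvc{0}{1}}} \iint_{\Omega \times \Omega} F\brk[\big]{(1 - s) x + s y, (1 - t) x + t y} \abs{y - x}^{1 + \gamma} \dif x \dif y \dif s \dif t.
\end{equation*}

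Next, for fixed $s \ne t$, the map $(x, y) \mapsto (w, v)$ is linear, and a short block-determinant computation shows that its Jacobian determinant has absolute value $\abs{s - t}^m$ with inverse relation $y - x = (w - v)/(s - t)$. By convexity the image of $\Omega \times \Omega$ lies inside $\Omega \times \Omega$, while the requirement $\abs{y - x} \le \diam (\Omega)$ translates into the constraint $\abs{s - t} \ge \abs{w - v}/\diam (\Omega)$. Since $F \ge 0$, the change of variables followed by Fubini produces the upper bound
\begin{equation*}
\iint_{\Omega \times \Omega} F(w, v) \abs{w - v}^{1 + \gamma} \smashoperator{\iint_{\substack{(s, t) \in \intvc{0}{1}^2 \\ \abs{s - t} \ge \abs{w - v}/\diam (\Omega)}}} \frac{\dif s \dif t}{\abs{s - t}^{m + 1 + \gamma}} \dif w \dif v.
\end{equation*}
The inner integral is controlled by the elementary one-variable estimate $2 \delta^{-(m + \gamma)}/(m + \gamma)$ with $\delta \defeq \abs{w - v}/\diam (\Omega)$, valid precisely because $\gamma > -m$; it follows by symmetrizing in $(s, t)$ and substituting $u = s - t \in \intvc{\delta}{1}$. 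Substituting this bound, the factor $\abs{w - v}^{m + \gamma}$ cancels and the integrand reduces to $F(w, v)/\abs{w - v}^{m - 1}$ times the prefactor $2 \diam (\Omega)^{m + \gamma}/(m + \gamma)$, which is exactly \eqref{eq_Taxeiyoo8ziephah7bienee3}.

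The principal subtlety is the constraint $\abs{s - t} \ge \abs{w - v}/\diam (\Omega)$ coming from the image of the change of variables: without it the $(s, t)$ integral would diverge, since $m + 1 + \gamma > 1$ under the standing assumption $\gamma > -m$. Hence both convexity and boundedness of $\Omega$ are essential inputs, whereas the rest of the argument is a clean bookkeeping of powers and Jacobians.
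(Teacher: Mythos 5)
Your argument is correct and follows essentially the same route as the paper: parametrize the segment integrals, change variables from $(x,y)$ to $(w,v)$ for fixed parameters (Jacobian $\abs{s-t}^m$), observe that convexity and boundedness of $\Omega$ force $\abs{s-t} \ge \abs{w-v}/\diam(\Omega)$, and integrate the one-variable bound. The only difference is purely cosmetic (you symmetrize and substitute $u = s-t$, whereas the paper compares directly to the unrestricted 1D integral $\int_{\abs{u}\ge\delta} \abs{u}^{-(m+\gamma+1)}\dif u$), so there is nothing to flag.
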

\begin{proof}
We have by definition of integral on a segment
\begin{multline}
\label{eq_shouP1leethous4Rour7aiph}
\iint\limits_{\Omega\times \Omega}\brk[\Bigg]{\;
 \smashoperator[r]{\iint_{\intvc{x}{y}\times \intvc{x}{y}}} F (w, v)  \dif w \dif v} \abs{y - x}^{\gamma - 1} \dif y \dif x\\
 = \smashoperator[r]{\iint_{\Omega\times \Omega}}\;
 \smashoperator[r]{\iint_{\intvc{0}{1}\times \intvc{0}{1}}} F ((1 -t) x + t y, (1-r) x + r y) \abs{y - x}^{\gamma + 1} \dif t \dif r \dif y \dif x.
\end{multline}
By the change of variable \(v = (1-r) x + r y\), \(w = (1 - t) x + t y\) in the right-hand side of \eqref{eq_shouP1leethous4Rour7aiph}, we obtain, since \(\abs{v - w} = \abs{t - r} \abs{y - x}\)
\begin{equation}
\label{eq_Aethajahfo4wal4iTh6naiN7} 
\iint\limits_{\Omega\times \Omega}\brk[\Bigg]{\;
 \smashoperator[r]{\iint_{\intvc{x}{y}\times \intvc{x}{y}}} F (w, v)  \dif w \dif v} \frac{\dif y \dif x}{\abs{y - x}^{1-\gamma}}
 = \smashoperator[l]{\iint\limits_{\Omega\times \Omega}}
 \iint\limits_{\Sigma_{v, w}} \frac{F (w, v) \abs{w - v}^{\gamma + 1}}{\abs{t - r}^{ m + \gamma + 1}} \dif t \dif r \dif w \dif v,
\end{equation}
where we have defined for every \(v, w \in \Omega\) the set
\[
 \Sigma_{v, w} \defeq \set[\bigg]{(t, r) \in \intvc{0}{1} \times \intvc{0}{1} \st
 \frac{r v - tw}{r - t} \in \Omega \text{ and }
 \frac{(1 - r)v - (1 - t)w}{t - r } \in \Omega }.
\]
Since 
\[
 \Sigma_{v, w} \subseteq \set[\bigg]{(t, r ) \in \intvc{0}{1} \times \intvc{0}{1}\st
 \abs{t - r} \ge \frac{\abs{w - v}}{\diam \Omega}},
\]
we have 
\begin{equation}
\label{eq_EiraisieVeibeemia8tah9mi}
 \iint\limits_{\Sigma_{v, w}} \frac{1}{\abs{t - r}^{ m + \gamma + 1}} \dif t \dif r 
 \le \smashoperator[r]{\int_{\abs{s} \ge  \frac{\abs{w - v}}{\diam \Omega}}} \frac{\dif s}{\abs{s}^{ m + \gamma + 1}}
 = \frac{2 \diam (\Omega)^{m + \gamma}}{(m + \gamma)\abs{w - v}^{m + \gamma }}.
\end{equation}
and we deduce from \eqref{eq_Aethajahfo4wal4iTh6naiN7} and \eqref{eq_EiraisieVeibeemia8tah9mi} that \eqref{eq_Taxeiyoo8ziephah7bienee3} holds.
\end{proof}

We proceed now to the proof of \cref{lemma_oscillation_estimate}.

\begin{proof}%
[Proof of \cref{lemma_oscillation_estimate}]%
\resetconstant
We have by \cref{lemma_large_scale_1d_estimate} since \(sp > 1\),
\begin{multline}
\label{eq_poosaeng4dequiaGom4aichi}
 \smashoperator[r]{\iint_{\Omega \times \Omega}} 
 \brk[\big]{
 d_{\lifting{\manifold{N}}} (\lifting{u} (y), \lifting{u} (x)) - \delta}_+^{sp}  
 \dif y 
 \dif x
 \\
 \le 
 \frac{\C}{\delta^{(1 -s)p}}
 \biggl( 
    \smashoperator[r]{\iint_{\Omega \times \Omega }}\;
      \smashoperator[r]{\iint_{\intvc{x}{y} \times \intvc{x}{y}}} \brk[\bigg]{\frac{d_{\manifold{N}} (u (w), u (v))}{\abs{w - v}^{s}} - \mu}_+^p 
      \frac{\dif w \dif v}{\abs{w - v}} \abs{y - x}^{s p - 1}  
    \dif y 
    \dif x
    \\
    + 
    \mu^p 
    \smashoperator{\iint_{\Omega \times \Omega}}  
    \abs{y - x}^{sp} 
    \dif y 
    \dif x 
 \biggr).
\end{multline} 
For the first term in the right-hand side of \eqref{eq_poosaeng4dequiaGom4aichi},
we proceed by \cref{lemma_integration_2} to infer from \eqref{eq_poosaeng4dequiaGom4aichi}, since \(sp > - m\) that 
\begin{equation}
\label{eq_wahqu6ahrai5eizieSheyeeR}
\begin{split}
 \smashoperator[r]{\iint_{\Omega \times \Omega }}\;
 \smashoperator[r]{\iint_{\intvc{x}{y}\times \intvc{x}{y}}}& \brk[\bigg]{\frac{d_{\manifold{N}} (u (w), u (v))}{\abs{w - v}^{s}} - \mu}_+^p \frac{\dif w \dif v}{\abs{w - v}} \abs{y - x}^{s p - 1}  \dif y \dif x\\
 &\le \frac{2 \diam (\Omega)^{m + sp}}{ m + sp } \smashoperator{\iint_{\Omega\times \Omega}} \brk[\bigg]{\frac{d_{\manifold{N}} (u (y), u (x))}{\abs{y - x}^{s}} - \mu}_+^p \frac{ \dif y \dif x}{\abs{y - x}^{m}},
\end{split}
\end{equation}
whereas for the second term in the right-hand side of \eqref{eq_poosaeng4dequiaGom4aichi} we have 
\begin{equation}
\label{eq_iyui7Zaesae3Ooh9iheerae3}
 \smashoperator{\iint_{\Omega \times \Omega}} \abs{y - x}^{sp} \dif y \dif x
 \le \C \diam (\Omega)^{2m + sp}.
\end{equation}
The estimate \eqref{eq_ca0nuovae4eevemie5Zae1ne} then follows from the inequalities \eqref{eq_poosaeng4dequiaGom4aichi}, \eqref{eq_wahqu6ahrai5eizieSheyeeR} and \eqref{eq_iyui7Zaesae3Ooh9iheerae3}.
\end{proof}

\subsection{Integral truncated mean oscillation estimate}
We now obtain an interpolation estimate similar to \cref{proposition_large_oscillation_estimate} on an integral of truncated mean oscillations.

\begin{proposition}
\label{lemma_single_scale_large_oscillation_estimate}
Let \(\manifold{M}\) be a compact Riemannian manifold, 
let \(s, s_* \in \intvo{0}{1}\), 
let \(p, p_* \in \intvr{1}{\infty}\), 
and let \(m \defeq \dim \manifold{M}\).
If \(s p > 1\), 
then 
there exists a constant \(C \in \intvo{0}{\infty}\) such that if \(\pi : \lifting{\manifold{N}} \to \manifold{N}\) is a Riemannian covering, 
if \(\lifting{u} \in Y(\manifold{M}, \lifting{\manifold{N}})\), 
if \(u \defeq \pi \compose \lifting{u}\), 
if \(\delta \le \inj (\manifold{N})\) 
and if 
\begin{equation}
\label{eq_roo9Eep4EeXookohxahk6ieW}
 \frac{1 - s_* }{m}
 = \frac{1}{sp} - \frac{1}{p_*},
\end{equation}
then 
\begin{multline}
  \int_{\Omega} 
  \int_0^{\diam \Omega}
    \brk[\bigg]{
  \frac
    {1}
    {r^{2m + s_*}}
    \smashoperator{
      \iint_{(\Omega \cap B_{r} (x))^2}
    }
    \brk[\big]{
      d_{\lifting{\manifold{N}}} (\lifting{u} (y), \lifting{u} (z))-\delta
      }_+
    \dif y
    \dif z
    }^{p_*}
    \frac
      {\dif r}
      {r} 
    \dif x
    \\[-1em]
 \le 
 C
 \brk[\bigg]{
  \frac{1}{\delta^{(1 - s)p}}
    \smashoperator{
      \iint_{\Omega \times \Omega}
      } 
      \frac
        {d_{\manifold{N}} (u (y), u (z))^{p}}
        {\abs{y - z}^{m + s p}}  
      \dif y
      \dif z
    }^\frac{p_*}{sp}.
\end{multline}
\end{proposition}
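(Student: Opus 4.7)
The plan is to apply the scale-local oscillation estimate of \cref{lemma_oscillation_estimate} at each scale \(r\) and each center \(x\), then integrate using a Marcinkiewicz-type weak-type argument on the level sets of the mean-oscillation functional
\begin{equation*}
 \Phi(x, r) \defeq \frac{1}{r^{2m + s_*}} \smashoperator{\iint_{(\Omega \cap B_r(x))^2}} (d_{\lifting{\manifold{N}}}(\lifting{u}(y), \lifting{u}(z)) - \delta)_+ \dif y \dif z.
\end{equation*}

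For each \((x, r) \in \Omega \times \intvo{0}{\diam \Omega}\) and each free parameter \(\mu \ge 0\), applying \cref{lemma_oscillation_estimate} to the convex set \(\Omega \cap B_r(x)\) and then using Jensen's inequality (valid since \(sp \ge 1\)) to pass from the \(L^{sp}\) bound on \((d_{\lifting{\manifold{N}}}(\lifting{u}(y), \lifting{u}(z)) - \delta)_+\) to the \(L^1\) bound appearing in \(\Phi\), combined with the Sobolev scaling \((1 - s_*)/m = 1/sp - 1/p_*\), yields the pointwise estimate
\begin{equation*}
 \Phi(x, r) \le \frac{C}{\delta^{(1-s)/s}} \left[ \frac{J_r(x, \mu)^{1/sp}}{r^{m/p_*}} + \mu r^{1 - s_*} \right],
\end{equation*}
where
\begin{equation*}
  J_r(x, \mu) \defeq \smashoperator{\iint_{(\Omega \cap B_r(x))^2}} \left( \frac{d_{\manifold{N}}(u(y), u(w))}{\abs{y-w}^s} - \mu \right)_+^p \frac{\dif y \dif w}{\abs{y-w}^m}
\end{equation*}
is the truncated Gagliardo integral appearing on the right-hand side of \cref{lemma_oscillation_estimate}.

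For the integration step, I proceed by a level-set decomposition. For each threshold \(\lambda > 0\), choose \(\mu = \mu_\lambda(r) \defeq c \lambda \delta^{(1-s)/s} r^{s_* - 1}\) so that the second term in the brackets equals exactly \(\lambda/2\). The event \(\{\Phi(x,r) > \lambda\}\) then forces \(J_r(x, \mu_\lambda(r)) \ge c' \lambda^{sp} \delta^{(1-s)p} r^{msp/p_*}\). Applying Chebyshev's inequality at the sharp power \(\alpha = p_*/sp > 1\), together with Fubini---the \(x\)-integration of \(\mathbf{1}_{\{y, w \in B_r(x)\}}\) produces a factor \(\lesssim r^m \mathbf{1}_{\abs{y-w} < 2r}\), and the \(r\)-integration is effectively restricted to \(r \ge r_0(y, w, \lambda)\) by the condition \(\mu_\lambda(r) < d_{\manifold{N}}(u(y), u(w))/\abs{y-w}^s\)---leads to the weak-type bound
\begin{equation*}
 \nu(\{\Phi > \lambda\}) \le \frac{C}{\lambda^{p_*}} \left( \frac{1}{\delta^{(1-s)p}} \smashoperator{\iint_{\Omega \times \Omega}} \frac{d_{\manifold{N}}(u(y), u(z))^p}{\abs{y-z}^{m+sp}} \dif y \dif z \right)^{p_*/sp}
\end{equation*}
with respect to the product measure \(\nu = \frac{\dif r}{r} \otimes \dif x\). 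The strong \(L^{p_*}\) estimate then follows by the layer-cake identity, with the small-\(\lambda\) contribution handled by the decay \(\Phi(x, r) \to 0\) as \(r \to 0\).

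The main obstacle lies in the interleaved Chebyshev and Fubini computation: one must leverage the truncation at \(\mu_\lambda(r)\) by observing that \((d_{\manifold{N}}/\abs{y-w}^s - \mu_\lambda(r))_+^p\) vanishes unless \(r \ge r_0(y, w, \lambda)\), and the induced lower cutoff \(r_0 \sim \lambda^{1/(1-s_*)}\) must produce the extra factor \(\lambda^{-(p_* - sp)}\) needed to upgrade the naive \(\lambda^{-sp}\) decay (obtained from the crude bound \((\cdot - \mu_\lambda)_+ \le (\cdot)\)) to the required \(\lambda^{-p_*}\) decay. The Sobolev scaling \((1 - s_*)/m = 1/sp - 1/p_*\) is precisely what makes this exponent balancing work, reproducing the full Gagliardo energy on the right-hand side without spurious logarithmic factors or powers of \(\diam \Omega\).
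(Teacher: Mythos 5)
Your high-level strategy — apply \cref{lemma_oscillation_estimate} at each scale $r$ and center $x$ with a free truncation parameter $\mu$, choose $\mu$ to absorb half the bound, then Fubini and layer-cake — is indeed the paper's strategy, and your identification of the Sobolev scaling's role is correct. However, the step you call ``Chebyshev at the sharp power $\alpha = p_*/sp > 1$ together with Fubini'' has a genuine gap, and the paper's proof handles this with machinery you have omitted.

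The Fubini step $\int_\Omega J_r(x,\mu)\,\dif x \lesssim r^m \iint_{\Omega^2}(\cdots-\mu)_+^p \mathbf{1}_{|y-w|<2r}|y-w|^{-m}\,\dif y\,\dif w$ is valid only at power $1$; you cannot integrate $J_r(x,\mu)^{p_*/sp}$ in $x$ by ``interchanging'' since $J_r$ raised to a power $>1$ is not itself an integral over $(y,w)$. But if you instead apply Markov's inequality at power $1$ and then Fubini honestly, the resulting change of variables $\mu = \mu_\lambda(r)$ in the $r$-integral produces a Beta-type integral $\int_0^a (a-\mu)^p\mu^{-p-1}\,\dif\mu$, which \emph{diverges} at $\mu=0$ (equivalently at $r\to\infty$, within the allowed range). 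This is precisely why the paper introduces an intermediate exponent $q$ with $1/s < q < p$ and applies \cref{lemma_oscillation_estimate} at exponent $q$ rather than $p$: the $\mu$-integral then has weight $\mu^{p-q-1}$ with $p-q-1>-1$, so it converges. Moreover, the paper obtains a \emph{second}, independent pointwise bound with $\mu=0$, which yields the global constraint set $H = \{(r,\lambda) : \lambda^{sp}r^{m-sp} \lesssim \delta^{-(1-s)p}\norm{u}^p_{\homog{W}^{s,p}}\}$ outside of which $E^r_\lambda=\emptyset$. This cutoff is what makes the $t$-integral $\int_0^{\bar t}t^{p_*/sp-2}\,\dif t$ converge. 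Your remark about the ``small-$\lambda$ contribution handled by $\Phi\to 0$'' does not supply this constraint, and the local restriction $r>r_0(y,w,\lambda)$ (coming from the truncation $(\cdot-\mu_\lambda(r))_+$) is not a substitute for $H$.

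Finally, two more issues. Even granting a weak-type bound $\nu(\{\Phi>\lambda\})\lesssim C\lambda^{-p_*}$, the layer-cake identity gives $\int_0^\infty \nu(\{\Phi>\lambda\})\lambda^{p_*-1}\,\dif\lambda \gtrsim \int C\lambda^{-1}\,\dif\lambda$, which diverges: a weak $L^{p_*}$ bound at the critical exponent does not imply strong $L^{p_*}$. The paper gets around this precisely by combining the $\lambda^{-sq}$-weak bound with the constraint $H$ in a genuine Marcinkiewicz-type interpolation, not by a single-exponent Chebyshev estimate. And as a minor point, the pointwise estimate after Jensen should read $\Phi(x,r) \le C\delta^{-(1-s)/s}\brk{J_r^{1/sp}r^{-m/p_*} + \mu^{1/s}r^{1-s_*}}$ with $\mu^{1/s}$ (not $\mu$) in the second term, which changes $\mu_\lambda(r)$ to $\sim \lambda^s\delta^{1-s}r^{s(s_*-1)}$; this is consistent with the paper's choice $\mu_\lambda^r = c\lambda^s\delta^{1-s}/r^s$ once one accounts for the normalization by $r^{s_*}$.
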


The proof of \cref{lemma_single_scale_large_oscillation_estimate} is reminiscent of the proof of the  Marcinkiewicz real interpolation theorem, although the framework here is much more nonlinear.

\begin{proof}[Proof of \cref{lemma_single_scale_large_oscillation_estimate}]
\resetconstant
We have, by the layer cake representation of integrals (Cavalieri's principle),
\begin{multline}
\label{eq_baeWu5quoh6ziexaiph0ieb3}
  \int_{\Omega} 
    \brk[\bigg]{\frac{1}{r^{2m}}
      \iint_{(\Omega \cap B_{r} (x))^2}
        \brk[\big]{
          d_{\lifting{\manifold{N}}} (\lifting{u} (y), \lifting{u} (z))- \delta
          }_+  
      \dif y 
      \dif z
    }^{p_*} 
    \dif x
    \\[-1em]
  = 
    (p_* - 1)
    \int_{0}^\infty 
      \mathcal{L}^m (E^r_\lambda) 
      \lambda^{p^* - 1} 
    \dif \lambda,
\end{multline}
where for each \(\lambda \in \intvo{0}{\infty}\) and \(r \in \intvo{0}{\infty}\) we have defined the set 
\[
 E^r_\lambda
 \defeq 
 \set[\Bigg]
    {x \in \Omega 
    \st  
      \smashoperator[r]{
        \iint_{(\Omega \cap B_{r} (x))^2}
      }
    \brk[\big]
      {
        d_{\lifting{\manifold{N}}} (\lifting{u} (y), \lifting{u} (z))-\delta
      }_+ 
      \dif y 
      \dif z
  \ge 
    \lambda r^{2m} 
    },
\]

On the one hand, fixing \(q \in \intvo{\frac{1}{s}}{p}\) --- which is possible since \(s p > 1\) ---
for each \(x \in E^r_\lambda\) and \(\mu \in \intvr{0}{\infty}\), 
we have by Jensen's inequality and by \cref{lemma_oscillation_estimate}, since \(s q > 1\),
\begin{equation}
\label{eq_oosahjoe6oph4ziu8yaiS2mo}
\begin{split}
  \lambda^{sq}
  &
  \le 
    \brk[\bigg]{
      \frac{1}{r^{2m}} 
      \smashoperator{
        \iint_{(\Omega \cap B_{r} (x))^2}
      }
      \brk[\big]{
        d_{\lifting{\manifold{N}}} (\lifting{u} (y), \lifting{u} (z))- \delta}_+ 
      \dif y 
      \dif w
    }^{s q} 
    \\
  & 
    \le 
    \C 
    \frac{1}{r^{2m}} 
    \smashoperator{
        \iint_{(\Omega \cap B_{r} (x))^2}
      }
      \brk[\big]{
        d_{\lifting{\manifold{N}}} (\lifting{u} (y), \lifting{u} (z))- \delta
        }_+^{s q} 
      \dif y 
      \dif z\\
 &
 \le  
 \C 
 \frac
  {r^{sq  - m}}
  {\delta^{(1 - s)q}} 
  \smashoperator{
    \iint_{(\Omega \cap B_{r} (x))^2}}
    \brk[\bigg]{
      \frac
        {d_{\manifold{N}} (u (y), u (z))}
        {\abs{y - z}^{s}} - \mu }_+^{q} 
      \frac
        {\dif y \dif z}
        {\abs{y - z}^m} 
  + 
    \Cl{cst_Ueghaik5eduzoi9yee4pi3hu} 
      \frac
        {\mu^q r^{sq}}
        {\delta^{(1-s)q}}
 ;
 \end{split}
\end{equation}
If we take now \(\mu\) to be given by
\[
 \mu^r_\lambda \defeq 
 \Cl{cst_booyakie9jae4eehahLeinoh}
 \frac{\lambda^{s} \delta^{1 - s}}{r^{s}},
\]
with \(\Cr{cst_booyakie9jae4eehahLeinoh}^q \Cr{cst_Ueghaik5eduzoi9yee4pi3hu}=  \frac{1}{2}\), for each \(x \in E^r_\lambda\), we have by \eqref{eq_oosahjoe6oph4ziu8yaiS2mo} 
\begin{equation}
\label{eq_eefoe3onei8mohf4eeKuZa2c}
  \lambda^{s q}
  \le \Cl{cst_wei7vax9eih0xet7Tingai8i} \frac{r^{s q - m}}{\delta^{(1 - s)q}} \smashoperator{\iint_{(\Omega \cap B_{r} (x))^2}}
 \brk[\bigg]{\frac{d_{\manifold{N}} (u (y), u (z))}{\abs{y - z}^{s}} - \mu_\lambda^r}_+^q \frac{\dif y \dif z}{\abs{y - z}^m}.
\end{equation}
Hence, we have by \eqref{eq_eefoe3onei8mohf4eeKuZa2c}
\begin{equation}
\label{eq_Co6jooko5uu2Aey6ail6chee}
\begin{split}
  \mathcal{L}^m (E^r_\lambda)
  &\le \frac{\Cr{cst_wei7vax9eih0xet7Tingai8i} r^{s q  - m}}{\lambda^{s q} \delta^{(1 - s)q}}
  \int_{\Omega} 
  \smashoperator[r]{\iint_{(\Omega \cap B_{r} (x))^2}}
 \brk[\bigg]{\frac{d_{\manifold{N}} (u (y), u (z))}{\abs{y - z}^{s}} - \mu_\lambda^r}_+^q \frac{\dif y \dif z}{\abs{y - z}^m}
   \dif x \\
  &= \frac{\Cr{cst_wei7vax9eih0xet7Tingai8i} r^{s q  - m}}{\lambda^{s q} \delta^{(1 - s)q}}
  \smashoperator{\iint_{\Omega \times \Omega}} 
   \brk[\bigg]{\frac{d_{\manifold{N}} (u (y), u (z))}{\abs{y - z}^{s}} - \mu_\lambda^r}_+^q 
   \mathcal{L}^m (\Omega \cap B_r (y) \cap B_r (z))
   \frac{\dif y \dif z}{\abs{y - z}^m}\\
  &\le \frac{\C r^{s q}}{\lambda^{s q} \delta^{(1 - s)q}}
    \smashoperator{\iint_{\Omega \times \Omega}}
 \brk[\bigg]{\frac{d_{\manifold{N}} (u (y), u (z))}{\abs{y - z}^{s}} - \mu_\lambda^r}_+^q \frac{\dif y \dif z}{\abs{y - z}^m}.
 \end{split}
\end{equation}

On the other hand, since \(s p > 1\), if \(x \in E^r_\lambda\), we have by Jensen's inequality and by \cref{lemma_oscillation_estimate} with \(\mu = 0\),  
\begin{equation}
\label{eq_Di2thaevaibosofiediezeiH}
\begin{split}
 \lambda^{s p}
 &\le \brk[\bigg]{\frac{1}{r^{2m }} \smashoperator{\iint_{(\Omega \cap B_{r} (x))^2}}
 \brk[\big]{d_{\lifting{\manifold{N}}} (\lifting{u} (y), \lifting{u} (z))- \delta}_+ \dif y \dif z}^{s p} \\
 & \le 
 \C 
 \frac{1}{r^{2m }} 
 \smashoperator{\iint_{(\Omega \cap B_{r} (x))^2}}
 \brk[\big]{d_{\lifting{\manifold{N}}} (\lifting{u} (y), \lifting{u} (z)) - \delta }_+^{s p} 
 \dif y \dif z\\
 &\le  
 \Cl{cst_ZiaLa3choohokiungaePaoCo} 
 \frac{r^{s p - m}}{\delta^{(1 - s) p}} 
 \smashoperator{\iint_{(\Omega \cap B_{r} (x))^2}}
 \frac{d_{\manifold{N}} (u (y), u (z))^p}{\abs{y - z}^{m + s p}} 
 \dif y 
 \dif z
 ;
 \end{split}
\end{equation}
it follows then from \eqref{eq_Di2thaevaibosofiediezeiH} that 
\begin{multline}
\label{eq_oc1ooFei2quahsahngureach}
\set{(r, \lambda)\in \intvo{0}{\infty}^2 \st E_r^\lambda \ne \emptyset}\\
\subseteq
 H \defeq \set[\bigg]{ (r, \lambda) \in \intvo{0}{\infty}^2\st 
 \lambda^{s p} r^{m - s p}
 \le \frac{\Cr{cst_ZiaLa3choohokiungaePaoCo}^{sp}}{\delta^{(1 - s) p}} 
 \smashoperator{\iint_{\Omega \times \Omega}}
 \frac{d_{\manifold{N}} (u (y), u (z))^p}{\abs{y - z}^{m + s p}} \dif y \dif z}.
\end{multline}

By \eqref{eq_baeWu5quoh6ziexaiph0ieb3}, \eqref{eq_Co6jooko5uu2Aey6ail6chee} and \eqref{eq_oc1ooFei2quahsahngureach}, we have 
\begin{multline}
\label{eq_gahz3shooth5Eshohquuriuc}
 \int_{\Omega} 
 \int_0^{\diam \Omega}
 \brk[\bigg]{\frac{1}{r^{2m}}
 \smashoperator{\iint_{(\Omega \cap B_{r} (x))^2}}
 \brk[\big]{d_{\lifting{\manifold{N}}} (\lifting{u} (y), \lifting{u} (z)) - \delta}_+  \dif y \dif z
 }^{p_*} \frac{\dif r}{r^{1 + s_* p_*}}\dif x\\
 \le \frac{\C}{ \delta^{(1 - s)q}}
 \iint\limits_{H}\smashoperator[r]{\iint_{\Omega \times \Omega}}
 \brk[\bigg]{\frac{d_{\manifold{N}} (u (y), u (z))}{\abs{y - z}^s} - \mu_\lambda^r}_+^q \frac{\dif y \dif z}{\abs{y - z}^m}
\frac{r^{s q}\lambda^{p_*}}{r^{1 + s_*p_*}\lambda^{1+s q}}\dif \lambda \dif r.
\end{multline}
Applying the change of variable 
\begin{align*}
 \mu &=  \Cr{cst_booyakie9jae4eehahLeinoh}
 \frac{\lambda^{s} \delta^{1 - s}}{r^{s}}&
 &\text{ and }&
 t =
 \lambda^{s p}r^{m - s p}
\end{align*}
in \eqref{eq_gahz3shooth5Eshohquuriuc},
we infer from  \eqref{eq_roo9Eep4EeXookohxahk6ieW} that 
\begin{multline}
\label{eq_maiZuch5eiS0wohd1ohlohGe}
\int_{\Omega}
 \int_{0}^{\diam (\Omega)} 
 \brk[\bigg]{\frac{1}{r^{2m}}
 \smashoperator{\iint_{(\Omega \cap B_{r} (x))^2}}
 \brk[\big]{d_{\lifting{\manifold{N}}} (\lifting{u} (y), \lifting{u} (z))- \delta}_+  \dif y \dif z
 }^{p_*} \frac{\dif r}{r^{1 + s_* p_*}}\dif x\\
 \le \frac{\C}{ \delta^{(1 - s)p}}
 \smashoperator{
 \iint_{\Omega \times \Omega}
 }
 \int_0^{\Bar{t}} 
 \int_0^\infty
 \brk[\bigg]{\frac{d_{\manifold{N}} (u (y), u (z))}{\abs{y - z}^s} - \mu}_+^q 
\mu^{p - q - 1} t^{\frac{p^*}{sp} - 2} \dif \mu \dif t\frac{\dif y \dif z}{\abs{y - z}^m},
\end{multline}
with
\[
 \Bar{t} \defeq \frac{\Cr{cst_ZiaLa3choohokiungaePaoCo}^{sp}}{\delta^{(1 - s)p}} \smashoperator{\iint_{\Omega \times \Omega}}
 \frac{d_{\manifold{N}} (u (y), u (z))^{p}}{\abs{y - z}^{m + s p}} \dif y \dif z.
\]
The conclusion follows by the integration in \(\mu\) and \(t\) of the right-hand side of the inequality \eqref{eq_maiZuch5eiS0wohd1ohlohGe}, since \(q < p \) and \( p^* > sp\).
\end{proof}

\subsection{Proof of the large scale estimate}

We now use \cref{lemma_single_scale_large_oscillation_estimate} to prove \cref{proposition_large_oscillation_estimate}.
The main idea consists in applying \cref{lemma_single_scale_large_oscillation_estimate} with the triangle inequality;  because of the truncation in the left-hand side we need to rely on \cref{lemma_single_scale_large_oscillation_estimate} with values of \(\delta\) arbitrarily close to \(0\).

\begin{proof}[Proof of \cref{proposition_large_oscillation_estimate}]
\resetconstant
By a comparison argument, we have 
\begin{equation}
\label{eq_Supeingeequaehei2aeQuev4}
\smashoperator[r]{\iint_{\substack{(x, y) \in \Omega \times \Omega\\
 d_{\lifting{\manifold{N}}}(\lifting{u} (y), \lifting{u} (x))\ge \delta}}} \frac{d_{\lifting{\manifold{N}}} (\lifting{u} (y), \lifting{u} (x))^{p_*}}{\abs{y - x}^{m + s_*p_*}} \dif y \dif x 
 \le 
 2^{p_*}
 \smashoperator{\iint_{\Omega \times \Omega}} \frac{\brk[\big]{d_{\lifting{\manifold{N}}} (\lifting{u} (y), \lifting{u} (x)) - \frac{\delta}{2}}_+^{p_*}}{\abs{y - x}^{m + s_*p_*}} \dif y \dif x 
 .
\end{equation}
By the triangle inequality and by symmetry, we have then
\begin{multline}
\label{eq_ieThiiTohw0vaivo7eiSh3tu}
  \smashoperator{
    \iint_{\Omega \times \Omega}
  } 
    \frac
      {\brk[\big]{d_{\lifting{\manifold{N}}} (\lifting{u} (y), \lifting{u} (x)) - \frac{\delta}{2}}_+^{p_*}}
      {\abs{y - x}^{m + s_*p_*}} 
    \dif y 
    \dif x 
\\
\le 2^{p_* - 1}
\smashoperator{
\iint_{\Omega \times \Omega} }
\brk[\bigg]{
\;
\smashoperator[r]{
\fint_{\Omega \cap B_{\abs{y -x}/2}(\frac{x + y}{2})}}
\brk[\big]{d_{\lifting{\manifold{N}}} (\lifting{u} (y), \lifting{u} (z)) - \tfrac{\delta}{4}}_+ 
\dif z}^{p_*} \\
\shoveright{+
\brk[\bigg]{
\;
\smashoperator[r]{\fint_{\Omega \cap B_{\abs{y -x}/2}(\frac{x + y}{2})}}
\brk[\big]{d_{\lifting{\manifold{N}}} (\lifting{u} (z), \lifting{u} (x)) - \tfrac{\delta}{4}}_+ 
\dif z}^{p_*} \frac{\dif y \dif x}{\abs{y - x}^{m + s_*p_*}}}
\\
= 
2^{p_*}
\smashoperator{
\iint_{\Omega \times \Omega} 
}
\brk[\bigg]{\;
\smashoperator[r]{
\fint_{\Omega \cap B_{\abs{y -x}/2}(\frac{x + y}{2})}
}
\brk[\big]{d_{\lifting{\manifold{N}}} (\lifting{u} (z), \lifting{u} (x)) - \tfrac{\delta}{4}}_+ 
\dif z
}^{p_*} 
\frac
  {\dif y \dif x}
  {\abs{y - x}^{m + s_*p_*}}
  .
\end{multline}
By \eqref{eq_Supeingeequaehei2aeQuev4} and \eqref{eq_ieThiiTohw0vaivo7eiSh3tu} we have by integration in spherical coordinates.
\begin{multline}
\label{eq_hooph5pheegee9Zaeyii0uRu}
\smashoperator[r]{\iint_{\substack{(x, y) \in \Omega \times \Omega\\
 d_{\lifting{\manifold{N}}}(\lifting{u} (y), \lifting{u} (x))\ge \delta}}} \frac{d_{\lifting{\manifold{N}}} (\lifting{u} (y), \lifting{u} (x))^{p_*}}{\abs{y - x}^{m + s_*p_*}} \dif y \dif x 
\\
\le \C 
\smashoperator{
\iint_{\Omega \times \Omega} }
\brk[\bigg]{\;
\smashoperator[r]{
\fint_{\Omega \cap B_{\abs{y -x}}(x)}
}
\brk[\big]{d_{\lifting{\manifold{N}}} (\lifting{u} (z), \lifting{u} (x)) - \tfrac{\delta}{4}}_+ \dif z}^{p_*}  \frac{\dif y \dif x}{\abs{y - x}^{m + s_*p_*}}\\
\le \C
\int_{\Omega} \int_0^{\diam (\Omega)}
\brk[\bigg]{\;
\smashoperator[r]{
\fint_{\Omega \cap B_{r}(x)} }
\brk[\big]{d_{\lifting{\manifold{N}}} (\lifting{u} (z), \lifting{u} (x) - \tfrac{\delta}{4}}_+ \dif z}^{p_*}  \frac{\dif r}{r^{1 + s_*p_*}} \dif x.
\end{multline}
By the triangle inequality, similarly to the proof of \cref{lemma_Minkowski_average}, we have for almost every \(x \in \Omega\) and every \(r \in \intvo{0}{\diam (\Omega)}\), 
\begin{multline}
\label{eq_hee6oxe0Shoepohgaexie1oh}
  \smashoperator[r]{
    \fint_{\Omega \cap B_r (x)}
    } 
    \brk[\big]{d_{\lifting{\manifold{N}}} (\lifting{u} (z), \lifting{u} (x)) - \tfrac{\delta}{4}}_+\dif z \\
  \le 
  \sum_{j \in \Nset} 
  \;
  \fint\limits_{\Omega \cap B_{2^{-j} r} (x)}
  \smashoperator[r]{\fint_{\Omega \cap B_{2^{-j - 1} r} (x)}}
    \brk[\big]{
      d_{\lifting{\manifold{N}}} (\lifting{u} (y), \lifting{u} (z)) - \delta_j
      }_+ 
    \dif y 
    \dif z
 \\
  \le 
    \Cl{cst_icubibusheFei8aimuith8se}
    \frac
      {r^{2m}}
      {2^{2mj}}  
    \sum_{j \in \Nset} \;\int\limits_{\Omega \cap B_{2^{-j} r} (x)}
    \smashoperator[r]{\int_{\Omega \cap B_{2^{-j} r} (x)}}
    \brk[\big]{
      d_{\lifting{\manifold{N}}} (\lifting{u} (y), \lifting{u} (z)) - \delta_j
      }_+
  \dif y
  \dif z
  ,
\end{multline}
where we have set for each \(j \in \Nset\)
\begin{equation}
\label{eq_iesh4bai0OoN1mie3jeos7Po}
 \delta_j \defeq \frac{\delta\kappa^j}{4(1 - \kappa)},
\end{equation}
with a constant \(\kappa \in \intvo{0}{1}\) to be determined later,
since \(
 \sum_{j \in \Nset} \delta_j 
 = \frac{\delta}{4}
\).
We have then by \eqref{eq_hee6oxe0Shoepohgaexie1oh} and Minkowski's inequality
\begin{multline}
\label{eq_iesh8eir2daith8NeeM8ohXi}
  \brk[\Bigg]{
    \int_{\Omega} 
    \int_0^{\diam \Omega}
      \brk[\bigg]
        {
          \smashoperator[r]{
            \fint_{\Omega \cap B_r (x)}} 
              \brk[\big]
                {d_{\lifting{\manifold{N}}} (\lifting{u} (y), \lifting{u} (x))-\tfrac{\delta}{4}}_+
              \dif y
        }^{p_*} 
        \frac
          {\dif r}
          {r^{1 + s_*p_*}} 
        \dif x
      }^\frac{1}{p_*}
 \\
 \le 
 \Cr{cst_icubibusheFei8aimuith8se}
 \sum_{j \in \Nset} 
 \brk[\Bigg]{
    \int_{\Omega} 
    \int_0^{\diam \Omega}
      \brk[\bigg]
        {
          \frac
            {2^{2mj}}
            {r^{2m}} 
          \smashoperator{
            \iint_{(\Omega \cap B_{2^{-j} r} (x))^2}
            } 
            \brk[\big]{d_{\lifting{\manifold{N}}} (\lifting{u} (y), \lifting{u} (z)) -\delta_j}_+
            \dif y \dif z}^{p_*} 
            \frac
              {\dif r}
              {r^{1 + s_*p_*}} 
            \dif x
    }^\frac{1}{p_*}
    .
\end{multline}
For every \(j \in \Nset\), we have by a change of variable in the outer integral
\begin{multline}
\label{eq_leef9eezee2keo8uD5zeicai}
  \int_0^{\diam \Omega}
    \brk[\bigg]{
      \frac
        {2^{2mj}}
        {r^{2m}} 
        \smashoperator{
          \iint_{(\Omega \cap B_{2^{-j} r} (x))^2}
        }
          \brk[\big]{d_{\lifting{\manifold{N}}} (\lifting{u} (y), \lifting{u} (z)) -\delta_j}_+ 
          \dif y 
          \dif z
        }^{p_*} 
        \frac
          {\dif r}
          {r^{1 + s_*p_*}} 
          \\
    =  
      \frac
        {1}
        {2^{s_* p_* j}} 
      \int_0^{2^{-j} \diam \Omega} 
        \brk[\bigg]{\frac{1}{r^{2m}} \smashoperator{\iint_{(\Omega \cap B_{r} (x))^2}}  \brk[\big]{d_{\lifting{\manifold{N}}}
          (\lifting{u} (y), \lifting{u} (z)) -\delta_j}_+
          \dif y 
          \dif z
        }^{p_*} \frac{\dif r}{r^{1 + s_*p_*}},
\end{multline}
whereas by \cref{lemma_single_scale_large_oscillation_estimate},
\begin{multline}
\label{eq_ca7eiw2muf3Jie0geiw9Ufia}
\int_0^{\diam \Omega} 
  \brk[\bigg]{
    \frac{1}{r^{2m}} 
  \smashoperator{
    \iint_{(\Omega \cap B_{r} (x))^2}
  }
  \brk[\big]{d_{\lifting{\manifold{N}}} (\lifting{u} (y), \lifting{u} (z)) -\delta_j}_+\dif y 
  \dif z
}^{p_*} 
\frac
{\dif r}
{r^{1 + s_*p_*}}
\\[-1em]
\le  
\C
 \brk[\Bigg]{\frac{1}{\delta_j^{(1 - s)p}}
 \smashoperator{\iint_{\Omega\times \Omega}} 
  \frac
    {d_{\manifold{N}} (u (y), u (z))^{p}}
    {\abs{y - z}^{m + s p}}  
  \dif y 
  \dif z}
 .
\end{multline}
Combining \eqref{eq_iesh8eir2daith8NeeM8ohXi}, \eqref{eq_leef9eezee2keo8uD5zeicai} and \eqref{eq_ca7eiw2muf3Jie0geiw9Ufia}, we obtain in view of \eqref{eq_iesh4bai0OoN1mie3jeos7Po}
\begin{multline}
  \brk[\Bigg]{
    \int_{\Omega} 
    \int_0^{\diam \Omega}
    \brk[\bigg]{
      \smashoperator[r]{
        \fint_{\Omega \cap B_r (x)}} 
          \brk[\big]{d_{\lifting{\manifold{N}}} (\lifting{u} (y), \lifting{u} (x))-\tfrac{\delta}{4}}_+
          \dif y}^{p_*} 
          \frac
          {\dif r}
          {r^{1 + s_*p_*}} 
          \dif x}^\frac{1}{p_*}
          \\[-1em]
\le \C 
\sum_{j \in \Nset} 
  \frac
    {1}
    {\brk[\big]{2^{s_*}\kappa^{\frac{1 - s}{s}}}^j}
    \brk[\Bigg]{
      \frac
        {1}
        {\delta^{(1 - s)p}}
        \smashoperator{
          \iint_{\Omega^2}
        } 
        \frac
          {d_{\manifold{N}} (u (y), u (z))^{p}}
          {\abs{y - z}^{m + s p}}  \dif y \dif z}^\frac{1}{sp},
\end{multline}
and the conclusion follows provided \(\kappa \in \intvo{0}{1}\) is chosen in such a way that \(\kappa  > 2^{-\frac{s_* s}{1-s}}\).
\end{proof}

\subsection{Conclusion and further estimate}
We now deduce \cref{theorem_estimate_lifting_noncompact} from \cref{proposition_large_oscillation_estimate}.

\begin{proof}[Proof of \cref{theorem_estimate_lifting_noncompact}]
\resetconstant
We first assume that \(\manifold{M} = \Omega\), where the set \(\Omega \subset \Rset^m\) is open, bounded and convex. 
By \cref{proposition_X_lifting_of_Wsp_in_Y}, we have \(\lifting{u} \in Y (\Omega, \lifting{\manifold{N}})\).
Letting \(p_* \defeq  p\),
we have
\(
  s_* = s + \brk{1 - s}\brk{1 - \frac{m}{sp}}\ge s
\).
We get since \(\Omega\) is bounded and \(s_* p_* \ge s p\),
\begin{equation}
\label{eq_cai1rivai5daiyaThauzoh8o}
\begin{split}
\smashoperator[r]{\iint_{\substack{(x, y) \in \Omega \times \Omega\\
 d_{\lifting{\manifold{N}}}(\lifting{u} (y), \lifting{u} (x))\ge \inj(\manifold{N})}}} \frac{d_{\lifting{\manifold{N}}} (\lifting{u} (y), \lifting{u} (x))^{p}}{\abs{y - x}^{m + sp}} \dif y \dif x 
 &\le\C
 \smashoperator{\iint_{\substack{(x, y) \in \Omega \times \Omega\\
 d_{\lifting{\manifold{N}}}(\lifting{u} (y), \lifting{u} (x))\ge \inj{\manifold{N}}}}} \frac{d_{\lifting{\manifold{N}}} (\lifting{u} (y), \lifting{u} (x))^{p_*}}{\abs{y - x}^{m + s_*p_*}} \dif y \dif x \\
 &\le 
 \C
 \brk[\bigg]{\;\smashoperator[r]{\iint_{\Omega\times \Omega}} \frac{d_{\manifold{N}} (u (y), u (x))^{p}}{\abs{y - x}^{m + s p}}  \dif y \dif x}^\frac{1}{s}
 ,
 \end{split}
\end{equation}
by \cref{proposition_large_oscillation_estimate} with \(\delta = \inj\brk{\manifold{N}}\).
Combining the estimate \eqref{eq_cai1rivai5daiyaThauzoh8o} with \cref{proposition_spgt1_estimate}, we get
\begin{equation}
\label{eq_yo4ooYai8ahp3heeMeifie1s}
\begin{split}
\smashoperator[r]{\iint_{\Omega \times \Omega}}& \frac{d_{\lifting{\manifold{N}}} (\lifting{u} (y), \lifting{u} (x))^{p}}{\abs{y - x}^{m + sp}} \dif y \dif x \\
&\le\C \brk[\Bigg]{\;\smashoperator[r]{  \iint_{\Omega\times \Omega}} \frac{d_{\manifold{N}} (u (y), u (x))^{p}}{\abs{y - x}^{m + s p}}  \dif z \dif y + \brk[\bigg]{\;\smashoperator[r]{\iint_{\Omega\times \Omega}} \frac{d_{\manifold{N}} (u (y), u (x))^{p}}{\abs{y - x}^{m + s p}}  \dif y \dif x}^{\frac{1}{s}} }
.
 \end{split}
\end{equation}
We reach the conclusion \eqref{eq_Pei1Chu7Amie6ohz0phi6cai} on a general compact manifold \(\manifold{M}\) thanks to the estimate \eqref{eq_yo4ooYai8ahp3heeMeifie1s} and the covering of  \cref{lemma_manifold_diagonal_covering}.
\end{proof}

\begin{remark}
\resetconstant
The exponent \(\frac{1}{s}\) in \eqref{eq_Pei1Chu7Amie6ohz0phi6cai} is optimal.
Indeed,  assuming that 
\begin{equation}
\label{eq_jaiGeejeiCeish7wieyahnae}
\norm{\lifting{u}}_{\homog{W}^{s, p}}
\le \C \brk[\big]{\norm{u}_{\homog{W}^{s, p}} + \norm{u}_{\homog{W}^{s, p}}^\gamma}  
\end{equation}
holds and 
taking \(\pi : \Rset \to \Sset^1\) to be the universal covering of the unit circle and \(\lifting{u} = t \varphi\), for some \(\varphi \in C^\infty (\manifold{M}, \Rset)\) and every \(t \in \Rset\), one gets from \eqref{eq_Pei1Chu7Amie6ohz0phi6cai} that 
\(
  \abs{t} \le \C (\abs{t}^s + \abs{t}^{\gamma s})\),
which can only hold if \(\gamma \ge \frac{1}{s}\).
\end{remark}

\Cref{proposition_large_oscillation_estimate} can also be applied to obtain a result in which the nonlinear part in the estimate contains a critical fractional Sobolev energy.

\begin{theorem}
\label{theorem_Wsp_Wmpp}
Let \(\manifold{M}\) and \(\manifold{N}\) be compact Riemannian manifolds, let \(m \defeq  \dim \manifold{M}\), let \(\pi : \lifting{\manifold{N}} \to \manifold{N}\) be a Riemannian covering map, let \(s \in \intvo{0}{1}\) and let \(p \in \intvo{1}{\infty}\). 
If \(sp > 1\), then there exists a constant \(C \in \intvo{0}{\infty}\) such that for every \(\lifting{u} \in X (\manifold{M}, \lifting{\manifold{N}})\), we have \(\lifting{u} \in \homog{W}^{s, p} (\manifold{M}, \lifting{\manifold{N}})\) and 
\begin{multline}
\label{eq_lazodahXa9of7ieJaephe5no}
 \smashoperator[r]{\iint_{\manifold{M} \times \manifold{M}}} \frac{d_{\lifting{\manifold{N}}} (\lifting{u} (y), \lifting{u} (x))^p}{d_{\manifold{M}} (y, x)^{m + sp}} \dif y \dif x\\[-1em]
 \le 
 \C \brk[\Bigg]{1 + \smashoperator{\iint_{\manifold{M}\times \manifold{M}}} \frac{d_{\manifold{N}} (u (y), u (x))^p}{d_{\manifold{M}} (y, x)^{2 m}} \dif y \dif x}^{\frac{(1 - s)p}{m}}
 \smashoperator{\iint_{\manifold{M} \times \manifold{M}}} \frac{d_{\manifold{N}} (u (y), u (x))^p}{d_{\manifold{M}} (y, x)^{m + sp}} \dif y \dif x.
\end{multline}

\end{theorem}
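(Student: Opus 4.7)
The plan is to split the integral on the left of \eqref{eq_lazodahXa9of7ieJaephe5no} at the threshold \(\delta \defeq \inj (\manifold{N})\) into a small-scale and a large-scale part, and to control the large-scale part by interpolating two applications of \cref{proposition_large_oscillation_estimate} via Hölder's inequality. First I would reduce by \cref{lemma_manifold_diagonal_covering} to the case where \(\manifold{M}\) is replaced by a bounded convex open set \(\Omega \subseteq \Rset^m\); since \(sp > 1\), \cref{proposition_X_lifting_of_Wsp_in_Y} ensures that the lifting \(\lifting{u}\) then lies in \(Y(\Omega, \lifting{\manifold{N}})\), so that \cref{proposition_large_oscillation_estimate} is available. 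The small-scale portion, where \(d_{\lifting{\manifold{N}}}(\lifting{u}(y), \lifting{u}(x)) \le \delta\), is bounded by a multiple of the second term on the right-hand side of \eqref{eq_lazodahXa9of7ieJaephe5no} by \cref{theorem_lifting_truncated_noncompact_apriori} (using that on this set the integrand coincides with \(d_{\lifting{\manifold{N}}}^p \wedge 1\) up to a constant).

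Next, I would apply \cref{proposition_large_oscillation_estimate} twice on the large-scale region, both times with \(p_* = p\) and threshold \(\delta\), but varying the ``base'' exponent pair. The first application uses the original pair \((s, p)\), which forces \(s_* = s_1 \defeq 1 - m(1-s)/(sp)\) by \eqref{eq_Foreigie4zaiw2ze0dah8nah}, and yields a control of \(\iint_{d_{\lifting{\manifold{N}}} \ge \delta} d_{\lifting{\manifold{N}}}^p / \abs{y-x}^{m + s_1 p} \dif y \dif x\) by a constant multiple of \(I_2^{1/s}\), where \(I_2\) denotes the Gagliardo energy of \(u\) at exponent \((s, p)\). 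The second application uses the pair \((m/p, p)\) in the proposition, which is legitimate provided \(m \ge 2\) (so that \((m/p) \cdot p = m > 1\)) and \(m < p\) (so that \(s_* = m/p \in (0, 1)\)); the condition \(m < p\) is essentially necessary for \(I_1\) to be a non-trivial seminorm by the rigidity invoked in \cref{proposition_counterexample_critical}. This second application yields a control of \(\iint_{d_{\lifting{\manifold{N}}} \ge \delta} d_{\lifting{\manifold{N}}}^p / \abs{y-x}^{2m} \dif y \dif x\) by a constant multiple of \(I_1^{p/m}\), with \(I_1\) the critical energy appearing on the right of \eqref{eq_lazodahXa9of7ieJaephe5no}.

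The linchpin is the pointwise factorisation
\begin{equation*}
\frac{d_{\lifting{\manifold{N}}} (\lifting{u} (y), \lifting{u} (x))^p}{\abs{y - x}^{m + sp}}
=
\brk[\bigg]{\frac{d_{\lifting{\manifold{N}}} (\lifting{u} (y), \lifting{u} (x))^p}{\abs{y - x}^{m + s_1 p}}}^{s}
\brk[\bigg]{\frac{d_{\lifting{\manifold{N}}} (\lifting{u} (y), \lifting{u} (x))^p}{\abs{y - x}^{2m}}}^{1 - s},
\end{equation*}
which is exactly why \(s_1\) is chosen so that \(s(m + s_1 p) + (1 - s)(2m) = m + sp\). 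Integrating over the large-scale region and applying Hölder's inequality with conjugate exponents \(1/s\) and \(1/(1 - s)\), together with the two bounds above, gives a control of the large-scale piece by a constant multiple of \(I_2 \cdot I_1^{(1-s)p/m}\). Combining with the small-scale bound and using the elementary inequality \(1 + t^\alpha \le 2 (1 + t)^\alpha\) valid for \(t, \alpha \ge 0\) then yields \eqref{eq_lazodahXa9of7ieJaephe5no}.

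The main technical point to address is ensuring that \(s_1 \in (0, 1)\), which amounts to the condition \(s > m/(m+p)\): this holds throughout the main range of parameters where the estimate is informative, but can fail in a narrow subrange still compatible with \(sp > 1\). I would address such edge cases by instead choosing \(p_* < p\) in the first application (which enlarges \(s_1\) into the admissible range) and adjusting the Hölder exponents in the interpolation so that the algebraic identity and the two integral bounds still combine to give the same final estimate.
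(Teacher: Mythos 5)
Your argument is a legitimate ``dual'' of the paper's proof: the same two tools (\cref{proposition_large_oscillation_estimate} and Hölder's inequality) are used for the large-scale part, but you apply the proposition twice and interpolate the large-scale energies of $\lifting{u}$, whereas the paper applies it once and interpolates the Gagliardo energies of $u$ on the right-hand side. The difficulty is that your two applications impose admissibility conditions that $sp>1$ alone does not guarantee. The second application, with input pair $(m/p,p)$, needs $m\ge 2$ and $p>m$; the case $p\le m$ can be dismissed since the right-hand side of \eqref{eq_lazodahXa9of7ieJaephe5no} is then infinite for nonconstant $u$, but $m=1$ (with $p>1$, $sp>1$) is genuinely nontrivial and simply falls outside the proposition's scope, since $(m/p)\cdot p=1$ is not $>1$. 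The first application produces $s_1=1-m(1-s)/(sp)\in\intvo{0}{1}$ only when $s>m/(m+p)$, which is strictly stronger than $sp>1$ as soon as $m\ge p/(p-1)$; and the fix you sketch of taking $p_*<p$ does not work. Indeed, to factor $\frac{d_{\lifting{\manifold{N}}}(\lifting{u}(y),\lifting{u}(x))^p}{\abs{y-x}^{m+sp}}=F^\alpha G^{1-\alpha}$ with $F=\frac{d_{\lifting{\manifold{N}}}(\lifting{u}(y),\lifting{u}(x))^{p_*}}{\abs{y-x}^{m+s_1p_*}}$ and $G=\frac{d_{\lifting{\manifold{N}}}(\lifting{u}(y),\lifting{u}(x))^{p}}{\abs{y-x}^{2m}}$, matching the exponents of $d_{\lifting{\manifold{N}}}$ forces $\alpha p_*+(1-\alpha)p=p$, hence $\alpha(p_*-p)=0$, hence $p_*=p$; there are no ``adjusted Hölder exponents'' that rebalance this, and on $\{d_{\lifting{\manifold{N}}}\ge\delta\}$ only negative residual powers of $d_{\lifting{\manifold{N}}}$ can be absorbed into $\delta$, which is the wrong direction for an upper bound.

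The paper avoids all of this by making the dual parameter choice: apply \cref{proposition_large_oscillation_estimate} a single time with $p_*=p$ and with the \emph{input} exponent set to $s_0\defeq\frac{1}{1+(1-s)p/m}$, chosen so that the output is exactly $s$. One checks that $s_0\in\intvo{0}{1}$ and $s_0p=\frac{p}{1+(1-s)p/m}>1$ follow from $sp>1$ alone, for every $m\ge1$ and $p>1$. This bounds the large-scale integral of $\lifting{u}$ by a power of $\iint_{\Omega\times\Omega}d_{\manifold{N}}(u(y),u(x))^p/\abs{y-x}^{m+\gamma}\dif y\dif x$ with $\gamma=\frac{mp}{m+(1-s)p}$, which always lies between $sp$ and $m$; Hölder's inequality then interpolates this $u$-energy between the $u$-energies at exponents $sp$ and $m$, giving exactly the right-hand side of \eqref{eq_lazodahXa9of7ieJaephe5no}. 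I recommend recasting your large-scale step this way: it removes all the edge-case restrictions with no extra work.
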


Although the no restriction is put on the exponent, in practice the first integral in the right-hand side will be finite for some nonconstant function \(u\) if and only if \(p > m\).

\begin{proof}
[Proof of \cref{theorem_Wsp_Wmpp}]
\resetconstant
We proceed as in the proof of \cref{theorem_estimate_lifting_noncompact}, 
applying now \cref{proposition_large_oscillation_estimate} 
with \(s\) being given by \(s_0 = \frac{1}{1 + (1-s)p/m}\), \(p_* = p\) and
so that \(s_*\) is then given by \(s\) 
in \eqref{eq_Foreigie4zaiw2ze0dah8nah}. Since \(sp > 1\), we have \(s_0 p = \frac{p}{1 + (1-s)p/m} > 1\),
and thus by  \cref{proposition_large_oscillation_estimate}
\begin{equation}
\label{eq_se4inamoh1oe9ooy8eiYohxo}
 \smashoperator{\iint_{\substack{(x, y) \in \Omega \times \Omega\\
 d_{\lifting{\manifold{N}}}(\lifting{u} (y), \lifting{u} (x))\ge \inj(\manifold{N})}}} \frac{d_{\lifting{\manifold{N}}} (\lifting{u} (y), \lifting{u} (x))^p}{\abs{y - x}^{m + sp}} \dif y \dif x \le 
 \C \brk[\Bigg]{\;\smashoperator{\iint_{\Omega \times \Omega}} \frac{d_{\manifold{N}} (u (y), u (x))^p}{\abs{y - x}^{m + \frac{mp}{m + (1 - s)p}}} \dif y \dif x}^\frac{m + (1 - s)p}{m}.
\end{equation}
If \(sp \ge m\), we have \(
  m \le \frac{mp}{m + (1 - s)p}\le sp,
\) whereas if \(sp \le m\) we have \(
  sp \le \frac{mp}{m + (1 - s)p}\le m
\),
and thus by Hölder's inequality and \eqref{eq_se4inamoh1oe9ooy8eiYohxo} we get
\begin{multline}
\label{eq_kui1zielaiCh3cou9aeTae1y} \smashoperator{\iint_{\substack{(x, y) \in \Omega \times \Omega\\
 d_{\lifting{\manifold{N}}}(\lifting{u} (y), \lifting{u} (x))\ge \inj(\manifold{N})}}}
  \frac{d_{\lifting{\manifold{N}}} (\lifting{u} (y), \lifting{u} (x))^p}{\abs{y - x}^{m + sp}} \dif y \dif x\\[-1em] \le 
 \C \brk[\bigg]{1 + \smashoperator{\iint_{\Omega\times \Omega}} \frac{d_{\manifold{N}} (u (y), u (x))^p}{\abs{y - x}^{2 m}} \dif y \dif x}^{\frac{(1 - s)p}{m}}
 \smashoperator{\iint_{\Omega \times \Omega}} \frac{d_{\manifold{N}} (u (y), u (x))^p}{\abs{y - x}^{m + sp}} \dif y \dif x.
\end{multline}
Hence combining \eqref{eq_kui1zielaiCh3cou9aeTae1y} with \cref{proposition_spgt1_estimate}, we get
\begin{multline}
\label{eq_eeth3ay4loos7kaliuDo1ohZ} \smashoperator{\iint_{\Omega \times \Omega}}
  \frac{d_{\lifting{\manifold{N}}} (\lifting{u} (y), \lifting{u} (x))^p}{\abs{y - x}^{m + sp}} \dif y \dif x\\[-1em] \le 
 \C \brk[\bigg]{1 + \smashoperator{\iint_{\Omega\times \Omega}} \frac{d_{\manifold{N}} (u (y), u (x))^p}{\abs{y - x}^{2 m}} \dif y \dif x}^{\frac{(1 - s)p}{m}}
 \smashoperator{\iint_{\Omega \times \Omega}} \frac{d_{\manifold{N}} (u (y), u (x))^p}{\abs{y - x}^{m + sp}} \dif y \dif x.
\end{multline}
Combining \eqref{eq_eeth3ay4loos7kaliuDo1ohZ} with the covering of \cref{lemma_manifold_diagonal_covering}, we conclude.
\end{proof}

\begin{remark}
\resetconstant
Again the exponent \(\frac{1}{s}\) in \eqref{eq_Pei1Chu7Amie6ohz0phi6cai} is optimal.
Indeed,  assuming that we have
\begin{equation}
\label{eq_nae1Mai4eix0waishae6ihin}
\norm{\lifting{u}}_{\homog{W}^{s, p}}
\le \C \brk[\big]{1 + \norm{u}_{\homog{W}^{m/p, p}}^\gamma}  \norm{u}_{\homog{W}^{s, p}}
\end{equation}
and
taking \(\pi : \Rset \to \Sset^1\) to be the universal covering of the unit circle and \(\lifting{u} = t \varphi\), for some \(\varphi \in C^\infty (\manifold{M}, \Rset)\) and every \(t \in \Rset\), one gets from \eqref{eq_nae1Mai4eix0waishae6ihin} that \(\abs{t} \le \C (1 + \abs{t}^{\gamma m/p})\abs{t}^s\),
which can only hold if \(\gamma \ge \frac{(1-s)p}{m}\).
\end{remark}
Finally, the same methods can be used to get an estimate on a  lower order fractional Sobolev energy when the dimension is supercritical.

\begin{theorem}
\label{theorem_supercritical_estimate}
Let \(\manifold{M}\) and \(\manifold{N}\) be compact Riemannian manifold, let \(m \defeq  \dim \manifold{M}\), let \(\pi : \lifting{\manifold{N}} \to \manifold{N}\) be a Riemannian covering map, let \(s \in \intvo{0}{1}\) and let \(p \in \intvo{1}{\infty}\). 
If
\begin{equation}
\label{eq_NiiPeiJoo4Sing6ahyu4ua6U}
1 - s < \frac{sp}{m} < 1
\end{equation}
and if \(\lifting{u} \in X (\manifold{M}, \lifting{\manifold{N}})\), then \(\lifting{u} \in \homog{W}^{s_\flat, p} (\manifold{M}, \lifting{\manifold{N}})\) and 
\begin{equation}
\begin{split}
\smashoperator[r]{\iint_{\manifold{M} \times \manifold{M}}} &\frac{d_{\lifting{\manifold{N}}} (\lifting{u} (y), \lifting{u} (x))^{p}}{d_{\manifold{M}} (y, x)^{m + s_\flat p}} \dif y \dif x \\
 &\le 
 \C
 \brk[\Bigg]{\;
 \smashoperator[r]{\iint_{\manifold{M}\times \manifold{M}}} \frac{d_{\manifold{N}} (u (y), u (x))^{p}}{d_{\manifold{M}} (y, x)^{m + s p}}  \dif y \dif x
 +
 \brk[\bigg]{\;
 \smashoperator[r]{\iint_{\manifold{M}\times \manifold{M}}} \frac{d_{\manifold{N}} (u (y), u (x))^{p}}{d_{\manifold{M}} (y, x)^{m + s p}}  \dif y \dif x}^\frac{1}{s}
 \,
 }. 
\end{split}
\end{equation}
with 
\begin{equation}
\label{eq_Ua7heJairahte6ohWeighoo8}
s_\flat \defeq s - (1 - s)\brk[\Big]{\frac{m}{sp} - 1}.
\end{equation}
\end{theorem}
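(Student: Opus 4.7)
The approach will mirror the structure of the proof of \cref{theorem_estimate_lifting_noncompact}: via \cref{lemma_manifold_diagonal_covering}, I would reduce to a bounded open convex set \(\Omega \subseteq \Rset^m\) in place of \(\manifold{M}\), and then split the integral defining the \(\homog{W}^{s_\flat,p}\)-energy of \(\lifting{u}\) at the threshold \(\inj(\manifold{N})\) on \(d_{\lifting{\manifold{N}}}(\lifting{u}(y),\lifting{u}(x))\). We may assume without loss of generality that the right-hand side of the desired estimate is finite, so that \(u \defeq \pi \compose \lifting{u} \in \homog{W}^{s,p}(\Omega,\manifold{N})\); \cref{proposition_X_lifting_of_Wsp_in_Y} then yields \(\lifting{u} \in Y(\Omega,\lifting{\manifold{N}})\), placing us in the setting of \cref{proposition_large_oscillation_estimate}.

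For the small-scale piece, where \(d_{\lifting{\manifold{N}}}(\lifting{u}(y),\lifting{u}(x)) \le \inj(\manifold{N})\), I would invoke \cref{lemma_small_isometry} to get the pointwise identity \(d_{\lifting{\manifold{N}}}(\lifting{u}(y),\lifting{u}(x)) = d_{\manifold{N}}(u(y),u(x))\). Since the condition \(sp/m < 1\) in \eqref{eq_NiiPeiJoo4Sing6ahyu4ua6U} forces \(s_\flat < s\), and \(\Omega\) is bounded, the elementary inequality \(\abs{y-x}^{(s-s_\flat)p} \le \diam(\Omega)^{(s-s_\flat)p}\) controls this contribution by a constant multiple of the Gagliardo energy of \(u\) at exponent \((s,p)\), which is the first term on the right-hand side of the statement.

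For the large-scale piece the plan is to apply \cref{proposition_large_oscillation_estimate} with \(p_* = p\), \(s_* = s_\flat\), and \(\delta = \inj(\manifold{N})\). The central arithmetic observation is that this triple satisfies the scaling relation \eqref{eq_Foreigie4zaiw2ze0dah8nah}: from the definition \eqref{eq_Ua7heJairahte6ohWeighoo8},
\begin{equation*}
 1 - s_\flat = (1-s) + (1-s)\brk*{\tfrac{m}{sp} - 1} = \frac{(1-s) m}{sp},
\end{equation*}
so that \((1-s_\flat)/m = (1-s)/(sp) = 1/(sp) - 1/p\), matching \eqref{eq_Foreigie4zaiw2ze0dah8nah} exactly with \((s_*,p_*) = (s_\flat,p)\). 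Hypothesis \eqref{eq_NiiPeiJoo4Sing6ahyu4ua6U} further ensures \(s_\flat \in \intvo{0}{1}\), and the standing assumption \(sp > 1\) used in \cref{proposition_X_lifting_of_Wsp_in_Y,proposition_large_oscillation_estimate} is in force. The proposition then bounds the large-scale integral by a constant multiple of \(\brk[\big]{\iint d_{\manifold{N}}(u(y),u(x))^p/\abs{y-x}^{m+sp}\dif y\dif x}^{1/s}\), which is the second term on the right-hand side. Summing the two scale contributions and transferring back to \(\manifold{M}\) via \cref{lemma_manifold_diagonal_covering} completes the proof.

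The only delicate step I anticipate is the arithmetic identification linking the exponent \(s_\flat\) of \eqref{eq_Ua7heJairahte6ohWeighoo8} to the scaling relation \eqref{eq_Foreigie4zaiw2ze0dah8nah}; once that is in place, the theorem reduces to a direct application of \cref{proposition_large_oscillation_estimate} combined with the almost trivial small-scale bound from local isometry, and no obstacle beyond those already resolved in the proof of \cref{theorem_estimate_lifting_noncompact} should remain.
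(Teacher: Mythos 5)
Your proof is correct and follows the paper's own argument: reduce to a bounded convex domain via \cref{lemma_manifold_diagonal_covering}, apply \cref{proposition_large_oscillation_estimate} with \(p_* = p\) and \(s_* = s_\flat\) (the arithmetic \(1 - s_\flat = (1-s)m/(sp)\) verifying \eqref{eq_Foreigie4zaiw2ze0dah8nah} is exactly what the paper uses), and control the small-scale piece using \(s_\flat < s\) together with boundedness of \(\Omega\). Your only deviation is invoking \cref{lemma_small_isometry} directly on the small-scale region to replace \(d_{\lifting{\manifold{N}}}\) by \(d_{\manifold{N}}\), rather than routing through \cref{proposition_spgt1_estimate} as the paper does; this is a cosmetic simplification that reaches the same bound.
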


\begin{proof}
\resetconstant
We follow the structure of the proof of  \cref{theorem_estimate_lifting_noncompact}.
Considering \(\lifting{u} \in Y (\Omega, \lifting{\manifold{N}})\), we apply \cref{proposition_large_oscillation_estimate} with \(p_* =  p\) so that \(s_* = s_{\flat}\) in view of \eqref{eq_Ua7heJairahte6ohWeighoo8} since by \eqref{eq_NiiPeiJoo4Sing6ahyu4ua6U}
\begin{equation*}
  s_\flat p = p + m \brk[\Big]{\frac{1}{s} - 1}>1
\end{equation*}
and we get 
\begin{equation*}
\smashoperator[r]{\iint_{\substack{(x, y) \in \Omega \times \Omega\\
 d_{\lifting{\manifold{N}}}(\lifting{u} (y), \lifting{u} (x))\ge \inj(\manifold{N})}}} \frac{d_{\lifting{\manifold{N}}} (\lifting{u} (y), \lifting{u} (x))^{p}}{\abs{y - x}^{m + s_\flat p}} \dif y \dif x 
 \le 
 \C
 \brk[\bigg]{\;\smashoperator[r]{\iint_{\Omega \times \Omega}} \frac{d_{\manifold{N}} (u (y), u (x))^{p}}{\abs{y - x}^{m + s p}}  \dif y \dif x}^\frac{1}{s}.
\end{equation*}
On the other hand by \cref{proposition_spgt1_estimate}, since \(s_\flat <s\) and since the set \(\Omega\) is bounded, we get 
\begin{equation}
\label{eq_chahxi4AW6theiThoGhohpie}
\begin{split}
\smashoperator[r]{\iint_{\Omega \times \Omega}} &\frac{d_{\lifting{\manifold{N}}} (\lifting{u} (y), \lifting{u} (x))^{p}}{\abs{y - x}^{m + s_\flat p}} \dif y \dif x \\
 &\le 
 \C
 \brk[\Bigg]{
 \; \smashoperator[r]{\iint_{\Omega\times \Omega}} \frac{d_{\manifold{N}} (u (y), u (x))^{p}}{\abs{y - x}^{m + s p}}  \dif y \dif x
 +
 \brk[\bigg]{\;\smashoperator[r]{\iint_{\Omega \times \Omega}} \frac{d_{\manifold{N}} (u (y), u (x))^{p}}{\abs{y - x}^{m + s p}}  \dif y \dif x}^\frac{1}{s}\,
 }. 
\end{split}
\end{equation}
The conclusion follows from \eqref{eq_chahxi4AW6theiThoGhohpie} and \cref{lemma_manifold_diagonal_covering}.
\end{proof}

\begin{remark}
The value \(s_\flat\) in the statement of \cref{theorem_supercritical_estimate} is optimal: taking \(\pi : \Rset \to \Sset^1\) to be the universal covering of the unit circle and defining \(\lifting{u} (x) \defeq \abs{x}^{-\alpha}\), then \(u \in \homog{W}^{1, sp} (\Bset^m, \Rset)\) if and only if \((\alpha + 1)sp < m\).
By the fractional Gagliardo--Nirenberg interpolation inequality, one has then \(\pi \compose \lifting{u} \in \homog{W}^{s, p} (\Bset^m, \Sset^1)\).
We also have \(u \in \homog{W}^{s_*, p} (\Bset^m, \Rset)\) if and only if \((\alpha + s_*) p < m\).
This implies that we can have \(\lifting{u} \not \in \homog{W}^{s_*, p} (\Bset^m, \Rset)\) and \(u \in \homog{W}^{s, p} (\Bset^m, \Sset^1)\), when \(\frac{m}{p} - s_* < \frac{m}{sp} - 1\), or equivalently \(s_* > s_\flat\).
\end{remark}

\begin{bibdiv}
  \begin{biblist}

\bib{Bethuel_2014}{article}{
    author={Bethuel, Fabrice},
    title={A new obstruction to the extension problem for Sobolev maps between manifolds},
    journal={J. Fixed Point Theory Appl.},
    volume={15},
    date={2014},
    number={1},
    pages={155--183},
    issn={1661-7738},
    doi={10.1007/s11784-014-0185-0},
}

\bib{Bethuel_Chiron_2007}{incollection}{
    author={Bethuel, Fabrice},
    author={Chiron, David},
    title={Some questions related to the lifting problem in {S}obolev spaces},
    date={2007},
    booktitle={Perspectives in nonlinear partial differential equations},
    series={Contemp. Math.},
    volume={446},
    publisher={Amer. Math. Soc., Providence, RI},
    pages={125\ndash 152},
    doi={10.1090/conm/446/08628},
}

\bib{Bethuel_Demengel_1995}{article}{
    author={Bethuel, F.},
    author={Demengel, F.},
    title={Extensions for Sobolev mappings between manifolds},
    journal={Calc. Var. Partial Differential Equations},
    volume={3},
    date={1995},
    number={4},
    pages={475--491},
    issn={0944-2669},
    doi={10.1007/BF01187897},
}
		
\bib{Bourgain_Brezis_2003}{article}{
    author={Bourgain, Jean},
    author={Brezis, Ha\"{i}m},
    title={On the equation {\(\operatorname{div}\, Y=f\)} and application to control of phases},
    date={2003},
    ISSN={0894-0347},
    journal={J. Amer. Math. Soc.},
    volume={16},
    number={2},
    pages={393\ndash 426},
    doi={10.1090/S0894-0347-02-00411-3},
}
    
\bib{Bourgain_Brezis_Mironescu_2000}{article}{
    author={Bourgain, Jean},
    author={Brezis, Ha\"{\i}m},
    author={Mironescu, Petru},
    title={Lifting in Sobolev spaces},
    journal={J. Anal. Math.},
    volume={80},
    date={2000},
    pages={37--86},
    issn={0021-7670},
    doi={10.1007/BF02791533},
}

\bib{Bourgain_Brezis_Mironescu_2001}{article}{
    author={Bourgain, Jean},
    author={Brezis, Ha\"{\i}m},
    author={Mironescu, Petru},
    title={Another look at Sobolev spaces},
    conference={
        title={Optimal control and partial differential equations},
    },
    book={
        publisher={IOS, Amsterdam},
    },
    date={2001},
    pages={439--455},
}

\bib{Brezis_2002}{article}{
    author={Brezis, Ha{\"{\i}}m},
    title={How to recognize constant functions. A connection with Sobolev spaces},
    language={Russian},
    journal={Uspekhi Mat. Nauk},
    volume={57},
    date={2002},
    number={4(346)},
    pages={59--74},
    issn={0042-1316},
    translation={
        journal={Russian Math. Surveys},
        volume={57},
        date={2002},
        number={4},
        pages={693--708},
        issn={0036-0279},
    },
    doi={10.1070/RM2002v057n04ABEH000533},
}

\bib{Berzis_Mironescu_2001}{article}{
    author={Brezis, Ha\"{\i}m},
    author={Mironescu, Petru},
    title={Gagliardo--Nirenberg, composition and products in fractional
    Sobolev spaces},
    journal={J. Evol. Equ.},
    volume={1},
    date={2001},
    number={4},
    pages={387--404},
    issn={1424-3199},
    review={\MR{1877265}},
    doi={10.1007/PL00001378},
}

\bib{Brezis_Mironescu_2015}{article}{
    author={Brezis, Ha\"{\i}m},
    author={Mironescu, Petru},
    title={Density in \(W^{s,p}(\Omega;N)\)},
    journal={J. Funct. Anal.},
    volume={269},
    date={2015},
    number={7},
    pages={2045--2109},
    issn={0022-1236},
    doi={10.1016/j.jfa.2015.04.005},
}

\bib{Brezis_Mironescu_2018}{article}{
    author={Brezis, Ha\"{\i}m},
    author={Mironescu, Petru},
    title={Gagliardo--Nirenberg inequalities and non-inequalities: the full
    story},
    journal={Ann. Inst. H. Poincar\'{e} C Anal. Non Lin\'{e}aire},
    volume={35},
    date={2018},
    number={5},
    pages={1355--1376},
    issn={0294-1449},
    doi={10.1016/j.anihpc.2017.11.007},
}

\bib{Brezis_Mironescu_2021}{book}{
    author={Brezis, Ha\"{\i}m},
    author={Mironescu, Petru},
    title={Sobolev maps to the circle},
    subtitle={From the perspective of analysis, geometry, and topology},
    series={Progress in Nonlinear Differential Equations and their Applications},
    date={2021},
    volume={96},    
    isbn={978-1-0716-1510-2},
}

\bib{DeMarco_Mariconda_Solimini_2008}{article}{
    author={De Marco, Giuseppe},
    author={Mariconda, Carlo},
    author={Solimini, Sergio},
    title={An elementary proof of a characterization of constant functions},
    journal={Adv. Nonlinear Stud.},
    volume={8},
    date={2008},
    number={3},
    pages={597--602},
    issn={1536-1365},
    doi={10.1515/ans-2008-0306},
}

\bib{Gallot_Hullin_Lafontaine_2004}{book}{
    author={Gallot, Sylvestre},
    author={Hulin, Dominique},
    author={Lafontaine, Jacques},
    title={Riemannian geometry},
    series={Universitext},
    edition={3},
    publisher={Springer}, 
    address={Berlin},
    date={2004},
    pages={xvi+322},
    isbn={3-540-20493-8},
    doi={10.1007/978-3-642-18855-8},
}

\bib{Hardt_Lin_1987}{article}{
    author={Hardt, Robert},
    author={Lin, Fang-Hua},
    title={Mappings minimizing the $L^p$ norm of the gradient},
    journal={Comm. Pure Appl. Math.},
    volume={40},
    date={1987},
    number={5},
    pages={555--588},
    issn={0010-3640},
    doi={10.1002/cpa.3160400503},
}

\bib{Hatcher_2002}{book}{
    author={Hatcher, Allen},
    title={Algebraic topology},
    publisher={Cambridge University Press, Cambridge},
    date={2002},
    pages={xii+544},
    isbn={0-521-79160-X},
    isbn={0-521-79540-0},
}

\bib{Lee_2018}{book}{
    author={Lee, John M.},
    title={Introduction to Riemannian manifolds},
    series={Graduate Texts in Mathematics},
    volume={176},
    edition={2},
    publisher={Springer, Cham},
    date={2018},
    pages={xiii+437},
    isbn={978-3-319-91754-2},
    isbn={978-3-319-91755-9},
}
\bib{Leoni_2023}{book}{
   author={Leoni, Giovanni},
   title={A first course in fractional Sobolev spaces},
   series={Graduate Studies in Mathematics},
   volume={229},
   publisher={American Mathematical Society, Providence, R.I.},
   date={2023},
   pages={xv+586},
   doi={10.1090/gsm/229},
}

\bib{Merlet_2006}{article}{
    author={Merlet, Beno\^{\i}t},
    title={Two remarks on liftings of maps with values into \(\Sset^1\)},
    journal={C. R. Math. Acad. Sci. Paris},
    volume={343},
    date={2006},
    number={7},
    pages={467--472},
    issn={1631-073X},
    doi={10.1016/j.crma.2006.07.014},
}

\bib{Mironescu_2008}{article}{
    author={Mironescu, Petru},
    title={Lifting default for {$\mathbb{S}^1$}-valued maps},
    date={2008},
    ISSN={1631-073X},
    journal={C. R. Math. Acad. Sci. Paris},
    volume={346},
    number={19--20},
    pages={1039\ndash 1044},
    doi={10.1016/j.crma.2008.08.001},
}
    
\bib{Mironescu_2010}{article}{
    author={Mironescu, Petru},
    title={\(\mathbb{S}^1\)--valued Sobolev mappings},
    language={Russian},
    journal={Sovrem. Mat. Fundam. Napravl.},
    volume={35},
    date={2010},
    pages={86--100},
    issn={2413-3639},
    translation={
        journal={J. Math. Sci. (N.Y.)},
        volume={170},
        date={2010},
        number={3},
        pages={340--355},
        issn={1072-3374},
    },
    doi={10.1007/s10958-010-0090-z},
}

\bib{Mironescu_2010_Decomposition}{article}{
    author={Mironescu, Petru},
    title={Decomposition of \(\mathbb{S}^1\)-valued maps in Sobolev spaces},
    journal={C. R. Math. Acad. Sci. Paris},
    volume={348},
    date={2010},
    number={13-14},
    pages={743--746},
    issn={1631-073X},
    doi={10.1016/j.crma.2010.06.020},
}

\bib{Mironescu_preprint}{article}{
    author={Mironescu, Petru},
    title={Lifting of \(\mathbb{S}^1\)-valued maps in sums of Sobolev spaces},
    eprint={https://hal.archives-ouvertes.fr/hal-00747663},
}

\bib{Mironescu_Molnar_2015}{article}{
    author={Mironescu, Petru},
    author={Molnar, Ioana},
    title={Phases of unimodular complex valued maps: optimal estimates, the
    factorization method, and the sum-intersection property of Sobolev
    spaces},
    journal={Ann. Inst. H. Poincar\'{e} Anal. Non Lin\'{e}aire},
    volume={32},
    date={2015},
    number={5},
    pages={965--1013},
    issn={0294-1449},
    doi={10.1016/j.anihpc.2014.04.005},
}

\bib{Mironescu_VanSchaftingen_2021}{article}{
    author={Mironescu, Petru},
    author={Van Schaftingen, Jean},
    title={Lifting in compact covering spaces for fractional Sobolev
    mappings},
    journal={Anal. PDE},
    volume={14},
    date={2021},
    number={6},
    pages={1851--1871},
    issn={2157-5045},
    doi={10.2140/apde.2021.14.1851},
}

\bib{Mironescu_VanSchaftingen_2021_Trace}{article}{
    author={Mironescu, Petru},
    author={Van Schaftingen, Jean},
    title={Trace theory for Sobolev mappings into a manifold},
    journal={Ann. Fac. Sci. Toulouse Math. (6)},
    volume={30},
    date={2021},
    number={2},
    pages={281--299},
    issn={0240-2963},
    doi={10.5802/afst.1675},
}

\bib{Nash_1956}{article}{
   author={Nash, John},
   title={The imbedding problem for Riemannian manifolds},
   journal={Ann. of Math. (2)},
   volume={63},
   date={1956},
   pages={20--63},
   issn={0003-486X},
   doi={10.2307/1969989},
}
\bib{Nguyen_2008}{article}{
    author={Nguyen, Hoai-Minh},
    title={Inequalities related to liftings and applications},
    journal={C. R. Math. Acad. Sci. Paris},
    volume={346},
    date={2008},
    number={17-18},
    pages={957--962},
    issn={1631-073X},
    doi={10.1016/j.crma.2008.07.026},
}

\bib{Nguyen_2011}{article}{
    author={Nguyen, Hoai-Minh},
    title={Some inequalities related to Sobolev norms},
    journal={Calc. Var. Partial Differential Equations},
    volume={41},
    date={2011},
    number={3-4},
    pages={483--509},
    issn={0944-2669},
    doi={10.1007/s00526-010-0373-8},
}

\bib{Petrache_Riviere_2014}{article}{
    author={Petrache, Mircea},
    author={Rivi\`ere, Tristan},
    title={Global gauges and global extensions in optimal spaces},
    journal={Anal. PDE},
    volume={7},
    date={2014},
    number={8},
    pages={1851--1899},
    issn={2157-5045},
    doi={10.2140/apde.2014.7.1851},
}

\bib{Petrache_VanSchaftingen_2017}{article}{
    author={Petrache, Mircea},
    author={Van Schaftingen, Jean},
    title={Controlled singular extension of critical trace Sobolev maps from
    spheres to compact manifolds},
    journal={Int. Math. Res. Not. IMRN},
    date={2017},
    number={12},
    pages={3647--3683},
    doi={10.1093/imrn/rnw109},
    issn={1073-7928},
}

\bib{RanjbarMotlagh_2020}{article}{
    author={Ranjbar-Motlagh, Alireza},
    title={A remark on the Bourgain--Brezis--Mironescu characterization of constant functions},
    journal={Houston J. Math.},
    volume={46}, 
    number={1}, 
    date={2020}, 
    pages={113--115},
}

\bib{Rodiac_VanSchaftingen_2021}{article}{
    author={Rodiac, R\'{e}my},
    author={Van Schaftingen, Jean},
    title={Metric characterization of the sum of fractional Sobolev spaces},
    journal={Studia Math.},
    volume={258},
    date={2021},
    number={1},
    pages={27--51},
    issn={0039-3223},
    doi={10.4064/sm190408-21-4},
}

\bib{Runst_1986}{article}{
    author={Runst, T.},
    title={Mapping properties of nonlinear operators in spaces of Triebel--Lizorkin and Besov type},
    journal={Anal. Math.},
    volume={12},
    date={1986},
    number={4},
    pages={313--346},
    issn={0133-3852},
    doi={10.1007/BF01909369},
}

\bib{Spanier_1966}{book}{
   author={Spanier, Edwin H.},
   title={Algebraic topology},
   publisher={McGraw-Hill},
   address={New York-Toronto, Ont.-London, },
   date={1966},
   pages={xiv+528},
}

\bib{Stein_1970}{book}{
    author={Stein, Elias M.},
    title={Singular integrals and differentiability properties of functions},
    series={Princeton Mathematical Series, No. 30},
    publisher={Princeton University Press}, 
    address={Princeton, N.J.},
    date={1970},
    pages={xiv+290},
}

\bib{VanSchaftingen_2020}{article}{
    author={Van Schaftingen, Jean},
    title={Estimates by gap potentials of free homotopy decompositions of critical Sobolev maps},
    journal={Adv. Nonlinear Anal.},
    volume={9},
    date={2020},
    number={1},
    pages={1214--1250},
    issn={2191-9496},
    doi={10.1515/anona-2020-0047},
}

\bib{VanSchaftingen_2022}{article}{
    author={Van Schaftingen, Jean},
    title={Reverse superposition estimates in Sobolev spaces},
    journal={Pure Appl. Funct. Anal.},
    volume={7},
    date={2022},
    number={2},
    pages={805--811},
    issn={2189-3756},
}

  \end{biblist}
\end{bibdiv}

\end{document}